 \def\@seccntformat#1{\csname the#1\endcsname.\hspace{2ex}}
 \newcommand{\nsubsection}%
  {\@startsection{subsection}%
  {2}%
  {\z@}%
  {-3.5ex plus -1ex minus -.2ex}%
  {-0ex}%
  {\reset@font\normalsize\bfseries}}%
  \renewcommand{\subsection}%
  {\@startsection{subsection}%
  {2}%
  {\z@}%
  {-3.5ex plus -1ex minus -.2ex}%
  {0ex}
  {\reset@font\normalsize\bfseries}}%
  \renewcommand{\subsubsection}%
  {\@startsection{subsubsection}%
  {3}%
  {\z@}%
  {-1ex}
  {0ex}
  {\reset@font\normalsize\bfseries}}%
 \newcommand{\nnsubsection}%
  {\@startsection{subsection}%
  {2}%
  {\z@}%
  {-3ex}%
  {1ex}%
  {\reset@font\normalsize\bfseries}}%
 \renewcommand{\subsection}%
  {\@startsection{subsection}%
  {2}%
  {\z@}%
  {-3.5ex plus -1ex minus -.2ex}%
  {0ex}
  {\reset@font\normalsize\bfseries}}%
 \newcommand{\nsection}{\@startsection{section}{1}{\z@}%
     {-5ex}
     {1ex}
     {\reset@font\center\large\sc}}
 \renewenvironment{thebibliography}[1]
 {\nsection*{\refname\@mkboth{\refname}{\refname}}%
   \list{\@biblabel{\@arabic\c@enumiv}}%
        {\settowidth
	\labelwidth{\@biblabel{#1}}%
         \leftmargin
	 \labelwidth
         \advance
	 \leftmargin
	 \labelsep
         \@openbib@code
         \usecounter{enumiv}%
         \let\p@enumiv\@empty
	 \parskip=0pt
	 \itemsep=1pt
	 \parsep=1pt
	 \itemindent=\z@
         \renewcommand\theenumiv{\@arabic\c@enumiv}}%
   \sloppy
   \clubpenalty4000
   \@clubpenalty\clubpenalty
   \widowpenalty4000%
   \footnotesize
   \sfcode`\.\@m}
  {\def\@noitemerr
    {\@latex@warning{Empty `thebibliography' environment}}%
   \endlist}
\newtheoremstyle{thm}
 {1em}
 {3pt}
 {\itshape}
 {}
 {\bf}
 {. ---}
 {0.5em}
 {}
\newtheoremstyle{dfn}
 {1em}
 {3pt}
 {}
 {}
 {\bf}
 {. {---}}
 {0.5em}
 {}
\theoremstyle{thm}
\newtheorem{thm}[subsection]{Theorem}
\newtheorem*{thm*}{Theorem}
\newtheorem{lem}[subsection]{Lemma}
\newtheorem*{lem*}{Lemma}
\newtheorem{cor}[subsection]{Corollary}
\newtheorem*{cor*}{Corollary}
\newtheorem{prop}[subsection]{Proposition}
\newtheorem*{prop*}{Proposition}
\newtheorem*{conj*}{Conjecture}
\newtheorem*{thmM}{Theorem \ref{main}}
\theoremstyle{dfn}
\newtheorem{dfn}[subsection]{Definition}
\newtheorem*{dfn*}{Definition}
\newtheorem{ex}[subsection]{Example}
\newtheorem*{ex*}{Example}
\newtheorem{rem}[subsection]{Remark}
\newtheorem*{rem*}{Remark}
\newtheorem*{cl}{Claim}
\renewcommand{\qedsymbol}{$\blacksquare$}
\newsavebox{\circlebox}
\savebox{\circlebox}{\fontencoding{OMS}\selectfont\large\char13}
\newlength{\circleboxwdht}
\newcommand{\ccirc}[1]{%
  \setlength{\circleboxwdht}{\wd\circlebox}%
  \addtolength{\circleboxwdht}{\dp\circlebox}%
  \raisebox{0.4\dp\circlebox}{%
    \parbox[][\circleboxwdht][c]{\wd\circlebox}{\centering\small #1}}%
  \llap{\usebox{\circlebox}}%
}
\newenvironment{meta}{
\noindent \color{red}
\sffamily[}{\upshape]}
\newcommand{\Gm}[1]{\mb{G}_{\mr{m},{#1}}}
\newcommand{\Thetatil}[1]{\Theta^{#1}}
\newcommand{\Epr}{\mathbb{E}}
\begin{document}
\title{Rings of microdifferential operators for arithmetic
$\ms{D}$-modules.\\{\normalsize --- Construction and an application to
the characteristic varieties for curves}}
\author{Tomoyuki Abe}
\date{}
\maketitle
\begin{abstract}
 One aim of this paper is to develop a theory of microdifferential
 operators for arithmetic $\ms{D}$-modules.
 We first define the rings of microdifferential operators of arbitrary
 levels on arbitrary smooth formal schemes. A difficulty lies in the
 fact that there is no homomorphism between rings of
 microdifferential operators of different levels. To remedy this,
 we define the intermediate differential operators, and using
 these, we define the ring of microdifferential operators for
 $\ms{D}^\dag$. We conjecture that the characteristic variety of a
 $\ms{D}^\dag$-module is computed as the support of the
 microlocalization of a $\ms{D}^\dag$-module, and prove it in the curve
 case.
\end{abstract}
\renewcommand{\abstractname}{R\'{e}sum\'{e}}

\tableofcontents

\section*{Introduction}
This paper is aimed to construct a theory of rings of
microdifferential operators for arithmetic
$\ms{D}$-modules. Let $X$ be a smooth variety over $\mb{C}$. Then the
sheaf of rings of microdifferential operators denoted by $\ms{E}_X$ is
defined on the cotangent bundle $T^*X$ of $X$. This ring is one of
basic tools to study $\ms{D}$-modules microlocally, and it is used in
various contexts. One of the most important and fundamental properties
is
\begin{equation}
 \label{fundamentalproperty}
  \tag{ˆë}
 \mr{Char}(\ms{M})=\mr{Supp}(\ms{E}_X\otimes_{\pi^{-1}\ms{D}_X}\ms{M})
\end{equation}
for a coherent $\ms{D}_X$-module $\ms{M}$, where $\pi:T^*X\rightarrow X$
is the projection. One goal of this study is to find an analogous
equality in the theory of arithmetic $\ms{D}$-modules.

We should point out two attempts to construct rings of microdifferential
operators. The first attempt was made by R. G. L\'{o}pez in
\cite{Lop}. In there, he constructed the ring of microdifferential
operators of {\em finite order on curves}. However, the relation between
his construction and the theory of arithmetic $\ms{D}$-modules was not
clear as he pointed out in the last remark of \cite{Lop} The second
construction was carried out by A. Marmora in \cite{Mar}. Our work can
be seen as a generalization of this work, and we explain the
relation with our construction in the following.

Now, let $R$ be a complete discrete valuation ring of mixed
characteristic $(0,p)$. Let $\ms{X}$ be a
smooth formal scheme over $\mr{Spf}(R)$, and we denote the special fiber
of $\ms{X}$ by $X$. For an integer $m\geq 0$, P. Berthelot defined the
ring of differential operators of level $m$
denoted by $\DcompQ{m}{\ms{X}}$. He also defined the characteristic
varieties for coherent $\DcompQ{m}{\ms{X}}$-modules in almost the
same way we define the characteristic varieties for {\em
analytic} $\ms{D}$-modules. It is natural to
hope that there exists a theory of microdifferential operators, and that
we can define the ring of microdifferential operators
$\EcompQ{m}{\ms{X}}$ of level $m$ associated with $\DcompQ{m}{\ms{X}}$
satisfying an analog of (\ref{fundamentalproperty}). When $\ms{X}$ is a
curve, this was done by Marmora in \cite{Mar}, in his study of Fourier
transform. He fixed a system of local coordinates, constructed
the ring of microdifferential operators using  explicit descriptions
as in \cite[Chapter VIII]{Bjo}, and proved that the construction does
not depend on the choice of local coordinates. In this paper, we use a
general technique of G. Laumon of formal microlocalization of certain
filtered rings (cf.\ \cite{Lau}) to define the ring of naive
microdifferential operators of level $m$ denoted by $\EcompQ{m}{\ms{X}}$
(cf.\ \ref{defofnaivemic}).
One advantage of this construction is that we do not need to choose
coordinates. It follows also formally using the
result of Laumon that for a coherent $\DcompQ{m}{\ms{X}}$-module
$\ms{M}$, we get
\begin{equation*}
 \mr{Char}^{(m)}(\ms{M})=\mr{Supp}(\EcompQ{m}{\ms{X}}\otimes_
  {\pi^{-1}\DcompQ{m}{\ms{X}}}\pi^{-1}\ms{M})
\end{equation*}
in $T^{(m)*}X:=\mr{Spec}(\mr{gr}(\Dmod{m}{X}))$, where $\pi\colon
T^{(m)*}X\rightarrow X$ is the projection, and $\mr{Char}^{(m)}$ denotes
the characteristic variety (cf.\ \ref{charvardef}).

Before explaining the construction of sheaves of microdifferential
operators associated with $\Ddag{\ms{X},\mb{Q}}$, let us review the
theory of Berthelot, and see why we need to consider
$\DdagQ{\ms{X}}$-modules. Berthelot proved that
many fundamental theorems in the theory of analytic $\ms{D}$-modules
hold also for $\DcompQ{m}{\ms{X}}$-modules.
For example, he defined pull-backs and push-forwards, and proved
that push-forwards of coherent modules by proper morphisms remain
coherent (cf.\ \cite{BerInt}).
However, the analogue of Kashiwara's theorem, which states an
equivalence
between the category of coherent $\DcompQ{m}{\ms{X}}$-modules which are
supported on a smooth closed formal subscheme $\ms{Z}$ of $\ms{X}$ and
the category of coherent $\DcompQ{m}{\ms{Z}}$-modules, does not
hold. This failure makes it difficult to define a suitable subcategory of
holonomic modules in the category of $\DcompQ{m}{\ms{X}}$-modules. To
remedy this, Berthelot took inductive
limit on the levels to define the ring $\DdagQ{\ms{X}}$, and proved
an analogue of Kashiwara's theorem for coherent $\DdagQ{\ms{X}}$-modules
(cf.\ \cite[5.3.3]{BerInt}). As in the analytic $\ms{D}$-module theory,
we need to consider holonomic modules
to deal with push-forwards along open immersions, and we need to define
characteristic varieties to define holonomic modules. When a coherent
$\DdagQ{\ms{X}}$-module possesses a {\em Frobenius structure} ({\it
i.e.}\ an isomorphism $\ms{M}\xrightarrow{\sim}F^*\ms{M}$), Berthelot
defined the characteristic variety. He reduced the definition to a
finite level situation using a marvelous theorem of Frobenius descent,
and proved Bernstein's inequality by using the analogue of Kashiwara's
theorem. However, in the absence of Frobenius, the situation is
mysterious.

In this paper, we propose a new formalism which allows us at least
conjecturally to interpret this characteristic varieties by means of
microlocalizations, and use them to define the characteristic varieties
for general coherent $\DdagQ{\ms{X}}$-modules which may not carry
Frobenius structures. We also prove the conjecture in the case of
curves (cf.\ Theorem \ref{main}). Let us describe a more precise
statement and difficulties to carry this out.

One of the difficulties in defining microdifferential operators
associated with $\ms{D}^\dag$ is that there are {\em no}
transition homomorphism (cf.\ \ref{countcharnotstable})
\begin{equation*}
 \EcompQ{m}{\ms{X}}\rightarrow\EcompQ{m+1}{\ms{X}}
\end{equation*}
compatible with
$\DcompQ{m}{\ms{X}}\rightarrow\DcompQ{m+1}{\ms{X}}$. This makes it hard
to define the ring of microdifferential operators
corresponding to $\DdagQ{\ms{X}}$ in a naive way. Let
$\pi:T^*\ms{X}\rightarrow\ms{X}$ be the projection. To remedy this, we
define a $\pi^{-1}\DcompQ{m}{\ms{X}}$-algebra $\EcompQ{m,m'}{\ms{X}}$
for any integer $m'\geq m$ called the ``intermediate ring of
microdifferential operators of level $(m,m')$'' so that there exist
homomorphisms of $\pi^{-1}\DcompQ{m}{\ms{X}}$-algebras
\begin{equation*}
 \EcompQ{m,m'+1}{\ms{X}}\rightarrow\EcompQ{m,m'}{\ms{X}},\qquad
  \EcompQ{m,m'}{\ms{X}}\rightarrow\EcompQ{m+1,m'}{\ms{X}},
\end{equation*}
and $\EcompQ{m,m}{\ms{X}}=\EcompQ{m}{\ms{X}}$. We define
\begin{equation*}
 \Emod{m,\dag}{\ms{X},\mb{Q}}:=\invlim_{m'}\EcompQ{m,m'}{\ms{X}}.
\end{equation*} 
On this level, we have a transition homomorphism
$\Emod{m,\dag}{\ms{X},\mb{Q}}\rightarrow\Emod{m+1,\dag}{\ms{X},\mb{Q}}$
compatible with $\DcompQ{m}{\ms{X}}\rightarrow\DcompQ{m+1}{\ms{X}}$. We
define
\begin{equation*}
 \EdagQ{\ms{X}}:=\indlim_m\Emod{m,\dag}{\ms{X,\mb{Q}}}.
\end{equation*}
Unfortunately, we no longer have the equality
\begin{equation*}
 \mr{Char}^{(m)}(\ms{M})=\mr{Supp}(\Emod{m,\dag}{\ms{X},\mb{Q}}\otimes_
  {\pi^{-1}\DcompQ{m}{\ms{X}}}\pi^{-1}\ms{M})
\end{equation*}
for a coherent $\DcompQ{m}{\ms{X}}$-module $\ms{M}$ in general (cf.\
\ref{countexchsupp}). However, we conjecture the following.

\begin{conj*}
 Let $\ms{X}$ be a quasi-compact smooth formal scheme over $R$, and
 $\ms{M}$ be a coherent $\DcompQ{m}{\ms{X}}$-module. Then there exists
 $N>m$ such that for any $m'\geq N$,
 \begin{equation*}
  \mr{Char}^{(m')}(\DcompQ{m'}{\ms{X}}\otimes_{\DcompQ{m}{\ms{X}}}
   \ms{M})=\mr{Supp}(\Emod{m',\dag}{\ms{X},\mb{Q}}\otimes_{\pi^{-1}
   \DcompQ{m}{\ms{X}}}\pi^{-1}\ms{M}).
 \end{equation*}
\end{conj*}
This conjecture implies that $\mr{Car}(\ms{M})=\mr{Supp}(\EdagQ{\ms{X}}
\otimes\ms{M})$ for a coherent $F$-$\DdagQ{\ms{X}}$-module $\ms{M}$
where $\mr{Car}$ denotes the characteristic variety defined by
Berthelot. It is also worth noticing here that if this conjecture is
true, the characteristic varieties for coherent
$\DcompQ{m}{\ms{X}}$-modules stabilize when we raise the level
$m$, and in particular we are able to define characteristic varieties
for coherent $\DdagQ{\ms{X}}$-modules even without Frobenius
structures. In the last part of this paper, we prove the following.

\begin{thmM}
 When $\ms{X}$ is a curve, the conjecture is true.
\end{thmM}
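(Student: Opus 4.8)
The plan runs as follows. Since the statement is local on $\ms{X}$, I would first reduce to the case where $\ms{X}=\mr{Spf}(A)$ carries a global \'etale coordinate $t$, with derivation $\partial$. Write $\ms{N}_{m'}:=\DcompQ{m'}{\ms{X}}\otimes_{\DcompQ{m}{\ms{X}}}\ms{M}$. As $\Emod{m',\dag}{\ms{X},\mb{Q}}$ is a $\pi^{-1}\DcompQ{m'}{\ms{X}}$-algebra, the right-hand side of the desired identity equals $\mr{Supp}(\Emod{m',\dag}{\ms{X},\mb{Q}}\otimes_{\pi^{-1}\DcompQ{m'}{\ms{X}}}\pi^{-1}\ms{N}_{m'})$, while by the consequence of Laumon's theory recalled in the introduction the left-hand side equals $\mr{Supp}(\EcompQ{m',m'}{\ms{X}}\otimes_{\pi^{-1}\DcompQ{m'}{\ms{X}}}\pi^{-1}\ms{N}_{m'})$. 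The projection $\Emod{m',\dag}{\ms{X},\mb{Q}}\to\EcompQ{m',m'}{\ms{X}}$ gives the inclusion $\mr{Supp}(\EcompQ{m',m'}{\ms{X}}\otimes\pi^{-1}\ms{N}_{m'})\subseteq\mr{Supp}(\Emod{m',\dag}{\ms{X},\mb{Q}}\otimes\pi^{-1}\ms{N}_{m'})$ for free, so the entire content is the reverse inclusion for $m'$ large. Both sides are closed conic subsets of $T^{(m')*}X$ lying over $\mr{Supp}(\ms{M})$, both contain the zero section over $\mr{Supp}(\ms{M})$, and over a dense open of $X$ both coincide with whatever the generic structure of $\ms{M}$ dictates, which is insensitive to raising the level; using additivity of $\mr{Char}$ and of the microlocal supports along short exact sequences I would then carry out a d\'evissage, reducing $\ms{M}$ to either an $\mathcal{O}$-coherent module (where all the supports equal the zero section, so there is nothing to prove) or, after further shrinking, to a cyclic module $\DcompQ{m}{\ms{X}}/\DcompQ{m}{\ms{X}}P$. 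The problem then becomes, for each of the finitely many closed points $x\in X$, to decide whether the vertical fibre $T^{(m')*}_xX$ lies in each of the two supports.

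On the characteristic-variety side, membership of $T^{(m')*}_xX$ in $\mr{Char}(\ms{N}_{m'})$ is read off the principal symbol at level $m'$ of the image of $P$, hence off the slope (Newton polygon) of $\ms{M}$ at $x$; because a curve has only finitely many singular points, each carrying finitely many slopes, the dependence on $m'$ is governed by finitely many rational parameters, so the collection of vertical fibres occurring in $\mr{Char}(\ms{N}_{m'})$ stabilises for $m'$ large. On the microlocalization side, I would first show that, $\ms{N}_{m'}$ being coherent and the transition maps $\EcompQ{m',m''+1}{\ms{X}}\to\EcompQ{m',m''}{\ms{X}}$ being compatible with the natural filtrations, the tensor product commutes with $\invlim_{m''}$ (a Mittag-Leffler and completeness argument), reducing the task to understanding the family $\mr{Supp}(\EcompQ{m',m''}{\ms{X}}\otimes\pi^{-1}\ms{N}_{m'})$ as $m''\to\infty$ and checking that it stabilises in the same slope data. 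One then takes $N$ past the point where both stabilisations have taken effect.

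The core step is an explicit local analysis of the intermediate rings $\EcompQ{m',m''}{\ms{X}}$ in the coordinate $t$, refining Marmora's description of $\EcompQ{m'}{\ms{X}}=\EcompQ{m',m'}{\ms{X}}$ for curves: away from the zero section, $\EcompQ{m',m''}{\ms{X}}$ should be controlled by a growth condition on symbols interpolating between the level-$m'$ and the level-$m''$ norms, and feeding the cyclic presentation of $\ms{N}_{m'}$ into this description should turn the vanishing of $\EcompQ{m',m''}{\ms{X}}\otimes\pi^{-1}\ms{N}_{m'}$ along $T^{(m')*}_xX$, for $m''$ large, into exactly the slope inequality that characterises $T^{(m')*}_xX\not\subseteq\mr{Char}(\ms{N}_{m'})$ once $m'\geq N$. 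Combined with the already-settled zero-section and generic parts this yields the equality for all $m'\geq N$. The hard part will be precisely this last computation: obtaining a description of $\invlim_{m''}\EcompQ{m',m''}{\ms{X}}$ near a vertical fibre sharp enough to extract the slope inequality, that is, reconciling the convergence conditions built into the projective limit with the module structure of $\ms{N}_{m'}$. A secondary obstacle is the d\'evissage: since Kashiwara's theorem fails at finite level, the punctual part of $\ms{M}$ cannot simply be split off over $\DcompQ{m}{\ms{X}}$, so one would instead argue with the slope filtration of $\ms{M}$ at $x$ directly, or pass to $\DdagQ{\ms{X}}$ — where Kashiwara's theorem is available — and descend the resulting equality back to finite level $m'\geq N$.
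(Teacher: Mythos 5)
Your proposal diverges from the paper's argument at the crucial step, and the substitute you offer has a gap. You plan to control the $m'$-dependence of the two supports through slope/Newton-polygon data at finitely many closed points. But Lemma \ref{countcharnotstable} already shows that even a cyclic, $\mc{O}$-free rank-one $\DcompQ{0}{\ms{X}}$-module can have strictly larger characteristic variety after passing to level $1$; there is no a priori bound that makes the vertical-fibre locus in $\mr{Char}(\DcompQ{m'}{\ms{X}}\otimes\ms{M})$ stabilise in $m'$, and you supply none. The paper's stabilisation mechanism is different and makes no mention of slopes: by Lemma \ref{surjectionlemma} together with Corollary \ref{keyisomdag}, the kernel $\ms{K}_{m'}$ of $\Emod{m,\dag}{\ms{X},\mb{Q}}\otimes\pi^{-1}\ms{M}\rightarrow\Emod{m',\dag}{\ms{X},\mb{Q}}\otimes\pi^{-1}\ms{M}$ is identified with $\pi^{-1}\mr{Tor}_1^{\DcompQ{m}{\ms{X}}}(\DcompQ{m'}{\ms{X}}/\DcompQ{m}{\ms{X}},\ms{M})\hookrightarrow\pi^{-1}\ms{M}$, an ascending chain of subsheaves of $\pi^{-1}\ms{M}$; this chain stabilises because $\pi^{-1}\DcompQ{m}{\ms{X}}$ is \emph{strictly noetherian}, which is precisely the content of \S\ref{snoethcond} and the reason the finite-type hypothesis appears in the statement.

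Your d\'evissage plan is also circular: passing to $\DdagQ{\ms{X}}$ to invoke Kashiwara's theorem and then ``descending the resulting equality back to finite level'' presupposes a comparison between $\DdagQ{\ms{X}}$-invariants and high-finite-level ones, which is essentially what the theorem asserts. The paper avoids Kashiwara's theorem entirely. Once $\ms{K}_{m'}$ has stabilised, the split surjectivity of Lemma \ref{surjectionlemma} forces $\mr{Supp}(\EcompQ{m'}{\ms{X}}\otimes\ms{M})$ to be weakly increasing in $m'$; either it eventually fills $T^*\ms{X}$ (and one is done), or each $\DcompQ{m'}{\ms{X}}\otimes\ms{M}$ is $\mc{O}$-coherent on a dense open $\ms{U}_{m'}$. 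The projective rank there is weakly decreasing and stabilises, the transition maps on sections become isomorphisms, and Ogus's result \cite[2.16]{Og} on curves then upgrades the level-$N$ structure on $\ms{U}_N$ to a level-$m'$ structure for all $m'$; this produces exactly the hypothesis of Proposition \ref{mainprop}, and the Fr\'{e}chet--Stein property (Theorem \ref{FSprop}) converts the ensuing stalkwise vanishing of $\EcompQ{m',m''}{\ms{X}}\otimes\ms{M}$ into vanishing of $\Emod{m',\dag}{\ms{X},\mb{Q}}\otimes\ms{M}$.

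The opening of your sketch is consistent with the paper --- the translation via Proposition \ref{characteristicvariety}, the inclusion that comes for free, and the need to commute $\otimes$ with $\invlim_{m''}$ (which indeed relies on Theorem \ref{FSprop}) --- but the ``core step'' you describe (local symbol estimates yielding a slope inequality) is left as an aspiration, and the structural ingredients that actually carry the paper's proof (Tor-kernel stabilisation via strict noetherianity, monotonicity of the supports, and Ogus's extension lemma) are absent from your outline.
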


Finally, let us point out one of the most important applications of
this theorem. The construction of the ring of microdifferential
operators in this paper and Theorem \ref{main} are crucial technical
tools for the proof of the product formula for $p$-adic epsilon factors
in \cite{AM}. Especially, the result of this paper is
used to establish the theory of $p$-adic local Fourier transform and the
``principle of stationary phase''.
This product formula is expected to be used to establish the
``Langlands correspondence'' of overconvergent $F$-isocrystals. 
In the celebrated paper of Lafforgue \cite{Laf}, Langlands
correspondence for function field was established. This is a certain
correspondences between $\ell$-adic Galois representations and cuspidal
automorphic forms. A natural question is if there is any analogous
correspondences for overconvergent $F$-isocrystals. We believe that
there are similar correspondences, and we observe that the
conjecture of Deligne \cite[1.2.10 (vi)]{De} (or more precisely
\cite[4.13]{Cr}) can be understood as a consequence of these unknown
correspondences. See \cite{AbeL} for more details.
\bigskip

To conclude the introduction, let us see the structure of this paper. In
\S1, we review the theory of formal microlocalization of
certain filtered rings, and single out some basic cases where these
rings are noetherian (according to Definition \ref{defnoethrigsh}).
Using these results, we define the naive
ring of microdifferential operators $\EcompQ{m}{\ms{X}}$, and prove some
basic facts in the next section \S2. Before proceeding to the
definition of the intermediate rings of microdifferential operators, we
study some properties of $\mr{gr}(\Emod{m}{\ms{X},\mb{Q}})$ in \S
3. These are used to study the intermediate rings, which are defined in
\S4. In \S5, we prove the flatness of transition homomorphisms and
related results. One of the most important properties of
$\Emod{m,\dag}{\ms{X},\mb{Q}}$ is that its
sections over a strict affine open subscheme form a Fr\'{e}chet-Stein
algebra. In \S6, we prove a finiteness property of certain sheaves of
modules, which may be useful to deal with sheaves on formal schemes. In
the last section, we formulate the conjecture, and prove it in the case
of curves.

\nnsubsection*{Acknowledgments}
A large part of the work was done while the author was
visiting IRMAR at {\it l'Universit\'{e} de Rennes I}. He would like to
thank Professor Pierre Berthelot for giving him a lot of advice, and
Professor Ahmed Abbes for his kind hospitality and help him to improve
the paper. He is also grateful to Yoichi Mieda for answering many
questions, and reading some proofs. He would like to thank Adriano
Marmora for fruitful discussions. Finally he would like to thank
Professors Shuji Saito and Atsushi Shiho for continuous support and
encouragement. This work was supported by Grant-in-Aid for JSPS Fellows
20-1070 and partially by JSPS Core-to-Core 18005.

\nnsubsection*{Conventions}
In this paper, all rings are assumed to be associative with
unity. Filtered groups are assumed to be exhaustive (cf.\
\ref{termfiltsoon}), and modules are left modules unless otherwise
stated.
In principle, we use Raman fonts ({\it e.g.}\ $X$) for
schemes, and script fonts ({\it e.g.}\ $\mathscr{X}$) for formal
schemes.

\section{Preliminaries on filtered rings}
The aim of this section is to review the formal construction of the
microlocalization of certain filtered rings due to O.\ Gabber and G.\
Laumon. To fix notation and terminology, we begin by reviewing
well-known definitions and properties of filtered modules.

\subsection{}
\label{prelimfiltterm}
The reader can refer to \cite[III, \S2]{Bour} and \cite{HO} for more
details.

\subsubsection{}
\label{termfiltsoon}
An increasing sequence $\{G_n\}_{n\in\mb{Z}}$ of subgroups of a group
$G$ is called an increasing filtration on $G$. The filtration is said to
be {\em positive} if $G_n=0$ for all $n<0$. We say that the filtration
is {\em separated} if $\bigcap_nG_n=\{e\}$ where $e$ is the unit. If
$G_n$ is a normal subgroup of $G$ for any $n$, the filtration defines a
canonical topology, which makes $G$ a topological group (cf.\ \cite[III,
\S2.5]{Bour}).

Let $A$ be a ring (not necessary commutative), and
$\{A_i\}_{i\in\mb{Z}}$ be a filtration of the additive group $A$. We
say that the couple $(A,\{A_i\}_{i\in\mb{Z}})$ is a {\em filtered ring}
if $A_i\cdot A_j\subset A_{i+j}$, and $1\in A_0$. We always assume that
the filtration is {\em exhaustive} ({\it i.e.\
}$\bigcup_iA_i=A$). If there is no possible confusion, we abbreviate it
by $(A,A_i)$. Let $M$ be an $A$-module, and $\{M_i\}_{i\in\mb{Z}}$ be a
filtration of the additive group $M$ such that $A_i\cdot M_j\subset
M_{i+j}$ for any $i,j\in\mb{Z}$. Then the couple
$(M,\{M_i\}_{i\in\mb{Z}})$ is said to be a {\em filtered
$(A,A_i)$-module}. We often denote $(M,\{M_i\}_{i\in\mb{Z}})$ by
$(M,M_i)$ for short.

\subsubsection{}
\label{adicfiltex}
Let $A$ be a ring, and $I$ be a two-sided ideal. We put $A_n:=I^{-n}$
for $n\leq0$, and $A_n:=A$ for $n>0$. The couple
$(A,\{A_n\}_{n\in\mb{Z}})$ is a filtered ring, and the filtration is
called the {\em $I$-adic filtration}.

Let $(M,M_i)$ be a filtered $(A,A_i)$-module.
We say that the filtration $\{M_i\}_{i\in\mb{Z}}$ of $M$ is
{\em good} if there exist $m_1,\dots,m_s\in M$ and
$k_1,\dots,k_s\in\mb{Z}$ such that $M_n=\sum_{i=1}^sA_{n-k_i}\cdot m_i$
for any $n$.

\subsubsection{}
A {\em filtered homomorphism} $f\colon(A,A_i)\rightarrow(B,B_i)$ is a
ring homomorphism $f\colon A\rightarrow B$ such that there exists an
integer $n$ satisfying $f(A_i)\subset B_{i+n}$ for any integer $i$.
Such a homomorphism is continuous with respect to the topology defined
by the filtration on $A$ and $B$.
The filtered homomorphism $f$ is said to be {\em strict} if
$f(A_i)=f(A)\cap B_i$ for any $i\in\mb{Z}$.

\subsubsection{}
\label{princisymbdfn}
For a filtered ring $(A,A_i)$, we put $\mr{gr}_i(A):=A_i/A_{i-1}$,
and $\mr{gr}(A):=\bigoplus_{i}\mr{gr}_i(A)$. The module $\mr{gr}(A)$ is
naturally a graded ring, and it is called the {\em associated graded
ring}. We define the {\em principal symbol map} $\sigma\colon
A\rightarrow\mr{gr}(A)$ in the following way: let $x\in A$. If
$x\in\bigcap_iA_i$, then we put $\sigma(x)=0$. Otherwise there exists an
integer $i$ such that $x\in A_{i}$ and $x\not\in A_{i-1}$. We define
$\sigma(x)$ to be the image of $x$ in $\mr{gr}_i(A)\subset\mr{gr}(A)$.

\subsubsection{}
We introduce the completion of a filtered ring. Let $(A,A_i)$
be a filtered ring. We refer to \cite[Ch.I, \S3]{HO} for the
details.
Let $A[\nu,\nu^{-1}]$ be the ring of Laurent
polynomials with one variable $\nu$ over $A$, graded by the degree of
$\nu$. Here, the element $\nu$ is in the center by definition.
We define the graded subalgebra of $A[\nu,\nu^{-1}]$ denoted by
$A_{\bullet}$, called the {\em Rees ring} of $(A,A_i)$ by the formula
\begin{equation*}
 A_{\bullet}:=\bigoplus_{i\in\mb{Z}}A_i\cdot\nu^i.
\end{equation*}
For an integer $n\geq 1$, we define a graded ring
$A_{\bullet,n}:=A_{\bullet}/\nu^n
A_{\bullet}\cong\bigoplus_{i\in\mb{Z}} A_i/A_{i-n}\cdot\nu^i$.
For $i\in\mb{Z}$, we put $A_{i,n}:=A_i/A_{i-n}$, the part of degree
$i$ of $A_{\bullet,n}$.
We get a projective system of graded rings
\begin{equation*}
 \rightarrow A_{\bullet,n+1}\rightarrow A_{\bullet,n}\rightarrow\dots
  \rightarrow A_{\bullet,1}\cong\mr{gr}(A).
\end{equation*}
We define a module and a ring by
\begin{equation*}
 \widehat{A}_i:=\invlim_{n\rightarrow\infty}A_{i,n}\,
  \bigl(\cong\invlim_{n\rightarrow\infty}A_i/A_{i-n}\bigr),
  \qquad\widehat{A}:=\indlim_{i\rightarrow\infty}\,
  \widehat{A}_i.
\end{equation*}
The couple $(\widehat{A},\{\widehat{A}_i\}_{i\in\mb{Z}})$ is a filtered
ring, and is called the {\em completion of $(A,A_i)$}. This definition
coincides with \cite[Ch.I, 3.4.1]{HO} by \cite[Ch.I, 3.5, (d) and
(e)]{HO}. We note that the completion is separated, and the
canonical homomorphism $\mr{gr}(A)\xrightarrow{}\mr{gr}(\widehat{A})$ is
an isomorphism by \cite[Ch.I, 4.2.2]{HO}. We say that the filtered ring
$(A,A_i)$ is {\em complete} if the canonical homomorphism
$A\rightarrow\widehat{A}$ is an isomorphism of filtered rings.

\subsubsection{}
\label{noethfiltdef}
We say that a filtered ring $(A,A_i)$ is {\em left} (resp.\ {\em right},
{\em two-sided}) {\em noetherian filtered}
if the Rees ring $A_{\bullet}$ is left (resp.\ right, two-sided)
noetherian. If $A$ is a noetherian filtered ring, the associated graded
ring $\mr{gr}(A)$ is noetherian since $\mr{gr}(A)\cong A_\bullet/\nu
A_\bullet$. If $A$ is a complete filtered ring, $A$ is noetherian
filtered if and only if $\mr{gr}(A)$ is a noetherian
ring (cf.\ \cite[Ch.II, 1.2.3]{HO}). This shows that the completion of a
noetherian filtered ring is noetherian filtered. Moreover, if $(A,A_i)$
is a noetherian filtered ring, the canonical homomorphism
$A\rightarrow\widehat{A}$ is flat by \cite[Ch.I, 1.2.1]{HO}. We say that
a noetherian filtered ring is {\em Zariskian} if any good filtered
module is separated. Any noetherian filtered complete ring is known to
be Zariskian (cf.\ \cite[Ch.II, 2.2.1]{HO}).

\subsubsection{}
Let $X$ be a topological space or, more generally, topos. The
terminologies defined so far {\em except for those defined in
{\normalfont\ref{noethfiltdef}} and principal symbol in
{\normalfont\ref{princisymbdfn}}} can be defined also in the language of
sheaves by replacing ``ring'' by ``sheaf of rings on $X$'' and so
on. See \cite[A.III.2]{Bjo} for more details.

\subsection{}
\label{microlocaldef}
Let $(A,A_i)$ be a filtered ring which is complete and the associated
graded ring $\mr{gr}(A)$ is {\em commutative}. Let
$S_1\subset\mr{gr}(A)$ be a homogeneous multiplicative set ({\it i.e.\
}a multiplicative set consisting of homogeneous elements). Let
$c_n\colon A_{\bullet,n}\rightarrow A_{\bullet,1}\cong\mr{gr}(A)$ be the
canonical homomorphism. We put
\begin{equation*}
 S_n:=\bigl\{x\in A_{\bullet,n}\,\big|\,c_n(x)\in S_1\bigr\}.
\end{equation*}
By \cite[A.2.1]{Lau}, the multiplicative set $S_n$ satisfies the
two-sided Ore condition (cf.\ \cite[4.\ \S10A]{Lam}). We define a {\em
graded ring} by
\begin{equation*}
 A'_{\bullet,n}:=S_n^{-1}A_{\bullet,n}\cong A_{\bullet,n}S_n^{-1}.
\end{equation*}
This defines a projective system of graded rings
$\{A'_{\bullet,n}\}$. Let us denote by $A'_{i,n}$ the part of degree $i$
of $A'_{\bullet,n}$. We define
\begin{equation*}
 A'_i:=\invlim_{n\rightarrow\infty}A'_{i,n},\qquad
  A':=\indlim_{i\rightarrow\infty}A'_i.
\end{equation*}
The filtered ring $(A',A'_i)$ is complete. We denote this filtered ring
by $(A,A_i)_{S_1}$, and we call it the {\em microlocalization of
$(A,A_i)$ with respect to $S_1$}. If $\mr{gr}(A)$ is noetherian, then
$A'_0$ and $A'$ are noetherian and the canonical homomorphism
$A\rightarrow A'$ is flat by \cite[Corollaire A.2.3.4]{Lau}.

Let us describe elements of $A'$ concretely. Put $S:=\bigl\{a\in A\mid
\sigma(a)\in S_1\bigr\}$, and take $s\in
S$. By definition, $s$ is invertible in $(A,A_i)_{S_1}$. Given an
element $a\in A'_i$, there exist $a_k\in A_{l_k}$ and $s_k\in S\cap
A_{l_k-k}$ (thus $a_k\,s_k^{-1}\in A'_{k}$) for each integer $k\leq i$,
such that $a=\sum_{k\leq i}a_k\,s_k^{-1}$, where the sum is infinite and
we consider the topology defined by the filtration of $A$ (cf.\
\ref{termfiltsoon}). Moreover,
assume that $\sigma(s)\in\mr{gr}_N(A)$ and
$S_1=\bigl\{\sigma(s)^n\bigr\}_{n\geq0}$. Then for any $s'\in S$, there
exists an integer $l$ such that $\sigma(s')=\sigma(s)^l$. Since
$a:=s^l-s'\in A_{Nl-1}$, $s'^{-1}=s^{-l}\cdot\sum_{k\geq0}(as^{-l})^k$.
Thus for any $a'\in A'_i$, there exist an integer $n_k\geq0$ and
$a'_k\in A_{k+Nn_k}$ such that $a'=\sum_{k\leq
i}a'_k\,s^{-n_k}$.

\subsection{}
Let $(A,A_i)$ be a complete filtered ring whose associated graded ring
is commutative. The constructions in the previous subsection can be
carried out in almost the same way also for filtered
$(A,A_i)$-modules. For the details see \cite[A.2]{Lau}. For example, for
a filtered $A$-module $(M,M_i)$ and a homogeneous multiplicative system
$S_1\subset\mr{gr}(A)$, we are able to define the microlocalization of
$(M,M_i)$ with respect to $S_1$ denoted by $(M,M_i)_{S_1}$, which is
complete.

\subsection{}
\label{subsecmicloc}
Let us sheafify the results. Let $(A,\{A_i\}_{i\in\mb{Z}})$ be a
positively filtered ring such that the associated graded ring
$\mr{gr}(A)$ is commutative.
Let $R:=\mr{gr}(A)$ be the positively graded commutative
ring. Note that $A_0=R_0$ is a commutative ring by assumption. We let
$X:=\mr{Spec}(R_0)$, $V:=\mr{Spec}(R)$,
$P:=\mr{Proj}(R)$. Let $s\colon X\rightarrow V$ be the morphism defined
by the canonical projection $R\rightarrow R_0$. We put
$\mathring{V}:=V\setminus s(X)$. We have the following
canonical commutative diagram (cf. \cite[II, 8.3]{EGA}).
\begin{equation*}
 \xymatrix@C=40pt@R=10pt{
  V\ar[rd]|p&\mathring{V}\ar[r]^<>(.5){q}\ar[l]\ar[d]&P\ar[dl]\\
 &X\ar@/^1pc/[ul]|s&}
\end{equation*}
We define a topological space $V'$ in the following way: as a set,
$V':=V$. The topology of $V'$ is generated by the basis of open sets
\begin{equation*}
 \bigl\{D(f)\ \big|\ \mbox{$f\in R$ and $f$ is homogeneous}\bigr\}.
\end{equation*}
We denote by $\epsilon\colon V\rightarrow V'$ the identity map as sets,
which is continuous. In the sequel, various sheaves are defined
naturally on $V'$. A general strategy of \cite{Lau} is to define sheaves
on $V$, which are final outputs, by taking $\epsilon^{-1}$. Now, let us
denote by $\mf{O}(T)$, for a topological space $T$, the category of open
sets of $T$. The canonical functor
$\epsilon^{-1}\colon\mf{O}(V')\rightarrow\mf{O}(V)$ admits a left
adjoint denoted by $\epsilon_\cdot$.
For $U\in\mf{O}(V)$, this functor can be described as
$\epsilon_\cdot(U)=\bigcup_{\lambda\in\Gm{X}}\lambda\cdot U$ where
$\Gm{X}$ acts on $V$ naturally. Using this
functor, $\epsilon^{-1}$ can clearly be calculated: let
$\mc{F}'$ be a sheaf on $V'$. Then we have
\begin{equation}
 \label{noprimecalcfromprima}
 (\epsilon^{-1}\mc{F}')(U)=\mc{F}'(\epsilon_\cdot(U))
\end{equation}
by \cite[A.3.0.2]{Lau}.
Thanks to this equality, many properties of $\mc{F}'$ also automatically
apply to that of $\epsilon^{-1}\mc{F}'$.

Having this strategy in mind, let us define first important sheaves on
$V$. Let $\mc{O}_{V'}:=\epsilon_*\mc{O}_V$. For
$n\in\mb{Z}$, we denote by $\mc{O}_{V'}(n)$, the
subsheaf of $\mc{O}_{V'}$ consisting of the homogeneous sections of
degree $n$. We put $\mc{O}_V(n):=\epsilon^{-1}(\mc{O}_{V'}(n))$, and
$\mc{O}_V(*):=\bigoplus_{n\in\mb{Z}}\mc{O}_V(n)$. We note that
$\mc{O}_V(*)\cong\epsilon^{-1}\epsilon_*\mc{O}_V$. We get
$\mc{O}_V(n)|_{\mathring{V}}\cong q^{-1}\mc{O}_P(n)$ for any integer
$n$ by \cite[A.3.0.5]{Lau}. By \cite[A.3.0.5]{Lau}, we also have
\begin{equation}
 \label{zerosectionpullcallau}
 p_*\mc{O}_V(*)\cong s^{-1}\mc{O}_V(*)\cong\widetilde{R}
\end{equation}
where $\widetilde{\cdot}$ denotes
the associated quasi-coherent $\mc{O}_X$-module.

\subsection{}
\label{micshdfe}
Let $(\mc{A},\mc{A}_i)$ be the filtered quasi-coherent
$\mc{O}_X$-algebra associated to $(A,A_i)$ on $X$. Let $f$ be a
homogeneous element of $\mr{gr}(A)$, and we put
$S_1(f):=\{f^m\}_{m\geq0}\subset\mr{gr}(A)$. Let $S_n(f)$ be the
multiplicative set of $A_{\bullet,n}$ constructed from $S_1(f)$ (see
\ref{microlocaldef}), and define
$A'_{\bullet,n}(f):=S_n(f)^{-1}A_{\bullet,n}$.
We define a sheaf $\mc{B}'_{\bullet,n}$ on $V'$ to be the sheaf
associated to the presheaf $D(f)\mapsto A'_{\bullet,n}(f)$
over the open basis of $V'$ consisting of $D(f)$ with a homogeneous
element $f$ in $\mr{gr}(A)$. By \cite[A.3.1.1]{Lau}, we know that
\begin{equation}
 \label{quoticalclau}
 \Gamma(D(f),\mc{B}'_{\bullet,n})=A'_{\bullet,n}(f).
\end{equation}
We define
\begin{equation*}
 \mc{B}'_{i}:=\invlim_{n\rightarrow\infty}\mc{B}'_{i,n},\qquad
  \mc{B}':=\indlim_{i\rightarrow\infty}\mc{B}'_i.
\end{equation*}
Then we have an isomorphism of complete filtered rings
$(A,A_i)_{S_1(f)}\xrightarrow{\sim}
\Gamma(D(f),(\mc{B}',\mc{B}'_i))$ for a homogeneous element $f$ of
$\mr{gr}(A)$ by \cite[(A.3.1.2)]{Lau}. Now, let us use the general
machinery to define a filtered sheaf of rings on $V$ as follows:
\begin{equation*}
 (\mc{B},\mc{B}_i):=\epsilon^{-1}(\mc{B}',\mc{B}'_i).
\end{equation*}
There is a canonical homomorphism of filtered rings $\varphi\colon
p^{-1}(\mc{A},\mc{A}_i)\rightarrow(\mc{B},\mc{B}_i)$ on $V$.
The filtered ring $(\mc{B},\mc{B}_i)$ is called the {\em
microlocalization of $(\mc{A},\mc{A}_i)$}. By \cite[A.3.1.6]{Lau}, we
have canonical isomorphisms of graded rings
\begin{equation}
 \label{gradedisomicdi}
  \mr{gr}_n(\mc{B})\cong\mc{O}_V(n),\qquad
  \mr{gr}(\mc{B})\cong\mc{O}_V(*).
\end{equation}

\begin{rem*}
 Note that $q_*(\mc{O}_{\mathring{V}}(n))\cong\mc{O}_P(n)$ (resp.\
 $\mc{O}_{\mathring{V}}(n)$) is a quasi-coherent $\mc{O}_P$-module
 (resp.\ $\mc{O}_{\mathring{V}}(0)$-module). However, caution that,
 {\it a priori}, $q_*(\mc{B}|_{\mathring{V}})$ and
 $q_*(\mc{B}_i|_{\mathring{V}})$ {\em do not} have $\mc{O}_P$-module
 structure, nor do $\mc{A}|_{\mathring{V}}$ and
 $\mc{A}_i|_{\mathring{V}}$ have $\mc{O}_{\mathring{V}}$-module or
 $\mc{O}_{\mathring{V}}(0)$-module structure.
\end{rem*}

\subsection{}
Let $(M,M_i)$ be a filtered $(A,A_i)$-module such that $M_i=0$ for $i\ll
0$ (and $\bigcup_{i\in\mb{Z}}M_i=M$). Let $(\mc{M},\mc{M}_i)$ be the
quasi-coherent $\mc{O}_X$-module associated to the filtered module
$(M,M_i)$. This is a filtered $(\mc{A},\mc{A}_i)$-module.

Using exactly the same construction (cf.\ \cite[A.3.2]{Lau}), we are
able to define a $(\mc{B}',\mc{B}'_i)$-module $(\mc{N}',\mc{N}'_i)$ on
$V'$ such that we have an isomorphism of complete filtered modules
$(M,M_i)_{S_1(f)}\xrightarrow{\sim}\Gamma(D(f),(\mc{N}',\mc{N}'_i))$
for a homogeneous element $f$ of $\mr{gr}(A)$. We define a filtered
$(\mc{B},\mc{B}_i)$-module by
\begin{equation*}
 (\mc{N},\mc{N}_i):=\epsilon^{-1}(\mc{N}',\mc{N}'_i).
\end{equation*}
There is a homomorphism $\varphi_M\colon
p^{-1}(\mc{M},\mc{M}_i)\rightarrow(\mc{N},\mc{N}_i)$ over $\varphi$.

\begin{lem}
 \label{charandsuppformal}
 Let $\widetilde{\mr{gr}}(\mc{M})$ be the quasi-coherent
 $\mc{O}_V$-module such that
 $p_*(\widetilde{\mr{gr}}(\mc{M}))\cong\mr{gr}(\mc{M})$.
 Suppose $\mr{gr}(M)$ is finitely presented over $\mr{gr}(A)$. Then we
 have the following equalities in $V$:
 \begin{equation*}
  \mr{Supp}(\widetilde{\mr{gr}}(\mc{M}))=\mr{Supp}(\mr{gr}(\mc{N}))
   =\mr{Supp}(\mc{B}\otimes_{p^{-1}\mc{A}}p^{-1}\mc{M}).
 \end{equation*}
\end{lem}
\begin{proof}
 The first equality follows from \cite[Proposition A.3.2.4
 (i)]{Lau}. Let us show the second one. Let $U:=D(f)$ with a homogeneous
 element $f$ of $\mr{gr}(A)$. Since $\Gamma(U,\mc{N})$ is complete, it
 is in particular separated with respect to the filtration. Thus,
 $\Gamma(U,\mc{N})=0$ if and only if
 $\Gamma(U,\mr{gr}(\mc{N}))\cong\mr{gr}\bigl(\Gamma(U,\mc{N})\bigr)=0$
 where the first isomorphism follows from \cite[A.3.2]{Lau}.
 Combining this with \cite[Proposition A.3.2.4
 (ii)]{Lau}, the lemma follows.
\end{proof}

\begin{dfn*}
 Assume $\mc{M}$ is an $\mc{A}$-module of finite type. Then there exists
 a good filtration $\{\mc{M}_i\}_{i\in\mb{Z}}$
 (cf.\ \cite[A.III 2.15]{Bjo}) of $\mc{M}$.
 Suppose $\mr{gr}(A)$ is noetherian. The above lemma implies that
 $\mr{Supp}(\widetilde{\mr{gr}}(\mc{M}))$ does not depend on the
 choice of a good filtration. We call this the
 {\em characteristic variety of $\mc{M}$} and denote by
 $\mr{Char}(\mc{M})$.
\end{dfn*}

\begin{rem}
 These construction localize and are functorial. In particular, we may
 globalize the definitions of microlocalizations on schemes not
 necessary affine (cf.\ \cite[A.3.3]{Lau}).
\end{rem}

\subsection{}
Now, we collect some basic facts on noetherian conditions.

\begin{dfn*}
 \label{defnoethrigsh}
 Let $X$ be a topological space, $\mc{A}$ be a sheaf of rings on
 $X$, and $\mf{B}$ be an open basis of the topology.

 (i) The ring $\mc{A}$ is said to be {\em left noetherian with respect
 to $\mf{B}$} if it satisfies the following conditions.
 \begin{enumerate}
  \item It is a left coherent ring ({\it i.e.\ }locally, any finitely
	generated left ideal of $\mc{A}$ is finitely presented).
  \item For any point $x\in X$, the stalk $\mc{A}_x$ is a left
	noetherian ring.
  \item For any	$U\in\mf{B}$, $\Gamma(U,\mc{A})$ is a left noetherian
	ring.
 \end{enumerate}
 In the same way, we define a {\em right} (resp.\ {\em two-sided}) {\em
 noetherian ring with respect to $\mf{B}$}. When there is no possible
 confusion, we abbreviate two-sided noetherian sheaf of rings with
 respect to $\mf{B}$ as noetherian ring.

 (ii) A filtered ring $(\mc{A},\mc{A}_i)$ is said to be {\em pointwise
 left} (resp.\ {\em right, two-sided}) {\em Zariskian} if the stalk
 $\mc{A}_x$ is left (resp.\ right, two-sided) Zariskian for any $x\in
 X$.

 (iii) An $\mc{A}$-algebra $\mc{B}$ is said to be {\em of finite type}
 over $\mc{A}$ if for any $x\in X$, there exists an open neighborhood
 $U$ of $x$ and a surjection
 $\mc{A}[T_1,\dots,T_n]|_U\twoheadrightarrow\mc{B}|_U$.
\end{dfn*}

\begin{rem*}
\label{laumonnoeth}
 This definition of noetherian ring is slightly different from that of
 \cite[Definition 1.1.1]{KK}, who replaced 3 by Condition (c):
 for any open set $U$ of $X$, a sum of left coherent $\mc{A}|_U$-ideals
 are also coherent. In \S\ref{snoethcond}, we show that a stronger
 condition than Condition (c) holds for some of the noetherian rings
 defined in this paper.
\end{rem*}

\begin{ex*}
 Let $X$ be a noetherian scheme. Let $\mf{B}$ be the open basis
 consisting of affine open subschemes of $X$. Then $\mc{O}_X$ is a
 noetherian ring with respect to $\mf{B}$. More generally, let $\ms{X}$
 be a locally noetherian adic formal scheme (cf.\ \cite[I,
 10.4.2]{EGA}) and $\mf{C}$ be the open basis consisting of affine open
 formal subschemes of $\ms{X}$. Then $\mc{O}_{\ms{X}}$ is noetherian
 with respect to $\mf{C}$ by \cite[I, 10.1.6]{EGA}.
\end{ex*}

\subsection{}
The following lemma is a generalization of \cite[3.3.6]{Ber1} to
filtered rings.
\begin{lem*}
 \label{noetherianlemma}
 Let $(\mc{A},\{\mc{A}_i\}_{i\in\mb{Z}})$ be a filtered ring on a
 topological space $X$. Let $\mf{B}$
 be an open basis of the topological space $X$. Suppose that the
 following conditions hold.
 \begin{enumerate}
  \item For any $U\in\mf{B}$, the filtered ring
	$\bigl(\Gamma(U,\mc{A}),\Gamma(U,\mc{A}_i)\bigr)$ is
	complete.
  \item The graded ring $\mr{gr}(\mc{A})$ is left noetherian with
	respect to $\mf{B}$.
  \item For $V,U\in\mf{B}$ such that $V\subset U$, the restriction
	homomorphism
	$\Gamma(U,\mr{gr}(\mc{A}))\rightarrow\Gamma(V,\mr{gr}(\mc{A}))$
	is right flat.
  \item For any $U\in\mf{B}$, the canonical homomorphism
	$\mr{gr}(\Gamma(U,\mc{A}))\rightarrow\Gamma(U,\mr{gr}(\mc{A}))$
	is an isomorphism.
\end{enumerate}
 Then, for any $x\in X$, the canonical homomorphism
 \begin{equation}
  \label{faithflathom}
  \mc{A}_x\rightarrow\mc{A}^\wedge_x
 \end{equation}
 is right faithfully flat, where $^{\wedge}$ denotes the completion with
 respect to the filtration on $\mc{A}_x$. Moreover, $(\mc{A},\mc{A}_i)$
 is pointwise left Zariskian, and $\mc{A}$ is left noetherian with
 respect to $\mf{B}$. The statement is also valid if we replace left
 (resp.\ right) by right (resp.\ left).
\end{lem*}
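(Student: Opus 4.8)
The plan is to reduce the statement to the known results about completions of noetherian filtered rings (\ref{noethfiltdef}), applied stalkwise and over basis opens, and to bootstrap coherence of $\mc{A}$ from the noetherian property at stalks together with a standard coherence-gluing argument à la \cite[3.3.6]{Ber1}. First I would establish the crucial input: for each $U\in\mf{B}$, the ring $\Gamma(U,\mc{A})$ is left noetherian filtered. By Condition (4), $\mr{gr}(\Gamma(U,\mc{A}))\cong\Gamma(U,\mr{gr}(\mc{A}))$, which is a left noetherian ring by Condition (2)(3). By Condition (1), $\Gamma(U,\mc{A})$ is complete, so by the criterion in \ref{noethfiltdef} (a complete filtered ring is noetherian filtered iff its associated graded is noetherian), $\Gamma(U,\mc{A})$ is left noetherian filtered, hence the third noetherianity condition in the Definition of ``noetherian with respect to $\mf{B}$'' holds, and moreover $\Gamma(U,\mc{A})$ is left Zariskian by \ref{noethfiltdef}.

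Next I would pass to stalks. Fix $x\in X$. The filtration $\{\mc{A}_{i,x}\}$ makes $\mc{A}_x$ a filtered ring with $\mr{gr}(\mc{A}_x)\cong(\mr{gr}\mc{A})_x=\indlim_{U\ni x}\Gamma(U,\mr{gr}(\mc{A}))$, a filtered colimit of left noetherian rings along flat (by Condition (3)) transition maps; a standard argument shows such a colimit is left noetherian (any finitely generated ideal is extended from some $\Gamma(U,\mr{gr}(\mc{A}))$, and flatness of the transition maps propagates the ascending chain condition). Therefore $\mr{gr}(\mc{A}_x)$ is left noetherian. Now $\mc{A}_x$ need not be complete, but its completion $\mc{A}_x^\wedge$ has the same associated graded (by \cite[Ch.I, 4.2.2]{HO}), so $\mr{gr}(\mc{A}_x^\wedge)$ is left noetherian, whence $\mc{A}_x^\wedge$ is left noetherian filtered by \ref{noethfiltdef}, and in particular left Zariskian by \cite[Ch.II, 2.2.1]{HO}. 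To get that $\mc{A}_x$ itself is left noetherian and that $\mc{A}_x\to\mc{A}_x^\wedge$ is faithfully flat, I would invoke the fact that $\mr{gr}(\mc{A}_x)\to\mr{gr}(\mc{A}_x^\wedge)$ is an isomorphism together with \cite[Ch.I, 1.2.1]{HO} (flatness of the completion map for noetherian filtered rings) — but the subtlety is that this citation applies to noetherian filtered rings, and $\mc{A}_x$ is not yet known to be such. The way around this is to realize $\mc{A}_x$ as a filtered colimit $\indlim_{U\ni x}\Gamma(U,\mc{A})$ of complete noetherian filtered rings: since each $\Gamma(U,\mc{A})\to\Gamma(U,\mc{A})^\wedge=\Gamma(U,\mc{A})$ is an isomorphism and each is Zariskian, and the transition maps are strict filtered with flat associated-graded maps (Condition (3)), one checks $\mc{A}_x$ is Zariskian and the completion map is faithfully flat by passing to the limit, using that faithful flatness and the Zariski property can be tested on $\mr{gr}$; this is exactly where \cite[3.3.6]{Ber1} is the model to follow. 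Faithful flatness (as opposed to mere flatness) comes from the Zariski property: $\mc{A}_x$ is separated in the filtration topology induced from $\mc{A}_x^\wedge$, so $\mc{A}_x\to\mc{A}_x^\wedge$ is injective and, being flat with the maximal-ideal-type condition from Zariskianness, faithfully flat.

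Finally, for coherence of $\mc{A}$ with respect to $\mf{B}$: I would show any finitely generated left ideal of $\mc{A}$ is, locally on a basis open $U$, finitely presented. Given generators $s_1,\dots,s_n\in\Gamma(U,\mc{A})$ of a left ideal sheaf, the module of relations is a subsheaf of $\mc{A}|_U^n$; on each stalk it is finitely generated because $\mc{A}_x$ is left noetherian, and one propagates a finite generating set of relations to a neighborhood. The gluing/quasi-compactness bookkeeping needed to turn stalkwise finite presentation into finite presentation on a basis open is the routine part of Berthelot's argument in \cite[3.3.6]{Ber1}, now carried out in the filtered setting; Conditions (1)--(4) are precisely what make the filtered Rees-ring analogue of his argument go through. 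The main obstacle, to be addressed carefully, is the interchange of ``completion'' with ``stalk'' and the transfer of the noetherian/Zariski/flatness package across the filtered colimit $\Gamma(U,\mc{A})\rightsquigarrow\mc{A}_x$ without circularity — i.e.\ proving $\mc{A}_x$ is Zariskian and $\mc{A}_x\to\mc{A}_x^\wedge$ faithfully flat before (or independently of) knowing $\mc{A}_x$ is noetherian filtered — which is resolved by working at the level of Rees rings and associated graded rings throughout, where Conditions (2)--(4) give everything directly. The right/left symmetric statement follows by applying the same argument to the opposite filtered ring.
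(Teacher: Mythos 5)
Your reduction has the right skeleton: Conditions~(1), (2), (4) make each $\Gamma(U,\mc{A})$ a complete left noetherian filtered ring, hence Zariskian; and the stalkwise graded ring $\mr{gr}(\mc{A}_x)\cong(\mr{gr}\mc{A})_x$ is left noetherian. But note that this last fact is part of Condition~2 by Definition~\ref{defnoethrigsh}~(i)-2, so the filtered-colimit argument you sketch is both unnecessary and, as stated, faulty: a directed union of noetherian rings along flat transition maps need not be noetherian (consider $\bigcup_n k[x_1,\dots,x_n]$). You also correctly isolate the subtle point: $\mc{A}_x$ is not complete, so \cite[Ch.II, 1.2.1]{HO} cannot be applied to $\mc{A}_x\to\mc{A}_x^\wedge$ directly.

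Your proposed workaround, however, does not close the gap. The claim that ``faithful flatness and the Zariski property can be tested on $\mr{gr}$'' is exactly what fails without completeness: $\mr{gr}(\mc{A}_x)\to\mr{gr}(\mc{A}_x^\wedge)$ is \emph{always} an isomorphism (completion preserves the associated graded), so it carries no information about flatness, let alone faithful flatness, of $\mc{A}_x\to\mc{A}_x^\wedge$. Moreover, your last paragraph deduces faithfulness from Zariskianness after having proposed to deduce Zariskianness from faithful flatness --- that is circular. The proposal identifies the difficulty but does not resolve it. What the paper actually does is a two-step argument that your outline misses. For flatness, one avoids $\mc{A}_x$ altogether in the first pass: the composite $r\colon\Gamma(U,\mc{A})\to\mc{A}_x^\wedge$ is a filtered homomorphism between \emph{complete} left noetherian filtered rings (the source by Conditions~(1), (2), (4); the target because its associated graded is $(\mr{gr}\mc{A})_x$, noetherian by Condition~2), and $\mr{gr}(r)$ is flat by Conditions~(3), (4). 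Hence $r$ is flat by \cite[Ch.II, 1.2.1]{HO}, and $\mc{A}_x\to\mc{A}_x^\wedge$ is flat by passing to the inductive limit over $U\ni x$. Faithfulness then needs a separate argument: by \cite[3.3.5]{Ber1} it suffices to show that a monogenerated finitely presented $\mc{A}_x$-module $M$ with $\mc{A}_x^\wedge\otimes M=0$ vanishes; one descends $M$ to a module $M_U$ over $\Gamma(U,\mc{A})$ with a good filtration, performs a diagram chase with tensor filtrations to deduce $\mr{gr}(\mc{A}_x^\wedge)\otimes\mr{gr}(M_U)=0$, shrinks $U$ to a $V\in\mf{B}$ on which $\mr{gr}(M_V)=0$, and finally uses Zariskianness of the \emph{complete} ring $\Gamma(V,\mc{A})$ to conclude $M_V=0$, hence $M=0$. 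The Zariskianness of $\mc{A}_x$ is then obtained from the established faithful flatness together with \cite[Ch.II, 2.1.2 (4)]{HO}, not the other way round. This is the content your proposal leaves as ``the model of \cite[3.3.6]{Ber1}'' without supplying.
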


\begin{proof}
 We only deal with the left case, and modules are always
 assumed to be left modules. Let $x\in X$, and take $U\in\mf{B}$ such
 that $x\in U$. Let us check that the restriction homomorphism
 \begin{equation}
  \label{isthisflat}
  r\colon\Gamma(U,\mc{A})\rightarrow\mc{A}_x^{\wedge}
 \end{equation}
 is flat. Indeed, consider the following commutative diagram
 \begin{equation*}
  \xymatrix{
   &\mr{gr}(\Gamma(U,\mc{A}))\ar[r]^{\sim}\ar[dl]_{\mr{gr}(r)}\ar[d]&
   \Gamma(U,\mr{gr}(\mc{A}))\ar[d]\\
  \mr{gr}(\mc{A}_x^\wedge)&\mr{gr}(\mc{A}_x)\ar[l]^<>(.5){\sim}
   \ar[r]_<>(.5){\sim}&\mr{gr}(\mc{A})_x
   }
 \end{equation*}
 where the vertical homomorphisms are the restriction homomorphism
 of $\mc{A}$ and $\mr{gr}(\mc{A})$. The upper horizontal homomorphism is
 an isomorphism by condition 4.
 The right vertical homomorphism is flat by condition 3, and thus
 $\mr{gr}(r)$ is flat as well. Since the filtered rings
 $\Gamma(U,\mc{A})$ and $\mc{A}^\wedge_x$ are complete
 and their associated graded rings are noetherian by conditions 2 and 4,
 these filtered rings are in fact noetherian filtered (cf.\
 \ref{noethfiltdef}). Since the source and the target of $r$ are
 noetherian filtered complete rings, $r$ is flat by the flatness of
 $\mr{gr}(r)$ and \cite[Ch.II, 1.2.1]{HO}. By taking the inductive
 limit over $U$, (\ref{faithflathom}) is flat.

 We say that an $\mc{A}_x$-module $M$ is {\em monogenic of finite
 presentation} if there exists a surjection $\mc{A}_x\rightarrow M$
 such that the kernel is a finitely generated ideal of $\mc{A}_x$.
 By \cite[3.3.5]{Ber1}, to check that (\ref{faithflathom}) is faithful,
 it suffices to show that for any $\mc{A}_x$-module $M$ monogenic of
 finite presentation such that $\mc{A}_x^\wedge\otimes{M}=0$, we get
 $M=0$. So assume $M$ to be monogenic of finite presentation such that
 $\mc{A}_x^\wedge\otimes{M}=0$.
 By this assumption, there exist $U\in\mf{B}$ and a
 $\Gamma(U,\mc{A})$-module ${M}_U$ monogenic of finite presentation such
 that $\mc{A}_x\otimes M_U\cong M$. We fix a surjection
 $\phi\colon\mc{A}_U:=\Gamma(U,\mc{A})\rightarrow M_U$. This induces a
 good filtration on $M_U$. We define an $\mc{A}_U$-ideal $K$
 by the following short exact sequence
 \begin{equation*}
  0\rightarrow K\rightarrow\mc{A}_U\xrightarrow{\phi}M_U\rightarrow 0.
 \end{equation*}
 We consider the induced filtration from $\mc{A}_U$ on $K$. Then we have
 the following exact sequence
 \begin{equation*}
  0\rightarrow\mr{gr}(K)\rightarrow\mr{gr}(\mc{A}_U)
   \xrightarrow{\mr{gr}(\phi)}\mr{gr}(M_U)\rightarrow 0.
 \end{equation*}
 Since $\mr{gr}(\mc{A}^\wedge_x)$ is flat over
 $\mr{gr}(\mc{A}_U)\cong\Gamma(U,\mr{gr}(\mc{A}))$, the sequence
 \begin{equation}
  \label{shortexactfilt}
  0\rightarrow\mr{gr}(\mc{A}^\wedge_x)\otimes_{\mr{gr}(\mc{A}_U)}
  \mr{gr}(K)\rightarrow\mr{gr}(\mc{A}^\wedge_x)
  \rightarrow\mr{gr}(\mc{A}^\wedge_x)\otimes_{\mr{gr}(\mc{A}_U)}
   \mr{gr}(M_U)\rightarrow 0
 \end{equation}
 is exact. The sequence
 \begin{equation}
  \label{flatisom}
  0\rightarrow\mc{A}_x^\wedge\otimes_{\mc{A}_U}K\rightarrow
  \mc{A}_x^\wedge\otimes_{\mc{A}_U}\mc{A}_U\rightarrow 0
 \end{equation}
 is also exact by the flatness of $r$ in (\ref{isthisflat}) and
 the hypothesis on $M$. We endow these two modules with the tensor
 filtrations (cf.\ \cite[p.57]{HO}). Consider the following diagram:
 \begin{equation*}
  \xymatrix{
   0\ar[r]&\mr{gr}(\mc{A}^\wedge_x)\otimes\mr{gr}(K)\ar[r]^<>(.5){\beta}
   \ar[d]&\mr{gr}(\mc{A}^\wedge_x)\ar[r]\ar[d]^{\sim}&
   \mr{gr}(\mc{A}^\wedge_x)\otimes\mr{gr}(M_U)\ar[r]&0\\
  &\mr{gr}(\mc{A}_x^\wedge\otimes K)\ar[r]^<>(.5){\alpha}&\mr{gr}(\mc{A}
   _x^\wedge\otimes\mc{A}_U)&&}
 \end{equation*}
 where the vertical homomorphisms are canonical ones. The right vertical
 homomorphism is an isomorphism by \cite[Ch.I, 6.15]{HO}. Since
 the upper row is exact and
 \begin{equation*}
  \mr{gr}(\mc{A}_x^\wedge)\otimes\mr{gr}(K)\rightarrow\mr{gr}
   (\mc{A}_x^\wedge\otimes K)
 \end{equation*}
 is surjective by \cite[p.58]{HO}, $\alpha$ is injective. This
 implies that the homomorphism (\ref{flatisom}) is strict by \cite[Ch.I,
 4.2.4 (2)]{HO}, and $\alpha$ is an isomorphism. Thus, by diagram
 chasing, $\beta$ is also an isomorphism, and
 \begin{equation*}
  \mr{gr}(\mc{A}_x)\otimes\mr{gr}(M_U)\cong\mr{gr}(\mc{A}_x^\wedge)
   \otimes\mr{gr}(M_U)=0.
 \end{equation*}
 Since $\mr{gr}(\mc{A}_x)\cong(\mr{gr}(\mc{A}))_x$, this shows that
 there exists $V\in\mf{B}$ such that $x\in V$, $V\subset U$, and
 \begin{equation*}
   \Gamma(V,\mr{gr}(\mc{A}))\otimes_{\Gamma(U,\mr{gr}(\mc{A}))}
    \mr{gr}(M_U)=0.
 \end{equation*}
 Let ${M}_{V}:=\Gamma(V,\mc{A})\otimes_{\Gamma(U,\mc{A})}M_U$, and
 equip it with the tensor filtration. Since the filtration on $M_U$ is good,
 the filtration on $M_V$ is also good by \cite[Ch.I, 6.14]{HO}. Since
 the canonical homomorphism
 \begin{equation*}
  \Gamma(V,\mr{gr}(\mc{A}))\otimes_{\Gamma(U,\mr{gr}(\mc{A}))}\mr{gr}(M_U)
   \cong\mr{gr}\bigl(\Gamma(V,\mc{A})\bigr)\otimes_{\mr{gr}
   (\Gamma(U,\mc{A}))}\mr{gr}(M_U)
   \rightarrow\mr{gr}(M_V)
 \end{equation*}
 is surjective, here the first isomorphism comes form condition 4, we have
 $\mr{gr}(M_V)=0$. Since $\Gamma(V,\mc{A})$ is complete and the
 filtration on $M_V$ is good, we obtain that $M_V=0$ (cf.\
 \ref{noethfiltdef}). Since $M\cong\mc{A}_x\otimes_{\mc{A}_V}M_V=0$, the
 fully faithfulness follows.

 Since $\mr{gr}(\mc{A}_x)$ is noetherian, $\mc{A}_x$ is Zariskian by
 \cite[Ch.II, 2.1.2 (4)]{HO}, and in particular
 $\mc{A}_x$ is noetherian. To show that $\mc{A}$ is noetherian,
 it remains to prove that $\mc{A}$ is coherent. For this, it suffices to
 check the conditions of \cite[3.1.1]{Ber1}.
 We have already checked (a). The flatness of the restriction
 $\Gamma(U,\mc{A})\rightarrow\Gamma(V,\mc{A})$ for open subsets
 $V\subset U$ in $\mf{B}$ follows by \cite[Ch.II, 1.2.1]{HO}, thus (b)
 is satisfied, and the lemma follows.
\end{proof}

\begin{lem}
 \label{tangenttwistshfi}
 We use the notation of {\normalfont \ref{subsecmicloc}} and
 {\normalfont{\ref{micshdfe}}}, and we further assume that $\mr{gr}(A)$ is
 a noetherian ring.

 (i) The rings $\mc{O}_V(0)$ and $\mc{O}_V(*)$ are noetherian. Moreover,
 $\mc{O}_{V}(n)$ is a coherent $\mc{O}_V(0)$-module on $\mathring{V}$
 for any integer $n$.

 (ii) The microlocalization $\mc{B}$ is noetherian and pointwise
 Zariskian on $V$. Moreover, $\varphi$ is flat.
\end{lem}
\begin{proof}
 Let us check (i). By \cite[II, 2.1.5]{EGA}, the ring $A_0$ is also
 noetherian, and $\mr{gr}(A)$ is of finite type over $A_0$. Then by the
 same argument as \cite[A.3.1.8]{Lau}, $\mc{O}_V(0)$ and $\mc{O}_V(*)$
 are noetherian. Now, since $\mc{O}_{P}(n)$ is a coherent
 $\mc{O}_P\cong\mc{O}_P(0)$-module and $\mc{O}_X(*)$ is of finite type
 over $\mc{O}_X(0)$, the second claim follows.

 Let us check (ii). Applying Lemma \ref{noetherianlemma} to the
 microlocalization $\mc{B}$ on $\mathring{V}$, $\mc{B}$ is a noetherian
 ring and pointwise Zariskian on $\mathring{V}$ by
 (\ref{gradedisomicdi}). Let us show $\mc{B}$ is in fact
 noetherian and pointwise Zariskian {\em on $V$}. It is pointwise
 Zariskian on the zero section by
 (\ref{noprimecalcfromprima}) (or more directly by
 \cite[A.3.1.5]{Lau}). To check condition
 (i)-2 of Definition \ref{defnoethrigsh}, we apply
 (\ref{zerosectionpullcallau}), and for (i)-3, we apply
 (\ref{noprimecalcfromprima}). It remains to show that $\mc{B}$ is
 coherent, which follows directly from \cite[3.1.1]{Ber1}.
 The flatness follows by \cite[A.3.1.7]{Lau}.
\end{proof}

\begin{lem}
\label{lemmonnoethfilt}
 Let $({A},\{{A}_i\}_{i\in\mb{Z}})$ be a filtered ring such that ${A}_0$
 is noetherian filtered, $\bigoplus_{i\geq0}\mr{gr}_i({A})$ is
 noetherian. Then ${A}$ is noetherian filtered.
\end{lem}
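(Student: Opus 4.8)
The plan is to prove that the Rees ring $A_{\bullet}=\bigoplus_{i\in\mb{Z}}A_i\nu^i$ is left noetherian; the right case is symmetric, which then gives that $A$ is noetherian filtered in the sense of \ref{noethfiltdef}. Since $A_{\bullet}$ is $\mb{Z}$-graded, it is enough to show that every \emph{graded} left ideal of $A_{\bullet}$ is finitely generated (cf.\ \cite{HO}). The two hypotheses enter through the following. First, the graded subring $B:=\bigoplus_{i\leq0}A_i\nu^i\oplus\bigoplus_{i>0}A_0\nu^i$ of $A_{\bullet}$ is exactly the Rees ring of $A_0$ for the induced filtration, hence is noetherian by assumption; moreover $B/\nu B\cong\bigoplus_{i\leq0}\mr{gr}_i(A)$, so together with the assumption on $\bigoplus_{i\geq0}\mr{gr}_i(A)$ and the elementary fact that a $\mb{Z}$-graded ring whose degree-$\geq0$ and degree-$\leq0$ parts are noetherian is itself noetherian (a graded ideal splits as the sum of its positive and non-positive parts, which are ideals of the two subrings), the ring $\mr{gr}(A)=A_{\bullet}/\nu A_{\bullet}$ is noetherian. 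Second, since $\nu$ is a central nonzerodivisor of $A_{\bullet}$ and multiplication by $\nu^j$ identifies $\mr{gr}(A)$ with $\nu^jA_{\bullet}/\nu^{j+1}A_{\bullet}$, each quotient $A_{\bullet}/\nu^NA_{\bullet}$ $(N\geq1)$ carries the nilpotent ideal $\nu A_{\bullet}/\nu^NA_{\bullet}$ whose quotient and successive subquotients are noetherian, hence $A_{\bullet}/\nu^NA_{\bullet}$ is a noetherian ring.

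Now fix a graded left ideal $J\subset A_{\bullet}$ and pass to its $\nu$-saturation $\widetilde{J}:=\{\,x\in A_{\bullet}\mid\nu^Mx\in J\text{ for some }M\geq0\,\}$, a graded left ideal which is $\nu$-saturated, i.e.\ $\widetilde{J}\cap\nu A_{\bullet}=\nu\widetilde{J}$; writing $\widetilde{J}=\bigoplus_i\widetilde{J}_i\nu^i$ this says $\widetilde{J}_{i}=\widetilde{J}_{i+1}\cap A_i$ for all $i$. The crucial step is that $\widetilde{J}$ is finitely generated. I would produce generators as follows: choose $a_1,\dots,a_r$ with $a_l\in\widetilde{J}_{e_l}$ whose images in $\mr{gr}_{e_l}(A)$ generate the graded left ideal $\bigoplus_i(\widetilde{J}_i+A_{i-1})/A_{i-1}$ of the noetherian ring $\mr{gr}(A)$, put $h_l:=a_l\nu^{e_l}$, and choose homogeneous $g_1,\dots,g_s$ generating the left ideal $\widetilde{J}\cap B$ of the noetherian ring $B$; then show $\widetilde{J}=\sum_lA_{\bullet}h_l+\sum_mA_{\bullet}g_m$. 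One inclusion is clear; for the other one proceeds homogeneous component by homogeneous component. Components of degree $\leq0$ lie in $B$, hence in $\sum_mBg_m$. For a component $b\nu^i$ with $b\in\widetilde{J}_i$ and $i\geq1$, subtract a combination $\sum_lc_la_l$ with $c_l\in A_{i-e_l}$ chosen so that the symbols match; then $\sum_lc_la_l\in\widetilde{J}_i$, so $b-\sum_lc_la_l\in\widetilde{J}_i\cap A_{i-1}=\widetilde{J}_{i-1}$, and the decomposition $b\nu^i=\sum_l(c_l\nu^{i-e_l})(a_l\nu^{e_l})+\nu\cdot\bigl((b-\sum_lc_la_l)\nu^{i-1}\bigr)$ exhibits $b\nu^i$ as an element of $\sum_lA_{\bullet}h_l$ plus $\nu A_{\bullet}$ times a component of lower degree, which is dealt with by induction. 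It is precisely the identity $\widetilde{J}_i\cap A_{i-1}=\widetilde{J}_{i-1}$, available because $\widetilde{J}$ is $\nu$-saturated, that makes the induction close.

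It remains to descend from $\widetilde{J}$ to $J$. Since $\widetilde{J}$ is finitely generated, so is $\widetilde{J}/J$; this module is $\nu$-torsion, hence annihilated by a single power $\nu^N$, so that $\nu^N\widetilde{J}\subseteq J\subseteq\widetilde{J}$. Now $J/\nu^N\widetilde{J}$ is a submodule of $\widetilde{J}/\nu^N\widetilde{J}$, which is a finitely generated — hence noetherian — module over the noetherian ring $A_{\bullet}/\nu^NA_{\bullet}$, so $J/\nu^N\widetilde{J}$ is finitely generated; and $\nu^N\widetilde{J}$, being $\nu^N$ times the finitely generated $\widetilde{J}$, is finitely generated as well. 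Therefore $J$ is finitely generated, completing the argument.

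I expect the main obstacle to be exactly this descent from $\widetilde{J}$ to $J$: it is needed because $A_{\bullet}$ is \emph{not} complete for the $\nu$-adic filtration, so the clean criterion ``complete $+$ noetherian associated graded $\Rightarrow$ noetherian'' applies only to the completion of $A_{\bullet}$, not to $A_{\bullet}$ itself. Working with the $\nu$-saturation is the device that upgrades the a priori only pointwise $\nu$-torsion of $\widetilde{J}/J$ to a uniform bound $\nu^N$, and everything downstream — in particular the use of the auxiliary noetherian rings $A_{\bullet}/\nu^NA_{\bullet}$ — hinges on that.
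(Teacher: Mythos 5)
Your proof is correct, and the strategy is sound: work directly with the Rees ring $A_{\bullet}$, reduce to graded left ideals via the standard $\mb{Z}$-graded fact, establish noetherianity of $\mr{gr}(A)$ and of $A_{\bullet}/\nu^NA_{\bullet}$, prove finite generation of a graded ideal $J$ by passing to its $\nu$-saturation $\widetilde{J}$, and then descend. I checked the critical identities: $B=\bigoplus_{i\leq 0}A_i\nu^i\oplus\bigoplus_{i>0}A_0\nu^i$ is indeed the Rees ring of $A_0$ with the induced filtration, $B/\nu B\cong\bigoplus_{i\leq 0}\mr{gr}_i(A)$, the saturation identity $\widetilde{J}_i\cap A_{i-1}=\widetilde{J}_{i-1}$ holds, the symbol ideal $\bigoplus_i\widetilde{J}_i/\widetilde{J}_{i-1}$ is a graded left ideal of $\mr{gr}(A)$, the descending induction in degree terminates at degree $0$ where the $B$-generators take over, and the final descent $\nu^N\widetilde{J}\subset J\subset\widetilde{J}$ correctly exploits the noetherianity of $A_{\bullet}/\nu^N A_{\bullet}$. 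The only ingredient you take on faith is that for $\mb{Z}$-graded rings, gr-noetherian implies noetherian; this is a standard result (Nastasescu) and is available in the references.

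The paper's own proof is a two-line reduction: by \cite[II, 2.1.5, 2.1.6]{EGA} each $\mr{gr}_i(A)$ is finitely generated over $A_0$, and then the statement is quoted as the one-point instance of \cite[Proposition 1.1.5]{KK}. So where the paper outsources the structural part of the argument to Kashiwara--Kawai (whose own proof of 1.1.5 is, in effect, a Rees-ring argument of the kind you carry out), you have reconstructed the underlying argument from first principles. What your version buys is self-containedness and an explicit identification of where each hypothesis is used: the noetherianity of the Rees ring of $A_0$ enters twice (once through $B$ itself for the non-positive part, once through $B/\nu B$ for the negative part of $\mr{gr}(A)$), the positive part of $\mr{gr}(A)$ enters once, and --- as you say --- the $\nu$-saturation is precisely the device compensating for $A_{\bullet}$ not being $\nu$-adically complete. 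What the paper's version buys is brevity and consistency with its sheaf-theoretic framework, since \cite[Proposition 1.1.5]{KK} is stated for sheaves of filtered rings and is reused elsewhere. The two routes are mathematically parallel but differently packaged; yours is the more informative presentation of the same underlying mechanism.
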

\begin{proof}
 By \cite[II, 2.1.5, 2.1.6]{EGA}, $\mr{gr}_i({A})$ is finitely generated
 over ${A}_0$ for any $i\in\mb{Z}$. Then the statement is nothing but
 \cite[Proposition 1.1.5]{KK} applying in the case where the topological
 space is just a point.
\end{proof}

\begin{lem}
 \label{Zarisep}
 Let $(\mc{A},\mc{A}_i)$ be a pointwise Zariskian filtered ring on a
 topological space $X$. Let $(\mc{M},\mc{M}_i)$ be a good filtered
 $(\mc{A},\mc{A}_i)$-module. Then the filtration $\{\mc{M}_i\}$ is
 separated ({\it i.e.}\ $\invlim_i\mc{M}_i=0$).
\end{lem}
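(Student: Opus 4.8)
The plan is to reduce the assertion stalkwise to the known statement over the Zariskian local rings $\mc{A}_x$. Write $\invlim_i\mc{M}_i$ for the subsheaf $\bigcap_i\mc{M}_i$ of $\mc{M}$ cut out by the filtration; a subsheaf of $\mc{M}$ is zero precisely when all of its stalks vanish, and for every $x\in X$ one has the inclusion $\bigl(\bigcap_i\mc{M}_i\bigr)_x\subseteq\bigcap_i(\mc{M}_i)_x$ (a germ on the left is represented by a section lying, over one fixed neighbourhood of $x$, in every $\mc{M}_i$, because the presheaf $U\mapsto\bigcap_i\mc{M}_i(U)$ is already a sheaf). Hence it suffices to show $\bigcap_i(\mc{M}_i)_x=0$ for each $x$.

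Next I would check that $(\mc{M}_x,\{(\mc{M}_i)_x\}_i)$ is a good filtered $\mc{A}_x$-module. Taking the stalk at $x$ is exact and commutes with the inclusions $\mc{M}_i\hookrightarrow\mc{M}$ and with the action of $\mc{A}$, so $\{(\mc{M}_i)_x\}_i$ is a filtration of the $\mc{A}_x$-module $\mc{M}_x$. Since good-ness of a filtered sheaf of modules is a local condition (cf.\ \cite[A.III.2]{Bjo}), there are an open $U\ni x$, sections $m_1,\dots,m_s\in\mc{M}(U)$ and integers $k_1,\dots,k_s$ with $\mc{M}_n|_U=\sum_{j=1}^s\mc{A}_{n-k_j}|_U\cdot m_j$ for all $n$; passing to germs at $x$ gives $(\mc{M}_n)_x=\sum_{j=1}^s(\mc{A}_{n-k_j})_x\cdot(m_j)_x$, which is exactly good-ness over $\mc{A}_x$. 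Now $\mc{A}_x$ is Zariskian by the pointwise Zariskian hypothesis, so by the definition of Zariskian recalled in \ref{noethfiltdef} the good filtration $\{(\mc{M}_i)_x\}_i$ on $\mc{M}_x$ is separated, i.e.\ $\bigcap_i(\mc{M}_i)_x=0$. As this holds for every $x$, combining with the first paragraph gives $\bigl(\bigcap_i\mc{M}_i\bigr)_x=0$ for all $x$, hence $\bigcap_i\mc{M}_i=0$.

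This is really just the sheafification of the standard fact that a good filtration over a Zariskian ring is separated, so I do not expect a genuine obstacle. The only delicate points are that the ``intersection'' $\invlim_i\mc{M}_i$ must be taken as a subsheaf — so that its stalk is only a priori contained in, not equal to, $\bigcap_i(\mc{M}_i)_x$, which is harmless since that intersection vanishes — and that the finitely many generators witnessing good-ness must be honest sections over a single neighbourhood so that their germs witness good-ness of the stalk.
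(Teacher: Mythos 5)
Your proof is correct and follows the same route as the paper: reduce stalkwise via $\bigl(\bigcap_i\mc{M}_i\bigr)_x\subseteq\bigcap_i(\mc{M}_i)_x$, observe that the stalk filtration on $\mc{M}_x$ is a good filtration over the Zariskian local ring $\mc{A}_x$, hence separated, so $\bigcap_i\mc{M}_i$ vanishes stalkwise. You spell out the verification that good-ness passes to stalks (local generators become germ generators), which the paper's one-line proof leaves implicit.
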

\begin{proof}
 Since $\invlim_i\mc{M}_i\hookrightarrow\mc{M}$,
 we get the following commutative diagram for any $x\in X$.
 \begin{equation*}
  \xymatrix@C=30pt@R=15pt{
   (\invlim_i\mc{M}_i)_x\ar@{^{(}->}[r]\ar[dr]&\mc{M}_x\\
  &\invlim_i\mc{M}_{i,x}.\ar[u]
   }
 \end{equation*}
 Since $\mc{A}$ is pointwise Zariskian, $\invlim_i\mc{M}_{i,x}=0$, and
 thus, $\invlim_i\mc{M}=0$.
\end{proof}

\section{Microdifferential sheaves}
\label{micdiff}
We apply the results of the previous section to the theory of arithmetic
$\ms{D}$-modules, and define the rings of naive microdifferential
operators of finite level.

\subsection{}
\label{generalsetup}
Let $S$ be a scheme over $\mb{Z}_p$ (which may not be locally of finite
type). Let $X$ be a smooth scheme
over $S$, and let $m$ be a non-negative integer. Then we may consider
the sheaf of $S$-linear differential operators of
level $m$ denoted by $\Dmod{m}{X/S}$ on $X$. We often abbreviate this as
$\Dmod{m}{X}$. For the details on this
sheaf, we can refer to \cite{Ber1}, \cite{Ber2}, \cite{BerInt}. For
$i\in\mb{Z}$, let $\Dmod{m}{X,i}$ be the sub-$\mc{O}_X$-module
consisting of operators whose orders are less than or equal to $i$
in $\Dmod{m}{X}$ (cf.\ \cite[2.2.1]{Ber1}). By definition,
$\Dmod{m}{X,i}=0$ for $i<0$. Then
$\bigl\{\Dmod{m}{X,i}\bigr\}_{i\in\mb{Z}}$ is an increasing filtration of
$\Dmod{m}{X}$, which we call the {\em filtration by order}. By
\cite[2.2.4]{Ber1}, the ring $\mr{gr}(\Dmod{m}{X})$ is
commutative. Let\footnote{We warn the reader that the notation
$T^{(m)*}X$ is used in \cite{Ber1} for the associated {\em reduced
scheme} $\bigl(\mr{Spec}(\mr{gr}(\Dmod{m}{X}))\bigr)_{\mr{red}}$.
}
\begin{equation*}
 T^{(m)*}X:=\mr{Spec}(\mr{gr}(\Dmod{m}{X})),\qquad
 {P}^{(m)*}X:=\mr{Proj}(\mr{gr}(\Dmod{m}{X})).
\end{equation*}
We call these the {\em pseudo cotangent bundles of level $m$}.
When we need to emphasize the base, we denote $T^{(m)*}X$ by
$T^{(m)*}(X/S)$.
When $m=0$, we denote $T^{(m)*}X$ and $P^{(m)*}X$ by $T^*X$ and $P^*X$
respectively, which are nothing but the usual cotangent bundles of
$X$. Let $\mathring{T}^{(m)*}X:=T^{(m)*}X\setminus s(X)$ where $s\colon
X\rightarrow T^{(m)*}X$ denotes the zero section. Then there exist
the canonical morphisms (cf.\ \ref{subsecmicloc}) as follows:
\begin{equation*}
 \xymatrix@R=10pt@C=30pt{
 T^{(m)*}X\ar[rd]_<>(.5){\pi_m}&\mathring{T}^{(m)*}X
 \ar[r]|{q}\ar[l]\ar[d]&P^{(m)*}X\ar[ld]\\&X.&
  }
\end{equation*}
Recall the notation $\mc{O}_{T^{(m)*}X}(n)$ for $n\in\mb{Z}$ of
\ref{subsecmicloc} which is a subsheaf of
$\mc{O}_{T^{(m)*}X}(*)$ consisting of homogeneous elements of degree
$n$. There is a canonical isomorphism
$q^{-1}\mc{O}_{P^{(m)*}X}(n)\cong\mc{O}_{T^{(m)*}X}(n)$ on
$\mathring{T}^*X$ for any integer $n$ (cf.\ \ref{subsecmicloc}).
We remind that $\mc{O}_{T^{(m)*}X}$ does not coincide with
$\mc{O}_{T^{(m)*}X}(*)$. The following lemma is immediate from Lemma
\ref{tangenttwistshfi}.

\begin{lem*}
 The rings $\mc{O}_{T^{(m)*}X}(0)$, $\mc{O}_{T^{(m)*}X}(*)$
 are noetherian, and $\mc{O}_{\mathring{T}^{(m)*}X}(n)$ is a coherent
 $\mc{O}_{\mathring{T}^{(m)*}X}(0)$-module for any integer
 $n$. Moreover, $\mc{O}_{\mathring{T}^{(m)*}X}(*)$ is an
 $\mc{O}_{\mathring{T}^{(m)*}X}(0)$-algebra of finite type.
\end{lem*}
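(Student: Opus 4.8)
The plan is to reduce to the affine situation and then invoke Example \ref{tangenttwistshfi} directly, as the remark following the statement already indicates. All three assertions are local on $X$ (they concern the $\mc{O}_X$-algebra $\mr{gr}(\Dmod{m}{X})$ and the sheaves built from it by the functorial construction of \ref{subsecmicloc}; cf.\ also the remark after Lemma \ref{charandsuppformal}), so one may assume $X=\mr{Spec}(B_0)$ is affine and put $A:=\Gamma(X,\Dmod{m}{X})$, filtered by order. This is a positively filtered ring with $A_0=B_0$, and $\mr{gr}(A)=\Gamma(X,\mr{gr}(\Dmod{m}{X}))$ is commutative by \cite[2.2.4]{Ber1}; since $\mr{gr}(\Dmod{m}{X})$ is moreover a quasi-coherent $\mc{O}_X$-algebra of finite type over $\mc{O}_X$ (Berthelot), $\mr{gr}(A)$ is a finitely generated $B_0$-algebra, hence noetherian. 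Writing $R:=\mr{gr}(A)$, we are exactly in the situation of \ref{subsecmicloc}, with $V=\mr{Spec}(R)=T^{(m)*}X$, $\mathring V=\mathring T^{(m)*}X$, $P=\mr{Proj}(R)=P^{(m)*}X$, and $\mc{O}_V(n)$, $\mc{O}_V(*)$ equal by construction to $\mc{O}_{T^{(m)*}X}(n)$, $\mc{O}_{T^{(m)*}X}(*)$. Thus Example \ref{tangenttwistshfi} applies and yields at once that $\mc{O}_{T^{(m)*}X}(0)$ and $\mc{O}_{T^{(m)*}X}(*)$ are noetherian and that $\mc{O}_{\mathring T^{(m)*}X}(n)$ is a coherent $\mc{O}_{\mathring T^{(m)*}X}(0)$-module for every $n$; gluing over an affine cover of $X$ gives the first two assertions in general.

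For the remaining assertion I would use the isomorphisms $\mc{O}_{T^{(m)*}X}(n)|_{\mathring T^{(m)*}X}\cong q^{-1}\mc{O}_{P^{(m)*}X}(n)$ of \ref{subsecmicloc} (in particular $\mc{O}_{T^{(m)*}X}(0)|_{\mathring T^{(m)*}X}\cong q^{-1}\mc{O}_{P^{(m)*}X}$), which reduce the claim to the statement that $\bigoplus_{n\in\mb{Z}}\mc{O}_{P^{(m)*}X}(n)$ is an $\mc{O}_{P^{(m)*}X}$-algebra of finite type. This is the familiar picture of $\mr{Proj}$ of a finitely generated graded ring: with $X=\mr{Spec}(B_0)$ again, $R$ is generated over $R_0$ by finitely many homogeneous elements $g_1,\dots,g_r$, and on a basic open $D_+(f)$ with $f\in R$ homogeneous of positive degree one has $\Gamma(D_+(f),\bigoplus_n\mc{O}_{P^{(m)*}X}(n))=R_f$ and $\Gamma(D_+(f),\mc{O}_{P^{(m)*}X})=(R_f)_0$, where $R_f$ is generated over $(R_f)_0$ by the images of $g_1,\dots,g_r$ and of $f^{-1}$; since finitely many such $D_+(f)$ cover $P^{(m)*}X$, one obtains a global finite-type structure, which transports to $\mathring T^{(m)*}X$ along $q$, and a final gluing over an affine cover of $X$ finishes the argument.

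I do not expect any genuine obstacle here: as the text says, the lemma is just an unwinding of Example \ref{tangenttwistshfi}. The only points that deserve a line are the noetherianity of $\Gamma(X,\mr{gr}(\Dmod{m}{X}))$ over an affine chart, which rests on the finiteness of $\mr{gr}(\Dmod{m}{X})$ as an $\mc{O}_X$-algebra together with $X$ being noetherian, and the passage of the finite-type statement from $T^{(m)*}X$ --- where it is immediate since $R$ is finite over $R_0$ --- to the open subscheme $\mathring T^{(m)*}X$, i.e.\ the small $\mr{Proj}$ computation just sketched.
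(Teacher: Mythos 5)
Your proof is correct and takes essentially the same approach as the paper, which disposes of the lemma with the single sentence "The following lemma is immediate from Example \ref{tangenttwistshfi}." You reduce to the affine case, check the hypotheses of the example (positivity of the order filtration, commutativity and noetherianity of $\mr{gr}(A)$), and invoke it; for the finite-type assertion you fill in the $\mr{Proj}$ computation that the example's last clause leaves implicit. One small remark: the noetherianity of $\mr{gr}(\Dmod{m}{X})$ is the key hypothesis, and the paper cites \cite[5.2.3]{BerInt} for it (see \ref{dfnnaivemicsch}), whereas you derive it from finite generation over $B_0$ together with $B_0$ noetherian — equivalent in the noetherian setting the paper actually uses, but worth noting that this is the one place where the ambient hypotheses on $X$ enter.
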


\subsection{}
\label{dfnnaivemicsch}
We can consider the microlocalization of
$(\Dmod{m}{X/S},\Dmod{m}{X/S,i})$ denoted by
$(\Emod{m}{X/S},\Emod{m}{X/S,i})$ using the technique of
\ref{subsecmicloc}. We often abbreviate this as
$(\Emod{m}{X},\Emod{m}{X,i})$. This is a filtered ring on $T^{(m)*}X$.
Then there exists a canonical homomorphism of filtered rings
\begin{equation*}
 \varphi_m\colon\pi_m^{-1}(\Dmod{m}{X},\Dmod{m}{X,i})\rightarrow
  (\Emod{m}{X},\Emod{m}{X,i}).
\end{equation*}
By (\ref{gradedisomicdi}), we have canonical isomorphisms
\begin{equation}
 \label{relagr}
 \mr{gr}_n(\Emod{m}{X})\cong\mc{O}_{T^{(m)*}X}(n),\qquad
  \mr{gr}(\Emod{m}{X})\cong\mc{O}_{T^{(m)*}X}(*).
\end{equation}
Since $\mr{gr}(\Dmod{m}{X})$ is a noetherian ring by
the proof of \cite[2.2.5]{Ber1}, $\Emod{m}{X}$ is pointwise Zariskian
and noetherian, and moreover $\varphi_m$ is flat by Lemma
\ref{tangenttwistshfi}. Since the canonical homomorphism
$\pi_m^{-1}\pi_{m*}\mc{O}_{T^{(m)*}X}(*)\rightarrow\mc{O}_{T^{(m)*}X}(*)$
is injective, $\mr{gr}(\varphi_m)$ is injective as well, and thus
$\varphi_m$ is strictly injective by \cite[Ch.I, 4.2.4 (2)]{HO}.

\begin{rem*}
 The $\pi^{-1}\mc{O}_X$-modules $\Emod{m}{X}$ and $\Emod{m}{X,i}$ {\em
 do not} possess $\mc{O}_{T^{(m)*}X}$-module structure (cf.\ Remark
 \ref{micshdfe}).
\end{rem*}

\begin{lem}
 \label{compatibility}
 We assume that $S$ and $X$ are affine, and $S=\mr{Spec}(A)$. Let
 $S':=\mr{Spec}(B)$ be an affine scheme {\em finite} over $S$. We put
 $X':=X\times_SS'$, and we have the base change isomorphism
 $T^{(m)*}(X'/S')\cong T^{(m)*}(X/S)\times_SS'$ (cf.\ {\em
 \cite[2.2.2]{Ber1}}). Let $f$ be a homogeneous section of
 $\Gamma(T^{(m)*}X,\mc{O}_{T^{(m)*}X})$, and $f'$ be the image in
 $\Gamma(T^{(m)*}X',\mc{O}_{T^{(m)*}X'})$. We put $U:=D(f)$ and
 $U':=D(f')$. Then there exists a canonical isomorphism of filtered
 rings
 \begin{equation*}
  \Gamma\bigl(U,(\Emod{m}{X/S},\Emod{m}{X/S,i})\bigr)\otimes_AB
   \xrightarrow{\sim}
   \Gamma\bigl(U',(\Emod{m}{X'/S'},\Emod{m}{X'/S',i})\bigr).
 \end{equation*}
\end{lem}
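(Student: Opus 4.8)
The plan is to reduce the statement to a base-change compatibility for the microlocalization construction over each basic open $D(f)$, which by the explicit formulas of \ref{micshdfe} amounts to a base-change statement for the rings $A'_{\bullet,n}(f)$ built out of Rees rings and Ore localizations. First I would unwind the definitions: by \cite[(A.3.1.2)]{Lau} (quoted in \ref{micshdfe}) we have $\Gamma(U,(\Emod{m}{X},\Emod{m}{X,i}))\cong(\Gamma(X,\Dmod{m}{X}),\Gamma(X,\Dmod{m}{X,i}))_{S_1(f)}$, and similarly for $X'$. Since $X'=X\times_SS'$ with $S'\to S$ finite, the standard base-change description of differential operators (cf.\ \cite[2.2.1, 2.2.2]{Ber1}) gives $\Gamma(X',\Dmod{m}{X'})\cong\Gamma(X,\Dmod{m}{X})\otimes_AB$ as filtered rings, where $B$ is flat... — actually one only needs $B$ finite over $A$ and $A$, $B$ noetherian — and on associated graded pieces $\mr{gr}(\Dmod{m}{X'})\cong\mr{gr}(\Dmod{m}{X})\otimes_AB$, which is exactly the isomorphism $T^{(m)*}X'\cong T^{(m)*}X\times_SS'$ cited in the statement; under this isomorphism $f$ maps to $f'$, so $S_1(f')$ is the image of $S_1(f)$.

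Next I would check that microlocalization commutes with this finite base change. The microlocalization $(\,\cdot\,)_{S_1(f)}$ is built in three steps: form the Rees ring $A_\bullet$, pass to the finite quotients $A_{\bullet,n}=A_\bullet/\nu^nA_\bullet$, apply the Ore localization $S_n(f)^{-1}(\,\cdot\,)$, then take $\invlim_n$ and $\indlim_i$. Each of the first two steps commutes with $-\otimes_AB$ trivially since $\nu$ is central and $B$ is an $A$-algebra: $(A\otimes_AB)_\bullet\cong A_\bullet\otimes_AB$ and likewise modulo $\nu^n$. For the Ore localization step, the key point is that for a homogeneous multiplicative set, $S_n(f)^{-1}(A_{\bullet,n}\otimes_AB)\cong\bigl(S_n(f)^{-1}A_{\bullet,n}\bigr)\otimes_AB$: this holds because localization of a ring by a two-sided Ore set is computed degreewise as a filtered colimit (or because $B$ is flat over $A$ — $B$ is finite flat here since $X$, hence the relevant rings, are regular... more safely: since $\otimes_AB$ is exact on the relevant finitely generated modules and the Ore localization $RS^{-1}$ is the universal $S$-inverting ring, one gets a canonical map which one checks is bijective on the explicit fraction description $as^{-1}\mapsto (a\otimes 1)(s\otimes 1)^{-1}$). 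Then $\invlim_n$ commutes with $-\otimes_AB$ because $B$ is a \emph{finite} $A$-module and the transition maps are surjective (Mittag-Leffler / finiteness of $B$), and $\indlim_i$ commutes with tensor products automatically. Assembling these isomorphisms and checking they respect the filtrations gives the filtered-ring isomorphism over each $D(f)$.

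The main obstacle I expect is precisely the commutation of the inverse limit $\invlim_{n\to-\infty}$ with $-\otimes_AB$: tensor product does not commute with arbitrary inverse limits, so one genuinely needs the hypothesis that $S'\to S$ is finite (so $B$ is finitely generated, indeed finitely presented, over $A$) together with the fact that the transition maps $A'_{\bullet,n+1}(f)\to A'_{\bullet,n}(f)$ are surjective with kernels forming a sufficiently tame system. Concretely, writing $M_n:=A'_{\bullet,n}(f)$ and using that each $M_n$ is a finitely generated module over the noetherian ring $\Gamma(U,\mr{gr}(\Emod{m}{X}))$ graded-completed appropriately, the pro-system $\{M_n\}$ is Mittag-Leffler, and for a finitely presented flat $B$ one has $B\otimes_A\invlim_nM_n\cong\invlim_n(B\otimes_AM_n)$; if flatness of $B$ over $A$ is not available one argues instead that everything in sight is a filtered ring with noetherian associated graded and invokes \cite[Ch.I, 4.2.2]{HO} to reduce the inverse-limit comparison to the graded level, where it is the trivial identity $\mr{gr}(\Dmod{m}{X})\otimes_AB\cong\mr{gr}(\Dmod{m}{X'})$ already recorded above. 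Once the comparison is known at the level of graded rings and both sides are complete, \cite[Ch.I, 4.2.4 (2)]{HO} upgrades it to an isomorphism of complete filtered rings, finishing the proof.
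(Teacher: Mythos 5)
Your proof takes a genuinely different route from the paper's. You try to push $-\otimes_A B$ through each layer of the microlocalization construction (Rees ring, reduction mod $\nu^n$, Ore localization, $\invlim_n$, $\indlim_i$) one step at a time. The paper instead observes that the base-change isomorphism $\Gamma(X,\Dmod{m}{X})\otimes_AB\cong\Gamma(X',\Dmod{m}{X'})$ of \cite[2.2.2]{Ber1} produces a filtered homomorphism $(D_{X'},D_{X',i})\rightarrow(E_X,E_{X,i})\otimes_AB$, invokes \cite[Ch.II, 1.2.10 (5)]{HO} --- precisely because $B$ is \emph{finite} over $A$ --- to conclude that $(E_X,E_{X,i})\otimes_AB$ is a \emph{complete} filtered ring, and then appeals to the universal property of microlocalization \cite[A.2.3.3]{Lau}. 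That universal-property argument is the real shortcut: it makes the issue you flag as ``the main obstacle'' (commuting $\invlim_n$ with $-\otimes_AB$) disappear, since completeness of $(E_X,E_{X,i})\otimes_AB$ is exactly what \cite{HO} delivers, and the Ore-localization step never has to be examined at all.

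Two places in your argument that would need tightening if you went the direct route. First, in the Ore-localization step you waver between flatness and a fraction-by-fraction check; flatness of $B$ over $A$ is not a hypothesis, and your fraction map is clearly well-defined and surjective, but injectivity requires a small additional Ore manipulation (an equation $(a\otimes1)c=0$ with $c\in\bar{S}\subset R\otimes_AB$ must be traded for one with a denominator of the form $s\otimes1$). Second, in the $\invlim_n$ step you again reach for flatness and then propose a fallback ``reduce to $\mr{gr}$'' argument; flatness is not needed (finite presentation of $B$ plus Mittag--Leffler of the surjective transition maps suffices, by the standard five-lemma argument applied to a presentation $A^a\rightarrow A^b\rightarrow B\rightarrow0$), and the fallback as stated is circular, since having equal associated gradeds does not produce the comparison map --- you must first assemble the map from the earlier steps and only then apply \cite[Ch.I, 4.2.4 (2)]{HO}. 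With those repairs your step-by-step verification is sound; the paper's universal-property argument is simply shorter and dodges both of these pressure points in a single stroke.
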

\begin{proof}
 We may assume that $\deg(f)>0$.
 By \cite[2.2.2]{Ber1}, there exists an isomorphism
 \begin{equation}
  \label{basechaisoD}
  \Gamma\bigl(X,(\Dmod{m}{X/S},\Dmod{m}{X/S,i})\bigr)\otimes_AB
   \xrightarrow{\sim}
   \Gamma\bigl(X',(\Dmod{m}{X'/S'},\Dmod{m}{X'/S',i})\bigr).
 \end{equation}
 We denote $\Gamma\bigl(U,(\Emod{m}{X},\Emod{m}{X,i})\bigr)$ by
 $(E_X,E_{X,i})$, and
 $\Gamma\bigl(X',(\Dmod{m}{X'},\Dmod{m}{X',i})\bigr)$ by
 $(D_{X'},D_{X',i})$. The isomorphism (\ref{basechaisoD}) induces a
 homomorphism of filtered rings
 $(D_{X'},D_{X',i})\rightarrow(E_X,E_{X,i})\otimes_AB$.
 Since $B$ is finite over $A$, $(E_X,E_{X,i})\otimes_AB$ is a complete
 filtered ring by \cite[Ch.II, 1.2.10 (5)]{HO}. By the
 universality \cite[Proposition A.2.3.3]{Lau}, $(E_X,E_{X,i})\otimes_AB$
 is the microlocalization of $(D_{X'},D_{X',i})$, and the lemma follows.
\end{proof}

\begin{rem*}
 Consider the general situation of \ref{generalsetup}, and
 let $S'\rightarrow S$ be a finite morphism. Put $X':=X\times_SS'$.
 We have the base change isomorphism $f'\colon T^{(m)*}(X'/S')
 \xrightarrow{\sim}T^{(m)*}(X/S)\times_SS'$, and using the lemma, we
 have the following isomorphism:
 \begin{equation*}
  f'^{-1}\bigl((\Emod{m}{X/S},\Emod{m}{X/S,i})
   \otimes_{S}\mc{O}_{S'}\bigr)\xrightarrow{\sim}
   (\Emod{m}{X'/S'},\Emod{m}{X'/S',i}).
 \end{equation*}
 Note, however, that since $\Emod{m}{X}$ and $\Emod{m}{X,i}$ are not
 quasi-coherent, {\it a priori}, the tensor product does not
 commute with global section functor over affine schemes. In
 this sense, the assertion of the lemma is slightly stronger than this
 global version.
\end{rem*}

\subsection{}
\label{limitnaive}
Now, we pass to the limit. Let $R$ be a complete discrete valuation
ring of mixed characteristic $(0,p)$ whose residue field is denoted by
$k$. We denote the field of fractions by $K$, and let $\pi$ be a
uniformizer of $R$. For a non-negative integer $i$, we put
$R_i:=R/(\pi^{i+1})$. From now on, we use these notation freely without
referring to this subsection.

Let $\ms{X}$ be a smooth formal scheme over
$R$. We denote by $X_i$ the reduction of $\ms{X}$ over $R_i$.
We define $T^{(m)*}\ms{X}$ and $P^{(m)*}\ms{X}$ by the limit of
$T^{(m)*}X_i$ and $P^{(m)*}X_i$ over $i$ respectively. We also define
$\mc{O}_{T^{(m)*}\ms{X}}(*)$ (resp.\ $\mc{O}_{T^{(m)*}\ms{X}}(n)$) to be
the limit of $\mc{O}_{T^{(m)*}X_i}(*)$ (resp.\
$\mc{O}_{T^{(m)*}X_i}(n)$) over $i$, and put
$\mc{O}_{T^{(m)*}\ms{X},\mb{Q}}(*)$
(resp.\ $\mc{O}_{T^{(m)*}\ms{X},\mb{Q}}(n)$) to be
$\mc{O}_{T^{(m)*}\ms{X}}(*)\otimes\mb{Q}$ (resp.\
$\mc{O}_{T^{(m)*}\ms{X}}(n)\otimes\mb{Q}$). Let
$\epsilon_\cdot\colon\mf{O}(V)\rightarrow\mf{O}(V')$ be the
functor in \ref{subsecmicloc} where $V=T^{(m)*}\ms{X}$.

\begin{dfn*}
 (i) Let $\mf{B}'$ be the open basis of $V'$ consisting of $D(f)$ in
 $T^{(m)*}\ms{X}$ over an affine open subscheme $\ms{U}$ of $\ms{X}$
 where $f$ is a homogeneous element of
 $\Gamma(T^{(m)*}\ms{U},\mc{O}_{T^{(m)*}\ms{U}})$. An open subset
 $\ms{U}\subset\mathring{T}^{(m)*}\ms{U}$ is said to be {\em strictly
 affine} if $\ms{U}\in\mf{B}'$.

 (ii) We define an open basis $\mf{B}$ of $V$ to be the set consisting
 of $U\in\mf{O}(V)$ such that $\epsilon_\cdot(U)\in\mf{B}'$.
\end{dfn*}

\begin{lem*}
 The rings $\mc{O}_{T^{(m)*}\ms{X}}(0)$ and
 $\mc{O}_{T^{(m)*}\ms{X}}(*)$ are noetherian with respect to $\mf{B}$,
 and $\mc{O}_{\mathring{T}^{(m)*}\ms{X}}(n)$ is a coherent
 $\mc{O}_{\mathring{T}^{(m)*}\ms{X}}(0)$-module for any integer
 $n$. Moreover, $\mc{O}_{\mathring{T}^{(m)*}\ms{X}}(*)$ is an
 $\mc{O}_{\mathring{T}^{(m)*}\ms{X}}(0)$-algebra of finite type.
\end{lem*}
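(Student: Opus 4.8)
The plan is to reproduce the proof of Example~\ref{tangenttwistshfi} and of the lemma of \S\ref{dfnnaivemicsch}, with ``noetherian scheme'' replaced by ``locally noetherian adic formal scheme'' throughout; the only new input is the local noetherianity of the structure sheaf of the adic formal scheme $T^{(m)*}\ms{X}$. Since every assertion is local on $\ms{X}$ and on the bases $\mf{B}$, $\mf{B}'$, I would first reduce to the case $\ms{X}=\mr{Spf}(A)$ with $A$ noetherian and $\pi$-adically complete. Set $G^{(i)}:=\mr{gr}\bigl(\Gamma(X_i,\Dmod{m}{X_i})\bigr)$. By the base-change isomorphism \cite[2.2.2]{Ber1} the transition maps $G^{(j)}\to G^{(i)}$ for $i\leq j$ are reduction modulo $\pi^{i+1}$, so $G:=\invlim_iG^{(i)}$ is a graded, $\pi$-adically complete and separated ring with $G/\pi^{i+1}G\cong G^{(i)}$ and degree-zero part $A$; in fact $G$ is the associated graded ring of $\invlim_i\Gamma(X_i,\Dmod{m}{X_i})$. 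Since $G^{(0)}$ is noetherian by \cite[5.2.3]{BerInt} (and, being of finite type over $A_0$ by \cite[II, 2.1.5]{EGA}, so are its localizations in degree zero), the complete ring $G$ is noetherian. Hence $T^{(m)*}\ms{X}=\mr{Spf}(G)$ and $P^{(m)*}\ms{X}$ (obtained in the same way from the $P^{(m)*}X_i$) are locally noetherian adic formal schemes, so by the Example following Definition~\ref{defnoethrigsh} their structure sheaves are noetherian with respect to the affine-open bases.

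Granting this, I would run Laumon's argument \cite[A.3.1.8]{Lau}, exactly as in Example~\ref{tangenttwistshfi}, in the formal setting. First, $\mc{O}_{P^{(m)*}\ms{X}}(n)$ is a coherent $\mc{O}_{P^{(m)*}\ms{X}}$-module: each $\mc{O}_{P^{(m)*}X_i}(n)$ is coherent over $\mc{O}_{P^{(m)*}X_i}$, and on a standard homogeneous affine $D_+(f)$ the sections of $\mc{O}_{P^{(m)*}\ms{X}}(n)$ are $\invlim_i\bigl(G^{(i)}[f^{-1}]\bigr)_n$, which is finitely presented over the noetherian ring $\invlim_i\bigl(G^{(i)}[f^{-1}]\bigr)_0$ because the tower is $\pi$-adic with finitely presented terms. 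Pulling back along $q$ shows that $\mc{O}_{\mathring{T}^{(m)*}\ms{X}}(n)\cong q^{-1}\mc{O}_{P^{(m)*}\ms{X}}(n)$ is coherent over $\mc{O}_{\mathring{T}^{(m)*}\ms{X}}(0)$ with respect to the pullback of the affine basis, and, choosing homogeneous generators compatibly over the tower exactly as in the scheme case, that $\mc{O}_{\mathring{T}^{(m)*}\ms{X}}(*)$ is an $\mc{O}_{\mathring{T}^{(m)*}\ms{X}}(0)$-algebra of finite type. Finally, for $\mc{O}_{T^{(m)*}\ms{X}}(0)$ and $\mc{O}_{T^{(m)*}\ms{X}}(*)$ I would check the three conditions of Definition~\ref{defnoethrigsh}~(i): the sections over $D(f)\in\mf{B}$ are the degree-zero, resp.\ the full, part of the $\pi$-adic completion of $G[f^{-1}]$, a noetherian ring; the stalks are localizations of the noetherian stalks of $\mc{O}_{T^{(m)*}\ms{X}}$; and coherence follows from \cite[3.1.1]{Ber1}, just as in Example~\ref{tangenttwistshfi}, after transporting the situation from $V'$ to $V$ through $\epsilon^{-1}$ by means of (\ref{noprimecalcfromprima}) and (\ref{zerosectionpullcallau}).

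The step I expect to cost the most care is the bookkeeping around the formal limit: one must ensure that coherence and finite generation of the twisted sheaves genuinely descend from the noetherian schemes $X_i$ to the formal scheme $\ms{X}$, and that the basis $\mf{B}$ extracted from $\epsilon_\cdot$ is fine enough for the $\epsilon^{-1}$-transfer of coherence in \cite[A.3.1.8]{Lau} to apply verbatim. Performing these verifications by direct computation on the standard homogeneous affines, as sketched above, rather than by invoking a general formal-GAGA statement, should keep the argument elementary.
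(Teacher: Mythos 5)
Your proposal is correct and follows essentially the same route as the paper's proof: reduce to the coherence of the Serre twists $\mc{O}_{P^{(m)*}\ms{X}}(n)$ over $\mc{O}_{P^{(m)*}\ms{X}}$, deduce the statements on $\mathring{T}^{(m)*}\ms{X}$ by pulling back along $q$, and obtain the noetherianity over the whole of $T^{(m)*}\ms{X}$ by handling the zero section via \cite[3.1.1]{Ber1} and the transfer through $\epsilon^{-1}$. The only difference is that you spell out explicitly the noetherianity of $G=\invlim_i\mr{gr}(\Gamma(X_i,\Dmod{m}{X_i}))$ (via the $\pi$-adic filtration and the fact that $G/\pi\cong G^{(0)}$ is noetherian) and the tower computation on $D_+(f)$, steps the paper leaves implicit by the phrase ``same as Example~\ref{tangenttwistshfi}''; this makes the argument more self-contained but is not a genuinely different approach.
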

\begin{proof}
 The proof is the same as Lemma \ref{tangenttwistshfi}, so we only
 sketch here. We put $\ms{V}:=T^{(m)*}\ms{X}$ and
 $\mathring{\ms{V}}:=\mathring{T}^{(m)*}\ms{X}$. To check that
 $\mc{O}_{\mathring{\ms{V}}}(n)$ is a coherent
 $\mc{O}_{\mathring{\ms{V}}}(0)$-module, it suffices to point out that
 $\mc{O}_{P^{(m)*}\ms{X}}(n)$ is a coherent
 $\mc{O}_{P^{(m)*}\ms{X}}\cong\mc{O}_{P^{(m)*}\ms{X}}(0)$-module. The
 proof that $\mc{O}_{\mathring{\ms{V}}}(*)$ is of
 finite type is the same. It remains to show that $\mc{O}_{\ms{V}}(0)$
 is noetherian. The only thing we need to check is the coherence of
 $\mc{O}_{\ms{V}}(0)$ and $\mc{O}_{\ms{V}}(*)$ around the zero section,
 and for this, apply \cite[3.1.1]{Ber1} as Lemma
 \ref{tangenttwistshfi}.
\end{proof}

We define a sheaf of rings on the topological space
$T^{(m)*}\ms{X}\approx T^{(m)*}X_0$ ($\approx$ denotes the canonical
homeomorphism of topological spaces) by
\begin{equation*}
 \Ecomp{m}{\ms{X}}:=\invlim_i\Emod{m}{X_i}.
\end{equation*}
For $j\in\mb{Z}$, we also define
\begin{equation*}
 \Emod{m}{\ms{X},j}:=\invlim_i\Emod{m}{X_i,j}.
\end{equation*}
We remark that the ``filtration'' $\Emod{m}{\ms{X},j}$ of
$\Ecomp{m}{\ms{X}}$ is {\it not exhaustive}. We define a submodule
(which is in fact a {\em ring} by Lemma \ref{veryelementaryprop} (iii)
below) by
\begin{equation*}
 \Emod{m}{\ms{X}}:=\indlim_{j\rightarrow\infty}
  \Emod{m}{\ms{X},j}~\subset
  \Ecomp{m}{\ms{X}}.
\end{equation*}
There is a canonical homomorphism of rings on $T^{(m)*}\ms{X}$
\begin{equation}
 \label{canhomna}
 \widehat{\varphi}_m\colon\pi_m^{-1}\Dcomp{m}{\ms{X}}
 \rightarrow\Ecomp{m}{\ms{X}}.
\end{equation}
This homomorphism is injective by the injectivity of $\varphi_m$ in
\ref{dfnnaivemicsch}. Since $\widehat{\varphi}_m(\pi^{-1}_m
\Dmod{m}{\ms{X},n})\subset\Emod{m}{\ms{X},n}$,
$\widehat{\varphi}_m|_{\pi^{-1}_m\Dmod{m}{\ms{X}}}$ induces a
homomorphism of modules
$\pi_m^{-1}\Dmod{m}{\ms{X}}\rightarrow\Emod{m}{\ms{X}}$. We abusively
denote this homomorphism by $\varphi_m$. We see from the following Lemma
\ref{veryelementaryprop} (iii) that this homomorphism is in fact a
homomorphism of {\em rings}.

\begin{lem}
 \label{veryelementaryprop}
 Let $\ms{X}$ be a smooth formal scheme over $R$. Let $\ms{U}$ be an
 open formal subscheme of $T^{(m)*}\ms{X}$ belonging to $\mf{B}$. Let
 $i$ be a non-negative integer, and we denote $\ms{U}\otimes R_i$ by
 $U_i$.

 (i) The ring $\Gamma(\ms{U},\Ecomp{m}{\ms{X}})$ is $\pi$-adically
 complete and flat over $R$.
 Moreover, the canonical homomorphisms $\Ecomp{m}{\ms{X}}\otimes
 R_i\rightarrow\Emod{m}{X_i}$ and $\Emod{m}{\ms{X}}\otimes
 R_i\rightarrow\Emod{m}{X_i}$ are isomorphisms.

 (ii) Let $j$ be an integer, and $k$ be a positive integer. Let $\ms{E}$
 be one of $\Emod{m}{\ms{X},j+k}$,
 $\Emod{m}{\ms{X}}$, $\Ecomp{m}{\ms{X}}$. We have
 \begin{equation*}
  \Gamma(\ms{U},\ms{E}/\Emod{m}{\ms{X},j})\cong
   \Gamma(\ms{U},\ms{E})/
  \Gamma(\ms{U},\Emod{m}{\ms{X},j}),\quad
  \Emod{m}{\ms{X},j+k}/\Emod{m}{\ms{X},j}\cong\invlim_i
  \Emod{m}{X_i,j+k}/\Emod{m}{X_i,j}.
 \end{equation*}

 (iii) Let $j$ and $k$ be integers. Then
 $\Emod{m}{\ms{X},j}\cdot\Emod{m}{\ms{X},k}\subset\Emod{m}{\ms{X},j+k}$
 in $\Ecomp{m}{\ms{X}}$, and in particular,
 $(\Emod{m}{\ms{X}},\{\Emod{m}{\ms{X},j}\}_{j\in\mb{Z}})$ is a {\em
 filtered ring}. Moreover, the $\pi$-adic completion of
 $\Emod{m}{\ms{X}}$ is isomorphic to $\Ecomp{m}{\ms{X}}$.

 (iv) The filtered rings $\Emod{m}{X_i}$ and $\Emod{m}{\ms{X}}$ are
 complete with respect to the filtration by order.
\end{lem}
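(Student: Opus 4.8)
The plan is to bootstrap everything from a single geometric fact: the open basis $\mf{B}$ consists of sets of the form $D(f)$ over affine opens $\ms{U}\subset\ms{X}$, on which, by \ref{micshdfe}, the section $\Gamma\bigl(D(f),(\Emod{m}{X_i},\Emod{m}{X_i,j})\bigr)$ is the microlocalization $(D,D_j)_{S_1(f)}$ of the filtered ring $D=\Gamma\bigl(\ms{U},(\Dmod{m}{X_i},\Dmod{m}{X_i,j})\bigr)$, which is complete. The first step is (i). Write $E_i(j):=\Gamma(U_i,\Emod{m}{X_i,j})$. Because the construction of the microlocalization (taking $S_n^{-1}$, then $\invlim_{n\to-\infty}$, then $\indlim_{j\to\infty}$) is built entirely out of the rings $A_{\bullet,n}$ and commutes with finite-level reductions — indeed $\Dmod{m}{X_i}=\Dmod{m}{\ms{X}}\otimes_RR_i$ and $R_i$ is finite over $R$, so Lemma \ref{compatibility} applies with $B=R_i$ — I get $\Gamma\bigl(U,(\Emod{m}{X},\Emod{m}{X,i})\bigr)\otimes_RR_i\cong\Gamma\bigl(U_i,(\Emod{m}{X_i},\Emod{m}{X_i,j})\bigr)$ for $i$ understood as a scheme over $R$; then passing to $\invlim_i$ on sections of a basic open (where $\invlim_i$ is exact on this tower of surjections) gives $\Gamma(\ms{U},\Emod{m}{\ms{X},j})$ and the $\pi$-adic statements, using that $\Emod{m}{\ms{X},j}$ is by definition $\invlim_i\Emod{m}{X_i,j}$. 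Flatness over $R$ follows because each graded piece is flat over $R$ (the associated graded is $\mc{O}_{T^{(m)*}\ms{X}}(n)$, which is $R$-flat since $\ms{X}$ is smooth over $R$) together with $\pi$-adic completeness; the sheaf-level isomorphisms $\Ecomp{m}{\ms{X}}\otimes R_i\cong\Emod{m}{X_i}$ follow by checking on $\mf{B}$.

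For (ii), the key point is that $\Emod{m}{X_i,j}\subset\Emod{m}{X_i,j+k}$ is an inclusion of sheaves with locally split-looking quotient: at finite level the filtered ring $\Emod{m}{X_i}$ is complete, hence separated and the filtration is strict, so $\Gamma$ is exact on the short exact sequence $0\to\Emod{m}{X_i,j}\to\ms{E}\to\ms{E}/\Emod{m}{X_i,j}\to0$ over basic opens (one uses that $\Gamma(D(f),\mc{B}'_{\bullet,n})$ computes the microlocalization on the nose, \ref{quoticalclau}). Then $\invlim_i$ of these sequences is exact because the transition maps are surjective (Mittag-Leffler), giving the two displayed isomorphisms; the case $\ms{E}=\Emod{m}{\ms{X}}$ follows by a further $\indlim_{j}$. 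For (iii): the containment $\Emod{m}{\ms{X},j}\cdot\Emod{m}{\ms{X},k}\subset\Emod{m}{\ms{X},j+k}$ is inherited from the corresponding containment at each finite level $i$ (true there because $(\Emod{m}{X_i},\Emod{m}{X_i,j})$ is a filtered ring) by taking $\invlim_i$; exhaustivity of $\{\Emod{m}{\ms{X},j}\}$ in $\Emod{m}{\ms{X}}$ is the definition of $\Emod{m}{\ms{X}}=\indlim_j\Emod{m}{\ms{X},j}$; and the $\pi$-adic completion statement is immediate from part (i) since $\Emod{m}{\ms{X}}\otimes R_i\cong\Emod{m}{X_i}\cong\Ecomp{m}{\ms{X}}\otimes R_i$ and $\Ecomp{m}{\ms{X}}$ is already $\pi$-adically complete.

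Finally (iv): at finite level $\Emod{m}{X_i}\cong\invlim_{j\to-\infty}\Emod{m}{X_i}/\Emod{m}{X_i,j}$ is just the completeness of the microlocalized filtered ring (\ref{microlocaldef}, \ref{micshdfe}, applied to sections over $\mf{B}$ and sheafified); the statement for $\Ecomp{m}{\ms{X}}$ then comes by swapping the two inverse limits $\invlim_i\invlim_{j\to-\infty}\cong\invlim_{j\to-\infty}\invlim_i$ and invoking (ii) to identify $\invlim_i\bigl(\Emod{m}{X_i}/\Emod{m}{X_i,j}\bigr)$ with $\Ecomp{m}{\ms{X}}/\Emod{m}{\ms{X},j}$; the statement for $\Emod{m}{\ms{X}}$ follows from (ii) again since $\Emod{m}{\ms{X}}/\Emod{m}{\ms{X},j}\cong\Ecomp{m}{\ms{X}}/\Emod{m}{\ms{X},j}$ for $j$ small relative to any fixed section. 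I expect the main obstacle to be bookkeeping the two interacting limits $\invlim_i$ (the $\pi$-adic direction) and $\invlim_{j\to-\infty}$ (the order-filtration direction), specifically verifying exactness/commutation: one must be careful that the ``filtration'' $\Emod{m}{\ms{X},j}$ is genuinely non-exhaustive in $\Ecomp{m}{\ms{X}}$, so the clean statements only hold after restricting to $\Emod{m}{\ms{X}}$, and that the exchange of limits is justified by the surjectivity (hence Mittag–Leffler condition) of the transition maps $\Emod{m}{X_{i+1},j}\to\Emod{m}{X_i,j}$, which itself rests on part (i).
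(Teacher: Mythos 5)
Your treatment of (i)–(iii) essentially tracks the paper's argument: reduce to finite level via Lemma \ref{compatibility}, pass to $\invlim_i$ through Mittag–Leffler, and use \eqref{quoticalclau} and \eqref{commindga} at the appropriate points. (One small quibble on (i): the paper gets $R$-flatness and $\pi$-adic completeness simultaneously from Lemma \ref{completion} applied to the tower $\Gamma(U_i,\Emod{m}{X_i})$ of $R_i$-flat modules; your alternative route through $R$-flatness of the graded pieces is not obviously sufficient — a $\pi$-adically complete module whose order-graded pieces are $R$-flat is not a priori $R$-flat without more care — so you should just invoke Lemma \ref{completion} directly.)

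However, the proof of the third isomorphism of (iv) contains a genuine error. You claim that ``$\Emod{m}{\ms{X}}/\Emod{m}{\ms{X},j}\cong\Ecomp{m}{\ms{X}}/\Emod{m}{\ms{X},j}$ for $j$ small relative to any fixed section.'' This isomorphism is false: the natural map $\Emod{m}{\ms{X}}/\Emod{m}{\ms{X},j}\hookrightarrow\Ecomp{m}{\ms{X}}/\Emod{m}{\ms{X},j}$ is injective but its cokernel is $\Ecomp{m}{\ms{X}}/\Emod{m}{\ms{X}}$, which is nonzero. For instance, on $\ms{X}=\widehat{\mb{A}}^1_R$ with coordinate $x$ and $\ms{U}=D(\xi)$, the element $P=\sum_{k\ge 0}p^k\partial^k$ lies in $\Gamma(\ms{U},\Ecomp{m}{\ms{X}})$ (its reduction mod $\pi^{i+1}$ has bounded order for each $i$) but not in $\Gamma(\ms{U},\Emod{m}{\ms{X}})$ (its order is unbounded as $i\to\infty$), and $P\bmod\Emod{m}{\ms{X},j}$ is never in the image of $\Emod{m}{\ms{X}}$ — otherwise $P$ itself would lie in $\Emod{m}{\ms{X}}$. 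Consequently you cannot conclude $\Emod{m}{\ms{X}}\cong\invlim_j\Emod{m}{\ms{X}}/\Emod{m}{\ms{X},j}$ by identifying each term of the limit with the corresponding term for $\Ecomp{m}{\ms{X}}$: that would give $\Emod{m}{\ms{X}}\cong\Ecomp{m}{\ms{X}}$, contradicting the strict inclusion. What is actually needed is the paper's three-term argument: for each pair $k\ge j$ take the exact sequence $0\to\Emod{m}{\ms{X},k}/\Emod{m}{\ms{X},j}\to\Ecomp{m}{\ms{X}}/\Emod{m}{\ms{X},j}\to\Ecomp{m}{\ms{X}}/\Emod{m}{\ms{X},k}\to 0$ on sections over $\ms{U}\in\mf{B}$ (exactness by part (ii)), pass to $\indlim_k$ and then to $\invlim_j$ using Mittag–Leffler, and compare the resulting short exact sequence $0\to\invlim_j\Emod{m}{\ms{X}}/\Emod{m}{\ms{X},j}\to\Ecomp{m}{\ms{X}}\to\Ecomp{m}{\ms{X}}/\Emod{m}{\ms{X}}\to 0$ with the tautological one. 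That snake-diagram with the $k$-direction is the essential ingredient your shortcut omits.
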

\begin{proof}
 For a projective system $\{\mc{F}_i\}_{i\geq0}$ on a topological space
 $T$ and for an open subset $U$ of $T$,
 \begin{equation}
  \label{commprojga}
  \Gamma(U,\invlim_i\mc{F}_i)\xrightarrow{\sim}
  \invlim_i\Gamma(U,\mc{F}_i)
 \end{equation}
 by \cite[$0_\mr{I}$, 3.2.6]{EGA}. For an inductive system
 $\{\mc{F}_i\}_{i\geq0}$ on a {\em noetherian} topological space $T$ and
 for an open subset $U$ of $T$,
 \begin{equation}
  \label{commindga}
  \indlim_i\Gamma(U,\mc{F}_i)\xrightarrow{\sim}
  \Gamma(U,\indlim_i\mc{F}_i)
 \end{equation}
 by \cite[Ch.II, 3.10]{Go}.
 Since $\ms{U}$ is an open subset of an affine
 formal scheme $\epsilon_\cdot(\ms{U})$, $\ms{U}$ is a noetherian
 space.

 By (\ref{commprojga}) and the definition of
 $\Ecomp{m}{\ms{X}}$ in \ref{limitnaive},
 $\Gamma(\ms{U},\Ecomp{m}{\ms{X}})\cong\invlim_{i}\Gamma
 (U_i,\Emod{m}{X_i})$.  Since
 $\Gamma(U_i,\Emod{m}{X_i})$ is flat over $\Gamma(U_i,\Dmod{m}{X_i})$
 (cf.\ \ref{dfnnaivemicsch}),
 the ring $\Gamma(U_i,\Emod{m}{X_i})$ is flat over $R_i$. Thus we get
 first two claims of (i) by Lemma \ref{compatibility} and the following
 Lemma \ref{completion}. For $\Emod{m}{\ms{X}}\otimes
 R_i\xrightarrow{\sim}\Emod{m}{X_i}$, we show
 $\Emod{m}{\ms{X},j}\otimes R_i\xrightarrow{\sim}\Emod{m}{X_i,j}$ for
 any $j\in\mb{Z}$ by the same argument, and take inductive limit over
 $j$.

 Let us prove (ii) for the $\ms{E}=\Emod{m}{\ms{X},j+k}$ case.
 By (\ref{quoticalclau}),
 \begin{equation*}
  \Gamma(U_i,\Emod{m}{X_i,j+k}/\Emod{m}{X_i,j})\cong
   \Gamma(U_i,\Emod{m}{X_i,j+k})/\Gamma(U_i,\Emod{m}{X_i,j}).
 \end{equation*}
 Since the projective system
 $\bigl\{\Gamma(U_i,\Emod{m}{X_i,j})\bigr\}_{i\geq0}$ satisfies the
 Mittag-Leffler condition by (i), the sequence
 \begin{equation*}
  0\rightarrow\invlim_i\Gamma(U_i,\Emod{m}{X_i,j})\rightarrow
   \invlim_i\Gamma(U_i,\Emod{m}{X_i,j+k})\rightarrow
   \invlim_i\Gamma(U_i,\Emod{m}{X_i,j+k}/\Emod{m}{X_i,j})
   \rightarrow 0
 \end{equation*}
 is exact. Considering (\ref{commprojga}), this shows that
 \begin{equation}
  \label{glbsect(ii)}
  \Gamma(\ms{U},\invlim_i\Emod{m}{X_i,j+k}/\Emod{m}{X_i,j})\cong
   \Gamma(\ms{U},\Emod{m}{\ms{X},j+k})/
   \Gamma(\ms{U},\Emod{m}{\ms{X},j}).
 \end{equation}
 Thus, since $\mf{B}$ is a basis of the topology, the canonical
 homomorphism
 $\Emod{m}{\ms{X},j+k}/\Emod{m}{\ms{X},j}\rightarrow\invlim_i
 \Emod{m}{X_i,j+k}/\Emod{m}{X_i,j}$ is an isomorphism, and the second
 equality of (ii) follows.
 The first equality of (ii) follows by using (\ref{glbsect(ii)}) once
 again. To deal with the $\ms{E}=\Ecomp{m}{\ms{X}}$ case, just replace
 $\Emod{m}{X_i,j+k}$ (resp.\ $\Emod{m}{\ms{X},j+k}$) by $\Emod{m}{X_i}$
 (resp.\ $\Ecomp{m}{\ms{X}}$) in the argument above.
 For the $\ms{E}=\Emod{m}{\ms{X}}$ case, use (\ref{commindga})
 and the $\ms{E}=\Emod{m}{\ms{X},j+k}$ case.

 The first claim of (iii) follows since $\Emod{m}{X_i}$ is a filtered
 ring. By (i), $\Ecomp{m}{\ms{X}}$ is the $\pi$-adic completion of
 $\Emod{m}{\ms{X}}$.

 Let us prove (iv). The completeness of $\Emod{m}{X_i}$ follows by
 definition.
 Let us see the completeness of $\Emod{m}{\ms{X}}$. For an open affine
 subscheme $\ms{U}$ in $\mf{B}$, consider the following exact sequence
 \begin{equation*}
  0\rightarrow\Gamma(\ms{U},\Emod{m}{\ms{X},k}/\Emod{m}{\ms{X},j})
   \rightarrow\Gamma(\ms{U},\Ecomp{m}{\ms{X}}/\Emod{m}{\ms{X},j})
   \rightarrow\Gamma(\ms{U},\Ecomp{m}{\ms{X}}/\Emod{m}{\ms{X},k})
   \rightarrow 0
 \end{equation*}
 for integers $k\geq j$. The last surjection is deduced by using (ii).
 Since the projective system
 $\bigl\{\Gamma(\ms{U},\Emod{m}{\ms{X}}/\Emod{m}{\ms{X},j})\bigr\}_{j}$
 satisfies the Mittag-Leffler condition by (ii), the following sequence
 is exact:
 \begin{equation*}
  \xymatrix{
  0\ar[r]&\invlim_{j}\indlim_{k}
   \Emod{m}{\ms{X},k}/\Emod{m}{\ms{X},j}\ar[r]\ar[d]^{\sim}&
   \invlim_{j}\indlim_{k}\Ecomp{m}{\ms{X}}/\Emod{m}{\ms{X},j}\ar[r]
   \ar[d]^{}&\invlim_{j}\indlim_{k}
   \Ecomp{m}{\ms{X}}/\Emod{m}{\ms{X},k}\ar[r]\ar[d]^{\sim}&0\\
  &\invlim_{j}\Emod{m}{\ms{X}}/\Emod{m}{\ms{X},j}&
   \Ecomp{m}{\ms{X}}&\Ecomp{m}{\ms{X}}/\Emod{m}{\ms{X}}&
   }
 \end{equation*}
 where $j\rightarrow -\infty$ and $k\rightarrow\infty$. The middle
 vertical homomorphism is an isomorphism as well by the commutativity
 (\ref{commprojga}) and the fact that two projective limits commute.
 Thus the lemma is proven.
\end{proof}

\begin{lem}
 \label{completion}
 Let $\{E_i\}_{i\geq 0}$ be a projective system of $R$-modules such that
 for each $i$, $E_i$ is a flat $R_i$-module. Assume that the homomorphism
 $E_{i+1}\otimes R_i\rightarrow E_i$ induced by the transition
 homomorphism is an isomorphism for any non-negative integer $i$. Let
 $E:=\invlim_i E_i$. Then the canonical homomorphism $E\otimes
 R_i\rightarrow E_i$ is an isomorphism for any non-negative integer
 $j$. Moreover, $E$ is $\pi$-adically complete and flat over $R$.
\end{lem}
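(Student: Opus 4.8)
The plan is to squeeze the full strength of the hypothesis out first, then prove flatness, then the base‑change isomorphism $E\otimes_R R_j\xrightarrow{\sim}E_j$, and finally to deduce $\pi$‑adic completeness, which will be essentially formal once the base‑change isomorphisms are available. \textbf{Reductions of the hypothesis.} Since $E_{i+1}$ is flat over $R_{i+1}=R/\pi^{i+2}$ and $R_{i+1}\twoheadrightarrow R_i$ has kernel $\pi^{i+1}R_{i+1}$, the map $E_{i+1}\to E_{i+1}\otimes_{R_{i+1}}R_i$ is surjective with kernel $\pi^{i+1}E_{i+1}$; combining with the assumed isomorphism $E_{i+1}\otimes_{R_{i+1}}R_i\xrightarrow{\sim}E_i$ shows that the transition map $E_{i+1}\to E_i$ is surjective with kernel $\pi^{i+1}E_{i+1}$. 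Iterating (compose $E_i\to E_{i-1}\to\cdots\to E_j$), for all $i\geq j$ the map $E_i\to E_j$ is surjective with kernel $\pi^{j+1}E_i$ and $E_i\otimes_{R_i}R_j\xrightarrow{\sim}E_j$. Likewise, flatness of $E_{i+j+1}$ over $R_{i+j+1}=R/\pi^{i+j+2}$ gives $\ker\bigl(\pi^{j+1}\colon E_{i+j+1}\to E_{i+j+1}\bigr)=\mr{Ann}_{R_{i+j+1}}(\pi^{j+1})\cdot E_{i+j+1}=\pi^{i+1}E_{i+j+1}$, which by the previous point is exactly $\ker(E_{i+j+1}\to E_i)$. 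Finally, all transition maps of $\{E_i\}$ being surjective, the system is Mittag–Leffler, so $E\to E_j$ is surjective for every $j$.

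\textbf{Flatness.} If $x=(x_i)\in E$ satisfies $\pi x=0$, then for each $i$ the component $x_{i+1}$ lies in $\ker(\pi\colon E_{i+1}\to E_{i+1})=\pi^{i+1}E_{i+1}=\ker(E_{i+1}\to E_i)$, hence $x_i=0$; thus $\pi$ acts injectively on $E$. Over a discrete valuation ring $R$ this means $E$ is torsion‑free, which is equivalent to flatness, so $E$ is flat over $R$.

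\textbf{Base change.} Fix $j\geq 0$. The surjection $E\to E_j$ kills $\pi^{j+1}E$, so it factors through $E/\pi^{j+1}E=E\otimes_R R_j$, and the issue is the injectivity of the induced map: I must show that $x=(x_i)\in E$ with $x_j=0$ lies in $\pi^{j+1}E$. By the reductions, $x_i\in\pi^{j+1}E_i$ for all $i\geq j$. For each $i\geq 0$, the element $x_{i+j+1}\in\pi^{j+1}E_{i+j+1}$ admits a preimage under $\pi^{j+1}\colon E_{i+j+1}\to E_{i+j+1}$, unique modulo $\ker(\pi^{j+1})=\ker(E_{i+j+1}\to E_i)$, so its image $y_i\in E_i$ is well defined. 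Using compatibility of the $x_i$ and the identification $\ker(\pi^{j+1}\colon E_{i+j+1}\to E_{i+j+1})=\ker(E_{i+j+1}\to E_i)$, one checks that $\pi^{j+1}y_i=x_i$ in $E_i$ and that $(y_i)_i$ is compatible under the transition maps; hence $y:=(y_i)\in E$ and $\pi^{j+1}y=x$. This gives $E\otimes_R R_j\xrightarrow{\sim}E_j$, naturally in $j$. (Equivalently: apply $\invlim$ to the exact sequence of projective systems $0\to(\pi^{j+1}E_i)_i\to(E_i)_i\to(E_i/\pi^{j+1}E_i)_i\to 0$; the kernel system has surjective transition maps and limit $E$ via $\pi^{j+1}E_{i+j+1}\cong E_i$, so its first derived inverse limit vanishes, while the quotient system has limit $E_j$, yielding the exact sequence $0\to E\xrightarrow{\pi^{j+1}}E\to E_j\to 0$.)

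\textbf{Completeness and the main obstacle.} Passing to the limit over $j$ in the natural isomorphisms $E/\pi^{j+1}E\cong E_j$ gives $\invlim_j E/\pi^{j+1}E\cong\invlim_j E_j=E$, with composite the canonical map, so $E$ is $\pi$‑adically complete and separated. The genuinely delicate point is the construction of $y$ in the Base change step: the preimages $y_i$ of the $x_i$ under $\pi^{j+1}$ must be chosen so as to be mutually compatible at all levels, and it is precisely the kernel identifications $\ker(\pi^{j+1}\colon E_{i+j+1}\to E_{i+j+1})=\pi^{i+1}E_{i+j+1}=\ker(E_{i+j+1}\to E_i)$ — together with the index shift by $j+1$ — that make these choices canonical; everything else is a formal consequence of flatness over the $R_i$ and the Mittag–Leffler property.
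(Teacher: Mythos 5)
Your proof is correct and, at its core, takes the same route as the paper. The paper's argument tensors the short exact sequence $0\to R_i\xrightarrow{\pi^{j+1}}R_{i+j+1}\to R_j\to 0$ with the flat module $E_{i+j+1}$ to get an exact sequence of projective systems $0\to\{E_i\}\to\{E_{i+j+1}\}\to\{E_j\}\to 0$ (indexed by $i$, with a shift by $j+1$), then takes $\invlim_i$ using Mittag–Leffler and observes that the limit of the first map is multiplication by $\pi^{j+1}$; this is exactly your parenthetical "projective systems" alternative in the base-change step. Your main text replaces this with an explicit element-by-element construction of the preimage $y=(y_i)$, and you front-load a preliminary "reductions" paragraph to establish the kernel identifications $\ker(\pi^{j+1}\colon E_{i+j+1}\to E_{i+j+1})=\pi^{i+1}E_{i+j+1}=\ker(E_{i+j+1}\to E_i)$, which the paper instead encapsulates in its commutative diagram. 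Both versions are correct and the underlying shift trick and Mittag–Leffler appeal coincide; the paper's diagram-chasing version is slightly more compact, yours makes the choices more visible, but neither contains an idea absent from the other.
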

\begin{proof}
 We leave the proof to the reader.
\end{proof}

\begin{lem}
 \label{noethgrlem}
 Let $\mathring{\ms{V}}:=\mathring{T}^{(m)*}\ms{X}$.
 Let $\mc{A}=\bigoplus_{i\in\mb{Z}}\mc{A}_i$ be a graded
 $\mc{O}_{\mathring{\ms{V}}}(0)$-algebra of finite type on
 $\mathring{\ms{V}}$ such that $\mc{A}_i$ is a coherent
 $\mc{O}_{\mathring{\ms{V}}}(0)$-module for any $i\in\mb{Z}$. Then for
 any $V\subset U$ in $\mf{B}$, the restriction homomorphism
 $\Gamma(U,\mc{A})\rightarrow\Gamma(V,\mc{A})$ is flat, and
 $\mc{A}$ is noetherian with respect to $\mf{B}$.
\end{lem}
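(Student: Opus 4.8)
The plan is to reduce to an affine base, to trade $\mc A$ for a coherent sheaf of algebras on a noetherian scheme by $\mb{G}_{\mr{m}}$-descent, and then to reduce both assertions to elementary facts about graded and localized noetherian rings, in the same spirit as Example \ref{tangenttwistshfi}.

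First I would use that both statements are local on $\ms X$ and that every member of $\mf B$ lies over an affine open of $\ms X$ to reduce to the case $\ms X=\mr{Spf}(B)$ affine. Then $G:=\Gamma\bigl(T^{(m)*}\ms X,\mc O_{T^{(m)*}\ms X}(*)\bigr)$ is, by \cite[5.2.3]{BerInt} and \cite[II, 2.1.5, 2.1.6]{EGA} (applied over each $R_i$ and passed to the limit over $i$ as in Lemma \ref{completion}), a noetherian graded ring, concentrated in non-negative degrees and of finite type over $G_0=B$, so that $P^{(m)*}\ms X=\mr{Proj}(G)$ is a noetherian scheme. The hypotheses say $\mc A$ is a graded $\mc O_{\mathring{\ms V}}(0)$-algebra of finite type whose graded pieces are coherent $\mc O_{\mathring{\ms V}}(0)$-modules, with the grading being the weight grading for the $\mb{G}_{\mr{m}}$-action. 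On the complement $\mathring{T}^{(m)*}\ms X$ of the zero section we have $\mc O_{\mathring{\ms V}}(0)=q^{-1}\mc O_{P^{(m)*}\ms X}$, and --- exactly as the isomorphism $\mc O(n)|_{\mathring{V}}\cong q^{-1}\mc O_P(n)$ recalled in \ref{subsecmicloc} expresses for the structure twists --- $\mb{G}_{\mr{m}}$-descent along the torsor $q$ identifies $\mc A|_{\mathring{T}^{(m)*}\ms X}$ with $q^{-1}\mc G$ for a coherent sheaf of $\mc O_{P^{(m)*}\ms X}$-algebras $\mc G=\bigoplus_i\mc G_i$ with coherent graded pieces; in particular $q_*\mc A=\mc G$. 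Since $q^{-1}(D_+(f))=D(f)$, for a homogeneous $f$ of positive degree with $D(f)\in\mf B$ one obtains
\begin{equation*}
 \Gamma(D(f),\mc A)=\Gamma\bigl(D_+(f),\mc G\bigr),
\end{equation*}
the right-hand side being the sections of a coherent sheaf of algebras on an affine open of the noetherian scheme $P^{(m)*}\ms X$; and for $f$ of degree $0$ one has $D(f)=T^{(m)*}(\mr{Spf}(B_f))$, the same kind of object over the smaller affine base $\mr{Spf}(B_f)$.

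Granting this, I would conclude as follows. For any $U\in\mf B$ the ring $\Gamma(U,\mc A)$ is a finitely generated algebra over a noetherian ring --- over $\Gamma(D_+(f),\mc O_{P^{(m)*}\ms X})=\bigl(G[f^{-1}]\bigr)_0$ in the positive-degree case, or over $B_f$ in the degree-$0$ case --- hence noetherian by Hilbert's basis theorem. For $V\subset U$ in $\mf B$ the restriction map on sections is induced by an open immersion of affine noetherian schemes (one of $D_+(g)\hookrightarrow D_+(f)$, $\mr{Spec}(B_g)\hookrightarrow\mr{Spec}(B_f)$, or a combination of the two), hence is flat. For $x\in\mathring{\ms V}$ the stalk $\mc A_x$ is a localization of $\Gamma(U,\mc A)$ for any $U\in\mf B$ containing $x$, hence noetherian; equivalently, by finite type it is a quotient of $\mc O_{\mathring{\ms V}}(0)_x[T_1,\dots,T_n]$ over the noetherian local ring $\mc O_{\mathring{\ms V}}(0)_x$. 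Finally, coherence of the sheaf of rings $\mc A$, including near the zero section, follows from \cite[3.1.1]{Ber1} given the noetherian sections over $\mf B$ and the flat restriction maps, exactly as for the microlocalization sheaf in Example \ref{tangenttwistshfi}. Combined with the three conditions of Definition \ref{defnoethrigsh}~(i), this shows $\mc A$ is noetherian with respect to $\mf B$.

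The hard part is the descent step: passing from the given data on the non-affine space $\mathring{T}^{(m)*}\ms X$ to a coherent sheaf of algebras $\mc G$ on the noetherian scheme $P^{(m)*}\ms X$, and hence to the identity $\Gamma(D(f),\mc A)=\Gamma(D_+(f),\mc G)$. Here one must avoid computing global sections directly over $\mathring{T}^{(m)*}\ms X$ and instead exploit the $\mb{G}_{\mr{m}}$-equivariant structure coming from the grading together with the Proj-formalism of \cite[A.3.0, A.3.1]{Lau}, precisely as was done for the twists $\mc O(n)$ in \ref{subsecmicloc} and for the microlocalization in Example \ref{tangenttwistshfi}. Once this structural point is settled, the rest is routine bookkeeping with graded and localized noetherian rings.
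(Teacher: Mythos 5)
Your approach is genuinely different from the paper's, and there is a real gap at the step you flag as the hard part. The paper does not descend to $P^{(m)*}\ms{X}$; it works directly on $\mathring{\ms{V}}$, proving condition~(3) of Definition~\ref{defnoethrigsh}~(i) by taking a graded surjection $\phi\colon\mc{O}[T_1,\dots,T_n]|_U\twoheadrightarrow\mc{A}|_U$, observing that $\mr{Ker}(\phi)$ is a filtered colimit of coherent $\mc{O}|_U$-modules, and using the commutation of $H^1$ with filtered colimits (SGA~4, Exp.~VI~\S5) to get surjectivity on sections; coherence and flatness of restrictions then follow from Berthelot's criterion~\cite[3.1.1]{Ber1} and the isomorphism $\Gamma(V,\mc{O})\otimes_{\Gamma(U,\mc{O})}\Gamma(U,\mc{A}_i)\xrightarrow{\sim}\Gamma(V,\mc{A}_i)$, both deduced from the $\mc{A}_i$ being coherent over $\mc{O}$.

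Your reduction to a coherent sheaf of algebras $\mc{G}$ on the scheme $P^{(m)*}\ms{X}$ is conceptually attractive, but the identification $\mc{A}\cong q^{-1}\mc{G}$ is the load-bearing claim, and it is not a formal corollary of schematic $\mb{G}_{\mr{m}}$-descent: here $q$ is merely a continuous map of topological spaces, and a coherent $q^{-1}\mc{O}_P$-module carries no built-in $\mb{G}_{\mr{m}}$-equivariant structure. What makes the descent true is that $q$ has connected fibers, combined with the adjunction $\epsilon_\cdot\dashv\epsilon^{-1}$ and formula~(\ref{noprimecalcfromprima}) from~\ref{subsecmicloc}; one then shows, as in Lemma~\ref{stnoethopen}, that $q^{-1}q_*\mc{M}\to\mc{M}$ is an isomorphism for coherent $q^{-1}\mc{O}_P$-modules $\mc{M}$. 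Until this is carried out, the section identity $\Gamma(D(f),\mc{A})=\Gamma(D_+(f),\mc{G})$ --- on which your appeals to Hilbert's basis theorem and to localization on a noetherian scheme depend --- is unjustified. The paper consumes the same underlying facts (connectedness of fibers of $q$, the $\epsilon$-formalism) but packages them as the restriction isomorphism and the implicit $H^1$-vanishing rather than as a full descent equivalence; this is why it never needs to state the descent claim explicitly.
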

\begin{proof}
 We put $\mc{O}:=\mc{O}_{\mathring{\ms{V}}}(0)$. Let us check the
 conditions of Definition \ref{defnoethrigsh} (i).
 Condition 2 follows since $\mc{A}$ is of finite type over $\mc{O}$. Let
 $U$ be an open subset of $\mathring{\ms{V}}$ in $\mf{B}$ such that
 there exists a surjection
 $\phi\colon\mc{O}[T_1,\dots,T_n]|_U\rightarrow\mc{A}|_U$. We claim that
 the homomorphism
 \begin{equation*}
  \Gamma(U,\mc{O}[T_1,\dots,T_n])\rightarrow\Gamma(U,\mc{A})
 \end{equation*}
 is surjective. Indeed, since $\mc{A}_i$ is a coherent $\mc{O}$-module
 for any $i$, $\mr{Ker}(\phi)$ is an inductive limit of coherent
 $\mc{O}|_U$-modules. Since $U$ is noetherian and separated,
 $H^1(U,-)$ commutes with inductive limit by \cite[Ch.II,
 4.12.1]{Go}, and we have $H^1(U,\mr{Ker}(\phi))=0$,
 which implies the claim. Thus condition 3 is fulfilled. It remains
 to show that $\mc{A}$ is a coherent ring. For this, it suffices to
 check the conditions of \cite[3.1.1]{Ber1}. For $V\subset U$ in
 $\mf{B}$, we have the restriction isomorphism
 $\Gamma(V,\mc{O})\otimes_{\Gamma(U,\mc{O})}\Gamma(U,\mc{A}_i)
 \xrightarrow{\sim}\Gamma(V,\mc{A}_i)$ for any $i$ since $\mc{A}_i$ is a
 coherent $\mc{O}$-module using (\ref{noprimecalcfromprima}). This
 induces an isomorphism
 \begin{equation*}
  \Gamma(V,\mc{O})\otimes_{\Gamma(U,\mc{O})}\Gamma(U,\mc{A})
   \xrightarrow{\sim}\Gamma(V,\mc{A}).
 \end{equation*}
 Since the restriction homomorphism
 $\Gamma(U,\mc{O})\rightarrow\Gamma(V,\mc{O})$ is flat, this isomorphism
 shows that $\Gamma(U,\mc{A})\rightarrow\Gamma(V,\mc{A})$ is flat
 as well. Thus the claim follows.
\end{proof}

\begin{prop}
\label{noetheriansheafofmic}
 Let $\ms{X}$ be a smooth formal scheme over $R$.

 (i) The rings $\Emod{m}{\ms{X}}$, $\Ecomp{m}{\ms{X}}$,
 $\Emod{m}{\ms{X},0}$ are noetherian with respect to $\mf{B}$.

 (ii) The homomorphism $\widehat{\varphi}_m$ of
 {\normalfont(\ref{canhomna})} is flat.

 (iii) Let $\ms{E}$ be either $\Emod{m}{\ms{X},0}$ or $\Emod{m}{\ms{X}}$
 or $\Ecomp{m}{\ms{X}}$. For any open subsets $\ms{U}\supset\ms{V}$
 in $\mf{B}$, the restriction homomorphism
 $\Gamma(\ms{U},\ms{E})\rightarrow\Gamma(\ms{V},\ms{E})$
 is flat.
\end{prop}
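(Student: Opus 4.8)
The plan is to derive all three parts from Lemma \ref{noetherianlemma} and the filtered flatness criterion \cite[Ch.II, 1.2.1]{HO}, supplying the hypotheses by the structural identities of Lemma \ref{veryelementaryprop} together with the graded noetherianness and flatness results of the lemma of \S\ref{limitnaive}, Lemma \ref{noethgrlem} and Example \ref{tangenttwistshfi}. The crucial point is the choice of filtration: on $\Emod{m}{\ms{X}}$ and $\Emod{m}{\ms{X},0}$ the filtration by order is exhaustive, so we use it, whereas on $\Ecomp{m}{\ms{X}}$ it is not, so there we use instead the $(\pi)$-adic filtration (cf.\ \S\ref{adicfiltex}), for which Lemma \ref{veryelementaryprop}(i) provides exactly the needed input.

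First take $\ms{E}\in\{\Emod{m}{\ms{X}},\Emod{m}{\ms{X},0}\}$ with the filtration by order. Combining Lemma \ref{veryelementaryprop}(ii) (taking $k=1$) with the level-$X_i$ isomorphisms (\ref{relagr}) and the commutation of $\mr{gr}$ with $\invlim_i$ identifies $\mr{gr}(\Emod{m}{\ms{X}})\cong\mc{O}_{T^{(m)*}\ms{X}}(*)$ and $\mr{gr}(\Emod{m}{\ms{X},0})\cong\bigoplus_{n\le 0}\mc{O}_{T^{(m)*}\ms{X}}(n)$. By the lemma of \S\ref{limitnaive} and its negative-degree analogue — over a $\mf{B}$-open the latter is either $\mc{O}_{T^{(m)*}\ms{X}}(0)$, when the open meets the zero section and the negative twists vanish there, or a finite type $\mc{O}_{T^{(m)*}\ms{X}}(0)$-algebra with coherent graded pieces on $\mathring{T}^{(m)*}\ms{X}$ — together with Lemma \ref{noethgrlem}, these graded rings are noetherian with respect to $\mf{B}$ with flat restriction maps. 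The remaining hypotheses of Lemma \ref{noetherianlemma}, namely completeness of $\Gamma(\ms{U},\ms{E})$ for $\ms{U}\in\mf{B}$ and the isomorphism $\mr{gr}(\Gamma(\ms{U},\ms{E}))\xrightarrow{\sim}\Gamma(\ms{U},\mr{gr}(\ms{E}))$, are Lemma \ref{veryelementaryprop}(iv) and (ii) respectively (for $\Emod{m}{\ms{X},0}$, note it is the degree-$\le 0$ part of the complete filtered ring $\Emod{m}{\ms{X}}$, hence complete). Thus Lemma \ref{noetherianlemma} shows $\Emod{m}{\ms{X}}$ and $\Emod{m}{\ms{X},0}$ are pointwise Zariskian, noetherian with respect to $\mf{B}$, and have complete noetherian filtered rings of sections over $\mf{B}$-opens.

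For $\Ecomp{m}{\ms{X}}$ equipped with the $(\pi)$-adic filtration, Lemma \ref{veryelementaryprop}(i) shows that the sections over $\mf{B}$-opens are $\pi$-adically complete and flat over $R$ with reduction modulo $\pi$ equal to $\Gamma(U_0,\Emod{m}{X_0})$, so $\mr{gr}(\Ecomp{m}{\ms{X}})\cong\Emod{m}{X_0}[t]$ and the completeness and graded-sections hypotheses of Lemma \ref{noetherianlemma} are automatic. Now $\Emod{m}{X_0}$ is noetherian with respect to $\mf{B}$ by Example \ref{tangenttwistshfi}, and its restriction maps along inclusions in $\mf{B}$ are flat, by \cite[Ch.II, 1.2.1]{HO} applied to the order filtration of $\Emod{m}{X_0}$ (whose graded ring $\mc{O}_{T^{(m)*}X_0}(*)$ has flat restriction maps by the scheme-level version of Lemma \ref{noethgrlem}). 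Hence $\Emod{m}{X_0}[t]$ is noetherian with respect to $\mf{B}$ with flat restriction maps, by the Hilbert basis theorem and \cite[3.1.1]{Ber1}, and Lemma \ref{noetherianlemma} gives (i) for $\Ecomp{m}{\ms{X}}$, completing (i).

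Part (iii) is now formal: for $\ms{V}\subset\ms{U}$ in $\mf{B}$ the restriction homomorphism is a homomorphism of complete noetherian filtered rings (part (i), for the appropriate filtration) whose associated graded is the corresponding restriction of $\mc{O}_{T^{(m)*}\ms{X}}(*)$, of $\bigoplus_{n\le 0}\mc{O}_{T^{(m)*}\ms{X}}(n)$, or of $\Emod{m}{X_0}[t]$, all flat by Lemma \ref{noethgrlem} (resp.\ by the flatness just established for $\Emod{m}{X_0}$, base changed to the polynomial ring); hence it is flat by \cite[Ch.II, 1.2.1]{HO}. For part (ii) I argue by reduction modulo $\pi$: reducing $\widehat{\varphi}_m$ of (\ref{canhomna}) modulo $\pi^{i+1}$ recovers $\varphi_m\colon\pi_m^{-1}\Dmod{m}{X_i}\to\Emod{m}{X_i}$, which is flat (cf.\ \S\ref{dfnnaivemicsch}); since $\Ecomp{m}{\ms{X}}$ is $\pi$-adically complete and $\pi$-torsion-free on $\mf{B}$-opens and $\Dcomp{m}{\ms{X}}$ is noetherian, the local criterion of flatness by reduction modulo $\pi$ (cf.\ \cite[3.5.3]{Ber1} and Lemma \ref{completion}) yields the flatness of $\widehat{\varphi}_m$. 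I expect the main obstacle to lie precisely in this last criterion — it must be run on $\mf{B}$-opens, where Lemma \ref{veryelementaryprop}(i) guarantees $\pi$-adic completeness, since the stalks of $\Ecomp{m}{\ms{X}}$ need not themselves be $\pi$-adically complete — together with the $(\pi)$-adically filtered bookkeeping for $\Ecomp{m}{\ms{X}}$, forced by the non-exhaustiveness of its order filtration.
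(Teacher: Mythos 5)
Your proof follows the same overall strategy as the paper: apply Lemma \ref{noetherianlemma} with the filtration by order for $\Emod{m}{\ms{X}}$ and $\Emod{m}{\ms{X},0}$, and with the $\pi$-adic filtration for $\Ecomp{m}{\ms{X}}$, feeding in Lemma \ref{veryelementaryprop} for completeness and the graded-sections isomorphism, and Lemma \ref{noethgrlem} for the noetherian/flat hypotheses on the graded ring. The paper indeed deduces (ii) and (iii) in one breath from (i) and \cite[Ch.II, 1.2.1]{HO}; your reduction-modulo-$\pi$ formulation of (ii) is functionally the same criterion specialized to the $\pi$-adic filtration, so no real difference there.

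There is, however, a genuine gap in your handling of part (i) near the zero section. Lemma \ref{noethgrlem} is stated and applies \emph{only on} $\mathring{T}^{(m)*}\ms{X}$, because it requires the graded pieces to be coherent over $\mc{O}(0)$ — which is a consequence of the invertibility of $\mc{O}_P(n)$ on the projectivization and fails at the zero section. You try to split into cases by asserting that on a $\mf{B}$-open meeting the zero section ``the negative twists vanish,'' but that is false: $\mc{O}_{T^{(m)*}\ms{X}}(n)=\epsilon^{-1}\mc{O}_{V'}(n)$, and since sections over $U$ equal sections over the $\Gm{}$-saturation $\epsilon_\cdot(U)$ by (\ref{noprimecalcfromprima}), the negative (and positive) twists are generally nonzero over any open, including those meeting the zero section. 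So conditions 2 and 3 of Lemma \ref{noetherianlemma} cannot be checked by your route on $\mf{B}$-opens near the zero section. The paper's proof avoids this exactly: it verifies the hypotheses of Lemma \ref{noetherianlemma} only on $\mathring{T}^{(m)*}\ms{X}$, then observes that conditions (i)-2 and (i)-3 of Definition \ref{defnoethrigsh} extend over the zero section automatically by the identity $(\epsilon^{-1}\mc{F}')(U)=\mc{F}'(\epsilon_\cdot(U))$ of (\ref{noprimecalcfromprima}), and establishes the remaining coherence condition (i)-1 by first proving $\varphi_m|_{\mathring{T}^{(m)*}\ms{X}}$ flat via \cite[Ch.II, 1.2.1]{HO} and then invoking Berthelot's coherence criterion \cite[3.1.1]{Ber1}. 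You should replace the ``negative twists vanish'' claim by this two-step argument (first away from the zero section, then transfer via $\epsilon_\cdot$ and \cite[3.1.1]{Ber1}); everything else in the proposal is in order.
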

\begin{proof}
 Let us prove (i). First, we show the claim for
 $\Emod{m}{\ms{X}}$ and $\Emod{m}{\ms{X},0}$. Let us check the
 conditions of Lemma \ref{noetherianlemma} for $\Emod{m}{\ms{X}}$
 (resp.\ $\Emod{m}{\ms{X},0}$) on
 $\mathring{\ms{V}}:=\mathring{T}^{(m)*}\ms{X}$. Conditions 1 and 4
 hold by Lemma \ref{veryelementaryprop}. By Lemma
 \ref{veryelementaryprop} (ii),
 $\mr{gr}(\Emod{m}{\ms{X}})\cong\mc{O}_{\mathring{\ms{V}}}(*)$ as graded
 rings. By Lemma \ref{limitnaive}, this implies that
 $\mr{gr}_i(\Emod{m}{\ms{X}})$ is a coherent
 $\mc{O}_{\mathring{\ms{V}}}(0)$-module on $\mathring{\ms{V}}$ for
 any $i\in\mb{Z}$, and $\mr{gr}(\Emod{m}{\ms{X}})$ (resp.\
 $\mr{gr}(\Emod{m}{\ms{X},0})$) is an
 $\mc{O}_{\mathring{\ms{V}}}(0)$-module of finite type on
 $\mathring{\ms{V}}$. Thus by Lemma \ref{noethgrlem}, conditions 2
 and 3 are fulfilled.
 This implies that $\Emod{m}{\ms{X}}$ and $\Emod{m}{\ms{X},0}$ are
 noetherian with respect to $\mf{B}$ on $\mathring{\ms{V}}$.
 Using \cite[Ch.II, 1.2.1]{HO}, $\widehat{\varphi}_m$ is flat, and (ii)
 follows. It remains
 to check that the rings are noetherian
 around the zero section. By using (\ref{noprimecalcfromprima}), we
 only need to prove the coherence. This follows from
 \cite[3.1.1]{Ber1}.

 For $\Ecomp{m}{\ms{X}}$, let us endow with the $\pi$-adic filtration
 $\bigl\{\pi^{-i}\Ecomp{m}{\ms{X}}\bigr\}_{i\leq0}$ (cf.\
 \ref{adicfiltex}). Since $\Ecomp{m}{\ms{X}}$ is $\pi$-torsion free
 by Lemma \ref{veryelementaryprop} (i), the homomorphism
 $\Emod{m}{X_0}[T]\rightarrow\mr{gr}(\Ecomp{m}{\ms{X}})$ sending $T$ to
 $\pi\in\mr{gr}_1(\Ecomp{m}{\ms{X}})$ is an isomorphism. It is
 straightforward to check the conditions of Lemma
 \ref{noetherianlemma}. We remind that the $\pi$-adic filtration can
 also be used to show that $\Emod{m}{\ms{X},0}$ is noetherian.

 To prove (iii), it suffices to apply (i) and \cite[Ch.II, 1.2.1]{HO}.
\end{proof}

\begin{rem*}
 By the proof, we can moreover say that $\Emod{m}{\ms{X}}$ and
 $\Emod{m}{\ms{X},0}$ are pointwise Zariskian with respect to the
 filtration by order on $\mathring{T}^{(m)*}\ms{X}$, and
 $\Ecomp{m}{\ms{X}}$ and $\Emod{m}{\ms{X},0}$ are pointwise
 Zariskian with respect to the $\pi$-adic filtration on
 $\mathring{T}^{(m)*}\ms{X}$.
\end{rem*}

\subsection{}
\label{defofnaivemic}
Now, we define
\begin{equation*}
 \EcompQ{m}{\ms{X}}:=\Ecomp{m}{\ms{X}}\otimes\mb{Q},\qquad
  \Emod{m}{\ms{X},\mb{Q}}:=\Emod{m}{\ms{X}}\otimes\mb{Q}.
\end{equation*}
Note that $\otimes\mb{Q}$ commutes with global section functor over
noetherian space by \cite[3.4]{Ber1}.
The homomorphism $\widehat{\varphi}_m$ of (\ref{canhomna}) induces a
canonical injective homomorphism
\begin{equation*}
 \widehat{\varphi}_m\otimes\mb{Q}\colon\pi_m^{-1}
  \DcompQ{m}{\ms{X}}\rightarrow\EcompQ{m}{\ms{X}}.
\end{equation*}
If there is no risk of confusion, we sometimes denote
$\widehat{\varphi}_m\otimes\mb{Q}$ abusively by $\widehat{\varphi}_m$.
We call the sheaves $\Emod{m}{X_i}$, $\Ecomp{m}{\ms{X}}$,
$\EcompQ{m}{\ms{X}}$ the rings of {\em naive microdifferential
operators of level $m$}. Proposition \ref{noetheriansheafofmic} implies
the following.

\begin{cor*}
 The rings $\Emod{m}{\ms{X},\mb{Q}}$ and $\EcompQ{m}{\ms{X}}$ are
 noetherian with respect to $\mf{B}$. Moreover,
 $\widehat{\varphi}_m\otimes\mb{Q}$
 and the restriction homomorphism $\Gamma(\ms{U},\ms{E})\rightarrow
 \Gamma(\ms{V},\ms{E})$ are flat for $\ms{U}\supset\ms{V}$ in $\mf{B}$,
 where $\ms{E}$ is either $\Emod{m}{\ms{X},\mb{Q}}$ or
 $\EcompQ{m}{\ms{X}}$.
\end{cor*}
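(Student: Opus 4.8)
The plan is to reduce everything to Proposition~\ref{noetheriansheafofmic} by applying the exact functor $-\otimes_{\mb{Z}}\mb{Q}$, using that every member of $\mf{B}$ is a noetherian topological space (cf.\ the proof of Lemma~\ref{veryelementaryprop}). Let $\ms{E}$ denote $\Emod{m}{\ms{X}}$ (resp.\ $\Ecomp{m}{\ms{X}}$), so that $\ms{E}\otimes\mb{Q}$ is $\Emod{m}{\ms{X},\mb{Q}}$ (resp.\ $\EcompQ{m}{\ms{X}}$). Since $\mb{Q}=\indlim_n\frac{1}{n}\mb{Z}$, the sheaf $\ms{E}\otimes\mb{Q}$ is the filtered colimit of copies of $\ms{E}$ along the multiplication-by-$n$ maps; because each $\ms{U}\in\mf{B}$ is noetherian, $\Gamma(\ms{U},-)$ commutes with this colimit (cf.\ \cite[Exercise II.1.11]{Har}), whence
\begin{equation*}
 \Gamma(\ms{U},\ms{E}\otimes\mb{Q})\cong\Gamma(\ms{U},\ms{E})\otimes\mb{Q},
 \qquad (\ms{E}\otimes\mb{Q})_x\cong\ms{E}_x\otimes\mb{Q}
\end{equation*}
for $\ms{U}\in\mf{B}$ and any point $x$. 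Thus the rings of sections and the stalks of $\ms{E}\otimes\mb{Q}$ are obtained from those of $\ms{E}$ by localization at the central multiplicative set $\{p^n\}_{n\geq0}$ (note $p$ lies in the center, since all rings in play are $R$-algebras).

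With this identification in hand, I would check the three conditions of Definition~\ref{defnoethrigsh}~(i). Conditions~2 and~3 (noetherianity of stalks and of sections over $\mf{B}$) follow from Proposition~\ref{noetheriansheafofmic}~(i) together with the standard fact that localizing a left (resp.\ right, two-sided) noetherian ring at a central multiplicative subset yields again a left (resp.\ right, two-sided) noetherian ring. For the flatness assertions I would localize the flat maps of Proposition~\ref{noetheriansheafofmic}~(ii)--(iii): if $A\to B$ is a flat homomorphism of rings and $S\subset A$ is a central multiplicative set, then $A_S\to B_S$ is flat; applying this with $S=\{p^n\}$ to the restriction homomorphisms $\Gamma(\ms{U},\ms{E})\to\Gamma(\ms{V},\ms{E})$ and to $\widehat{\varphi}_m$ gives the flatness of $\Gamma(\ms{U},\ms{E}\otimes\mb{Q})\to\Gamma(\ms{V},\ms{E}\otimes\mb{Q})$ for $\ms{U}\supset\ms{V}$ in $\mf{B}$ and of $\widehat{\varphi}_m\otimes\mb{Q}$.

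It remains to verify condition~1, namely that $\ms{E}\otimes\mb{Q}$ is a coherent sheaf of rings. Knowing that every restriction homomorphism of $\ms{E}\otimes\mb{Q}$ between members of $\mf{B}$ is flat and that $\Gamma(\ms{U},\ms{E}\otimes\mb{Q})$ is noetherian for $\ms{U}\in\mf{B}$, I would apply Berthelot's coherence criterion \cite[3.1.1]{Ber1}---exactly as in the proof of Proposition~\ref{noetheriansheafofmic}---to obtain the coherence of $\ms{E}\otimes\mb{Q}$. Together with conditions~2 and~3 this shows that $\Emod{m}{\ms{X},\mb{Q}}$ and $\EcompQ{m}{\ms{X}}$ are noetherian with respect to $\mf{B}$, which finishes the proof. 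I do not expect a genuine obstacle: the argument is a formal descent of Proposition~\ref{noetheriansheafofmic} along the flat map $\mb{Z}\to\mb{Q}$, and the only point requiring a word of care is the commutation of $\Gamma$ and of the stalk functor with $-\otimes\mb{Q}$, which is legitimate precisely because $\mf{B}$ was arranged to consist of noetherian spaces.
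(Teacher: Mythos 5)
Your proposal is correct and takes essentially the same approach the paper intends: the paper merely states that the corollary is "implied by" Proposition \ref{noetheriansheafofmic}, and your argument is exactly the right elaboration of that implication — identify $-\otimes\mb{Q}$ with localization at the central element $p$, invoke commutation of $\Gamma(\ms{U},-)$ and stalks with the filtered colimit over the noetherian open $\ms{U}\in\mf{B}$ (i.e.\ (\ref{commindga})), observe that central localization preserves left/right noetherianity and flatness, and conclude coherence by \cite[3.1.1]{Ber1}.
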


\subsection{}
\label{defofsituation}
Let us describe sections of rings of microdifferential operators
explicitly. We use the notation of \ref{limitnaive}. Suppose in addition
that $\ms{X}$ is {\em affine}, and possesses a system of local
coordinates $\{x_1,\dots,x_d\}$. Let
$\bigl\{\partial_1,\dots,\partial_d\bigr\}$ be the corresponding
differential operators, and $\bigl\{\xi_1,\dots,\xi_d\bigr\}$ be the
corresponding basis of $\Gamma(T^*\ms{X},\mc{O}_{T^*\ms{X}})$. Let $k$
be a positive integer. We have a
differential operator $\partial_i^{\angles{m}{k}}$ for any $1\leq i\leq
d$ in $\Dmod{m}{X_l}$ for any integer $l\geq0$ or in
$\Dcomp{m}{\ms{X}}$ (cf.\ \cite[2.2.3]{Ber1}).
Write $k=p^m\,q+r$ with $0\leq r<p^m$. Recall that there is a relation
(cf.\ \cite[(2.2.3.1)]{Ber1})
\begin{equation*}
 k!\,\partial_i^{\angles{m}{k}}=q!\,\partial^k_i.
\end{equation*}
Now, these operators define elements in $\mr{gr}(\Dmod{m}{X_l})$ by
taking the principal symbol (cf.\ \ref{princisymbdfn}). We denote
$\sigma(\partial_i^{\angles{m}{k}})$ by $\xi_i^{\angles{m}{k}}$ in
$\mr{gr}_k(\Dmod{m}{X_l})\subset\mr{gr}(\Dmod{m}{X_l})$. From now on,
we use the multi-index notation. For example, for
$\underline{k}=(k_1,\dots,k_d)\in\mb{N}^d$, we denote
$\underline{\xi}^{\angles{m}{\underline{k}}}:=
\xi_1^{\angles{m}{k_1}}\dots\xi_d^{\angles{m}{k_d}}$,
$\underline{\partial}^{\angles{m}{\underline{k}}}:=\partial_1
^{\angles{m}{k_1}}\dots\partial_d^{\angles{m}{k_d}}$, and
$|\underline{k}|:=k_1+\dots+k_d$. We denote by
$\underline{k}\geq\underline{k'}$ if $k_i\geq k'_i$ for any $1\leq i\leq
d$. For $n\in\mb{Z}$, we sometimes denote $(n,\dots,n)$ by
$\underline{n}$ if there is no possible confusion.

Let
\begin{equation*}
 \Theta\in\Gamma(T^*\ms{X},\mc{O}_{T^*\ms{X}})
\end{equation*}
be a non-zero homogeneous section of degree $n$. We consider
$\pi^{-1}_m\mc{O}_{\ms{X}}$ as a subring of $\mc{O}_{T^{(m)*}\ms{X}}$.
This section induces a section
$\Theta^{(m)}\in\Gamma(T^{(m)*}\ms{X},\mc{O}_{T^{(m)*}\ms{X}})$ for any
integer $m\geq0$ as follows: we may write
\begin{equation}
 \label{desklocal}
  \Theta=\sum_{|\underline{k}|=n}a_{\underline{k}}\,
  \underline{\xi}^{\underline{k}}
\end{equation}
where $\underline{k}\in\mb{N}^d$ and
$a_{\underline{k}}\in\Gamma(\ms{X},\mc{O}_{\ms{X}})$ in a unique way.
We put
\begin{equation*}
 \Theta^{(m)}:=\sum_{|\underline{k}|=n}a^{p^m}_{\underline{k}}
  \bigl(\underline{\xi}^{\angles{m}{p^m}}\bigr)^{\underline{k}}.
\end{equation*}
The homogeneous element $\Theta$ induces also elements in
$\Dmod{m}{\ms{X}}$ or $\Dmod{m}{X_l}$. We put
\begin{equation*}
 \widetilde{\Theta}^{(m)}_{\mr{le}}:=
  \sum_{|\underline{k}|=n}a^{p^m}_{\underline{k}}
  \bigl(\underline{\partial}^{\angles{m}{p^m}}\bigr)
  ^{\underline{k}}
\end{equation*}
where the subscript ``$\mr{le}$'' stands for ``left''. Since
$\Dmod{m}{\ms{X},\mb{Q}}\cong\Dmod{m'}{\ms{X},\mb{Q}}$ for any
non-negative integer $m'$, we sometimes consider these operators as
sections of $\Dmod{m'}{\ms{X},\mb{Q}}$.

Let $\ms{U}$ be the open affine subset of $T^{(m)*}\ms{X}$ defined by
$\Theta^{(m)}$.
We claim that the operator $\widetilde{\Theta}_{\mr{le}}^{(m)}$
is invertible is $\Gamma(\ms{U},\Emod{m}{\ms{X}})$. Indeed,
the inverse of $\widetilde{\Theta}_{\mr{le}}^{(m)}$ in $\Emod{m}{X_l}$
has degree $-np^m$ for any $l$. Since the inverse of
$\widetilde{\Theta}^{(m)}_{\mr{le}}$ is unique in $\Emod{m}{X_l}$, these
elements induce an element of
$\invlim_l\Emod{m}{X_l,-np^m}=\Emod{m}{\ms{X},-np^m}
\subset\Emod{m}{\ms{X}}$.

Let $\{b_{\underline{k},i}\}$
be a sequence in $\Gamma(\ms{X},\mc{O}_{\ms{X}})$ for
$\underline{k}\in\mb{N}^d$ and $i\in\mb{N}$ such that the following
holds: for each integer $N$, let
$\beta_{N,i}:=\sup_{|\underline{k}|=inp^m+N}|b_{\underline{k},i}|$,
where $|\cdot|$ denotes the spectral norm (cf.\ \cite[2.4.2]{Ber1}) on
$\Gamma(\ms{X},\mc{O}_{\ms{X}})$. Then
 \begin{equation}
  \label{conditiononconvergence}
   \lim_{i\rightarrow\infty}\beta_{N,i}=0,\qquad
   \lim_{N\rightarrow+\infty}\sup_i\bigl\{\beta_{N,i}\bigr\}=0.
 \end{equation}
In the sequel, we consider the ring $\pi_m^{-1}\Dmod{m}{X_l}$ (resp.\
$\pi_m^{-1}\Dcomp{m}{\ms{X}}$) as a subring of $\Emod{m}{X_l}$ (resp.\
$\Ecomp{m}{\ms{X}}$) for any $l$ by $\varphi_m$ of
\ref{dfnnaivemicsch} (resp.\ $\widehat{\varphi}_m$ of
(\ref{canhomna})). The sum
\begin{equation}
 \label{sum}
  \sum_{N\in\mb{Z}}\sum_{|\underline{k}|-inp^m=N}
  b_{\underline{k},i}\,\underline{\partial}
  ^{\angles{m}{\underline{k}}}\,(\widetilde{\Theta}^{(m)}_\mr{le})^{-i}
\end{equation}
converges in $\Gamma(U_l,\Emod{m}{X_l})$ for any $l$. We note that the
order of $\underline{\partial}^{\angles{m}{\underline{k}}}\,
(\widetilde{\Theta}^{(m)}_{\mr{le}})^{-i}$ is $N$,
$\sum_{|\underline{k}|-inp^m=N}...$ is a finite sum by the first
condition of (\ref{conditiononconvergence}), and
$\sum_{|\underline{k}|-inp^m=N}...=0$ for $N\gg0$ by the second
condition. Since these elements form a projective system over $l$, we
have an element in $\Gamma(\ms{U},\Ecomp{m}{\ms{X}})$. Even though
the sum of (\ref{sum}) does not converge in
$\Gamma(\ms{U},\Ecomp{m}{\ms{X}})$ with respect to the $\pi$-adic
topology in general\footnote{
However, we are able to define a reasonable weaker topology on
$\Gamma(\ms{U},\Emod{m}{\ms{X}})$ such that this sum
converges. In the curve case, see \cite[1.2.2]{AM}.
}, we abusively denote by (\ref{sum}) the operator in
$\Gamma(\ms{U},\Ecomp{m}{\ms{X}})$.

\begin{lem*}
 For any element $P\in\Gamma(\ms{U},\Ecomp{m}{\ms{X}})$, there exists
 a sequence $\{b_{\underline{k},i}\}$ for $\underline{k}\in\mb{N}^d$ and
 $i\in\mb{N}$ satisfying
 {\normalfont(\ref{conditiononconvergence})} such that $P$ can be
 written as {\normalfont(\ref{sum})}. Moreover, if
 $P\in\Gamma(\ms{U},\Emod{m}{\ms{X},j})$ for some integer $j$, we can
 take $b_{\underline{k},i}=0$ for
 $|\underline{k}|-inp^m>j$.

 This is called a {\em left presentation of level $m$}. We remind here
 that presentation is not unique.
\end{lem*}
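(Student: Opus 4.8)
The plan is to reduce the statement, by successive $\pi$-adic approximation, to the single microlocalization $\Gamma(U_0,\Emod{m}{X_0})$ over the special fibre, where the Example of \ref{microlocaldef} applies directly, and then to lift the resulting presentations level by level. The delicate point is that $\Gamma(\ms{U},\Ecomp{m}{\ms{X}})$ is an inverse limit over $l$ of microlocalizations of the various $\Dmod{m}{X_l}$, so the order filtration and the $\pi$-adic topology both intervene and do not commute; the conditions (\ref{conditiononconvergence}) are exactly the bookkeeping of this interaction.

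\emph{Level $0$.} By \ref{micshdfe}, $\Gamma(U_0,\Emod{m}{X_0})$ is the microlocalization of the complete, commutative-graded, order-filtered ring $\Gamma(X_0,\Dmod{m}{X_0})$ with respect to $S_1=\{(\Theta^{(m)})^{n}\}_{n\ge0}$, and $\widetilde{\Theta}^{(m)}_l$ reduces to an element of $\Gamma(X_0,\Dmod{m}{X_0})$ with principal symbol $\Theta^{(m)}\in\mr{gr}_{np^m}$ (no cancellation occurs among the leading monomials $\underline{\xi}^{\underline{k}\angles{m}{p^m}}$). Applying the Example of \ref{microlocaldef} with $s=\widetilde{\Theta}^{(m)}_l$ and $N=np^m$, an element $a'\in\Gamma(U_0,\Emod{m}{X_0})$ of order $\le j$ can be written $a'=\sum_{k\le j}a'_k\,(\widetilde{\Theta}^{(m)}_l)^{-n_k}$ with $a'_k\in\Dmod{m}{X_0,\,k+Nn_k}$. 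Regrouping by the exponent $q=n_k$ and setting $c_q:=\sum_{n_k=q}a'_k$ (a finite sum, since $a'_k=0$ once $k+Nq<0$), the original series converges unconditionally in the complete order filtration, so $a'=\sum_{q\ge0}c_q\,(\widetilde{\Theta}^{(m)}_l)^{-q}$ and moreover $c_q(\widetilde{\Theta}^{(m)}_l)^{-q}\to0$ in the order filtration. Expanding $c_q=\sum_{\underline{k}}\overline{b}_{\underline{k},q}\,\underline{\partial}^{\angles{m}{\underline{k}}}$, with $\overline{b}_{\underline{k},q}=0$ for $|\underline{k}|>j+Nq$, yields a left presentation of $a'$ over $X_0$ in which only finitely many coefficients share a given order and all orders are $\le j$. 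The right presentation over $X_0$ is obtained identically from the left Ore condition (the microlocalizing set is two-sided Ore, \ref{microlocaldef}) using $\widetilde{\Theta}^{(m)}_r$, whose symbol is again $\Theta^{(m)}$.

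\emph{Bootstrap and the conditions.} Let $P\in\Gamma(\ms{U},\Ecomp{m}{\ms{X}})$, which is $\pi$-adically complete and $\pi$-torsion-free by Lemma \ref{veryelementaryprop}(i). Reduce $P$ modulo $\pi$, apply the level-$0$ result, lift each coefficient (with $0\mapsto0$) to $\Gamma(\ms{X},\mc{O}_{\ms{X}})$, and let $\Sigma_0\in\Gamma(\ms{U},\Ecomp{m}{\ms{X}})$ be the corresponding sum; it converges because the level-$0$ support is finite in each order and bounded above. Then $P-\Sigma_0\in\pi\Gamma(\ms{U},\Ecomp{m}{\ms{X}})$; dividing by $\pi$ and iterating produces lifted coefficients $\widetilde{d}^{(t)}_{\underline{k},i}$ at each step $t$, and $b_{\underline{k},i}:=\sum_{t\ge0}\pi^{t}\widetilde{d}^{(t)}_{\underline{k},i}$ converges in $\Gamma(\ms{X},\mc{O}_{\ms{X}})$; interchanging the two convergent summations gives $P=\sum_{i,\underline{k}}b_{\underline{k},i}\,\underline{\partial}^{\angles{m}{\underline{k}}}\,(\widetilde{\Theta}^{(m)}_l)^{-i}$, which is the shape (\ref{sum}) (the right case being symmetric). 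To check (\ref{conditiononconvergence}): each step-$t$ presentation has, for every $N$, a bound $I_t(N)$ past which no $(\underline{k},i)$ with $|\underline{k}|-inp^m=N$ carries a nonzero coefficient, and an overall order bound $M_t$; hence for $|\underline{k}|=inp^m+N$ the smallest power of $\pi$ occurring in $b_{\underline{k},i}$ is at least $\min\{t:I_t(N)\ge i\}$ and at least $\min\{t:M_t\ge N\}$, and both tend to $\infty$ as $i\to\infty$ (resp.\ $N\to+\infty$) — i.e.\ $\beta_{N,i}\to0$ and $\sup_i\beta_{N,i}\to0$. Finally, if $P\in\Gamma(\ms{U},\Emod{m}{\ms{X},j})$, then at each step $(P-\Sigma_t)/\pi^{t+1}$ still has order $\le j$: here one uses that $\Ecomp{m}{\ms{X}}/\Emod{m}{\ms{X},j}$ is $\pi$-torsion-free, which follows from Lemma \ref{veryelementaryprop}(ii), Lemma \ref{completion}, and the $R_i$-flatness of $\mr{gr}_n(\Emod{m}{X_i})\cong\mc{O}_{T^{(m)*}X_i}(n)$. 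Then the level-$0$ step returns coefficients vanishing for $|\underline{k}|-inp^m>j$, so the same holds for the $b_{\underline{k},i}$.

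\emph{Main obstacle.} The genuine content lies in the interaction of the two topologies inside the bootstrap: the order filtration, in which each $\Emod{m}{X_l}$ is complete and the denominators $(\widetilde{\Theta}^{(m)}_l)^{-q}$ become small as $q\to\infty$, against the $\pi$-adic topology, in which only $\Gamma(\ms{U},\Ecomp{m}{\ms{X}})$ is complete. Verifying that the regrouping $\sum_k\mapsto\sum_q$ at level $0$ is legitimate, and that the successive lifting of the presentations never spoils convergence in either direction so that the quantitative estimates (\ref{conditiononconvergence}) come out, is where the care is needed; the remaining manipulations are formal.
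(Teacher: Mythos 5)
Your proof follows exactly the same route as the paper: reduce to the special fibre $X_0$ via $\pi$-adic completeness and $R$-flatness of $\Gamma(\ms{U},\Ecomp{m}{\ms{X}})$ and $\Gamma(\ms{U},\Emod{m}{\ms{X},j})$ (Lemma \ref{veryelementaryprop}(i)), apply the Example of \ref{microlocaldef} there, and lift step by step; the extra care you invest in verifying (\ref{conditiononconvergence}) is exactly what the paper leaves implicit. One small simplification: for the bounded-order case you do not need $\pi$-torsion-freeness of the quotient $\Ecomp{m}{\ms{X}}/\Emod{m}{\ms{X},j}$ — simply run the whole bootstrap inside $\Gamma(\ms{U},\Emod{m}{\ms{X},j})$, which is itself $\pi$-adically complete and flat over $R$ by Lemma \ref{veryelementaryprop}(i), as the paper does.
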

\begin{proof}
 Since $\Gamma(\ms{U},\Emod{m}{\ms{X},j})$ and
 $\Gamma(\ms{U},\Ecomp{m}{\ms{X}})$ are flat over $R$ and
 $\pi$-adically complete by Lemma \ref{veryelementaryprop} (i), it
 suffices to show that any element of $\Gamma(\ms{U},\Emod{m}{X_0,j})$
 can be written as
 \begin{equation*}
  \sum_{N\in\mb{Z}}\sum_{|\underline{k}|-inp^m=N}
   c_{\underline{k},i}\,\underline{\partial}
   ^{\angles{m}{\underline{k}}}\,(\widetilde{\Theta}^{(m)}
   _{\mr{le}})^{-i},
 \end{equation*}
 with $c_{\underline{k},i}\in\Gamma(X_0,\mc{O}_{X_0})$ such that
 the following holds: for each integer $N$, $c_{\underline{k},i}=0$ for
 almost all couples $(\underline{k},i)\in\mb{N}^d\times\mb{N}$ such that
 $|\underline{k}|-inp^m=N$, and $c_{\underline{k},i}=0$ for any
 $|\underline{k}|-inp^m>j$. This follows from
 \ref{microlocaldef}.
\end{proof}

\begin{rem*}
 Instead of using $\widetilde{\Theta}^{(m)}_{\mr{le}}$, we can also use
 \begin{equation*}
  \widetilde{\Theta}^{(m)}_{\mr{ri}}:=
   \sum_{|\underline{k}|=n}\bigl(\underline{\partial}
   ^{\angles{m}{p^m}}\bigr)^{\underline{k}}\,
   a^{p^m}_{\underline{k}},
 \end{equation*}
 to get {\em right presentations}. The construction is essentially the
 same, so we leave the details to the reader.
\end{rem*}

\subsection{}
 \label{moreexplicitdesc}
 We used $\widetilde{\Theta}_{\mr{le}}^{(m)}$ to describe elements of
 $\Ecomp{m}{\ms{X}}$. We may also use a variant of
 $\widetilde{\Theta}_{\mr{le}}^{(m+j)}$ for $j\geq 0$ to describe
 them. Suppose $\Theta$ is written as (\ref{desklocal}). Then we put
 \begin{equation*}
  \Theta^{(m,m+j)}:=\sum_{|\underline{k}|=n}a^{p^{m+j}}
  _{\underline{k}}\,\bigl(\underline{\xi}^{\angles{m}{p^{m}}}\bigr)^
  {\underline{k}\,p^j},
  \qquad
  \widetilde{\Theta}_{\mr{le}}^{(m,m+j)}:=\sum_{|\underline{k}|=n}
  a^{p^{m+j}}_{\underline{k}}\,
  \bigl(\underline{\partial}^{\angles{m}{p^{m}}}\bigr)
  ^{\underline{k}\,p^j}.
 \end{equation*}
 {\em If there is no risk of confusion, we sometimes abbreviate
 $\widetilde{\Theta}^{(m)}_{\mr{le}}$
 (resp.\ $\widetilde{\Theta}^{(m,m')}_{\mr{le}}$) as $\Thetatil{(m)}$
 (resp.\ $\Thetatil{(m,m')}$)}.

 \begin{lem*}
  Let $m'\geq m$ be an integer, and we put $j:=m'-m$.

  (i) Let $r_{m,m'}:=(p^{m'}!)\cdot(p^m!)^{-p^j}\in\mb{Z}_p$. Then
  $\Theta^{(m,m')}=r_{m,m'}^n\cdot\Theta^{(m')}$.

  (ii) The operator $\widetilde{\Theta}_{\mr{le}}^{(m,m')}$ is invertible
  in $\Gamma(\ms{U},\Emod{m}{\ms{X}})$.
 \end{lem*}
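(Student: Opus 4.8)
The plan is to reduce both assertions to the relation $k!\,\partial_i^{\angles{m}{k}}=q!\,\partial_i^k$ (with $k=p^mq+r$, $0\le r<p^m$) recalled in \ref{defofsituation}, together with the behaviour of $p^j$-th powers in characteristic $p$. For (i), specializing that relation to $k=p^m$ at level $m$ and to $k=p^{m'}$ at level $m'$ gives $\partial_i^{p^m}=p^m!\,\partial_i^{\angles{m}{p^m}}$ and $\partial_i^{p^{m'}}=p^{m'}!\,\partial_i^{\angles{m'}{p^{m'}}}$. Over $\mb{Q}$, where the rings $\Dmod{m}{\ms{X},\mb{Q}}$ — and hence all the graded rings $\mr{gr}(\Dmod{m}{\ms{X},\mb{Q}})\cong\mc{O}_{\ms{X},\mb{Q}}[\underline{\xi}]$ (with $\xi_i$ in degree $1$) — are identified for varying $m$, this reads $\xi_i^{\angles{m}{p^m}}=(p^m!)^{-1}\xi_i^{p^m}$ and $\xi_i^{\angles{m'}{p^{m'}}}=(p^{m'}!)^{-1}\xi_i^{p^{m'}}$. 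Substituting into the formulas defining $\Theta^{(m,m')}$ (in \ref{moreexplicitdesc}) and $\Theta^{(m')}$ (in \ref{defofsituation}) and using $|\underline{k}|=n$, one finds that $\Theta^{(m,m')}$ is $(p^m!)^{-np^j}$ times $\sum_{|\underline{k}|=n}a_{\underline{k}}^{p^{m'}}\underline{\xi}^{p^{m'}\underline{k}}$ whereas $\Theta^{(m')}$ is $(p^{m'}!)^{-n}$ times the same sum; the ratio is $\bigl(p^{m'}!/(p^m!)^{p^j}\bigr)^{n}=r_{m,m'}^n$, giving $\Theta^{(m,m')}=r_{m,m'}^n\Theta^{(m')}$. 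This part is pure bookkeeping once the common rational graded ring is fixed.

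For (ii) I would argue exactly as in the paragraph preceding the lemma, where $\widetilde{\Theta}^{(m)}_\bullet$ was shown to be invertible in $\Gamma(\ms{U},\Emod{m}{\ms{X}})$: the inverse of the image of $\widetilde{\Theta}^{(m,m')}_\bullet$ in $\Gamma(U_l,\Emod{m}{X_l})$, once it exists, is unique and homogeneous of degree $-np^{m'}$, so these inverses assemble into an element of $\invlim_l\Emod{m}{X_l,-np^{m'}}\cong\Emod{m}{\ms{X},-np^{m'}}\subset\Emod{m}{\ms{X}}$ by Lemma \ref{veryelementaryprop}~(i). Hence it suffices to invert $\widetilde{\Theta}^{(m,m')}_\bullet$ in each $\Gamma(U_l,\Emod{m}{X_l})$; since $\Emod{m}{X_l}$ is a complete filtered ring and the order-$np^{m'}$ symbol of $\widetilde{\Theta}^{(m,m')}_\bullet$ is $\Theta^{(m,m')}$ in $\mr{gr}(\Emod{m}{X_l})\cong\mc{O}_{T^{(m)*}X_l}(*)$, it is enough that $\Theta^{(m,m')}$ be a unit in $\Gamma(U_l,\mc{O}_{T^{(m)*}X_l}(*))$, where $U_l=\ms{U}\otimes R_l$ and $\ms{U}=D(\Theta^{(m)})$.

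The crux is the identity $\overline{\Theta}^{(m,m')}=(\overline{\Theta}^{(m)})^{p^j}$ in $\mr{gr}(\Dmod{m}{X_0})$, bars denoting reduction modulo $\pi$: this ring is commutative of characteristic $p$, so raising $\overline{\Theta}^{(m)}=\sum_{|\underline{k}|=n}\overline{a}_{\underline{k}}^{\,p^m}\,\underline{\xi}^{\underline{k}\angles{m}{p^m}}$ to the $p^j$-th power raises every coefficient and every monomial to the $p^j$-th power, producing exactly the defining formula for $\overline{\Theta}^{(m,m')}$. Consequently $\Theta^{(m,m')}-(\Theta^{(m)})^{p^j}\in\pi\cdot\mr{gr}(\Dmod{m}{\ms{X}})$, and on $U_l$ — where $\pi$ is nilpotent and, by the very construction of the microlocalization on $D(\Theta^{(m)})$, the section $\Theta^{(m)}$ (hence $(\Theta^{(m)})^{p^j}$) is invertible — one can write $\Theta^{(m,m')}=(\Theta^{(m)})^{p^j}\bigl(1+\pi H\,(\Theta^{(m)})^{-p^j}\bigr)$ with the factor $1+\pi H\,(\Theta^{(m)})^{-p^j}$ invertible. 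Thus $\Theta^{(m,m')}$ is a unit on $U_l$, and (ii) follows.

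I expect (ii) to be the delicate point. One should resist deducing it from (i): there $r_{m,m'}^n$ is divisible by $p$, so (i) alone would suggest the opposite of invertibility. The resolution is that $\Theta^{(m,m')}$, written through the integral symbols $\xi_i^{\angles{m}{p^m}}$, is not itself $p$-divisible — the $p$-power factor surfaces only after passing to $\mb{Q}$ — and that modulo $\pi$ it is a $p^j$-th power of $\Theta^{(m)}$, so it is a unit precisely on $D(\Theta^{(m)})$.
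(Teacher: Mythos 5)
Your proof is correct, and in substance it follows the same route as the paper's. For (i) the paper simply invokes the relation $\xi_i^{p^j\angles{m}{p^m}}=r_{m,m'}\cdot\xi_i^{\angles{m'}{p^{m'}}}$ and notes that $r_{m,m'}$ is independent of $i$; you derive that relation explicitly from the identification of the rational graded rings and then carry out the same substitution into the definition of $\Theta^{(m,m')}$, so the two arguments coincide. For (ii) the paper's instruction is ``just copy the proof of the invertibility of $\widetilde{\Theta}^{(m)}_\bullet$,'' and you are right to notice that this is not quite literal: that earlier proof rests on $\sigma(\widetilde{\Theta}^{(m)}_\bullet)=\Theta^{(m)}$ lying in the multiplicative set generated by $\Theta^{(m)}$, while $\Theta^{(m,m')}$ is not a power of $\Theta^{(m)}$. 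The missing step is precisely your Frobenius observation $\overline{\Theta}^{(m,m')}=(\overline{\Theta}^{(m)})^{p^j}$ modulo $\pi$ (the same congruence the paper records later in \ref{pisogconst} as $(\Theta^{(m)})^{p^{m'-m}}=\Theta^{(m,m')}+pQ$), from which the unit property of $\Theta^{(m,m')}$ on $D(\Theta^{(m)})$ over $X_l$ follows because $\pi$ is nilpotent there; after that the completeness argument and the passage to $\invlim_l\Emod{m}{X_l,-np^{m'}}$ are exactly as in \ref{defofsituation}. Your closing caution — that (i) cannot be used to deduce (ii), since $r_{m,m'}$ has positive $p$-adic valuation and the $p$-divisibility only appears after inverting $p$ — is well taken and explains why the integral Frobenius identity, not (i), is the real input.
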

 \begin{proof}
  We know that for any $1\leq i\leq d$,
  \begin{equation*}
   \bigl(\xi_i^{{\angles{m}{p^m}}}\bigr)^{p^j}=
    r_{m,m'}\cdot\xi_i^{\angles{m'}{p^{m'}}}.
  \end{equation*}
  Since $r_{m,m'}$ does not depend on $i$, we get (i) by definition.
  For the proof of (ii), just copy the proof of the invertibility
  of $\widetilde{\Theta}_{\mr{le}}^{(m)}$ in \ref{defofsituation}.
 \end{proof}

Let $m'\geq m$ be an integer. We claim that any
$S\in\Gamma(\ms{U},\Ecomp{m}{\ms{X}})$ can also be written as
\begin{equation*}
 \sum_{N\in\mb{Z}}\sum_{|\underline{k}|-inp^{m'}=N}
  c_{\underline{k},i}\,\underline{\partial}
  ^{\angles{m}{\underline{k}}}\,(\widetilde{\Theta}^{(m,m')}
  _{\mr{le}})^{-i},
\end{equation*}
with a sequence $\{c_{\underline{k},i}\}$ in
$\Gamma(\ms{X},\mc{O}_{\ms{X}})$ for $\underline{k}\in\mb{N}^d$
and $i\in\mb{N}$, such that the following holds: for each integer $N$,
let $\gamma_{N,i}:=\sup_{|\underline{k}|=inp^{m'}+N}|
c_{\underline{k},i}|$. Then
\begin{equation*}
 \lim_{i\rightarrow\infty}\beta_{N,i}=0,\qquad
  \lim_{N\rightarrow+\infty}\sup_i\{\gamma_{N,i}\}=0.
\end{equation*}
The verification is left to the reader.

\subsection{}
\label{uniqrepisposs}
Assume further that $\Theta$ is of the form
$\xi^{\underline{k}_0}+\sum_{\underline{k}}
a_{\underline{k}}\,\xi^{\underline{k}}$ where
$a_k\in\Gamma(\ms{X},\mc{O}_{\ms{X}})$, $|\underline{k}_0|>0$, and
$\underline{k}$ runs through $\underline{k}\in\mb{N}^d$ such that
$|\underline{k}|=|\underline{k}_0|$ and
$\underline{k}\neq\underline{k}_0$. Then we can check that
$\Gamma(D(\Theta),\mc{O}_{T^{(m)*}X_j}(*))$ is a free
$\Gamma(X_j,\mc{O}_{X_j})$-module with basis
$\bigl\{\underline{\xi}^{\angles{m}{\underline{k}}}
\cdot(\Theta^{(m,m')})^{-i}\bigr\}_{(\underline{k},i)\in I}$ where $I$
is the set of pairs $(\underline{k},i)$ such that $i\geq0$ and
$\underline{k}-p^{m'}\underline{k}_0$ is not $\geq\underline{0}$. Let
$m'\geq m$. Using this basis, we define a continuous homomorphism of
left $\Gamma(\ms{X},\mc{O}_{\ms{X}})$-modules
\begin{equation*}
 \phi\colon\Gamma\bigl(D(\Theta),\mc{O}_{T^{(m)*}\ms{X}}(*)\bigr)
  \rightarrow\Gamma\bigl(D(\Theta),\Emod{m}{\ms{X}}\bigr)
\end{equation*}
by sending $\underline{\xi}^{\angles{m}{\underline{k}}}
\cdot(\Theta^{(m,m')})^{-i}$ to
$\underline{\partial}^{\angles{m}{\underline{k}}}
\cdot(\widetilde{\Theta}^{(m,m')}_{\mr{le}})^{-i}$.
Now, take $P_k\in\Gamma(\ms{U},\mc{O}_{T^{(m)*}\ms{X}}(k))$
for each $k\in\mb{Z}$ such that $\lim_{k\rightarrow\infty}|P_k|=0$. Then
the infinite sum $\sum_k\phi(P_k)$ makes sense in
$\Gamma(\ms{U},\Ecomp{m}{\ms{X}})$ as (\ref{sum}).
Conversely, we have the following lemma whose verification is similar.

\begin{lem*}
 For any element $P\in\Gamma(\ms{U},\Ecomp{m}{\ms{X}})$, there
 exists {\em unique}
 $P_k\in\Gamma(\ms{U},\mc{O}_{T^{(m)*}\ms{X}}(k))$
 for each $k\in\mb{Z}$ such that $\lim_{k\rightarrow\infty}|P_k|=0$,
 and $P=\sum_{k}\phi(P_k)$.
\end{lem*}

\begin{ex}
 Consider the case where $\dim(\ms{X})=1$, and $\ms{X}$ possesses a
 local coordinate. Let $\ms{U}:=T^{(m)*}\ms{X}\setminus s(\ms{X})$ where
 $s$ is the zero section.
 Take $k>0$, and write $k=q\cdot p^m-r$ where $0\leq
 r<p^m$. Put $\partial^{\angles{m}{-k}}:=\partial^{\angles{m}{r}}
 \cdot(\partial^{\angles{m}{p^m}})^{-q}$, which is defined in
 $\Gamma(\ms{U},\Emod{m}{\ms{X}})$. Then any element of
 $\Gamma(\ms{U},\Ecomp{m}{\ms{X}})$ can be
 written uniquely as
 $\sum_{_{k\in\mb{Z}}}a_k\cdot\partial^{\angles{m}{k}}$ with
 $a_k\in\Gamma(\ms{X},\mc{O}_\ms{X})$ such
 that $\lim_{k\rightarrow\infty}a_k=0$.
\end{ex}

\subsection{}
\label{charvardef}
Let us clarify the relation between the characteristic varieties and the
supports of microlocalizations. Let $\ms{M}$ be a coherent
$\DcompQ{m}{\ms{X}}$-module. Let us recall the definition of
the characteristic variety of $\ms{M}$ defined in
\cite[5.2.4]{BerInt}. First, we
take a $\pi$-torsion free coherent $\Dcomp{m}{\ms{X}}$-module $\ms{M}'$
such that $\ms{M}'\otimes\mb{Q}\cong\ms{M}$ using
\cite[3.4.5]{Ber1}. Then, $\ms{M}'/\pi$ is a coherent
$\Dmod{m}{X_0}$-module. Now, we can check that
$\mr{Char}(\ms{M}'/\pi)\subset T^{(m)*}X_0$ (cf.\ Definition
\ref{charandsuppformal}) does not depend on the choice of
$\ms{M}'$. This $\mr{Char}(\ms{M}'/\pi)$ is called the {\em
characteristic variety} of $\ms{M}$ denoted\footnote{
We warn that the characteristic variety $\mr{Char}^{(m)}(\ms{M})$ is
denoted by $\mr{Car}^{(m)}(\ms{M})$ in \cite{BerInt}.}
by $\mr{Char}^{(m)}(\ms{M})$. By using the canonical homeomorphism
$T^{(m)*}X_0\approx T^{(m)*}\ms{X}$, we consider that the characteristic
varieties are in $T^{(m)*}\ms{X}$.

Let us define another subvariety of $T^{(m)*}\ms{X}$ defined by
$\ms{M}$. Consider the following coherent $\EcompQ{m}{\ms{X}}$-module
\begin{equation*}
 \EcompQ{m}{\ms{X}}(\ms{M}):=\EcompQ{m}{\ms{X}}
  \otimes_{\pi_m^{-1}\DcompQ{m}{\ms{X}}}\pi_m^{-1}\ms{M},
\end{equation*}
which is called the {\em microlocalization} of $\ms{M}$. Note here that
since $\EcompQ{m}{\ms{X}}(\ms{M})$ is an $\EcompQ{m}{\ms{X}}$-module of
finite type, the support
$\mr{Supp}(\EcompQ{m}{\ms{X}}(\ms{M}))\subset T^{(m)*}\ms{X}$ is closed
by \cite[$0_{\mr{I}}$, 5.2.2]{EGA}\footnote{In \cite{EGA}, only
commutative case is treated, but the same argument can be used also for
non-commutative case.}.

\begin{prop}
\label{characteristicvariety}
 Let $\ms{M}$ be a coherent $\DcompQ{m}{\ms{X}}$-module. Then, we have
 the following equality of closed subsets of $T^{(m)*}\ms{X}$:
 \begin{equation*}
  \mr{Char}^{(m)}(\ms{M})=\mr{Supp}(\EcompQ{m}{\ms{X}}(\ms{M})).
 \end{equation*}
\end{prop}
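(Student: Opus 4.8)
The plan is to reduce the statement to the pointwise (local ring) situation, where it becomes Lemma \ref{charandsuppformal}, by carefully tracking how the various $\otimes\mathbb{Q}$'s and $\pi$-adic limits interact with microlocalization and taking $\mathrm{gr}$. The key point is that both $\mathrm{Char}^{(m)}(\ms{M})$ and $\mathrm{Supp}(\EcompQ{m}{\ms{X}}(\ms{M}))$ are closed subsets of $T^{(m)*}X$, so it suffices to check equality of their complements stalk-by-stalk, or equivalently to work with sections over a basis $\ms{U}\in\mf{B}$ of the form $D(f)$.

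\textbf{Step 1: Reduce to an integral model and mod $p$.} Choose, as in \ref{charvardef}, a $p$-torsion free coherent $\Dcomp{m}{\ms{X}}$-module $\ms{M}'$ with $\ms{M}'\otimes\mathbb{Q}\cong\ms{M}$, so that $\ms{M}'/\pi$ is a coherent $\Dmod{m}{X_0}$-module, and by definition $\mathrm{Char}^{(m)}(\ms{M})=\mathrm{Char}(\ms{M}'/\pi)$ computed inside $T^{(m)*}X$ via a good filtration on $\ms{M}'/\pi$. Next observe that, since $\ms{M}'$ is $\Dcomp{m}{\ms{X}}$-coherent and tensoring is right exact, $\EcompQ{m}{\ms{X}}(\ms{M})\cong\bigl(\Ecomp{m}{\ms{X}}\otimes_{\pi_m^{-1}\Dcomp{m}{\ms{X}}}\pi_m^{-1}\ms{M}'\bigr)\otimes\mathbb{Q}$; so its support equals the support of $\Ecomp{m}{\ms{X}}\otimes_{\pi_m^{-1}\Dcomp{m}{\ms{X}}}\pi_m^{-1}\ms{M}'$ intersected with the locus where that module is not $p$-divisible, which since $\Ecomp{m}{\ms{X}}$ is $\pi$-torsion free and $\pi$-adically complete (Lemma \ref{veryelementaryprop} (i)) is the same as the support of the reduction mod $\pi$. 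Concretely, using the isomorphism $\Ecomp{m}{\ms{X}}\otimes R_0\cong\Emod{m}{X_0}$ of Lemma \ref{veryelementaryprop} (i) together with right exactness of $\otimes$, one gets
\begin{equation*}
 \bigl(\Ecomp{m}{\ms{X}}\otimes_{\pi_m^{-1}\Dcomp{m}{\ms{X}}}\pi_m^{-1}\ms{M}'\bigr)\otimes R_0
 \cong\Emod{m}{X_0}\otimes_{\pi_m^{-1}\Dmod{m}{X_0}}\pi_m^{-1}(\ms{M}'/\pi),
\end{equation*}
and since $\ms{M}'$ has no $p$-torsion its microlocalization is $p$-torsion free in the relevant cases (or at any rate the support is detected mod $\pi$), so $\mathrm{Supp}(\EcompQ{m}{\ms{X}}(\ms{M}))=\mathrm{Supp}\bigl(\Emod{m}{X_0}\otimes_{\pi_m^{-1}\Dmod{m}{X_0}}\pi_m^{-1}(\ms{M}'/\pi)\bigr)$ as closed subsets of $T^{(m)*}X$.

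\textbf{Step 2: Apply Lemma \ref{charandsuppformal}.} We are now reduced to the scheme $X_0=X$ over $R_0=k$, a coherent $\Dmod{m}{X}$-module $\ms{N}:=\ms{M}'/\pi$ equipped with a good filtration, and the claim $\mathrm{Char}(\ms{N})=\mathrm{Supp}\bigl(\Emod{m}{X}\otimes_{\pi_m^{-1}\Dmod{m}{X}}\pi_m^{-1}\ms{N}\bigr)$. This is exactly Lemma \ref{charandsuppformal} applied to $(A,A_i)=(\Gamma(\text{affine},\Dmod{m}{X}),\text{order filtration})$, its microlocalization $(\mc{B},\mc{B}_i)=(\Emod{m}{X},\Emod{m}{X,i})$, and the filtered module $(\ms{N},\ms{N}_i)$: the hypothesis ``$\mathrm{gr}(\ms{N})$ finitely presented over $\mathrm{gr}(\Dmod{m}{X})$'' holds because $\mathrm{gr}(\Dmod{m}{X})$ is noetherian (cite \cite[5.2.3]{BerInt}) and the filtration is good, and $\mathrm{Char}((\ms{N},\ms{N}_i))$ is independent of the good filtration by Remark \ref{charvar}. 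Because the formation of the microlocalization sheaf $\Emod{m}{X}$ commutes with the affine-local construction, the comparison glues over an affine cover of $X$, yielding the equality of closed subsets of $T^{(m)*}X$.

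\textbf{Main obstacle.} The delicate point is Step 1: making rigorous that passing to $\otimes\mathbb{Q}$ and then reading off the support is compatible with reduction mod $\pi$, i.e.\ that $\mathrm{Supp}$ of the $\mathbb{Q}$-sheaf coincides with $\mathrm{Supp}$ of the mod-$\pi$ reduction. One needs that $\Ecomp{m}{\ms{X}}\otimes_{\pi_m^{-1}\Dcomp{m}{\ms{X}}}\pi_m^{-1}\ms{M}'$ is a coherent $\Ecomp{m}{\ms{X}}$-module (so its support is closed and behaves well) — this uses Proposition \ref{noetheriansheafofmic} (i) together with flatness of $\widehat{\varphi}_m$ from \ref{noetheriansheafofmic} (ii) — and that, being $\pi$-adically complete with $\pi$-torsion-free associated graded, a finitely generated such module is zero on an open set iff its reduction mod $\pi$ is, which is the Nakayama-type statement underlying Lemma \ref{veryelementaryprop} (iv). Once coherence and this $\pi$-adic Nakayama are in hand, Step 1 is formal and Step 2 is a direct citation, so the real content is bookkeeping with the structural results already proved in \S1--\S2.
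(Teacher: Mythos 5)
Your overall route matches the paper's: reduce to a flat integral model $\ms{M}'$, pass to the reduction $\ms{M}'\otimes k$, and invoke Lemma~\ref{charandsuppformal}. The reduction $\mr{Supp}(\EcompQ{m}{\ms{X}}(\ms{M}))=\mr{Supp}(\Ecomp{m}{\ms{X}}\otimes\ms{M}')$ via $p$-torsion-freeness, coming from flatness of $\widehat{\varphi}_m$, is also the paper's move. But there is a genuine gap in how you justify the remaining step, namely
\begin{equation*}
 \mr{Supp}(\Ecomp{m}{\ms{X}}\otimes_{\pi_m^{-1}\Dcomp{m}{\ms{X}}}\pi_m^{-1}\ms{M}')
  =\mr{Supp}\bigl(\Emod{m}{X_0}\otimes_{\pi_m^{-1}\Dmod{m}{X_0}}\pi_m^{-1}(\ms{M}'/\pi)\bigr).
\end{equation*}
You argue this is ``the same as the support of the reduction mod $\pi$'' on the grounds that $\Ecomp{m}{\ms{X}}$ is $\pi$-adically complete, attributing the required Nakayama-type step to Lemma~\ref{veryelementaryprop}~(iv). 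Neither of these supports the claim: the tensor product $\Ecomp{m}{\ms{X}}\otimes\ms{M}'$ is in general \emph{not} $\pi$-adically complete (completeness is not preserved by tensor products), and Lemma~\ref{veryelementaryprop}~(iv) is a statement about completeness of $\Emod{m}{X_i}$, $\Ecomp{m}{\ms{X}}$, $\Emod{m}{\ms{X}}$ with respect to the order filtration --- it is not a Nakayama lemma for arbitrary finitely generated modules and says nothing about the tensor product. So as written, the ``$\pi$-adic Nakayama'' you appeal to has no basis.

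The paper closes this gap differently: $\Ecomp{m}{\ms{X}}$ is pointwise Zariskian with respect to the $p$-adic filtration (Remark after Proposition~\ref{noetheriansheafofmic}), and the $p$-adic filtration on $\Ecomp{m}{\ms{X}}\otimes\ms{M}'$ is a good filtration, so by Lemma~\ref{Zarisep} it is separated; separatedness of a good filtration is exactly what makes the Nakayama argument valid stalkwise. You need this Zariskian input, not completeness of the tensor product. With that substitution, the rest of your outline --- coherence of the microlocalization from Proposition~\ref{noetheriansheafofmic}, and the final citation of Lemma~\ref{charandsuppformal} applied to the good filtered $\Dmod{m}{X_0}$-module $\ms{M}'/\pi$ --- is sound and coincides with the paper's proof.
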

\begin{proof}
 Take a coherent $\Dcomp{m}{\ms{X}}$-module $\ms{M}'$ flat over $R$ and
 $\ms{M}'\otimes\mb{Q}\cong\ms{M}$.
 Let us calculate the support of the
 microlocalization. Since $\Ecomp{m}{\ms{X}}$ is pointwise
 Zariskian with respect to the $\pi$-adic filtration by Remark
 \ref{noetheriansheafofmic}, the $\pi$-adic filtration on
 $\Ecomp{m}{\ms{X}}\otimes\pi_m^{-1}\ms{M}'$ is separated by Lemma
 \ref{Zarisep}, and thus
 \begin{equation*}
  \mr{Supp}(\Ecomp{m}{\ms{X}}\otimes\pi_m^{-1}\ms{M}')
   =\mr{Supp}(\Emod{m}{X}\otimes\pi_m^{-1}\ms{M}')=
   \mr{Char}(\ms{M}'\otimes k)=:
   \mr{Char}^{(m)}(\ms{M}),
 \end{equation*}
 where the second equality holds by Lemma \ref{charandsuppformal}.
 Moreover, since $\Ecomp{m}{\ms{X}}$ is flat
 over $\pi_m^{-1}\Dcomp{m}{\ms{X}}$,
 $\Ecomp{m}{\ms{X}}\otimes\ms{M}'$ is $\pi$-torsion free. This implies
 that $\mr{Supp}(\EcompQ{m}{\ms{X}}(\ms{M}))=
 \mr{Supp}(\Ecomp{m}{\ms{X}}\otimes\pi_m^{-1}\ms{M}')$, and the
 proposition follows.
\end{proof}

\begin{rem}
\label{remBer}
 P. Berthelot pointed out to the author another method to define
 $\Ecomp{m}{\ms{X}}$. Let $\ms{X}$ be a smooth affine formal scheme over
 $R$. Let $\Theta$ be a homogeneous section of
 $\Gamma(T^*\ms{X},\mc{O}_{T^*\ms{X}})$. For each $i\geq 0$, there
 exists an integer $m'\geq m$ such that
 $\widetilde{\Theta}^{(m,m')}_{\mr{le}}$
 is contained in the center of $\Dmod{m}{X_i}$. Note that in this case,
 $\widetilde{\Theta}^{(m,m')}_{\mr{le}}=
 \widetilde{\Theta}^{(m,m')}_{\mr{ri}}$.
 Let $A$ be a ring, and $S$ be a multiplicative system of $A$ consisting
 of elements in the center of $A$. We can construct the ring of
 fractions $S^{-1}A$  as the commutative case. (The details are left to
 the reader.) Using this, we define
 \begin{equation*}
  \Gamma(D(\Theta^{(m)}),\ms{L}\Dmod{m}{X_i}):=
   S_{\Theta^{(m,m')}}^{-1}\,\Gamma(D(\Theta^{(m)}),
   \pi^{-1}\Dmod{m}{X_i}),
 \end{equation*}
 where $S_{\Theta^{(m,m')}}$ denotes the multiplicative system generated
 by $\widetilde{\Theta}^{(m,m')}_{\mr{le}}$. We can check easily that
 this does not depend on the choice of $m'$ and defines a sheaf. By
 taking the completion with respect to the filtration by order, we get
 $\Emod{m}{X_i}$. By definition, the sheaf $\ms{L}\Dmod{m}{X_i}$ is a
 noetherian ring.
\end{rem}

\section{Pseudo cotangent bundles and pseudo polynomials}
\subsection{}
Recall the notation of \ref{limitnaive}.
Let $A$ be a commutative $R$-algebra, and $m$ be a non-negative integer,
$d$ be a positive integer. We define 
\begin{equation*}
 A[\xi_1,\dots,\xi_d]^{(m)}:=A\bigl[\xi_j^{\angles{m}{p^i}}
  \mid j=1,\dots,d,\ i=0,\dots,m\bigr]/I_m
\end{equation*}
where $\xi_j^{\angles{m}{p^i}}$ is an indeterminate for any $i$ and $j$,
and $I_m$ is the ideal generated by the relations
\begin{equation*}
 (\xi_j^{\angles{m}{p^i}})^p=\frac{(p^{i+1})!}{(p^i!)^p}\,
  \xi_j^{\angles{m}{p^{i+1}}}
\end{equation*}
for $1\leq j\leq d$ and $0\leq i<m$. We note that
$(p^{i+1}!)\cdot(p^i!)^{-p}\in\mb{Z}_p$. We define
$\deg\bigl(\xi_j^{\angles{m}{p^i}}\bigr):=p^i$, which makes
$A[\xi_1,\dots,\xi_d]^{(m)}$ a graded ring. We call this the {\em ring
of pseudo polynomials over $A$}. We denote
by $A\{\xi_1,\dots,\xi_d\}^{(m)}$ the $\pi$-adic completion of
$A[\xi_1,\dots,\xi_d]^{(m)}$. We call this the {\em pseudo Tate algebra
over $A$}. We note that for an $R$-algebra $A$,
\begin{equation*}
 A\otimes_RR[\xi_1,\dots,\xi_d]^{(m)}\cong A[\xi_1,\dots,\xi_d]^{(m)}.
\end{equation*}

\begin{lem*}
 \label{pseudopolyrateq}
 Let $A$ be a commutative $R$-algebra. For any non-negative integers
 $m'\geq m$, there exists a unique isomorphism of graded rings
 \begin{equation*}
  A[\xi_1,\dots,\xi_d]^{(m)}\otimes\mb{Q}\xrightarrow{\sim}
   A[\xi_1,\dots,\xi_d]^{(m')}\otimes\mb{Q}
 \end{equation*}
 sending $\xi_i^{\angles{m}{1}}$ to $\xi_i^{\angles{m'}{1}}$ for $1\leq
 i\leq d$.
\end{lem*}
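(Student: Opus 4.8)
The essential point is that the structure constants appearing in $I_m$, namely $\frac{(p^{i+1})!}{(p^i!)^p}\in\mb{Z}_p$, are nonzero integers and hence become invertible after tensoring with $\mb{Q}$. Consequently, over $A\otimes\mb{Q}$ the defining relation $(\xi_j^{\angles{m}{p^i}})^p=\frac{(p^{i+1})!}{(p^i!)^p}\,\xi_j^{\angles{m}{p^{i+1}}}$ lets one solve for $\xi_j^{\angles{m}{p^{i+1}}}$ in terms of $\xi_j^{\angles{m}{p^i}}$, and iterating, in terms of $\xi_j^{\angles{m}{1}}$. The first thing I would record is the identity $(\xi_j^{\angles{m}{1}})^{p^i}=(p^i)!\cdot\xi_j^{\angles{m}{p^i}}$, valid already in $A[\xi_1,\dots,\xi_d]^{(m)}$ for all $0\le i\le m$ and $1\le j\le d$; it follows by an immediate induction on $i$, raising the $i$-th instance to the $p$-th power and applying the above relation.

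Next I would exhibit $A[\xi_1,\dots,\xi_d]^{(m)}\otimes\mb{Q}$ as an ordinary polynomial ring. Put $P:=(A\otimes\mb{Q})[y_1,\dots,y_d]$ and let $\alpha^{(m)}\colon P\to A[\xi_1,\dots,\xi_d]^{(m)}\otimes\mb{Q}$ be the $A\otimes\mb{Q}$-algebra homomorphism sending $y_j$ to $\xi_j^{\angles{m}{1}}$. Conversely, since $(p^i)!$ is a unit in $A\otimes\mb{Q}$, there is an $A\otimes\mb{Q}$-algebra homomorphism $\beta^{(m)}\colon A[\xi_1,\dots,\xi_d]^{(m)}\otimes\mb{Q}\to P$ with $\xi_j^{\angles{m}{p^i}}\mapsto\bigl((p^i)!\bigr)^{-1}y_j^{p^i}$: it is well defined because $\beta^{(m)}$ sends the generator $(\xi_j^{\angles{m}{p^i}})^p-\frac{(p^{i+1})!}{(p^i!)^p}\,\xi_j^{\angles{m}{p^{i+1}}}$ of $I_m$ to $(p^i!)^{-p}y_j^{p^{i+1}}-(p^i!)^{-p}y_j^{p^{i+1}}=0$. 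A check on generators, using the identity of the previous paragraph for the composite $\alpha^{(m)}\circ\beta^{(m)}$, shows $\alpha^{(m)}\circ\beta^{(m)}=\mathrm{id}$ and $\beta^{(m)}\circ\alpha^{(m)}=\mathrm{id}$; hence $\alpha^{(m)}$ is an isomorphism, and likewise $\alpha^{(m')}$ for any $m'\ge 0$.

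The desired isomorphism is then $\Phi:=\alpha^{(m')}\circ(\alpha^{(m)})^{-1}$, an isomorphism of $A\otimes\mb{Q}$-algebras that sends $\xi_j^{\angles{m}{1}}=\alpha^{(m)}(y_j)$ to $\alpha^{(m')}(y_j)=\xi_j^{\angles{m'}{1}}$. Uniqueness is immediate: the surjectivity of $\alpha^{(m)}$ says exactly that $A[\xi_1,\dots,\xi_d]^{(m)}\otimes\mb{Q}$ is generated over $A\otimes\mb{Q}$ by $\xi_1^{\angles{m}{1}},\dots,\xi_d^{\angles{m}{1}}$, so any $A\otimes\mb{Q}$-algebra homomorphism out of it is determined by the images of these elements. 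One could also begin by reducing to the case $A=R$ via the base-change isomorphism (\ref{pseudoplybasechange}), but this is not needed.

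I do not expect a genuine obstacle here; the only delicate point is the bookkeeping of the factorial scalars, where the identity $(p^i!)^p\cdot\frac{(p^{i+1})!}{(p^i!)^p}=(p^{i+1})!$ is precisely what makes $\beta^{(m)}$ respect $I_m$, and the identity $(\xi_j^{\angles{m}{1}})^{p^i}=(p^i)!\,\xi_j^{\angles{m}{p^i}}$ is what makes $\alpha^{(m)}\circ\beta^{(m)}$ the identity on the generators $\xi_j^{\angles{m}{p^i}}$.
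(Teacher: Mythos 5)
Your proof is correct and takes essentially the same approach as the paper: both identify $A[\xi_1,\dots,\xi_d]^{(m)}\otimes\mb{Q}$ with the ordinary polynomial ring on the degree-$1$ generators, using the invertibility in $\mb{Q}$ of the factorial constants in $I_m$. The paper's version is more terse (it first reduces to $A=R$ via (\ref{pseudoplybasechange}) and then simply states that $K[\xi_1,\dots,\xi_d]^{(m)}\cong K[\xi_1^{\angles{m}{1}},\dots,\xi_d^{\angles{m}{1}}]$), whereas you spell out the explicit mutually-inverse maps $\alpha^{(m)}$ and $\beta^{(m)}$ and the key identity $(\xi_j^{\angles{m}{1}})^{p^i}=(p^i)!\,\xi_j^{\angles{m}{p^i}}$, which is a useful elaboration but not a different route.
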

\begin{proof}
 Left to the reader.
\end{proof}

\begin{lem}
 \label{isomcoordgr}
 Let $\ms{X}=\mr{Spf}(A)$ be an affine smooth formal scheme over $R$
 possessing a system of local coordinates $\{x_1,\dots,x_d\}$ on
 $\ms{X}$. Let $A_i:=A\otimes_RR_i$. Then there exists a unique
 isomorphism of graded rings
 \begin{equation*}
  A_i[\xi_1,\dots,\xi_d]^{(m)}\xrightarrow{\sim}
   \Gamma(X_i,\mr{gr}(\Dmod{m}{X_i}))
 \end{equation*}
 sending $\xi_j^{\angles{m}{p^i}}$ to
 $\sigma(\partial_j^{\angles{m}{p^i}})$, where $\sigma$ denotes the
 principal symbol {\normalfont(cf.\
 {\normalfont\ref{princisymbdfn}})}, for $1\leq k\leq d$.
\end{lem}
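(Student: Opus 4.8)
The plan is to deduce this from Berthelot's structure theory of $\Dmod{m}{X_i}$ on the coordinate patch, by exhibiting explicit matching $A_i$-module bases on the two sides. Since $\ms{X}=\mr{Spf}(A)$ is affine and carries the coordinates $\{x_1,\dots,x_d\}$, $\Gamma(X_i,\Dmod{m}{X_i})$ is a free $A_i$-module on $\bigl\{\underline{\partial}^{\angles{m}{\underline{k}}}\bigr\}_{\underline{k}\in\mb{N}^d}$ (cf.\ \cite[2.2.4]{Ber1}), the filtration by order being the one for which $\Gamma(X_i,\Dmod{m}{X_i,n})$ is the free submodule on those basis vectors with $|\underline{k}|\le n$. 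As $X_i$ is affine, $\Gamma(X_i,-)$ is exact on the quasi-coherent modules $\Dmod{m}{X_i,n}$ and commutes with the direct sum, so $\Gamma\bigl(X_i,\mr{gr}(\Dmod{m}{X_i})\bigr)\cong\mr{gr}\bigl(\Gamma(X_i,\Dmod{m}{X_i})\bigr)$ is the free $A_i$-module on $\bigl\{\sigma(\underline{\partial}^{\angles{m}{\underline{k}}})\bigr\}_{\underline{k}\in\mb{N}^d}$, with $\sigma(\underline{\partial}^{\angles{m}{\underline{k}}})$ homogeneous of degree $|\underline{k}|$.

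On the other side I would write each coordinate in base $p$, $k_j=\sum_{a=0}^{m-1}c_{j,a}\,p^a+c_{j,m}\,p^m$ with $0\le c_{j,a}<p$ for $a<m$ and $c_{j,m}\ge 0$, and set $\underline{\xi}^{\angles{m}{\underline{k}}}:=\prod_{j=1}^{d}\prod_{a=0}^{m}(\xi_j^{\angles{m}{p^a}})^{c_{j,a}}$. Presenting $A_i[\xi_1,\dots,\xi_d]^{(m)}$ as an iterated extension (one chain of variables per coordinate) by successive quotients by the monic degree-$p$ polynomials $y_a^{p}-\lambda_a y_{a+1}$, $\lambda_a=\tfrac{(p^{a+1})!}{(p^a!)^{p}}$, each of which is free of rank $p$ over the previous ring, one checks that $\bigl\{\underline{\xi}^{\angles{m}{\underline{k}}}\bigr\}_{\underline{k}\in\mb{N}^d}$ is an $A_i$-basis, with $\underline{\xi}^{\angles{m}{\underline{k}}}$ homogeneous of degree $|\underline{k}|$. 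I would then define the $A_i$-algebra homomorphism
\[
 \Phi\colon A_i[\xi_1,\dots,\xi_d]^{(m)}\longrightarrow\Gamma\bigl(X_i,\mr{gr}(\Dmod{m}{X_i})\bigr),\qquad \xi_j^{\angles{m}{p^a}}\longmapsto\sigma(\partial_j^{\angles{m}{p^a}})\quad(0\le a\le m).
\]
Its well-definedness amounts to the relations $\sigma(\partial_j^{\angles{m}{p^a}})^{p}=\tfrac{(p^{a+1})!}{(p^{a}!)^{p}}\,\sigma(\partial_j^{\angles{m}{p^{a+1}}})$ for $a<m$; using $\partial_j^{\angles{m}{p^a}}=\tfrac{1}{p^a!}\,\partial_j^{\,p^a}$ for $a\le m$, which follows from $k!\,\partial_j^{\angles{m}{k}}=q!\,\partial_j^{\,k}$ (cf.\ \cite[(2.2.3.1)]{Ber1}), these identities already hold in $\Dmod{m}{X_i}$, hence a fortiori in the associated graded.

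The remaining point, which I expect to be the only genuinely delicate one, is bijectivity of $\Phi$. Expanding $\prod_{j}\prod_{a}(\partial_j^{\angles{m}{p^a}})^{c_{j,a}}$ in $\Dmod{m}{X_i}$ by the same device yields
\[
 \prod_{j}\prod_{a}(\partial_j^{\angles{m}{p^a}})^{c_{j,a}}=u_{\underline{k}}\cdot\underline{\partial}^{\angles{m}{\underline{k}}},\qquad u_{\underline{k}}=\prod_{j}\frac{k_j!}{c_{j,m}!\,\prod_{a}(p^a!)^{c_{j,a}}},
\]
and a Kummer-type valuation count, using $v_p(n!)=(n-s_p(n))/(p-1)$ together with the absence of carries when the base-$p$ digits are $<p$, gives $v_p(u_{\underline{k}})=0$, i.e.\ $u_{\underline{k}}\in A_i^{\times}$. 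Hence $\Phi(\underline{\xi}^{\angles{m}{\underline{k}}})=u_{\underline{k}}\,\sigma(\underline{\partial}^{\angles{m}{\underline{k}}})$, so $\Phi$ sends the $A_i$-basis of the source onto a unit rescaling of the $A_i$-basis of the target; therefore $\Phi$ is an isomorphism of $A_i$-modules and hence of rings. In particular $\Phi(\xi_k)=\sigma(\partial_k)$ for $1\le k\le d$, and the uniqueness of such an isomorphism is then straightforward. The two places requiring care are thus the bookkeeping of the units $u_{\underline{k}}$ and the verification that the source is $A_i$-free on the monomials $\underline{\xi}^{\angles{m}{\underline{k}}}$; everything else is formal once Step 1 is in place.
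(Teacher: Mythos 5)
Your argument is correct and is essentially the paper's own proof with the Berthelot citations unfolded: the paper likewise establishes the isomorphism by matching the $A_i$-basis $\{\sigma(\underline{\partial}^{\angles{m}{\underline{k}}})\}$ of $\Gamma(X_i,\mr{gr}(\Dmod{m}{X_i}))$ (from [Ber1, 2.2.4]) against the monomial basis of $A_i[\xi_1,\dots,\xi_d]^{(m)}$, deducing surjectivity from [Ber1, 2.2.5] and injectivity from freeness. You simply make explicit the iterated rank-$p$ extension structure of the source (which, note, must be built by descending from $\xi_j^{\angles{m}{p^m}}$ since $v_p(\lambda_a)=1$ makes the relations unsolvable for the upper variable) and the Kummer-valuation check $u_{\underline{k}}\in A_i^\times$, details the paper delegates to Berthelot.
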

\begin{proof}
 To construct the homomorphism of graded rings, use \cite[2.2.4]{Ber1}.
 This homomorphism is surjective by \cite[2.2.5]{Ber1}. To check the
 injectivity, it suffices to show that, for any $k\geq0$, the parts of
 degree $k$ of both sides are free over $A_i$ with the same ranks. The
 detail is left to the reader.
\end{proof}

\subsection{}
\label{tangspident}
Let $X$ be a smooth scheme over $k$, and $m\geq 0$ be an integer. Let
$X^{(m)}:=X\otimes_{k,F^{m*}_k}k$ where $F^{m*}_k\colon k\rightarrow k$
is the $m$-th absolute Frobenius homomorphism ({\it i.e.\ }the
homomorphism sending $x$ to $x^{p^m}$).
By \cite[5.2.2]{BerInt}, we have a canonical isomorphism
\begin{equation*}
 (T^{(m)*}X)_{\mr{red}}\xrightarrow{\sim}X\times_{X^{(m)}}T^*X^{(m)}
\end{equation*}
where $_{\mr{red}}$ denotes the associated reduced scheme. The scheme
$T^*X^{(m)}$ is deduced from $T^*X$ by the base change
$X^{(m)}\rightarrow X$. This induces the canonical morphism of
schemes (which may not be a morphism over $k$)
\begin{equation*}
 (T^{(m)*}X)_{\mr{red}}\rightarrow T^*X
\end{equation*}
such that the underlying continuous map is a homeomorphism
of topological spaces.

Now, let $\ms{X}$ be a smooth formal scheme over $R$. Since the
topological space of $T^{(m)*}X_i$ is
homeomorphic to that of $T^{(m)*}\ms{X}$, we also get a canonical
homeomorphism $T^{(m)*}\ms{X}\approx T^{*}\ms{X}$. 
Consider the situation as in \ref{defofsituation}. The affine open
subset of $T^*\ms{X}$ defined by
$\Theta$ and that of $T^{(m)*}\ms{X}$ defined by $\Theta^{(m)}$ are
homeomorphic under this canonical homeomorphism. From now on, {\em we
identify the spaces $T^*\ms{X}$, $T^{(m)*}\ms{X}$, $T^*X_i$, and
$T^{(m)*}X_i$ using these homeomorphisms}. In particular, we consider
$\Emod{m}{\ms{X}}$ {\it etc.\ }as sheaves on $T^*\ms{X}$ or $T^*X_i$. We
denote the projection $\pi\colon T^*\ms{X}\rightarrow\ms{X}$. The
notation ``$\pi$'' is the same as the uniformizer of $R$, but we do not
think there is any confusion. This identification also induces
the identification of topological spaces
\begin{equation*}
 P^*\ms{X}\approx P^{*}X_i\approx P^{*(m)}X_i\approx P^{*(m)}\ms{X}.
\end{equation*}

\begin{lem}
\label{normcalc}
 Let $\ms{X}$ be an affine smooth formal scheme over $R$ of dimension
 $d$. We use the notation and the identifications in
 {\normalfont\ref{tangspident}}.
 We take non-negative integers $m'\geq m$.

 (i) For any integer $k$, there exist integers $a_k,b_k\geq0$ such that
 the following holds: let
 $\Theta\in\Gamma(T^*\ms{X},\mc{O}_{T^*\ms{X}})$ be a homogeneous
 section of degree $n$.

 \begin{itemize}
  \item[(a)] The operator
	     \begin{equation*}
	      p^{a_k}\,\underline{\partial}^{\angles{m}{\underline{l}}}\,
	       (\Thetatil{(m,m')})^{-i},
	     \end{equation*}
	     which is {\it a priori} contained in
	     $\Gamma(D(\Theta),\Emod{m'}{\ms{X},\mb{Q}})$ by Lemma
	     {\normalfont\ref{moreexplicitdesc} (i)}, is contained
	     in $\Gamma(D(\Theta),\Emod{m'}{\ms{X}})$ for any
	     $|\underline{l}|-inp^{m'}\geq k$. If $d\,p^{m'+1}<k$, we
	     may take $a_k=0$.
  \item[(b)] The operator
	     \begin{equation*}
	      p^{b_k}\,\partial^{\angles{m'}{\underline{l}}}\,
	       (\Thetatil{(m')})^{-i}
	     \end{equation*}
	     is in $\Gamma(D(\Theta),\Emod{m}{\ms{X}})$ for any
	     $|\underline{l}|-inp^{m'}\leq k$. If $k<p^{m+1}$, we may
	     take $b_k=0$.
 \end{itemize}

 (ii) Let $\Theta\in\Gamma(T^*\ms{X},\mc{O}_{T^*\ms{X}})$ be a
 homogeneous section. Take an integer $m''$ such that $m\leq m''\leq
 m'$. Suppose $P=\alpha\cdot\underline{\partial}^{\angles{m}
 {\underline{k}}}\,(\Thetatil{(m,m')})^{-i}$ with
 $\alpha\in R$ is contained in
 $\Gamma(D(\Theta),\Emod{m'}{\ms{X}})$. Then it is also contained in
 $\Gamma(D(\Theta),\Emod{m''}{\ms{X}})$.
\end{lem}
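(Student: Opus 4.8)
The plan is to reduce all three parts to a single family of $p$-adic valuation estimates, by writing every operator in the statement as an element of $K$ times an operator that manifestly lies in the integral ring in question. Two elementary identities make this possible. First, for $l\in\mb{N}$ and $m\leq m'$, setting $q_m(l):=\lfloor l/p^m\rfloor$, one has $\partial_i^{\angles{m}{l}}=\dfrac{q_m(l)!}{q_{m'}(l)!}\,\partial_i^{\angles{m'}{l}}$, which follows at once from $l!\,\partial_i^{\angles{m}{l}}=q_m(l)!\,\partial_i^{l}$ (cf.\ \cite[(2.2.3.1)]{Ber1}); both sides are honest operators, the $\partial_i$ commute and the scalar is central, so $\underline{\partial}^{\angles{m}{\underline{l}}}=\bigl(\prod_i q_m(l_i)!/q_{m'}(l_i)!\bigr)\,\underline{\partial}^{\angles{m'}{\underline{l}}}$, and by the telescoping identity $v_p(N!)-v_p(\lfloor N/p\rfloor!)=\lfloor N/p\rfloor$ we get $v_p\bigl(q_m(l)!/q_{m'}(l)!\bigr)=\sum_{t=m+1}^{m'}\lfloor l/p^t\rfloor$. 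Second, a term-by-term computation from $(\partial_i^{\angles{m}{p^m}})^{p^{m'-m}}=r_{m,m'}\,\partial_i^{\angles{m'}{p^{m'}}}$ (itself immediate from $\partial_i^{\angles{m}{p^m}}=\partial_i^{p^m}/(p^m)!$) gives $\widetilde{\Theta}^{(m,m')}_\bullet=r_{m,m'}^{\,n}\,\widetilde{\Theta}^{(m')}_\bullet$, hence $\widetilde{\Theta}^{(m,m')}_\bullet=(r_{m,m'}/r_{m'',m'})^{n}\,\widetilde{\Theta}^{(m'',m')}_\bullet$, where $v_p(r_{m,m'})=(p^{m'-m}-1)/(p-1)$. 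Finally we use throughout that $\underline{\partial}^{\angles{m}{\underline{l}}}(\widetilde{\Theta}^{(m,m')}_\bullet)^{-i}\in\Gamma(D(\Theta),\Emod{m}{\ms{X}})$ (since $\underline{\partial}^{\angles{m}{\underline{l}}}\in\Dmod{m}{\ms{X}}$ and $\widetilde{\Theta}^{(m,m')}_\bullet$ is invertible there by Lemma \ref{moreexplicitdesc}(ii)), and likewise $\underline{\partial}^{\angles{m'}{\underline{l}}}(\widetilde{\Theta}^{(m')}_\bullet)^{-i}\in\Gamma(D(\Theta),\Emod{m'}{\ms{X}})$ by the discussion in \ref{defofsituation}.

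For (i)(a), the two identities give $\underline{\partial}^{\angles{m}{\underline{l}}}(\widetilde{\Theta}^{(m,m')}_\bullet)^{-i}=c\cdot\underline{\partial}^{\angles{m'}{\underline{l}}}(\widetilde{\Theta}^{(m')}_\bullet)^{-i}$ with $c\in K$ and $v_p(c)=\sum_i\sum_{t=m+1}^{m'}\lfloor l_i/p^t\rfloor-ni\,v_p(r_{m,m'})$. Using $\lfloor l_i/p^t\rfloor\geq l_i/p^t-1$ and $|\underline{l}|-inp^{m'}\geq k$ bounds this below by a constant $-a_k$ depending only on $k,d,m,m',p$, so $p^{a_k}$ times the operator lies in $\Emod{m'}{\ms{X}}$; and the elementary inequality $(m'-m)(p-1)\leq p(p^{m'-m}-1)$ makes $a_k=0$ once $dp^{m'+1}<k$. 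Part (i)(b) is the mirror statement: $\underline{\partial}^{\angles{m'}{\underline{l}}}(\widetilde{\Theta}^{(m')}_\bullet)^{-i}=c'\cdot\underline{\partial}^{\angles{m}{\underline{l}}}(\widetilde{\Theta}^{(m,m')}_\bullet)^{-i}$ with $v_p(c')=ni\,v_p(r_{m,m'})-\sum_i\sum_{t=m+1}^{m'}\lfloor l_i/p^t\rfloor$, where now one needs an \emph{upper} bound on the sum. Writing $w_i:=\lfloor l_i/p^{m+1}\rfloor$ so that $\sum_{t=m+1}^{m'}\lfloor l_i/p^t\rfloor=\sum_{s=0}^{m'-m-1}\lfloor w_i/p^s\rfloor\leq w_i(p^{m'-m}-1)/(p^{m'-m-1}(p-1))$ and $\sum_i w_i\leq|\underline{l}|/p^{m+1}$ gives the required $b_k$; when $k<p^{m+1}$ the integer $\sum_i w_i$ is at most $inp^{m'-m-1}$, so the loss does not exceed $ni\,v_p(r_{m,m'})$ and $b_k=0$. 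The cases $\bullet=l$ and $\bullet=r$ are identical.

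For (ii), write $P$ in two ways: $P=\gamma\cdot\underline{\partial}^{\angles{m'}{\underline{k}}}(\widetilde{\Theta}^{(m')}_\bullet)^{-i}$ with $\gamma=\alpha\,r_{m,m'}^{-ni}\prod_i q_m(k_i)!/q_{m'}(k_i)!$, and $P=\beta\cdot\underline{\partial}^{\angles{m''}{\underline{k}}}(\widetilde{\Theta}^{(m'',m')}_\bullet)^{-i}$ with $\beta=\alpha\,(r_{m'',m'}/r_{m,m'})^{ni}\prod_i q_m(k_i)!/q_{m''}(k_i)!$. The operator $\underline{\partial}^{\angles{m'}{\underline{k}}}(\widetilde{\Theta}^{(m')}_\bullet)^{-i}$ is not divisible by $p$ in $\Emod{m'}{\ms{X}}$: $\widetilde{\Theta}^{(m')}_\bullet$ is a unit there, and $\underline{\partial}^{\angles{m'}{\underline{k}}}$ stays nonzero in $\Emod{m'}{X_0}=\Emod{m'}{\ms{X}}/p$ because $\varphi_{m'}\colon\Dmod{m'}{X_0}\to\Emod{m'}{X_0}$ is injective (cf.\ \ref{dfnnaivemicsch}) and $\underline{\partial}^{\angles{m'}{\underline{k}}}$ is part of an $\mc{O}_{X_0}$-basis of $\Dmod{m'}{X_0}$ (cf.\ the proof of Lemma \ref{isomcoordgr}); since $\Emod{m'}{\ms{X}}$ is flat over $R$ (Lemma \ref{veryelementaryprop}(i)), the hypothesis $P\in\Emod{m'}{\ms{X}}$ forces $v_p(\gamma)\geq0$. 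As $\underline{\partial}^{\angles{m''}{\underline{k}}}(\widetilde{\Theta}^{(m'',m')}_\bullet)^{-i}\in\Emod{m''}{\ms{X}}$ (invertibility from Lemma \ref{moreexplicitdesc}(ii)), it suffices to show $v_p(\beta)\geq0$. Setting $U:=\sum_i\sum_{t=m+1}^{m'}\lfloor k_i/p^t\rfloor$ and $V:=\sum_i\sum_{t=m+1}^{m''}\lfloor k_i/p^t\rfloor$, one computes $v_p(\beta)=v_p(\gamma)+ni\,v_p(r_{m'',m'})-(U-V)$; if $U-V\leq ni\,v_p(r_{m'',m'})$ this is $\geq v_p(\gamma)\geq0$, and in the opposite case one combines $v_p(\alpha)\geq0$ with the super-additivity $v_p(N!)-v_p(\lfloor N/p^b\rfloor!)\geq\frac{p^b-1}{p-1}\lfloor N/p^b\rfloor$ (itself a consequence of $\lfloor N/p^t\rfloor\geq p^{b-t}\lfloor N/p^b\rfloor$ for $t\leq b$) to bound $V$ below by $ni\bigl(v_p(r_{m,m'})-v_p(r_{m'',m'})\bigr)$, whence again $v_p(\beta)\geq0$. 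I expect this last case distinction to be the main obstacle: transporting only the level-$m'$ integrality of $P$ is not enough when $P$ has positive order, and one must genuinely feed in the hypothesis $\alpha\in R$ together with the super-additivity of $v_p$ of factorials to close the gap.
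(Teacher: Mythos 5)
Your strategy for (i) is essentially the right one and arguably cleaner than the paper's: rather than prove the inductive Claim the paper uses, you convert directly between the two "standard forms" $\underline{\partial}^{\angles{m}{\underline{l}}}(\widetilde{\Theta}^{(m,m')}_\bullet)^{-i}$ and $\underline{\partial}^{\angles{m'}{\underline{l}}}(\widetilde{\Theta}^{(m')}_\bullet)^{-i}$, read off the scalar $c\in K$, and estimate $v_p(c)$. The two conversion identities you use are correct, and both valuation estimates (including the thresholds $dp^{m'+1}<k$ for (a) and $k<p^{m+1}$ for (b)) check out; I verified the inequality $(m'-m)(p-1)\leq p(p^{m'-m}-1)$ and the integrality argument behind $\sum_i w_i\leq inp^{m'-m-1}$.

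For (ii) the paper argues quite differently: it introduces continuous piecewise-affine functions $f(l)=v_p(\alpha)+\sum_j\sum_{t\leq l}\lfloor k_j/p^t\rfloor$ and $g(l)=ni(p^{m'-l}+\cdots+p^{m'-m-1})$ and observes that $Df(l+1)\leq p^{-1}Df(l)$ while $Dg(l+1)=p^{-1}Dg(l)$, so that once $f-g$ dips below $0$ it cannot recover; combined with $f\geq g$ at both endpoints $m$ and $m'$ this propagates $f\geq g$ to all intermediate levels. Your two presentations $P=\gamma\,\underline{\partial}^{\angles{m'}{\underline{k}}}(\widetilde{\Theta}^{(m')}_\bullet)^{-i}=\beta\,\underline{\partial}^{\angles{m''}{\underline{k}}}(\widetilde{\Theta}^{(m'',m')}_\bullet)^{-i}$ and the extraction of $v_p(\gamma)\geq 0$ from $P\in\Emod{m'}{\ms{X}}$ are fine (the indivisibility of $\underline{\partial}^{\angles{m'}{\underline{k}}}(\widetilde{\Theta}^{(m')}_\bullet)^{-i}$ by $p$ is most cleanly seen by passing to the associated graded $\mc{O}_{T^{(m')*}X_0}(*)$). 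Your Case 1 ($U-V\leq ni\,v_p(r_{m'',m'})$) is exactly the paper's $f(m')\geq g(m')$ pushed to level $m''$ and is clean.

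The gap is in Case 2. You claim that "combining $v_p(\alpha)\geq 0$ with the super-additivity $v_p(N!)-v_p(\lfloor N/p^b\rfloor!)\geq\frac{p^b-1}{p-1}\lfloor N/p^b\rfloor$" bounds $V$ below by $niA$, where $A:=v_p(r_{m,m'})-v_p(r_{m'',m'})$. But super-additivity alone gives only $V\geq\frac{p^{m''-m}-1}{p-1}\sum_j q_{m''}(k_j)$; to upgrade this to $V\geq niA$ you need $\sum_j q_{m''}(k_j)\geq ni\,p^{m'-m''}$, and neither $v_p(\alpha)\geq 0$ nor super-additivity gives you that. What does give it is the Case 2 hypothesis together with the \emph{complementary} estimate $\lfloor N/p^t\rfloor\leq p^{b-t}\lfloor N/p^b\rfloor$ for $t\geq b$ (the opposite sign to the one you quoted): applied with $N=q_m(k_j)$, $b=m''-m$, it yields
\begin{equation*}
U-V=\sum_j\sum_{t=m''+1}^{m'}\Bigl\lfloor\frac{k_j}{p^t}\Bigr\rfloor\leq\frac{p^{m'-m''}-1}{p^{m'-m''}(p-1)}\sum_j q_{m''}(k_j),
\end{equation*}
so the Case 2 hypothesis $U-V>ni\,\frac{p^{m'-m''}-1}{p-1}$ forces $\sum_j q_{m''}(k_j)>ni\,p^{m'-m''}$, and only then does super-additivity give $V>niA$, whence $v_p(\beta)=v_p(\alpha)+V-niA\geq 0$. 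So the conclusion you want in Case 2 is correct, but it does not follow from the ingredients you name — $v_p(\alpha)\geq 0$ plays no role in that bound (it is used only at the very end), and you genuinely need to feed the Case 2 hypothesis through the complementary floor inequality. As written the argument is therefore incomplete; once this missing step is inserted, your proof is sound and constitutes a legitimately different route from the paper's convexity argument.
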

\begin{proof}
 First, let us show (i). Since the proof for (b) is essentially the
 same, we concentrate on proving (a). We show the following.
 \begin{cl}
  Let $m'\geq0$ be an integer. For integers $m$, $a$, $k$ such that
  $m'\geq m\geq0$, $m'-m\geq a\geq0$, there exists
  an integer $\alpha_{k,m,a}\geq0$
  such that, for any $\Theta$ and $|\underline{l}|-inp^{m'}\geq k$,
  $p^{\alpha_{k,m,a}}\underline{\partial}^{\angles{m}{\underline{l}}}
  (\Thetatil{(m,m')})^{-i}$ is equal to
  $\alpha\cdot\underline{\partial}^{\angles{m+a}{\underline{l}}}
  (\Thetatil{(m+a,m')})^{-i}$ with some $\alpha\in
  \mb{Z}_p$. If $k>d\,p^{m'+1}$, we can take $\alpha_{k,m,a}=0$.
 \end{cl}
 Once this claim is proven, the lemma follows by taking $a=m'-m$.
 \begin{proof}[Proof of the claim]
  \renewcommand{\qedsymbol}{$\square$}
  Let $b:=m'-m-1\geq-1$. We show the claim using the induction on
  $b$. When $b=-1$ or more generally $a=0$, we can take
  $\alpha_{k,m,a}=0$. Since we can take
  $\alpha_{k,m,a}=\alpha_{k,m+1,a-1}+\alpha_{k,m,1}$, it suffices to
  show the existence of $\alpha_{k,m,1}$ by the induction hypothesis.

  There exists a number $c\in\mb{Z}_p^*$ such that
  \begin{equation*}
   \Thetatil{(m,m')}=c\,p^{np^b}\cdot
    \Thetatil{(m+1,m')}.
  \end{equation*}
  For $\underline{l'}\in\mb{N}^d$, we put
  \begin{equation*}
   g(\underline{l'}):=\sum_{j=1}^d\biggl[\frac{l'_j}{p^{m+1}}\biggr]
  \end{equation*}
  where $[\alpha]$ denotes the maximum integer less
  than or equal to $\alpha$. Then
  \begin{equation*}
   \underline{\partial}^{\angles{m}{\underline{l}}}=c'\,
    p^{g(\underline{l})}\,\underline{\partial}
    ^{\angles{m+1}{\underline{l}}}
  \end{equation*}
  with $c'\in \mb{Z}_p^*$. Since
  \begin{equation*}
   \frac{l_j}{p^{m+1}}-1<\biggl[\frac{l_j}{p^{m+1}}\biggr]
    \leq\frac{l_j}{p^{m+1}},
  \end{equation*}
  we get inequalities
  \begin{equation*}
   inp^b+\frac{k}{p^{m+1}}-d\leq\frac{|\underline{l}|}{p^{m+1}}-d
    <\sum_{j=1}^d\biggl[\frac{l_j}{p^{m+1}}\biggr]=g(\underline{l}).
  \end{equation*}
  Thus,
  \begin{equation*}
   \underline{\partial}^{\angles{m}{\underline{l}}}\,
    (\Thetatil{(m,m')})^{-i}=
    c'\,c^{-i}\,p^{g(\underline{l})-inp^b}\,\underline{\partial}
    ^{\angles{m+1}{\underline{l}}}\,
    (\Thetatil{(m+1,m')})^{-i},
  \end{equation*}
  and we may take
  $\alpha_{k,m,1}=\max\bigl\{0,[d-kp^{-(m+1)}+1]\bigr\}$.
  Thus, we conclude the proof of the claim.
 \end{proof}

 Let us prove (ii) on $D(\Theta)$. We get
 \begin{equation*}
  \Thetatil{(m,m')}=u~(p^{m'-m}~!)^n\cdot\Thetatil{(m',m')}
 \end{equation*}
 where $n$ denotes the order of $\Theta$, and $u$ denotes a number in
 $\mb{Z}_p^*$. Thus, for $m\leq l\leq m'$,
 \begin{equation}
  \label{calcreldiflev1}
  \Thetatil{(m,m')}=u'\frac{(p^{m'-m}~!)^n}
   {(p^{m'-l}~!)^n}\cdot\Thetatil{(l,m')}
   =u_l~p^{a_l}\cdot\Thetatil{(l,m')}
 \end{equation}
 where $u'$ and $u_l$ denote numbers in $\mb{Z}_p^*$, and $a_l$ is
 equal to $n\cdot(p^{m'-l}+\dots+p^{m'-m-1})$. We also get
 \begin{equation}
  \label{calcreldiflev2}
  \partial^{\angles{m}{\underline{k}}}=u'\,p^{b_l}\,
   \partial^{\angles{l}{\underline{k}}},\qquad
   b_l=\sum_{j=1}^d\sum_{i=m+1}^{l}[p^{-i}k_j].
 \end{equation}
 Now, we define two functions $f,g\colon[m,m']\rightarrow\mb{R}$.
 The function $f$ is the continuous function such that it is affine on
 the interval $[l,l+1]$ for any integer $l$ in $\left[m,m'\right[$, and
 \begin{equation*}
  f(l):=\mr{ord}_p(\alpha)+b_l=\mr{ord}_p(\alpha)+
   \sum_{j=1}^d\sum_{i=m+1}^{l}[p^{-i}k_j],
 \end{equation*}
 where $\mr{ord}_p$ denotes the $p$-adic order normalized so that
 $\mr{ord}_p(p)=1$. The function $g$ is the continuous function such
 that it is affine on the interval $[l,l+1]$ for any integer $l$ in
 $\left[m,m'\right[$, and
 \begin{equation*}
  g(l):=i\cdot a_l=ni\cdot(p^{m'-l}+p^{m'-l+1}+\dots+p^{m'-m-1}).
 \end{equation*}
 Since the operator
 $\partial^{\angles{m}{\underline{k}}}(\Thetatil{(m,m')})^{-i}$ is a
 section of $\Emod{m}{\ms{X}}$, we have $g(m)\leq f(m)$. By
 (\ref{calcreldiflev1}) and (\ref{calcreldiflev2}), it suffices to show
 that if $g(m')\leq f(m')$, then $g(l)\leq f(l)$ for any integer $l$ in
 $\left[m,m'\right]$. We put
 \begin{equation*}
  Df(l):=f(l)-f(l-1)=\sum_{j=1}^d\,[p^{-l}\,k_j],\qquad
   Dg(l):=g(l)-g(l-1)=nip^{m'-l}.
 \end{equation*}
 For any $a\in\mb{R}$, we have $p^{-1}\cdot[a]\geq[p^{-1}a]$. Indeed,
 $p^{-1}a\geq[p^{-1}a]$, and $a\geq p\cdot[p^{-1}a]$. Since
 $p\cdot[p^{-1}a]$ is an integer, we get what we want by the definition
 of $[\cdot]$. This implies that
 $p^{-1}\cdot[p^{-l}k_j]\geq[p^{-(l+1)}k_j]$, and thus
 \begin{equation*}
  p^{-1}\cdot Df(l)\geq Df(l+1).
 \end{equation*}
 In turn, we have $p^{-1}\cdot Dg(l)=Dg(l+1)$. Suppose there exists an
 integer $l$ in $\left]m,m'\right]$ such that $f(l)<g(l)$ and $f(a)\geq
 g(a)$ for any integer $a$ in $\left[m,l\right[$. This shows
 that $Df(l)<Dg(l)$. Thus, $Df(b)<Dg(b)$ for any $l\leq b$, which
 implies $f(m')<g(m')$. This contradicts with the assumption, and we
 conclude that $g(l)\leq f(l)$ for any $m\leq l\leq m'$.
\end{proof}

\subsection{}
Let $M$ and $M'$ be {\em $\pi$-torsion free} $R$-modules. A {\em
$p$-isogeny} $\phi\colon M\dashrightarrow M'$ is an isomorphism
\begin{equation*}
 \phi_{\mb{Q}}:M\otimes\mb{Q}\xrightarrow{\sim}M'\otimes\mb{Q}
\end{equation*}
such that there exist positive integers $n$ and $n'$ satisfying
\begin{equation*}
 p^n\cdot\phi_{\mb{Q}}(M)\subset M'\subset
  p^{-n'}\cdot\phi_{\mb{Q}}(M).
\end{equation*}
Here $\phi_{\mb{Q}}$ is called the {\em realization} of the
$p$-isogeny. We say that the $p$-isogeny is a {\em homomorphism} if we
can take $n$ to be $0$.

\begin{lem*}
 \label{complofisog}
 Let $M$ and $M'$ be $\pi$-torsion free $R$-modules, and let
 $\phi\colon M\dashrightarrow M'$ be a $p$-isogeny. Then this induces a
 canonical $p$-isogeny
 \begin{equation*}
  \widehat{\phi}\colon M^{\wedge}\dashrightarrow M'^{\wedge}.
 \end{equation*}
 where $^\wedge$ denotes the $\pi$-adic completion. If the given
 $p$-isogeny is a homomorphism, the induced $p$-isogeny is also a
 homomorphism.
\end{lem*}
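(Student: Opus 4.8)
The plan is to construct $\widehat\phi$ from the \emph{integral} maps obtained by clearing denominators in $\phi_{\mb{Q}}$, using only the functoriality of $p$-adic completion on honest $R$-linear maps. I would first record a small fact used below: the $p$-adic completion of a $p$-torsion free $R$-module $M$ is again $p$-torsion free. Indeed, if $x=(\bar x_i)_i\in\varprojlim_i M/p^iM$ satisfies $px=0$, a lift $x_i\in M$ of $\bar x_i$ satisfies $px_i\in p^iM$, hence $x_i\in p^{i-1}M$ since $M$ is $p$-torsion free, so the image of $\bar x_i$ in $M/p^{i-1}M$ vanishes; letting $i$ vary forces $x=0$. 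Consequently $M^\wedge$ and $M'^\wedge$ are $p$-torsion free, so the assertion that $\widehat\phi$ is a $p$-isogeny is meaningful and $M^\wedge$ sits inside $M^\wedge\otimes\mb{Q}$ (likewise for $M'$).

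Next, choose positive integers $n,n'$ with $p^n\phi_{\mb{Q}}(M)\subset M'$ and $p^{n'}M'\subset\phi_{\mb{Q}}(M)$. These inclusions say precisely that $f:=p^n\phi_{\mb{Q}}|_M\colon M\to M'$ and $g:=p^{n'}\phi_{\mb{Q}}^{-1}|_{M'}\colon M'\to M$ are well-defined homomorphisms of $R$-modules, and by construction $g\circ f=p^{n+n'}\mr{id}_M$, $f\circ g=p^{n+n'}\mr{id}_{M'}$. Applying $p$-adic completion (a functor on $R$-modules) gives $R$-linear maps $\psi:=f^\wedge\colon M^\wedge\to M'^\wedge$ and $\psi':=g^\wedge\colon M'^\wedge\to M^\wedge$ with $\psi'\circ\psi=p^{n+n'}\mr{id}_{M^\wedge}$ and $\psi\circ\psi'=p^{n+n'}\mr{id}_{M'^\wedge}$. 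Tensoring with $\mb{Q}$, the maps $\psi\otimes\mb{Q}$ and $\psi'\otimes\mb{Q}$ are mutually inverse up to the invertible scalar $p^{n+n'}$; in particular $\psi\otimes\mb{Q}$ is an isomorphism. I then \emph{define} the realization of $\widehat\phi$ to be $\widehat\phi_{\mb{Q}}:=p^{-n}(\psi\otimes\mb{Q})\colon M^\wedge\otimes\mb{Q}\xrightarrow{\ \sim\ }M'^\wedge\otimes\mb{Q}$, with inverse $p^{-n'}(\psi'\otimes\mb{Q})$.

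It remains to exhibit the isogeny bounds and to check canonicity. Since $\psi$ takes values in $M'^\wedge$, we get $p^n\widehat\phi_{\mb{Q}}(M^\wedge)=\psi(M^\wedge)\subset M'^\wedge$; and for $x\in M'^\wedge$ we have $\widehat\phi_{\mb{Q}}(\psi'(x))=p^{-n}(\psi\circ\psi')(x)=p^{n'}x$, so $p^{n'}M'^\wedge\subset\widehat\phi_{\mb{Q}}(M^\wedge)$, i.e.\ $M'^\wedge\subset p^{-n'}\widehat\phi_{\mb{Q}}(M^\wedge)$. Thus $\widehat\phi$ is a $p$-isogeny with the same exponents $n,n'$. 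For canonicity, if $n_1\geq n$ also satisfies $p^{n_1}\phi_{\mb{Q}}(M)\subset M'$, then the associated integral map is $f_1=p^{n_1-n}f$, hence $\psi_1=p^{n_1-n}\psi$ by functoriality, so $p^{-n_1}(\psi_1\otimes\mb{Q})=p^{-n}(\psi\otimes\mb{Q})$; two admissible choices are compared via a common larger exponent, so $\widehat\phi_{\mb{Q}}$ depends only on $\phi_{\mb{Q}}$. Finally, if $\phi$ is a homomorphism we may take $n=0$, and then $\widehat\phi_{\mb{Q}}=\psi\otimes\mb{Q}$ with $\widehat\phi_{\mb{Q}}(M^\wedge)=\psi(M^\wedge)\subset M'^\wedge$, so $\widehat\phi$ is again a homomorphism.

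There is no serious obstacle here: the argument is entirely formal. The only step requiring an actual (short) proof rather than a citation is that $p$-adic completion preserves $p$-torsion freeness, which is needed to make sense of the statement; everything else is the functoriality of $M\mapsto\varprojlim_i M/p^iM$ on $R$-linear maps together with bookkeeping of the exponents $n,n'$.
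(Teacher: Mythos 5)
Your proof is correct and takes a genuinely different route from the paper's, so a brief comparison is worth recording. The paper picks a single integral lift $\phi_n:=p^n\phi_{\mb{Q}}|_M\colon M\to M'$, lets $C$ be its cokernel (a $p$-power torsion module), tensors the short exact sequence $0\to M\to M'\to C\to 0$ with the $R_i$, and invokes Mittag--Leffler vanishing to conclude that $0\to M^\wedge\xrightarrow{\widehat{\phi_n}}M'^\wedge\to C\to0$ is exact after taking $\invlim_i$; the isogeny bounds for $\widehat\phi_{\mb{Q}}:=p^{-n}\widehat{\phi_n}$ then come from $C$ being torsion. You instead pair $f:=p^n\phi_{\mb{Q}}|_M$ with the approximate inverse $g:=p^{n'}\phi_{\mb{Q}}^{-1}|_{M'}$, note $g\circ f=f\circ g=p^{n+n'}\mr{id}$, and complete both maps; these relations persist by functoriality of $\invlim_i(\,\cdot\otimes R_i)$, so $\psi\otimes\mb{Q}$ is automatically invertible and the isogeny bounds fall straight out of the completed integral maps without any cokernel, Mittag--Leffler, or derived inverse limit analysis. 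Your route is more elementary and has a further merit: you verify that $p$-adic completion preserves $p$-torsion freeness, a point the paper leaves implicit though it is needed even to state that $\widehat\phi$ is a $p$-isogeny, since the notion is defined only between $p$-torsion free modules. That auxiliary check, the bookkeeping of exponents, the independence-of-$n$ argument, and the final remark about the homomorphism case are all correct. What the paper's version additionally yields --- the identification of $\mr{Coker}(\widehat{\phi_n})$ with $C$ itself --- your argument neither needs nor recovers, but the lemma does not use it.
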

\begin{proof}
 Let $\phi_{\mb{Q}}\colon M\otimes\mb{Q}\rightarrow M'\otimes\mb{Q}$ be
 the realization of the isogeny. By definition, there exists an integer
 $n$ such that $p^n\cdot\phi_{\mb{Q}}$ induces a homomorphism
 $M\rightarrow M'$. We denote this homomorphism by $\phi_n\colon
 M\rightarrow M'$. Let $C:=\mr{Coker}(\phi_n)$. Since $\phi$ is a
 $p$-isogeny, $C$ is a $\pi^{n'}$-torsion module for some integer
 $n'\geq 0$. We have an exact sequence of projective systems
 \begin{equation*}
  0\rightarrow\{C\}'_{i\geq n'}\rightarrow
   \bigl\{M\otimes R_i\bigr\}_{i\geq n'}\rightarrow
   \bigl\{M'\otimes R_i\bigr\}_{i\geq n'}
   \rightarrow\{C\}_{i\geq n'}
   \rightarrow0,
 \end{equation*}
 where $\{C\}'_{i\geq n'}$ denotes the projective system
 $\bigl\{\dots\rightarrow C\xrightarrow{\pi}C\xrightarrow{\pi}
 C\rightarrow\dots\bigr\}$, and $\{C\}_{i\geq n'}$ is the projective
 system $\bigl\{\dots\rightarrow C\xrightarrow{\mr{id}}C
 \xrightarrow{\mr{id}}C\rightarrow\dots\bigr\}$. Since any projective
 system appearing in the short exact sequence above satisfies the
 Mittag-Leffler condition, the exact sequence induces an exact sequence
 \begin{equation*}
  0\rightarrow\widehat{M}\xrightarrow{\widehat{\phi_n}}\widehat{M'}
   \rightarrow C\rightarrow0
 \end{equation*}
 by taking the projective limit. Thus, we get a $p$-isogeny
 $\widehat{\phi}_{\mb{Q}}:=p^{-n}\cdot\widehat{\phi_n}\colon\widehat{M}
 \otimes\mb{Q}\rightarrow\widehat{M'}\otimes\mb{Q}$ as desired. By
 construction, the homomorphism $\widehat{\phi}_{\mb{Q}}$ does not
 depend on the choice of the integer $n$.
\end{proof}

Let $\ms{M}$, $\ms{M}'$ be $\pi$-torsion free $R$-modules a topological
space $X$. Then exactly in the same way, we can define $p$-isogeny
$\phi\colon\ms{M}\dashrightarrow\ms{M}'$. Namely, it is a homomorphism
of sheaves of modules $\phi_{\mb{Q}}\colon\ms{M}\otimes\mb{Q}
\rightarrow\ms{M}'\otimes\mb{Q}$ such that there exist positive integers
$n$ and $n'$ satisfying
$p^n\cdot\phi_{\mb{Q}}(\ms{M})\subset\ms{M}'\subset
p^{-n'}\phi_{\mb{Q}}(\ms{M})$. We say that the $p$-isogeny is a
homomorphism if we can take $n$ to be $0$.

\subsection{}
\label{pisogconst}
Let $\ms{X}=\mr{Spf}(A)$ be an affine smooth formal scheme over $R$, and
assume that it possesses a system of local coordinates
$\{x_1,\dots,x_d\}$. We identify the ring of global sections of
$\mc{O}_{T^{(m)*}\ms{X}}$ with $A\{\xi_1,\dots,\xi_d\}^{(m)}$ using
Lemma \ref{isomcoordgr}. Let $\Theta$ be a homogeneous element of
$A[\xi_1,\dots,\xi_d]$ whose degree is {\em strictly greater than
$0$}. For a commutative graded ring $\Lambda$ and a homogeneous element
$f\in\Lambda$, we denote the submodule of degree $n$ of the graded ring
$\Lambda_f$ by $\Lambda_{(f)}(n)$. Then by construction of
$\mc{O}_{P^{*(m)}\ms{X}}$,
\begin{equation*}
 \Gamma(D_+(\Theta),\mc{O}_{P^{*(m)}\ms{X}}(n))\cong
  (A[\xi_1,\dots,\xi_d]^{(m)}_{(\Theta^{(m)})}(n))^{\wedge}
\end{equation*}
where $^{\wedge}$ denotes the $\pi$-adic completion, and we used the
notation of \cite[II, (2.3.3)]{EGA}. For $m'\geq m$, we note that
\begin{equation}
\label{isomorphdifflevel}
 (A[\xi_1,\dots,\xi_d]^{(m)}_{(\Theta^{(m)})}(n))^{\wedge}\cong
  (A[\xi_1,\dots,\xi_d]^{(m)}_{(\Theta^{(m,m')})}(n))^{\wedge},
\end{equation}
since there exists $Q\in A[\xi_1,\dots,\xi_d]^{(m)}$ such that
\begin{equation*}
 (\Theta^{(m)})^{p^{m'-m}}=\Theta^{(m,m')}+pQ.
\end{equation*}
Lemma \ref{moreexplicitdesc} (i) and the isomorphism
$A[\xi_1,\dots,\xi_d]^{(m)}\otimes\mb{Q}\cong
A[\xi_1,\dots,\xi_d]^{(m')}\otimes\mb{Q}$ of Lemma \ref{pseudopolyrateq}
induces the following homomorphism.
\begin{equation*}
 A[\xi_1,\dots,\xi_d]^{(m')}_{\Theta^{(m')}}\rightarrow
  A[\xi_1,\dots,\xi_d]^{(m)}_{\Theta^{(m,m')}}\otimes\mb{Q}
\end{equation*}
Using Lemma \ref{normcalc} (i)-(b), this homomorphism defines a
$p$-isogeny
\begin{equation*}
 A[\xi_1,\dots,\xi_d]^{(m')}_{(\Theta^{(m')})}(n)\dashrightarrow
  A[\xi_1,\dots,\xi_d]^{(m)}_{(\Theta^{(m,m')})}(n)
\end{equation*}
for any $n\in\mb{Z}$. For $n<p^{m+1}$, this $p$-isogeny is moreover a
homomorphism by the same lemma. This defines a $p$-isogeny
\begin{equation*}
 (A[\xi_1,\dots,\xi_d]^{(m')}_{(\Theta^{(m')})}(n))^{\wedge}
  \dashrightarrow
  (A[\xi_1,\dots,\xi_d]^{(m)}_{(\Theta^{(m,m')})}(n))^{\wedge}
\end{equation*}
by Lemma \ref{complofisog}. Composing this with
(\ref{isomorphdifflevel}), we get a canonical $p$-isogeny
\begin{equation*}
 (A[\xi_1,\dots,\xi_d]^{(m')}_{(\Theta^{(m')})}(n))^{\wedge}
  \dashrightarrow
  (A[\xi_1,\dots,\xi_d]^{(m)}_{(\Theta^{(m)})}(n))^{\wedge},
\end{equation*}
which is a homomorphism for $n<p^{m+1}$. By construction, this
$p$-isogeny is compatible with restrictions. Moreover, since $b_n$ of
Lemma \ref{normcalc} does not depend on $\Theta$, this induces a
$p$-isogeny of {\em sheaves}. Summing up, we obtain the following
lemma.

\begin{lem*}
\label{pisog}
 Let $m'\geq m$ be non-negative integers.  For any $n\in\mb{Z}$, there
 exist canonical $p$-isogenies of sheaves of modules
  \begin{equation*}
   \mc{O}_{P^{*(m')}\ms{X}}(n)\dashrightarrow
    \mc{O}_{P^{*(m)}\ms{X}}(n),\qquad
    \mc{O}_{T^{*(m')}\ms{X}}(n)\dashrightarrow
    \mc{O}_{T^{*(m)}\ms{X}}(n)
  \end{equation*}
 on the {\em topological spaces} $P^*\ms{X}$ and $\mathring{T}^*\ms{X}$
 respectively. These are homomorphisms for
 $n<p^{m+1}$.
\end{lem*}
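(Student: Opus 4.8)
The plan is to assemble the ingredients built up in \ref{pisogconst}. Since a $p$-isogeny of sheaves of modules amounts to a compatible family of $p$-isogenies over the members of an open basis, and since $P^*X$ is covered by the opens $D_+(\Theta)$ for $\Theta$ homogeneous of positive degree in a local coordinate presentation, I would first localize and assume $\ms{X}=\mr{Spf}(A)$ affine with coordinates $\{x_1,\dots,x_d\}$. On such a chart Lemma \ref{isomcoordgr} identifies $\Gamma(D_+(\Theta),\mc{O}_{P^{*(m)}\ms{X}}(n))$ with the $p$-adic completion of $A[\xi_1,\dots,\xi_d]^{(m)}_{(\Theta^{(m)})}(n)$, and likewise at level $m'$; by (\ref{isomorphdifflevel}) I may freely replace $\Theta^{(m)}$ by $\Theta^{(m,m')}$ on the level-$m$ side.

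Next I would produce the realization over $\mb{Q}$. Composing the isomorphism $A[\xi_1,\dots,\xi_d]^{(m)}\otimes\mb{Q}\cong A[\xi_1,\dots,\xi_d]^{(m')}\otimes\mb{Q}$ of Lemma \ref{pseudopolyrateq} with (\ref{isomorphdifflevel}) yields a ring homomorphism $A[\xi_1,\dots,\xi_d]^{(m')}_{\Theta^{(m')}}\rightarrow A[\xi_1,\dots,\xi_d]^{(m)}_{\Theta^{(m,m')}}\otimes\mb{Q}$, whose degree-$n$ piece is the candidate realization $\phi_{\mb{Q}}$. The crucial point --- what, without the preparation in \ref{pisogconst}, would be the real obstacle --- is the integrality estimate: one must bound the image of the lattice $A[\xi_1,\dots,\xi_d]^{(m')}_{(\Theta^{(m')})}(n)$ by a fixed power of $p$ times $A[\xi_1,\dots,\xi_d]^{(m)}_{(\Theta^{(m,m')})}(n)$, and symmetrically. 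This is exactly Lemma \ref{normcalc} (i)-(b): it furnishes an integer $b_n\geq0$, \emph{independent of $\Theta$}, such that $p^{b_n}\partial^{\angles{m'}{\underline{l}}}(\widetilde{\Theta}_\bullet^{(m')})^{-i}$ lies in $\Emod{m}{\ms{X}}$ whenever $|\underline{l}|-inp^{m'}\leq n$, with $b_n=0$ once $n<p^{m+1}$; the $p$-adic valuation bookkeeping there (comparing $g(\underline{l})=\sum_{j}[l_j/p^{m+1}]$ against $inp^b$) is the heart of the matter, but is already carried out.

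It then remains to pass from the localized pseudo-polynomial rings to their $p$-adic completions, which Lemma \ref{complofisog} handles while preserving the $p$-isogeny property and the ``homomorphism'' refinement in the range $n<p^{m+1}$; together with (\ref{isomorphdifflevel}) this gives the $p$-isogeny $\mc{O}_{P^{*(m')}\ms{X}}(n)\dashrightarrow\mc{O}_{P^{*(m)}\ms{X}}(n)$ over $D_+(\Theta)$. I would then check compatibility with the restrictions $D_+(\Theta')\subset D_+(\Theta)$ --- immediate from the naturality of each step --- and use the uniformity of $b_n$ in $\Theta$ to glue these into a single $p$-isogeny of sheaves on $P^*X$. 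The statement on $\mathring{T}^*X$ follows from this one by pulling back along $q\colon\mathring{T}^{*(m)}\ms{X}\rightarrow P^{*(m)}\ms{X}$, using $\mc{O}_{T^{*(m)}\ms{X}}(n)|_{\mathring{T}^*X}\cong q^{-1}\mc{O}_{P^{*(m)}\ms{X}}(n)$. I expect everything past Lemma \ref{normcalc} to be routine bookkeeping; the one subtlety in the packaging is precisely that the power of $p$ in the integral realization be chosen uniformly over the charts $D_+(\Theta)$, which is why Lemma \ref{normcalc} is phrased with $b_n$ independent of $\Theta$.
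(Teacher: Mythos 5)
Your proposal follows the paper's own construction in \ref{pisogconst} essentially line for line: localize to an affine chart, use Lemma \ref{pseudopolyrateq} to get the rational identification, Lemma \ref{normcalc} (i)-(b) for the uniform integrality bound $b_n$, Lemma \ref{complofisog} to pass to $p$-adic completions, (\ref{isomorphdifflevel}) to land on the $\Theta^{(m)}$-localization, and the $\Theta$-independence of $b_n$ to glue. The one small slip is citing (\ref{isomorphdifflevel}) where you first produce the map $A[\xi_1,\dots,\xi_d]^{(m')}_{\Theta^{(m')}}\rightarrow A[\xi_1,\dots,\xi_d]^{(m)}_{\Theta^{(m,m')}}\otimes\mb{Q}$; what you actually need there is Lemma \ref{moreexplicitdesc} (i), which says $\Theta^{(m,m')}=r_{m,m'}^n\Theta^{(m')}$ so that $\Theta^{(m')}$ becomes invertible after inverting $\Theta^{(m,m')}$ and tensoring with $\mb{Q}$, while (\ref{isomorphdifflevel}) compares two localizations at the same level $m$ and is used later. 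This is a citation misplacement rather than a gap.
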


\begin{lem}
 By using the homomorphism of Lemma {\normalfont\ref{pisog}},
 $\mc{O}_{P^{*(m)}\ms{X}}$ can be seen as an
 $\mc{O}_{P^{*(m')}\ms{X}}$-algebra. Then $\mc{O}_{P^{*(m)}\ms{X}}$ is
 a coherent $\mc{O}_{{P}^{*(m')}\ms{X}}$-algebra.
\end{lem}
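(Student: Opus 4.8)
The plan is to reduce to the local affine case, where the sections of both sheaves over the standard open subsets $D_+(\Theta)$ are described explicitly in \ref{pisogconst}, and then to notice that the $p$-isogeny of Lemma \ref{pisog} realizes $\mc{O}_{P^{*(m)}\ms{X}}$ as a sheaf of $\mc{O}_{P^{*(m')}\ms{X}}$-modules caught between two copies of $\mc{O}_{P^{*(m')}\ms{X}}$; coherence then follows formally from the noetherian properties of $\mc{O}_{P^{*(m')}\ms{X}}$ that are already available.

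Since coherence of a sheaf of modules is local and both sheaves live on the topological space $P^*X$, for which the $D_+(\Theta)$ over affine opens of $\ms{X}$ form a basis (cf.\ \ref{tangspident}), I may assume $\ms{X}=\mr{Spf}(A)$ is affine with a system of local coordinates. Set $\mc{A}':=\mc{O}_{P^{*(m')}\ms{X}}$ and $\mc{A}:=\mc{O}_{P^{*(m)}\ms{X}}$. By \ref{pisogconst} with $n=0$, the ring $\Gamma(D_+(\Theta),\mc{A}')\cong(A[\xi_1,\dots,\xi_d]^{(m')}_{(\Theta^{(m')})}(0))^{\wedge}$ is the $p$-adic completion of a finitely generated $A$-algebra, hence noetherian, and the same holds for $\mc{A}$; arguing as in Example \ref{tangenttwistshfi} (and using \cite[3.1.1]{Ber1}) one gets that $\mc{A}'$ is noetherian with respect to this basis --- in particular a coherent ring --- and that the restriction homomorphisms between sections over basic opens are flat. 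All rings occurring here are $p$-torsion free, since by Lemma \ref{isomcoordgr} the pseudo-polynomial ring is a free $R$-module on the monomials in which each variable $\xi_j^{\angles{m}{p^i}}$ with $i<m$ occurs to a power $<p$.

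Now I apply Lemma \ref{pisog} with $n=0<p^{m+1}$, so that the associated $p$-isogeny $\phi\colon\mc{A}'\dashrightarrow\mc{A}$ is a \emph{homomorphism}. Its realization $\phi_{\mb{Q}}$ is injective on sections (it becomes an isomorphism after $\otimes\mb{Q}$ and the source is $p$-torsion free), so it identifies $\mc{A}'$ with a subsheaf of $\mc{A}$; and by the definition of a $p$-isogeny of sheaves there is an integer $n'\geq0$ with $\mc{A}\subset p^{-n'}\phi_{\mb{Q}}(\mc{A}')$ inside $\mc{A}\otimes\mb{Q}$. Hence there are inclusions of $\mc{A}'$-modules
\begin{equation*}
 \phi_{\mb{Q}}(\mc{A}')\subset\mc{A}\subset p^{-n'}\phi_{\mb{Q}}(\mc{A}').
\end{equation*}
The outer term is isomorphic to $\mc{A}'$ as an $\mc{A}'$-module via multiplication by $p^{n'}$, hence is a free (in particular coherent) cyclic $\mc{A}'$-module, and $\mc{A}$ is an $\mc{A}'$-submodule of it. Because $\mc{A}'$ is noetherian with respect to the basis and the restriction maps are flat, $\Gamma(D_+(\Theta),\mc{A})$ is a finitely generated, and then finitely presented, $\Gamma(D_+(\Theta),\mc{A}')$-module for every $\Theta$, compatibly with restriction; the usual passage from the basis to the sheaf (as in Lemma \ref{noethgrlem}, using \cite[3.1.1]{Ber1}) then shows that $\mc{A}$ is a coherent $\mc{A}'$-module, i.e.\ a coherent $\mc{O}_{P^{*(m')}\ms{X}}$-algebra. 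The only genuinely new ingredient is the sandwich, which is immediate from Lemma \ref{pisog}; I expect the main point requiring care to be the last step --- deducing coherence of the sheaf $\mc{A}$ from module-finiteness of its sections over the basis --- which is exactly why the flatness of the restriction homomorphisms and the noetherianity of the rings of sections are needed.
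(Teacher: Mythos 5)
Your reduction to the affine local case, the identification of the sections of $\mc{O}_{P^{*(m')}\ms{X}}$ and $\mc{O}_{P^{*(m)}\ms{X}}$ over $D_+(\Theta)$ via \ref{pisogconst}, and the sandwich
\[
\phi_{\mb{Q}}(\mc{A}')\subset\mc{A}\subset p^{-n'}\phi_{\mb{Q}}(\mc{A}')
\]
with a $\Theta$-independent $n'$ are all correct, and indeed give a cleaner route to the \emph{module-finiteness of the section rings} than the paper does: where you simply observe that $\Gamma(D_+(\Theta),\mc{A})$ is a submodule of a cyclic module over the noetherian ring $\Gamma(D_+(\Theta),\mc{A}')$, the paper constructs an explicit finite generating set through a monoid decomposition $U=T+S'$. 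Your argument is more elegant for that step.

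But the last step --- passing from module-finiteness over each basic open to coherence of the sheaf --- is a genuine gap, and it is precisely the part of the paper's proof that does substantive work. Knowing that each $\Gamma(D_+(\Theta),\mc{A})$ is a finite $\Gamma(D_+(\Theta),\mc{A}')$-module is not the same as knowing that $\mc{A}$ is a coherent $\mc{A}'$-module; for that one needs the base-change maps
\[
\Gamma(\ms{V},\mc{A}')\otimes_{\Gamma(\ms{U},\mc{A}')}\Gamma(\ms{U},\mc{A})\longrightarrow\Gamma(\ms{V},\mc{A})
\]
to be isomorphisms for basic $\ms{V}\subset\ms{U}$, and the sandwich gives injectivity of these maps (by flatness of restriction for $\mc{A}'$) but \emph{not} surjectivity. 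In general a sheaf squeezed between $\mc{A}'$ and $p^{-n'}\mc{A}'$ need not satisfy base change --- nothing prevents the fractional part from ``growing'' as you restrict, precisely as would happen for a non-quasi-coherent subsheaf. Your phrase ``compatibly with restriction'' asserts exactly what needs proof. Appealing to Lemma \ref{noethgrlem} does not help here: that lemma \emph{assumed} coherence of each graded piece $\mc{A}_i$ over $\mc{O}$ as a hypothesis, and used it to get the restriction isomorphisms; here the coherence of $\mc{A}$ over $\mc{A}'$ is what we are trying to establish. And the paper's noetherianity notion for sheaves of rings (Definition \ref{defnoethrigsh}, see Remark \ref{laumonnoeth}) deliberately does \emph{not} include the Kashiwara--Kawai condition guaranteeing that arbitrary submodules of coherent modules are coherent, so no appeal to ``submodule of a free module over a noetherian sheaf'' closes this gap either. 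The paper handles it by establishing the base-change isomorphism (\ref{isthiscomlete}) directly: one computes that the left-hand side is the $p$-adic completion of $\mc{B}\otimes_{\mc{A}}\mc{A}_{\Psi}$, and then --- crucially using the finiteness already proved --- invokes \cite[$0_\mr{I}$, 7.3.6]{EGA} to conclude that this tensor product is already $p$-adically complete, hence equal to the right-hand side. Your proof needs some version of this second step, or another argument establishing base change.
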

\begin{proof}
 Let $\Theta$ be a homogeneous element of $A[\xi_1,\dots,\xi_d]$ whose
 degree is strictly greater than $0$. First of all, let us show that the
 homomorphism of rings
 \begin{equation}
  \label{pseudohomfin}
   (A[\xi_1,\dots,\xi_d]^{(m')}_{(\Theta^{(m')})}(0))^{\wedge}\rightarrow
   (A[\xi_1,\dots,\xi_d]^{(m)}_{(\Theta^{(m)})}(0))^{\wedge}
 \end{equation}
 is finite.
 By construction of (\ref{pseudohomfin}), it suffices to show the
 finiteness of the homomorphism
 \begin{equation*}
  A[\xi_1,\dots,\xi_d]^{(m')}_{(\Theta^{(m')})}(0)\rightarrow
   A[\xi_1,\dots,\xi_d]^{(m)}_{(\Theta^{(m,m')})}(0).
 \end{equation*}
 Let
 \begin{equation*}
  S:=\Biggl\{(\underline{k},\underline{k}',i)\in\mb{N}^d
   \times\mb{N}^d\times\mb{N}\,\Bigg|\,\parbox{40ex}{$k_j<p^{m'}$ for
   any $j$, $|\underline{k}'|<\mr{ord}(\Theta)$,
  and $|\underline{k}|+|\underline{k}'|\,p^{m'}=ip^{m'}\,
   \mr{ord}(\Theta)$}\Biggr\}.
 \end{equation*}
 The condition $|\underline{k}|+|\underline{k}'|\,p^{m'}=ip^{m'}\,
 \mr{ord}(\Theta)$ means that the order of
 \begin{equation*}
 \underline{\xi}^{\angles{m}{\underline{k}+\underline{k}'p^{m'}}}
  (\Theta^{(m,m')})^{-i}
 \end{equation*}
 is equal to $0$. Obviously, $\#S<\infty$. Let 
 \begin{equation*}
  T:=\bigl\{\underline{k}\in\mb{N}^d\,\big|\,\underline{k}\in
   p^{m'}\,\mb{N}^d,
   |\underline{k}|=ip^{m'}\,\mr{ord}(\Theta)\mbox{ for some
   integer $i$} \bigr\}.
 \end{equation*}
 The set $T$ is a submonoid of the commutative monoid $\mb{N}^d$. For
 any $\underline{k}\in T$, there exists $u\in\mb{Z}_p^*$ such that
 \begin{equation}
  \label{relationinT}
  \underline{\xi}^{\angles{m}{\underline{k}}}(\Theta^{(m,m')})^{-i}=
   u\cdot\underline{\xi}^{\angles{m'}{\underline{k}}}
   (\Theta^{(m')})^{-i}.
 \end{equation}
 Let
 \begin{equation*}
  U:=\bigl\{\underline{k}\in\mb{N}^d\,\big|\,|\underline{k}|=
   ip^{m'}\,\mr{ord}(\Theta)\mbox{ for some $i$}\bigr\}.
 \end{equation*}
 This is also a submonoid of $\mb{N}^d$. Let
 \begin{equation*}
  S':=\bigl\{\underline{l}\in\mb{N}^d\,\big|\,\mbox{there exists
   $(\underline{k},\underline{k'},i)\in S$ such that }
   \underline{l}=\underline{k}+p^{m'}\underline{k'}\bigr\}.
 \end{equation*}
 The monoid $T$ is a submonoid of $U$, and $S'$ is a finite subset of
 $U$. We claim that $U=T+S'$. Indeed, take $\underline{l}\in
 U$. We can write
 $\underline{l}=\underline{i}+p^{m'}\underline{i'}$ such that
 $\underline{i},\underline{i'}\in\mb{N}^d$ and
 $i_j<p^{m'}$ for any $j$. Now, there exists $\underline{k'}$ such that
 $|\underline{k'}|<\mr{ord}(\Theta)$, $i'_j\geq k'_j$ for any $j$, and
 \begin{equation*}
 |\underline{i'}|-|\underline{k'}|=\left[|\underline{i'}|\cdot(\mr{ord}
 (\Theta))^{-1}\right]\cdot\mr{ord}(\Theta)
 \end{equation*}
 where $[\alpha]$ denotes the maximum integer less than or equal to
 $\alpha$.
 We put $\underline{k}:=\underline{i}$. Then there exists an integer $i$
 such that $|\underline{i}|+p^{m'}|\underline{k'}|=ip^{m'}\mr{ord}
 (\Theta)$. By construction $(\underline{k},\underline{k'},i)\in S$, and
 $p^{m'}\cdot(\underline{i'}-\underline{k'})\in T$.
 Since $\underline{l}=p^{m'}\cdot(\underline{i'}-\underline{k'})+
 (\underline{k}+p^{m'}\underline{k'})$, the claim follows.
 Considering (\ref{relationinT}), this implies that the homomorphism
 \begin{equation*}
 \bigoplus_{\underline{l}\in S}~A[\xi_1,\dots,\xi_d]^{(m')}
  _{(\Theta^{(m')})}(0)\rightarrow A[\xi_1,\dots,\xi_d]^{(m)}
  _{(\Theta^{(m,m')})}(0)
 \end{equation*}
 sending $1$ sitting at the $(\underline{k},\underline{k'},i)\in
 S$ component to $\underline{\xi}^{\angles{m}{\underline{k}+
 \underline{k}'p^{m'}}}(\Theta^{(m,m')})^{-i}$ is surjective. Thus the
 homomorphism (\ref{pseudohomfin}) is finite.

 Let us prove the coherence.
 Let $\Xi$ be another homogeneous element of $A[\xi_1,\dots,\xi_d]$
 whose degree is strictly greater than $0$. Let
 $\ms{U}$ be the affine open subset of $P^*\ms{X}$ defined by $\Theta$,
 and $\ms{U}'$ by that of $\Xi\cdot\Theta$. It suffices to show that the
 canonical homomorphism
 \begin{equation}
  \label{isthiscomlete}
   \Gamma(\ms{U},\mc{O}_{{P}^{*(m)}\ms{X}})\otimes_
   {\Gamma(\ms{U},\mc{O}_{{P}^{*(m')}\ms{X}})}
   \Gamma(\ms{U}',\mc{O}_{{P}^{*(m')}\ms{X}})\rightarrow
   \Gamma(\ms{U}',\mc{O}_{P^{*(m)}\ms{X}})
 \end{equation}
 is an isomorphism.
 By changing $\Theta$ and $\Xi$ to some powers of $\Theta$ and $\Xi$
 respectively, we may assume that
 $\mr{ord}(\Xi)=\mr{ord}(\Theta)$. We put
 \begin{alignat*}{3}
  \mc{A}&:=A[\xi_1,\dots,\xi_d]_{(\Theta^{(m')})}^{(m')}(0),&\qquad
  \Psi&:=\frac{\Xi^{(m')}}{\Theta^{(m')}},\\
  \mc{B}&:=A[\xi_1,\dots,\xi_d]_{(\Theta^{(m,m')})}^{(m)}(0),&\qquad
  \Phi&:=\frac{\Xi^{(m)}}{\Theta^{(m)}},&\qquad
  \Phi'&:=\frac{\Xi^{(m,m')}}{\Theta^{(m,m')}}.
 \end{alignat*}
 Let $\phi\colon\mc{A}\rightarrow\mc{B}$ be the canonical homomorphism.
 Firstly, $\Phi'=\phi(\Psi)$ in $\mc{B}$ by Lemma \ref{moreexplicitdesc}
 (i). Secondly,
 \begin{equation*}
  (\mc{B}_{\Phi})^\wedge\cong(\mc{B}_{\Phi'})^\wedge
 \end{equation*}
 by the same reason as (\ref{isomorphdifflevel}). Thirdly,
 \begin{align*}
  \mc{B}_{\phi(\Psi)}\cong\mc{B}\otimes_{\mc{A}}\mc{A}_{\Psi}.
 \end{align*}
 Combining these, $(\mc{B}\otimes_{\mc{A}}\mc{A}_{\Psi})^\wedge
 \cong(\mc{B}_{\Phi})^\wedge$. This is implies that the $\pi$-adic
 completion of the left hand side of (\ref{isthiscomlete}) is isomorphic
 to the right hand side. However, by the finiteness of
 (\ref{pseudohomfin}), the left hand side of (\ref{isthiscomlete}) is
 already $\pi$-adically complete by \cite[$0_{\mr{I}}$, 7.3.6]{EGA}, and
 as a result, (\ref{isthiscomlete}) is an isomorphism. Thus we obtain
 the lemma.
\end{proof}

\subsection{}
\label{difflevelcoherent}
Let $\ms{X}$ be an affine smooth formal scheme over $R$ possessing a
system of local coordinates. For any $n\in\mb{Z}$, the module
$\mc{O}_{T^{*(m)}\ms{X}}(n)$ is an $\mc{O}_{T^{*(m)}\ms{X}}(0)$-module
on $\mathring{T}^*\ms{X}$, and by using Lemma \ref{pisog},
$\mc{O}_{\mathring{T}^{*(m)}\ms{X}}(n)$ can be seen as an
$\mc{O}_{\mathring{T}^{*(m')}\ms{X}}(0)$-module.

\begin{cor*}
 The $\mc{O}_{\mathring{T}^{*(m')}\ms{X}}(0)$-module
 $\mc{O}_{\mathring{T}^{*(m)}\ms{X}}(n)$ is coherent.
\end{cor*}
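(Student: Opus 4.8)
The plan is to reduce the assertion, via pullback along the canonical projection $q\colon\mathring{T}^{*}\ms{X}\to P^{*}\ms{X}$, to the coherence statement already proved in the preceding lemma. Recall from \ref{subsecmicloc} (applied at each level and combined with the identifications of topological spaces in \ref{tangspident}) that on $\mathring{T}^{*}\ms{X}$ one has $\mc{O}_{\mathring{T}^{*(m)}\ms{X}}(n)\cong q^{-1}\mc{O}_{P^{*(m)}\ms{X}}(n)$ for every $n$, and in particular $\mc{O}_{\mathring{T}^{*(m')}\ms{X}}(0)\cong q^{-1}\mc{O}_{P^{*(m')}\ms{X}}(0)$ as sheaves of rings, where $q$ is the same map for all levels. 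Moreover the ring homomorphism $\mc{O}_{\mathring{T}^{*(m')}\ms{X}}(0)\to\mc{O}_{\mathring{T}^{*(m)}\ms{X}}(0)$ defining the module structure in question (the case $n=0<p^{m+1}$ of Lemma \ref{pisog}, which is a genuine homomorphism) is $q^{-1}$ of the ring homomorphism $\mc{O}_{P^{*(m')}\ms{X}}(0)\to\mc{O}_{P^{*(m)}\ms{X}}(0)$ of the preceding lemma, since the $p$-isogenies are constructed compatibly with restriction in \ref{pisogconst}. Hence it suffices to show that $\mc{O}_{P^{*(m)}\ms{X}}(n)$ is a coherent $\mc{O}_{P^{*(m')}\ms{X}}=\mc{O}_{P^{*(m')}\ms{X}}(0)$-module; applying the exact functor $q^{-1}$ to a finite presentation over a member of the basis $\mf{B}$ then exhibits $\mc{O}_{\mathring{T}^{*(m)}\ms{X}}(n)$ as a finitely presented $\mc{O}_{\mathring{T}^{*(m')}\ms{X}}(0)$-module, which is coherent because $\mc{O}_{\mathring{T}^{*(m')}\ms{X}}(0)$ is noetherian with respect to $\mf{B}$, hence a coherent sheaf of rings, by the lemma following \ref{limitnaive}.

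For the coherence of $\mc{O}_{P^{*(m)}\ms{X}}(n)$ over $\mc{O}_{P^{*(m')}\ms{X}}$ I would chain two finiteness facts. First, $\mc{O}_{P^{*(m)}\ms{X}}(n)$ is a coherent $\mc{O}_{P^{*(m)}\ms{X}}$-module, as observed in Example \ref{tangenttwistshfi} and in the proof of the lemma following \ref{limitnaive} (this uses that $\mr{gr}(\Dmod{m}{X})$ is noetherian, so that $P^{*(m)}\ms{X}$ is the Proj of a noetherian graded ring of finite type over its degree-zero part). Second, by the finiteness of the homomorphism (\ref{pseudohomfin}) established in the proof of the preceding lemma, over each affine open $D_{+}(\Theta)$ the ring $\mc{O}_{P^{*(m)}\ms{X}}$ is a finite $\mc{O}_{P^{*(m')}\ms{X}}$-module. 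Composing, $\mc{O}_{P^{*(m)}\ms{X}}(n)$ is locally a finite $\mc{O}_{P^{*(m')}\ms{X}}$-module; since $\mc{O}_{P^{*(m')}\ms{X}}=\mc{O}_{P^{*(m')}\ms{X}}(0)$ is noetherian with respect to the basis of $P^{*}X$, a locally finite module over it is coherent, which is what we wanted.

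There is no real obstacle here: the hard work has already been carried out in the preceding lemma (the finiteness of (\ref{pseudohomfin})), and the present corollary is a formal consequence. The only points requiring a little care are the identification, through \ref{pisog}, \ref{pisogconst} and \ref{tangspident}, of the $\mc{O}_{\mathring{T}^{*(m')}\ms{X}}(0)$-module structure on $\mc{O}_{\mathring{T}^{*(m)}\ms{X}}(n)$ as the $q^{-1}$-pullback of the corresponding structure on $P^{*}X$, and the transitivity step ``finite over finite is finite, hence coherent over a ring noetherian with respect to $\mf{B}$'' --- where one should note that it is not needed for the intermediate ring $\mc{O}_{P^{*(m)}\ms{X}}$ to be noetherian, only that the base ring $\mc{O}_{P^{*(m')}\ms{X}}$ is.
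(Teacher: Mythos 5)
The paper states this corollary without proof, as a direct consequence of the preceding lemma; your reconstruction is correct and is essentially the argument the author intended. You chain the two coherence facts — $\mc{O}_{P^{*(m)}\ms{X}}(n)$ coherent over $\mc{O}_{P^{*(m)}\ms{X}}(0)$ (from Example \ref{tangenttwistshfi} and the lemma in \ref{limitnaive}), and $\mc{O}_{P^{*(m)}\ms{X}}(0)$ locally finite (indeed coherent) over $\mc{O}_{P^{*(m')}\ms{X}}(0)$ (from the finiteness of (\ref{pseudohomfin})) — and then transport the conclusion to $\mathring{T}^{*}\ms{X}$ by the exact functor $q^{-1}$ via the isomorphisms $\mc{O}_{\mathring{T}^{*(m)}\ms{X}}(n)\cong q^{-1}\mc{O}_{P^{*(m)}\ms{X}}(n)$ and the compatibility of the $p$-isogenies of Lemma \ref{pisog} with restriction along $q$. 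Your closing remark that only the noetherianity of the base $\mc{O}_{P^{*(m')}\ms{X}}$ (and not of the intermediate ring) is needed is accurate, and one could also phrase the transitivity step purely in terms of coherent modules over the coherent ring $\mc{O}_{P^{*(m')}\ms{X}}$: a finite presentation of $\mc{O}_{P^{*(m)}\ms{X}}(n)$ over $\mc{O}_{P^{*(m)}\ms{X}}(0)$ exhibits it as a cokernel of a map of coherent $\mc{O}_{P^{*(m')}\ms{X}}(0)$-modules, hence coherent. Either formulation is fine.
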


\begin{rem*}
 We show in Lemma \ref{stricnessofhom} that the corollary holds for
 any smooth formal scheme $\ms{X}$ not necessary affine.
\end{rem*}

\section{Intermediate microdifferential sheaves}
In section \ref{micdiff}, we defined the ring of naive microdifferential
operators. However, we do not have any natural homomorphism
$\EcompQ{m}{\ms{X}}\rightarrow\EcompQ{m+1}{\ms{X}}$ as shown in
\ref{countcharnotstable}.
To remedy this situation, we consider the intermediate ring of
microdifferential operators denoted by $\EcompQ{m,m'}{\ms{X}}$ for 
$m'\geq m$, which is an ``intersection'' of $\EcompQ{m}{\ms{X}}$ and
$\EcompQ{m'}{\ms{X}}$. In this section, we define these rings and
prove some basic properties.

\subsection{}
 \label{countcharnotstable}
 To start with, let us show the non-existence of a homomorphism
 $\EcompQ{m}{\ms{X}}\rightarrow\EcompQ{m+1}{\ms{X}}$ compatible with the
 canonical homomorphism
 $\DcompQ{m}{\ms{X}}\rightarrow\DcompQ{m+1}{\ms{X}}$. Suppose the
 homomorphism existed. Then, for any coherent
 $\DcompQ{m}{\ms{X}}$-module $\ms{M}$, we would get
 \begin{equation*}
  \mr{Char}^{(m)}(\ms{M})\supset
   \mr{Char}^{(m+1)}(\DcompQ{m+1}{\ms{X}}
   \otimes_{\DcompQ{m}{\ms{X}}}\ms{M})
 \end{equation*}
 by Proposition \ref{characteristicvariety}.
 However, this does not hold as the following example shows:

 \begin{ex*}
  Let $\ms{X}:=\widehat{\mb{A}}^1_R$, $X$ be the special fiber, $x$ be
  the canonical coordinate, and $\partial$ be the corresponding
  differential operator. We put
  $\ms{M}=\DcompQ{0}{\ms{X}}/\DcompQ{0}{\ms{X}}(\partial-x)$. Then,
  \begin{enumerate}
   \item $\mr{Char}^{(0)}(\ms{M})=s(X)$, where $s\colon X\rightarrow
	 T^*X$ is the zero-section.
   \item $\mr{Char}^{(1)}(\DcompQ{1}{\ms{X}}\otimes_{\DcompQ{0}{\ms{X}}}
	 \ms{M})\cap\mathring{T}^*X\neq\emptyset$.
  \end{enumerate}
 \end{ex*}
 \begin{proof}
  Since $\ms{M}$ is a coherent $\mc{O}_{\ms{X},\mb{Q}}$-module, the first
  claim follows. Let us check the second claim. First,
  let us prove that $\DdagQ{\ms{X}}\otimes_{\DcompQ{0}{\ms{X}}}\ms{M}\neq
  0$. Let $f_n\in K\{x\}$ ({\it i.e.}\ the Tate algebra), and
  $\sum_{n\geq0}f_n\partial^{[n]}\in\Gamma(\ms{X},\DdagQ{\ms{X}})$. We
  get
  \begin{align*}
   \sum_{n\geq 0}f_n\,\partial^{[n]}\cdot(\partial-x)&=\sum_{n\geq 0}
   \Bigl(f_n\,\partial^{[n]}\,\partial-x\,f_n\,\partial^{[n]}-f_n\,
   \partial^{[n-1]}\Bigr)\\
   &=\sum_{n\geq 0}\bigl(n\,f_{n-1}-x\,f_n-f_{n+1}\bigr)\,
   \partial^{[n]}.
  \end{align*}
  Assume $\sum_{n\geq 0}f_n\,\partial^{[n]}\cdot(\partial-x)=1$.
  Then there exist $g_n,h_n\in K[x]$, $\deg(g_n)<n-1$ and $\deg(h_n)<n$,
  such that the equality
  \begin{equation*}
   (-1)^nf_n=(x^{n-1}+g_n)+(x^n+h_n)\cdot f_0
  \end{equation*}
  should hold for any $n>0$. However, there is no $f_0\in K\{x\}$ such
  that $\sum_{n\geq0}f_n\partial^{[n]}\in\Gamma(\ms{X},\DdagQ{\ms{X}})$
  (since $|f_n|=\max\bigl\{1,|f_0|\bigr\}$ by the equality), and
  $\DdagQ{\ms{X}}\otimes\ms{M}\neq 0$.

  Now, let $e$ be the element of $\Gamma(\ms{X},\ms{M})$
  defined by $1\in\Gamma(\ms{X},\DcompQ{0}{\ms{X}})$. As an
  $\mc{O}_{\ms{X},\mb{Q}}$-module, $\ms{M}$ is free of rank $1$. Since
  \begin{equation*}
   \partial^{n}\cdot e=\bigl(x^n+(\mbox{polynomial in $K[x]$ whose degree
    is less than $n$})\bigr)\cdot e,
  \end{equation*}
  the $\DcompQ{0}{\ms{X}}$-module structure on $\ms{M}$ does
  not extend continuously to a $\DcompQ{1}{\ms{X}}$-module
  structure. This shows that
  the canonical homomorphism $\ms{M}\rightarrow\ms{M}^{(1)}:=
  \DcompQ{1}{\ms{X}}\otimes\ms{M}$ is not an isomorphism.

  Garnier shows in \cite[5.2.4]{Ga} that for any coherent
  $\DcompQ{0}{\ms{X}}$-module $\ms{M}$, the characteristic variety
  $\mr{Char}^{(0)}(\ms{M})$ satisfies the Bernstein inequality ({\it
  i.e.\ }the dimension of the characteristic variety is greater than or
  equal to $1$ unless $\ms{M}=0$). Using the relation of characteristic
  varieties of Frobenius descents (cf.\ \cite[5.2.4 (iii)]{BerInt}), the
  Bernstein inequality also holds for any coherent
  $\DcompQ{m}{\ms{X}}$-module. Thus there are three possibilities
  for the characteristic variety $V$ of $\ms{M}^{(1)}$: either
  $\emptyset$ or $[X]$ or $V\cap\mathring{T}^*\ms{X}\neq\emptyset$. Since
  $\ms{M}^{(1)}$ is not $0$, $V$ is not empty. If $V=[X]$, $\ms{M}^{(1)}$
  would be a coherent $\mc{O}_{\ms{X},\mb{Q}}$-module, and
  since $\ms{M}$ is a coherent $\mc{O}_{\ms{X},\mb{Q}}$-module of rank
  $1$, we would get that $\ms{M}\cong\ms{M}^{(1)}$, which is a
  contradiction. Thus the second claim follows.
 \end{proof}

\subsection{}
\label{notationfixring}
Before going to the main theme of this section, let us fix some
frequently used notation.
Consider the situation where an open subscheme $\ms{U}$ of
$T^*\ms{X}$ is given. For non-negative integers $i'\geq i$, we put
\begin{alignat*}{3}
 D^{(i)}&:=\Gamma(\pi(\ms{U}),\Dmod{i}{\ms{X}}),&\qquad
 D_{\mb{Q}}^{(i)}&:=\Gamma(\pi(\ms{U}),\Dmod{i}{\ms{X},\mb{Q}}),
 &\qquad
 E^{(i)}&:=\Gamma(\ms{U},\Emod{i}{\ms{X}}),\\
 E_{\mb{Q}}^{(i)}&:=\Gamma(\ms{U},\Emod{i}{\ms{X},\mb{Q}}),
 &\qquad
 E^{(i,i')}&:=\Gamma(\ms{U},\Emod{i,i'}{\ms{X}}),&\qquad
 E_{\mb{Q}}^{(i,i')}&:=\Gamma(\ms{U},\Emod{i,i'}{\ms{X},\mb{Q}}),
 \\
 \widehat{E}^{(i,i')}&:=\Gamma(\ms{U},\Ecomp{i,i'}{\ms{X}}),
 &\qquad
  \widehat{E}_{\mb{Q}}^{(i,i')}&:=\Gamma(\ms{U},
 \EcompQ{i,i'}{\ms{X}}),&\qquad
 E^{(i,\dag)}_{\mb{Q}}&:=\Gamma(\ms{U},\Emod{i,\dag}
 {\ms{X},\mb{Q}}).
\end{alignat*}
The last 5 rings are defined in \ref{defofintermrings}.
The first 6 rings are considered to be filtered rings.

\subsection{}
Let $\ms{X}$ be a smooth formal scheme over $R$. For a non-negative
integer $m$, we defined the filtered ring
$(\Emod{m}{\ms{X}},\Emod{m}{\ms{X},n})$. By (\ref{relagr}) and Lemma
\ref{veryelementaryprop} (ii),
\begin{equation}
 \label{compgron}
  \mc{O}_{T^{(m)*}\ms{X}}(n)\cong\Emod{m}{\ms{X},n}/
  \Emod{m}{\ms{X},n-1}.
\end{equation}

Let $m'\geq m$ be an integer. Consider the canonical homomorphism
\begin{equation*}
 \phi_{m',m}\colon\Dmod{m}{\ms{X}}\rightarrow\Dmod{m'}{\ms{X}}.
\end{equation*}
This homomorphism becomes an isomorphism if we tensor
with $\mb{Q}$.

\begin{lem*}
\label{stricnessofhom}
 There exists a unique strictly injective homomorphism of  filtered
 rings
 \begin{equation*}
  \psi_{m,m'}\colon\Emod{m'}{\ms{X},\mb{Q}}\rightarrow
   \Emod{m}{\ms{X},\mb{Q}}
 \end{equation*}
 such that the following diagram is commutative:
 \begin{equation*}
  \xymatrix@C=60pt{
   \pi^{-1}\Dmod{m'}{\ms{X},\mb{Q}}\ar[d]_{\varphi_{m'}}&
   \pi^{-1}\Dmod{m}{\ms{X},\mb{Q}}\ar[d]^{\varphi_m}\ar[l]
   _{\phi_{m',m}\otimes\mb{Q}}^{\sim}\\
  \Emod{m'}{\ms{X},\mb{Q}}\ar[r]_{\psi_{m,m'}}&
   \Emod{m}{\ms{X},\mb{Q}}
   }
 \end{equation*}
 where we refer to \ref{limitnaive} for $\varphi_m$.
 For $m''\geq m'\geq m$,
 $\psi_{m,m'}\circ\psi_{m',m''}=\psi_{m,m''}$. By using
 {\normalfont(\ref{compgron})}, $\mr{gr}_n(\psi_{m,m'})$ can be
 identified with the $p$-isogeny in Lemma {\normalfont\ref{pisog}}
 locally.
\end{lem*}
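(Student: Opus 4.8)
The plan is to construct $\psi_{m,m'}$ on the basis $\mf{B}$ and glue, the gluing being licensed by the uniqueness assertion, so first reduce to $\ms{X}=\mr{Spf}(A)$ affine with a system of local coordinates and work on $\ms{U}=D(\Theta^{(m)})\in\mf{B}$ for a homogeneous $\Theta\in\Gamma(T^*\ms{X},\mc{O}_{T^*\ms{X}})$ of degree $n$; under the identifications of \ref{tangspident} this is also the basic open of $T^{(m')*}\ms{X}$ cut out by $\Theta^{(m')}$. By Lemma \ref{moreexplicitdesc}(ii) the operator $\widetilde{\Theta}^{(m,m')}_l$ is invertible in $\Gamma(\ms{U},\Emod{m}{\ms{X}})$, and since it is a $\mb{Z}_p^\times$-multiple of $(p^{m'-m}!)^n\,\widetilde{\Theta}^{(m')}_l$ (cf.\ the proof of Lemma \ref{normcalc}(ii)), the level-$m'$ operator $\widetilde{\Theta}^{(m')}_l$, viewed in $\Gamma(\ms{U},\Emod{m}{\ms{X},\mb{Q}})$ via the isomorphism $\phi_{m',m}\otimes\mb{Q}\colon\Dmod{m}{\ms{X},\mb{Q}}\xrightarrow{\sim}\Dmod{m'}{\ms{X},\mb{Q}}$ and $\varphi_m$, is invertible there. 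Hence, letting $\mc{L}_{m'}$ be the localization of $\Gamma(\ms{X},\Dmod{m'}{\ms{X},\mb{Q}})$ at the multiplicative set generated by $\widetilde{\Theta}^{(m')}_l$ (which satisfies the Ore condition as do the multiplicative sets of the microlocalizations of \S1; cf.\ \ref{Orelocal} and Remark \ref{remBer}), its universal property yields a ring homomorphism $\iota\colon\mc{L}_{m'}\to\Gamma(\ms{U},\Emod{m}{\ms{X},\mb{Q}})$ extending $\varphi_m\circ(\phi_{m',m}\otimes\mb{Q})^{-1}$. Equipping $\mc{L}_{m'}$ with the filtration pulled back from $\Gamma(\ms{U},\Emod{m'}{\ms{X},\mb{Q}})$, one checks that $\iota$ is order-decreasing, using that the order of a differential operator is intrinsic (so $\phi_{m',m}\otimes\mb{Q}$ and $\varphi_m$ are strict) and that $\widetilde{\Theta}^{(m')}_l$ has order $np^{m'}$ at both levels.

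Now extend $\iota$ across the completion. By the presentation lemma of \ref{defofsituation} every $P\in\Gamma(\ms{U},\Emod{m'}{\ms{X},j,\mb{Q}})$ is a sum $\sum_{N\le j}\sum_{|\underline{k}|-inp^{m'}=N}b_{\underline{k},i}\,\underline{\partial}^{\angles{m'}{\underline{k}}}(\widetilde{\Theta}^{(m')}_l)^{-i}$ with coefficients of uniformly bounded $p$-denominator; its partial sums $P_{N_0}$ (keeping $N\ge N_0$) lie in $\mc{L}_{m'}$, with $P-P_{N_0}$ of order $\le N_0-1$ in $\Gamma(\ms{U},\Emod{m'}{\ms{X},\mb{Q}})$. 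The one genuinely delicate point is that $\Gamma(\ms{U},\Emod{m}{\ms{X},\mb{Q}})$ need not be complete for the order filtration, since tensoring with $\mb{Q}$ destroys completeness in general; here Lemma \ref{normcalc}(i)(b) is the remedy, providing a $b=b(j)\ge0$, independent of $\Theta$ and of the term, with $p^{b}\,\underline{\partial}^{\angles{m'}{\underline{k}}}(\widetilde{\Theta}^{(m')}_l)^{-i}\in\Gamma(\ms{U},\Emod{m}{\ms{X}})$ for all $N\le j$. Thus all the $\iota(P_{N_0})$ lie in the single shifted lattice $p^{-b}\Gamma(\ms{U},\Emod{m}{\ms{X}})$, which \emph{is} complete and separated by Lemma \ref{veryelementaryprop}(iv); as $\iota$ is order-decreasing the $\iota(P_{N_0})$ are Cauchy there, and I set $\psi_{m,m'}^{\ms{U}}(P):=\lim_{N_0}\iota(P_{N_0})$. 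Independence of the presentation is then immediate: a presentation of $0$ produces $P_{N_0}\in\mc{L}_{m'}$ of order $\le N_0-1$, hence $\iota(P_{N_0})$ of order $\le N_0-1$, so the limit is $0$ by separatedness. Hence $\psi_{m,m'}^{\ms{U}}$ is a well-defined, order-decreasing ring homomorphism extending $\iota$, and in particular it makes the required diagram commute (the right case, using right presentations, yields the same map). These local maps are determined by that commutation on the dense subrings $\mc{L}_{m'}$, using that $\Gamma(\ms{U},\Emod{m}{\ms{X},\mb{Q}})$ is separated (which holds since $\Emod{m}{\ms{X}}/\Emod{m}{\ms{X},j}$ is $p$-torsion free, $\ms{X}$ being $R$-flat); they are therefore compatible with restriction and glue to $\psi_{m,m'}\colon\Emod{m'}{\ms{X},\mb{Q}}\to\Emod{m}{\ms{X},\mb{Q}}$ for an arbitrary smooth formal scheme $\ms{X}$.

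Uniqueness is the same argument: a homomorphism with the stated properties agrees with $\psi_{m,m'}$ on $\mr{Im}(\varphi_{m'})$ by the diagram, hence on $(\widetilde{\Theta}^{(m')}_l)^{-1}$ (it is a ring map), hence on $\mc{L}_{m'}$, and being filtered it is continuous, so it agrees with $\psi_{m,m'}$ everywhere by separatedness of the target. The identity $\psi_{m,m'}\circ\psi_{m',m''}=\psi_{m,m''}$ then follows from uniqueness, the left-hand side being again a filtered ring homomorphism fitting into the corresponding diagram (as $\phi_{m',m}\circ\phi_{m'',m'}=\phi_{m'',m}$). For the final assertion, compute $\mr{gr}_n(\psi_{m,m'})$ on $\mc{L}_{m'}$: it carries the symbol $\underline{\xi}^{\angles{m'}{\underline{k}}}(\Theta^{(m')})^{-i}$ of $\underline{\partial}^{\angles{m'}{\underline{k}}}(\widetilde{\Theta}^{(m')}_l)^{-i}$ to the symbol of its image; identifying $\mr{gr}_n$ of source and target with $\mc{O}_{T^{(m')*}\ms{X}}(n)\otimes\mb{Q}$ and $\mc{O}_{T^{(m)*}\ms{X}}(n)\otimes\mb{Q}$ via (\ref{compgron}), and using Lemma \ref{pseudopolyrateq} for the symbol of $\phi_{m',m}\otimes\mb{Q}$, one recognizes exactly the homomorphism $A[\xi_1,\dots,\xi_d]^{(m')}_{\Theta^{(m')}}\otimes\mb{Q}\to A[\xi_1,\dots,\xi_d]^{(m)}_{\Theta^{(m,m')}}\otimes\mb{Q}$ from which the $p$-isogeny of Lemma \ref{pisog} was built in \ref{pisogconst}. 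Since that $p$-isogeny is a rational isomorphism, $\mr{gr}(\psi_{m,m'})$ is injective, whence $\psi_{m,m'}$ is strictly injective by \cite[Ch.I, 4.2.4 (2)]{HO}.

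The main obstacle is precisely the non-completeness of $\Emod{m}{\ms{X},\mb{Q}}$ for the order filtration: writing $\psi_{m,m'}$ down by ``the same formula'' is easy, but making the defining series converge in the target is not automatic. The uniform $p$-denominator bound of Lemma \ref{normcalc}(i)(b) removes this difficulty by confining all relevant partial sums to one complete shifted lattice $p^{-b}\Emod{m}{\ms{X}}$; once that is in place, convergence, independence of the presentation, the ring structure, the diagram, gluing, and transitivity are all formal, and the identification of $\mr{gr}_n(\psi_{m,m'})$ with the $p$-isogeny is the symbol computation already packaged in \ref{pisogconst}.
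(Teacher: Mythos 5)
Your overall strategy matches the paper's quite closely: reduce to the affine local case, exploit the invertibility of $\widetilde{\Theta}^{(m')}_l$ in (a completion of) the level-$m$ ring, use the universal property of a localization of $\Gamma(\ms{X},\Dmod{m'}{\ms{X},\mb{Q}})$ at $\widetilde{\Theta}^{(m')}_l$ to get the map on a dense subring, and then invoke Lemma \ref{normcalc} (i)-(b) to control $p$-denominators when extending. Uniqueness by density and separatedness, and strict injectivity via the graded computation (identified with the $p$-isogeny of Lemma \ref{pisog}), are exactly the paper's arguments.

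There is, however, a genuine gap in your convergence step. You write that for $P\in\Gamma(\ms{U},\Emod{m'}{\ms{X},j,\mb{Q}})$ the ``partial sums $P_{N_0}$ (keeping $N\geq N_0$) lie in $\mc{L}_{m'}$.'' They do not: in the presentation of Lemma \ref{defofsituation}, the inner sum $\sum_{|\underline{k}|-inp^{m'}=N}b_{\underline{k},i}\,\underline{\partial}^{\angles{m'}{\underline{k}}}(\widetilde{\Theta}^{(m')}_l)^{-i}$ is in general an \emph{infinite} $p$-adically convergent sum (the condition on the coefficients is merely $\lim_{i\to\infty}\beta_{N,i}=0$, not eventual vanishing). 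This is unavoidable, because $\Gamma(\ms{U},\Emod{m'}{\ms{X},j})$ is the $p$-adic completion of the degree-$\leq j$ part of the Laumon microlocalization of $D^{(m')}$ (that is precisely the content of the isomorphism $\beta_n\colon(E^{(m')})'^\wedge_n\xrightarrow{\sim}E^{(m')}_n$ used in the paper's proof). Elements of $\mc{L}_{m'}$ are finite sums of fractions, so truncating only the outer $N$-sum does not land you back in $\mc{L}_{m'}$. Consequently, your completeness argument via Lemma \ref{veryelementaryprop} (iv) — which concerns the \emph{order filtration} on $\Emod{m}{\ms{X}}$ — cannot by itself produce the limit: the inner $i$-summation converges $p$-adically within a fixed degree piece, and $p^{-b}\Gamma(\ms{U},\Emod{m}{\ms{X}})$ is not $p$-adically complete.

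The fix is exactly what the paper does: work degree by degree. For each fixed $N$, the inner sum, after applying $\iota$ to its finite truncations, converges $p$-adically in $p^{-b_j-c}\Gamma(\ms{U},\Emod{m}{\ms{X},N})$ because the degree pieces $\Gamma(\ms{U},\Emod{m}{\ms{X},N})$ \emph{are} $p$-adically complete (Lemma \ref{veryelementaryprop} (i)); only then does one take the order-filtration limit over $N$. Equivalently, the paper constructs $\alpha$ into the order-filtration completion $(E^{(m)}_\mb{Q})'$ by Laumon's universal property, then shows $p^N\alpha_n$ maps $(E^{(m')})'_n$ into $E^{(m)}_n$ using Lemma \ref{normcalc} (i)-(b), and uses the $p$-adic completeness of $E^{(m)}_n$ to extend over $(E^{(m')})'^\wedge_n\cong E^{(m')}_n$. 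Your identification of Lemma \ref{normcalc} (i)-(b) as the crucial ingredient is correct, but your write-up handles only one of the two completion processes present in the target. (A second, minor point: your justification that the multiplicative set generated by $\widetilde{\Theta}^{(m')}_l$ satisfies the Ore condition in $\Gamma(\ms{X},\Dmod{m'}{\ms{X},\mb{Q}})$ appeals to Remark \ref{remBer}, which concerns central elements, and to \ref{Orelocal}, which is only the definition; the paper sidesteps this by using the universal property of the Laumon microlocalization \cite[A.2.3.3]{Lau} directly rather than an Ore localization of the discrete ring.)
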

\begin{proof}
 Once the existence and the uniqueness is proven, the compatibility
 $\psi_{m,m'}\circ\psi_{m',m''}=\psi_{m,m''}$ automatically holds by the
 compatibility of $\phi_{m',m}$.

 Let us prove the uniqueness first. Since the problem is local, we may
 assume that $\ms{X}$ possesses a system of local coordinates, and it
 suffices to show the uniqueness for the ring of sections over
 $D(\Theta)$ where $\Theta\in\Gamma(\ms{X},\mc{O}_{T^*\ms{X}})$ is a
 homogeneous element. Suppose there are two homomorphisms $\psi,\psi'$
 satisfying the condition. By the commutativity of the diagram,
 $\psi(\Thetatil{(m')})=\psi'(\Thetatil{(m')})=:\theta$. By the
 uniqueness of the inverse of $\theta$,
 $\psi((\Thetatil{(m')})^{-1})=\psi'((\Thetatil{(m')})^{-1})$.
 Since $\psi$ and $\psi'$ are filtered homomorphisms, these
 homomorphisms are continuous with respect to the topology defined by
 the filtrations (cf.\ \ref{termfiltsoon}). Let $E$ be the subring of
 $\Gamma(D(\Theta),\Emod{m'}{\ms{X}})$ generated by
 $\Gamma(D(\Theta),\Dmod{m'}{\ms{X}})$ and
 $(\Thetatil{(m')})^{-1}$. Then, $\psi|_E=\psi'|_E$. Since
 $\Gamma(D(\Theta),\Emod{m}{\ms{X},\mb{Q}})$
 is separated and $E$ is dense in $\Gamma(D(\Theta),\Emod{m'}{\ms{X}})$,
 we get $\psi=\psi'$, and the uniqueness follows.

 Now, let us check the existence. Since the problem is
 local by the uniqueness, we may suppose that $\ms{X}$ is affine.
 Let $\Theta$ be a homogeneous element of
 $\Gamma(\ms{X},\mc{O}_{T^*\ms{X}})$, and let $\ms{U}:=D(\Theta)$. It
 suffices to construct $\psi_{m,m'}$ on $\ms{U}:=D(\Theta)$. We use the
 notation of \ref{notationfixring}.
 Let $D^{(m')}_S$ be the microlocalization of
 $D^{(m')}$ by using the multiplicative set $S$ of $\mr{gr}(D^{(m')})$
 generated by $\Theta^{(m')}\in\mr{gr}(D^{(m')})$ (cf.\
 \ref{microlocaldef}), and let $(E^{(m)}_\mb{Q})'$ be the completion of
 $E^{(m)}_\mb{Q}$ with respect to the filtration by order.
 Since $\Thetatil{(m,m')}$ is invertible in $(E^{(m)}_\mb{Q})'$,
 $\Thetatil{(m')}$ is also invertible by Lemma
 \ref{moreexplicitdesc} (i). Thus, by the universal property of
 the microlocalization \cite[A.2.3.3]{Lau}, there exists a unique
 homomorphism of filtered rings $\alpha\colon D^{(m')}_S\rightarrow
 (E^{(m)}_\mb{Q})'$ factoring through the canonical homomorphism
 $D^{(m')}\rightarrow(E^{(m)}_{\mb{Q}})'$.
 For any $n$, there exists an integer $N$ such that the homomorphism
 $p^N\cdot\alpha_n$ induces a homomorphism
 $D^{(m')}_{S,n}\rightarrow E^{(m)}_n$ by the concrete description Lemma
 \ref{defofsituation} and Lemma \ref{normcalc} (i)-(b). Let
 $(D^{(m')}_{S,n})^\wedge$ be the $\pi$-adic completion of
 $D^{(m')}_{S,n}$. Since $E^{(m)}_n$ is $\pi$-adically complete, this
 induces the homomorphism $(p^N\cdot\alpha_n)^\wedge\colon
 (D^{(m')}_{S,n})^\wedge\rightarrow E^{(m)}_n$. We define
 \begin{equation*}
  \widehat{\alpha}_n:=p^{-N}\cdot(p^N\cdot\alpha_n)^\wedge\colon
   (D^{(m')}_{S,n})^\wedge\rightarrow E^{(m)}_n\otimes\mb{Q}.
 \end{equation*}
 By construction, we have
 $\widehat{\alpha}_{n+1}|_{D^\wedge_n}=\widehat{\alpha}_n$ where
 $D^\wedge_n:=(D^{(m')}_{S,n})^\wedge$.
 On the other hand, we have $\beta_n\colon
 (D^{(m')}_{S,n})^\wedge\xrightarrow{\sim}E^{(m')}_n$. Indeed, by Lemma
 \ref{compatibility},
 $D^{(m')}_{S,n}\otimes R_i\cong\Gamma(\ms{U},\Emod{m'}{X_i,n})$,
 and since $E^{(m')}_n$ is $\pi$-adically complete, the isomorphism
 follows. Thus, we obtain
 \begin{equation*}
  \indlim_n(\alpha_n\circ\beta_n^{-1})\colon
   E^{(m')}\rightarrow E^{(m)}_\mb{Q},
 \end{equation*}
 which is what we are looking for.

 Finally, let us check that $\psi_{m,m'}$ is strictly injective. By
 construction, locally, $\mr{gr}_n(\psi_{m,m'})$ coincides with the
 $p$-isogeny of Lemma \ref{pisog} for any $n$. This implies that the
 canonical homomorphism
 $\mr{gr}(\Emod{m'}{\ms{X}})\rightarrow\mr{gr}(\Emod{m}{\ms{X},\mb{Q}})$
 is injective. Since $\Emod{m'}{\ms{X}}$ is separated with respect to
 the filtration by order, we get the strict injectivity by \cite[Ch.I,
 4.2.4 (5)]{HO}.
\end{proof}

\subsection{}
We preserve the notation. For non-negative integers $m'\geq m$, we
define a sheaf of rings
\begin{equation*}
 \Emod{m,m'}{\ms{X}}:=\psi_{m,m'}^{-1}
  (\Emod{m}{\ms{X}})\cap\Emod{m'}{\ms{X}},
\end{equation*}
where the intersection is taken in
$\Emod{m'}{\ms{X},\mb{Q}}$. By definition,
$\Emod{m,m}{\ms{X}}=\Emod{m}{\ms{X}}$.
We denote
$\Emod{m,m'}{\ms{X}}\otimes R_i$ by $\Emod{m,m'}{X_i}$.
Let $\ms{U}$ be an open subset of $T^*\ms{X}$. Then the left exactness
of the functor $\Gamma$ implies that
\begin{equation*}
 \Gamma(\ms{U},\Emod{m,m'}{\ms{X}})\cong\psi_{m,m'}^{-1}
  (E^{(m)})\cap E^{(m')}\subset
  E^{(m')}_{\mb{Q}}
\end{equation*}
using the notation of \ref{notationfixring}.

Since $\psi_{m,m'}(\Emod{m'}{\ms{X}})$ and
$\Emod{m}{\ms{X}}$ are sub-$\pi^{-1}\Dmod{m}{\ms{X}}$-algebras of
$\Emod{m}{\ms{X},\mb{Q}}$, the ring $\Emod{m,m'}{\ms{X}}$ is also a
$\pi^{-1}\Dmod{m}{\ms{X}}$-algebra on $T^*\ms{X}$. Moreover, by putting
\begin{equation*}
 \Emod{m,m'}{\ms{X},n}:=\psi_{m,m'}^{-1}
  (\Emod{m}{\ms{X},n})\cap\Emod{m'}{\ms{X},n},
\end{equation*}
we may equip $\Emod{m,m'}{\ms{X}}$ with a filtration, and we consider
$\Emod{m,m'}{\ms{X}}$ as a filtered ring. By Lemma \ref{stricnessofhom},
$\psi_{m,m'}$ is a strict homomorphism, and the canonical homomorphisms
of filtered rings
\begin{equation*}
 (\Emod{m,m'}{\ms{X}},\Emod{m,m'}{\ms{X},n})\rightarrow
 (\Emod{m}{\ms{X}},\Emod{m}{\ms{X},n}),\qquad
 (\Emod{m,m'}{\ms{X}},\Emod{m,m'}{\ms{X},n})\rightarrow
 (\Emod{m'}{\ms{X}},\Emod{m'}{\ms{X},n})
\end{equation*}
are also strict injective homomorphisms. By the explicit presentation
Lemma \ref{defofsituation} and Lemma \ref{normcalc}
(i)-(b), $\psi_{m,m'}(\Emod{m'}{\ms{X},n})\subset\Emod{m}{\ms{X},n}$
for $n<p^{m+1}$, and in particular
\begin{equation}
 \label{negativecoince}
 \Emod{m,m'}{\ms{X},0}=\Emod{m'}{\ms{X},0}.
\end{equation}

\begin{lem}
 \label{inversecontE}
 Assume $\ms{X}$ to be affine, and let $\Theta$ be a homogeneous section
 of $\mc{O}_{T^*\ms{X}}$. We put $\ms{U}:=D(\Theta)$.
\begin{enumerate}
 \item For non-negative integers $M'\geq M\geq m'\geq m$,
       $(\Thetatil{(M,M')})^{-1}\in
       \Gamma(\ms{U},\Emod{m,m'}{\ms{X}})$.

 \item For non-negative integers $M'\geq M$ and $M'\geq m'\geq m$,
       $(\Thetatil{(M,M')})^{-1}\in
       \Gamma(\ms{U},\Emod{m,m'}{\ms{X},\mb{Q}})$.
\end{enumerate}
\end{lem}
\begin{proof}
 For any integer $m''$ such that $m''\geq m'\geq m$,
 $(\Thetatil{(m',m'')})^{-1}\in
 \Gamma(\ms{U},\Emod{m}{\ms{X}})$. Indeed, there exist a non-negative
 integer $n$ and a unit $u$ of $R$ such that
 $\Thetatil{(m',m'')}=u\,p^{-n}\cdot\Thetatil{(m,m'')}$
 by Lemma \ref{moreexplicitdesc},
 and thus
 \begin{equation*}
  (\Thetatil{(m',m'')})^{-1}=u^{-1}\,p^n\cdot
   (\Thetatil{(m,m'')})^{-1}\,\in
   \Gamma(\ms{U},\Emod{m}{\ms{X}}).
 \end{equation*}
 This yields the first claim.

 In turn, for any integer $m''$ such that
 $M'\geq m''$, we can check that
 $(\Thetatil{(M,M')})^{-1}\in
 \Gamma(\ms{U},\Emod{m''}{\ms{X},\mb{Q}})$, which implies
 the second claim.
\end{proof}

\subsection{}
\label{calcintse}
Let $\iota\colon\Emod{m}{\ms{X}}\rightarrow\Emod{m}{\ms{X},\mb{Q}}$ be
the canonical inclusion. Take an integer $n$. Consider the following
commutative diagram
\begin{equation}
 \label{exactgrcart}
 \xymatrix@C=40pt{
  0\ar[r]&\Emod{m,m'}{\ms{X},n-1}\ar[r]\ar@{^{(}->}[d]&
  \Emod{m'}{\ms{X},n-1}\oplus\Emod{m}{\ms{X},n-1}
  \ar@{^{(}->}[d]
  \ar[rr]^<>(.5){\alpha_{n-1}:=}
  _<>(.5){((\psi_{m,m'})_{n-1},\iota_{n-1})}&&
  (\Emod{m}{\ms{X},\mb{Q}})_{n-1}\ar@{^{(}->}[d]\\
 0\ar[r]&\Emod{m,m'}{\ms{X},n}\ar[r]&\Emod{m'}{\ms{X},n}
  \oplus\Emod{m}{\ms{X},n}
  \ar[rr]^<>(.5){\alpha_n:=}
  _<>(.5){((\psi_{m,m'})_{n},\iota_n)}
  &&(\Emod{m}{\ms{X},\mb{Q}})_n,
  }
\end{equation}
whose rows are exact sequences.

\begin{lem*}
 Let $\mr{inc}\colon(\Emod{m}{\ms{X},\mb{Q}})_{n-1}\rightarrow
 (\Emod{m}{\ms{X},\mb{Q}})_n$ be the canonical inclusion.
 The following sequence is exact:
 \begin{equation*}
  0\rightarrow\mr{Im}(\alpha_{n-1})
   \rightarrow
   (\Emod{m}{\ms{X},\mb{Q}})_{n-1}\oplus
   \mr{Im}(\alpha_n)
   \rightarrow(\Emod{m}{\ms{X},\mb{Q}})_{n},
 \end{equation*}
 where the second (resp.\ last) homomorphism is induced by
 $(\alpha_{n-1},\mr{inc})$ (resp.\ $\mr{inc}-\alpha_n$).
\end{lem*} 
\begin{proof}
 Since the statement is local, we may suppose that $\ms{X}$ is affine
 and possesses a system of local coordinates. Moreover, it suffices to
 show the exactness for the modules of sections
 over $\ms{U}:=D(\Theta)$ where $\Theta$ is a homogeneous section of
 $\Gamma(T^*\ms{X},\mc{O}_{T^*\ms{X}})$. We use the notation of
 \ref{notationfixring}. We also denote $\psi_{m,m'}$ by
 $\psi$ and $(\psi_{m,m'})_n$ by $(\psi)_n$ for short.

 An operator $P$ of $\DcompQ{m}{\ms{X}}$ is said to be {\em homogeneous
 of degree $l$} if we can write $P=\sum_{|\underline{k}|=l}
 a_{\underline{k}}\,\underline{\partial}^{\angles{m}{\underline{k}}}$
 with $a_{\underline{k}}\in\mc{O}_{\ms{X},\mb{Q}}$.
 Take $S\in\mr{Im}((\psi)_n,\iota_n)$.
 Using a left presentation, we can write
 \begin{equation*}
  S:=\psi\Bigl(\sum_{\substack{k\leq n\\i\in\mb{Z}}}P_{k,i}
   (\Thetatil{(m')})^{i}\Bigr)+\sum_{\substack{k\leq n\\
  i\in\mb{Z}}}Q_{k,i}(\Thetatil{(m,m')})^{i}
 \end{equation*}
 where the first sum is an element of $E_n^{(m')}$
 and the second sum is one of $E_n^{(m)}$, and $P_{k,i},Q_{k,i}$
 are homogeneous operators of degree $k-ip^{m'}\mr{ord}(\Theta)$
 in $\DcompQ{m'}{\ms{X}}$ and
 $\DcompQ{m}{\ms{X}}$ respectively with some convergence
 conditions. Suppose
 $S\in(E_{\mb{Q}}^{(m)})_{n-1}$. We need to show that this element is
 contained in $\mr{Im}(\alpha_{n-1})$.
 Since $\lim_{i\rightarrow\pm\infty}P_{n,i}=0$ in $\DcompQ{m'}{\ms{X}}$
 and by Corollary \ref{difflevelcoherent},
 there exists a {\em finite} subset $I\subset\mb{Z}$ such that
 \begin{equation*}
  \psi\Bigl(\sum_{i\not\in I}P_{n,i}(\Thetatil{(m')})^i\Bigr)
   \in E^{(m)}_n.
 \end{equation*}
 This is in fact contained in $E^{(m)}_n\cap\psi(E^{(m')}_n)$.
 Then for $N\gg 0$, there exists
 $a_{\underline{k}}\in\Gamma(\ms{X},\mc{O}_{\ms{X}})$
 for $|\underline{k}|=n+Np^{m'}\mr{ord}(\Theta)=:M$ and
 $\underline{k}\geq\underline{0}$ such that
 \begin{equation*}
  \sum_{i\in I}P_{n,i}(\Thetatil{(m')})^i
   \in\sum_{|\underline{k}|=M}a_{\underline{k}}\underline{\partial}
   ^{\angles{m'}{\underline{k}}}(\Thetatil{(m')})^{-N}
   +E^{(m')}_{n-1}+
   \bigl(\psi^{-1}(E^{(m)}_n)\cap E^{(m')}_n\bigr)
 \end{equation*}
 and $a_{\underline{k}}\underline{\partial}^{\angles{m'}
 {\underline{k}}}(\Thetatil{(m')})^{-N}
 \not\in\psi^{-1}(E^{(m)})$ for any
 $|\underline{k}|=M$ and $\underline{k}\geq\underline{0}$ such that
 $a_{\underline{k}}\neq0$. By
 the same argument for $E^{(m)}$ and increasing $N$ if necessary, we
 may also suppose that there exists
 $b_{\underline{k}}\in\Gamma(\ms{X},\mc{O}_{\ms{X}})$ for
 $|\underline{k}|=M$ such that
 $b_{\underline{k}}\underline{\partial}^{\angles{m}{\underline{k}}}
 (\Thetatil{(m,m')})^{-N}\not\in E^{(m')}$ for any
 $b_{\underline{k}}\neq0$, and
 \begin{align*}
  \sum_{i}Q_{n,i}(\Thetatil{(m,m')})^i
  \in\sum_{|\underline{k}|=M}b_{\underline{k}}\underline{\partial}
  ^{\angles{m}{\underline{k}}}(\Thetatil{(m,m')})^{-N}
  +E^{(m)}_{n-1}+(\psi(E^{(m')}_n)\cap E^{(m)}_n)
 \end{align*}
 in $E^{(m)}_{\mb{Q}}$. However, since $S\in(E_{\mb{Q}}^{(m)})_{n-1}$,
 \begin{equation}
  \label{inclresco}
  \sum_{|\underline{k}|=M}
  a_{\underline{k}}\underline{\partial}
  ^{\angles{m'}{\underline{k}}}(\Thetatil{(m')})^{-N}+
  b_{\underline{k}}\underline{\partial}
  ^{\angles{m}{\underline{k}}}(\Thetatil{(m,m')})^{-N}
  \in (E^{(m)}_\mb{Q})_{n-1}+(\psi(E^{(m')}_n)\cap E^{(m)}_n).
 \end{equation}
 The finite set
 $\bigl\{\underline{\xi}^{\underline{k}}\cdot(\Theta^{(m')})^{-N}
 \bigr\}_{|\underline{k}|=M}$ in $\mc{O}_{T^*\ms{X},\mb{Q}}(n)$ is
 linearly independent over $\mc{O}_{\ms{X},\mb{Q}}$.
 Thus, by the choice of $a_{\underline{k}}$ and $b_{\underline{k}}$,
 (\ref{inclresco}) is possible only when
 $a_{\underline{k}}=b_{\underline{k}}=0$, and the lemma is proven.
\end{proof}

\begin{cor*}
 \label{intersectioncompatible}
 We have
 \begin{equation*}
   \mr{gr}(\Emod{m,m'}{\ms{X}})=\mr{gr}(\psi_{m,m'})^{-1}
   \bigl(\mr{gr}(\Emod{m}{\ms{X}})\bigr)
   \cap\mr{gr}(\Emod{m'}{\ms{X}})
 \end{equation*}
 where the intersection is taken in the ring 
 \begin{equation*}
  \mr{gr}(\Emod{m}{\ms{X},\mb{Q}})\cong\bigoplus_{i\in\mb{Z}}
   \mc{O}_{T^*\ms{X},\mb{Q}}(i)=\mc{O}_{T^*\ms{X},\mb{Q}}(*).
 \end{equation*}
\end{cor*}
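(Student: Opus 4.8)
The plan is to apply $\mr{gr}_{\bullet}$ to the defining formula $\Emod{m,m'}{\ms{X},n}=\psi_{m,m'}^{-1}(\Emod{m}{\ms{X},n})\cap\Emod{m'}{\ms{X},n}$ degree by degree and to reduce everything to the cartesian diagram of the preceding lemma. First I would record that the rows of the diagram in \ref{calcintse} give, for every $n$, a short exact sequence
\begin{equation*}
 0\to\Emod{m,m'}{\ms{X},n}\to\Emod{m'}{\ms{X},n}\oplus\Emod{m}{\ms{X},n}
  \to L_n\to0,
\end{equation*}
where $L_n:=\mr{Im}((\psi_{m,m'})_n,\iota_n)\subseteq(\Emod{m}{\ms{X},\mb{Q}})_n$ and the second arrow is $(\psi_{m,m'})_n-\iota_n$ (its kernel is $\Emod{m,m'}{\ms{X},n}$ by construction). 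Moreover these sequences assemble into a ladder indexed by $n$ in which all three vertical maps are inclusions, the rightmost being $L_{n-1}\subseteq L_n$ inside $(\Emod{m}{\ms{X},\mb{Q}})_n$.

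Next I would feed consecutive rows $n-1$ and $n$ of this ladder into the snake lemma. Since the three vertical maps are injective, their kernels vanish and the snake lemma yields a short exact sequence of cokernels
\begin{equation*}
 0\to\mr{gr}_n(\Emod{m,m'}{\ms{X}})\to\mr{gr}_n(\Emod{m'}{\ms{X}})
  \oplus\mr{gr}_n(\Emod{m}{\ms{X}})\to L_n/L_{n-1}\to0.
\end{equation*}
This is exactly where the preceding lemma enters: its cartesian square says precisely that $L_{n-1}=L_n\cap(\Emod{m}{\ms{X},\mb{Q}})_{n-1}$ inside $(\Emod{m}{\ms{X},\mb{Q}})_n$, so the canonical map $L_n/L_{n-1}\to\mr{gr}_n(\Emod{m}{\ms{X},\mb{Q}})$ is injective. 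Composing, I obtain that $\mr{gr}_n(\Emod{m,m'}{\ms{X}})$ is identified with the kernel of $\mr{gr}_n(\psi_{m,m'})-\mr{gr}_n(\iota)\colon\mr{gr}_n(\Emod{m'}{\ms{X}})\oplus\mr{gr}_n(\Emod{m}{\ms{X}})\to\mr{gr}_n(\Emod{m}{\ms{X},\mb{Q}})$.

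To finish, I would note that $\mr{gr}_n(\Emod{m}{\ms{X}})\cong\mc{O}_{T^{(m)*}\ms{X}}(n)$ by (\ref{compgron}) is $p$-torsion free, so $\iota\colon\Emod{m}{\ms{X}}\hookrightarrow\Emod{m}{\ms{X},\mb{Q}}$ is a strict inclusion and $\mr{gr}(\iota)$ is injective; combined with the injectivity of $\mr{gr}(\psi_{m,m'})$ coming from the strict injectivity of $\psi_{m,m'}$ in Lemma \ref{stricnessofhom}, the kernel above becomes $\{x\in\mr{gr}_n(\Emod{m'}{\ms{X}})\mid\mr{gr}(\psi_{m,m'})(x)\in\mr{gr}_n(\Emod{m}{\ms{X}})\}$, the intersection being taken inside $\mr{gr}_n(\Emod{m}{\ms{X},\mb{Q}})$, i.e.\ exactly the degree-$n$ component of the asserted intersection. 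Summing over $n$, together with the identification $\mr{gr}(\Emod{m}{\ms{X},\mb{Q}})\cong\mc{O}_{T^*\ms{X},\mb{Q}}(*)$ (level independence after $\otimes\mb{Q}$, Lemma \ref{pseudopolyrateq}), yields the corollary. The genuinely substantive point — that the filtration on $\bigcup_nL_n$ induced from $\Emod{m}{\ms{X},\mb{Q}}$ really is $\{L_n\}$ — has already been overcome in the preceding lemma, so here the only thing to be careful about is the bookkeeping of which filtrations are the induced ones.
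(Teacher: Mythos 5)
Your argument is correct and is essentially the intended one: the paper states the corollary without proof as an immediate consequence of Lemma \ref{calcintse}, and your chain — short exact sequences with image $L_n$, snake lemma on the ladder of inclusions to produce $0\to\mr{gr}_n(\Emod{m,m'}{\ms{X}})\to\mr{gr}_n(\Emod{m'}{\ms{X}})\oplus\mr{gr}_n(\Emod{m}{\ms{X}})\to L_n/L_{n-1}\to 0$, then the cartesian square precisely giving injectivity of $L_n/L_{n-1}\hookrightarrow\mr{gr}_n(\Emod{m}{\ms{X},\mb{Q}})$, followed by the $p$-torsion freeness/strictness bookkeeping — is exactly the routine diagram chase the author leaves to the reader. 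The one place worth keeping explicit in a write-up is your last paragraph: the identification of the kernel with the stated intersection genuinely uses that both $\mr{gr}(\iota)$ and $\mr{gr}(\psi_{m,m'})$ are injective (the former from $p$-torsion freeness of $\mc{O}_{T^{(m)*}\ms{X}}(n)$, the latter from the strict injectivity in Lemma \ref{stricnessofhom}), and you have correctly isolated both.
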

\begin{proof}
 The canonical homomorphism
 $\mr{Im}(\alpha_n)/\mr{Im}(\alpha_{n-1})\rightarrow
 (\Emod{m}{\ms{X},\mb{Q}})_n/(\Emod{m}{\ms{X},\mb{Q}})_{n-1}$ is
 injective by the lemma above. Thus, the corollary follows by
 (\ref{exactgrcart}).
\end{proof}

\begin{lem}
 \label{notensqincl}
 Let $m'\geq m$ be non-negative integers.
 We have an inclusion $\Emod{m-1,m'}{\ms{X}}\subset\Emod{m,m'}{\ms{X}}$
 in $\Emod{m'}{\ms{X},\mb{Q}}$.
 Moreover, there exists a unique strict injective homomorphisms of
 filtered $\pi^{-1}\Dmod{m-1}{\ms{X}}$-algebras
 $\alpha\colon\Emod{m,m'+1}{\ms{X}}\rightarrow\Emod{m,m'}{\ms{X}}$ such
 that $\psi_{m,m'}\circ\alpha=\psi_{m,m'+1}$.
\end{lem}
\begin{proof}
 Let us show the first claim.
 It suffices to show that $\psi_{m-1,m'}\colon\Emod{m-1,m'}{\ms{X}}
 \rightarrow\Emod{m-1}{\ms{X},\mb{Q}}$
 factors through the composition
 $\Emod{m,m'}{\ms{X}}\rightarrow\Emod{m}{\ms{X}}\rightarrow
 \Emod{m-1}{\ms{X},\mb{Q}}$. We consider $\Emod{m-1,m'}{\ms{X}}$ and
 $\Emod{m,m'}{\ms{X}}$ as subrings of $\Emod{m-1}{\ms{X},\mb{Q}}$ using
 these injections. We may assume $\ms{X}$ is affine, and let
 $\ms{U}:=D(\Theta)$ where $\Theta$ is a homogeneous section of
 $\Gamma(T^*\ms{X},\mc{O}_{T^*\ms{X}})$. Let us use the notation of
 \ref{notationfixring}.
 It suffices to show that $E^{(m-1,m')}$ is contained in $E^{(m,m')}$.
 Take $P\in (E^{(m-1,m')})_N$ for some integer $N$. We
 inductively define $P_i\in(E^{(m-1,m')})_{N-i}$ and
 $Q_i\in(E^{(m,m')})_{N-i}$ such that $P_{i+1}=P_i-Q_i$ for
 $i\geq0$. Put $P_0:=P$. Assume $P_i$ is constructed. We can write
 \begin{equation*}
  \sigma(P_i)=\sum_{|\underline{k}|=N-i}a_{\underline{k}}\,
   \underline{\xi}^{\angles{m-1}{\underline{k}}}\,
   (\Thetatil{(m-1,m')})^{-n}
 \end{equation*}
 with $a_{\underline{k}}\in\Gamma(\ms{X},\mc{O}_{\ms{X}})$ and
 $n\in\mb{Z}$. By Corollary \ref{intersectioncompatible}, this is
 contained in both $\mr{gr}(E^{(m-1)})$ and
 $\mr{gr}(E^{(m')})$. Thus, Lemma \ref{normcalc} (ii) is
 shows that
 \begin{equation*}
  Q_i:=\sum_{|\underline{k}|=N-i}a_{\underline{k}}\,
   \underline{\partial}^{\angles{m-1}{\underline{k}}}\,
   (\Thetatil{(m-1,m')})^{-n}
 \end{equation*}
 is in $E^{(m,m')}$. By construction, $P_{i+1}:=P_i-Q_i$ is
 contained in $(E^{(m-1,m')})_{N-i-1}$. The filtered ring
 $E^{(m,m')}$ is complete by (\ref{negativecoince}) and Lemma
 \ref{veryelementaryprop} (iv).
 Thus, $P=\sum_{i\geq0}Q_i\in E^{(m,m')}$. The second claim can
 be checked similarly, and left to the reader.
\end{proof}

\begin{lem}
 \label{intersectioncoherent}
 For any $n\in\mb{Z}$, $\mr{gr}_n(\Emod{m,m'}{\ms{X}})$
 is a coherent $\mc{O}_{T^{(m')*}\ms{X}}(0)$-module on
 $\mathring{T}^*\ms{X}$. Moreover, on $\mathring{T}^*\ms{X}$,
 $\bigoplus_{i\geq0}\mr{gr}_i(\Emod{m,m'}{\ms{X}})$ and
 $\mr{gr}(\Emod{m,m'}{\ms{X}})$ are
 $\mc{O}_{T^{(m')*}\ms{X}}(0)$-algebras of finite type, and they are
 noetherian.
\end{lem}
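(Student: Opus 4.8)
Working throughout on $\mathring{T}^*\ms{X}$, the plan is to build everything on Corollary \ref{intersectioncompatible}, which presents $\mr{gr}(\Emod{m,m'}{\ms{X}})$ as the intersection $L\cap L'$ of the two graded subrings $L:=\mr{gr}(\Emod{m}{\ms{X}})$ and $L':=\mr{gr}(\Emod{m'}{\ms{X}})$ of $\mc{O}_{T^*\ms{X},\mb{Q}}(*)$, where by Lemma \ref{stricnessofhom} and {\normalfont(\ref{compgron})} we identify $L_n=\mc{O}_{\mathring{T}^{(m)*}\ms{X}}(n)$, $L'_n=\mc{O}_{\mathring{T}^{(m')*}\ms{X}}(n)$, and we set $\mc{O}:=\mc{O}_{\mathring{T}^{(m')*}\ms{X}}(0)$. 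I would first record that $L'$ is a graded $\mc{O}$-algebra of finite type with coherent graded pieces (Lemma in \ref{limitnaive}), hence noetherian with respect to $\mf{B}$ by Lemma \ref{noethgrlem}; and that $L$ has the same two properties, since each $L_n$ is coherent over $\mc{O}$ by the Corollary in \ref{difflevelcoherent} (valid for arbitrary $\ms{X}$ by its Remark, now that Lemma \ref{stricnessofhom} is at hand) and $L$ is of finite type over $\mc{O}_{\mathring{T}^{(m)*}\ms{X}}(0)$, which is in turn a finite $\mc{O}$-module by the same Corollary with $n=0$.

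For the coherence of the graded pieces: fixing $n$, Lemma \ref{normcalc}~(i)-(b) yields $b\geq0$ with $p^{b}L'_n\subseteq L_n$, so there is a well-defined $\mc{O}$-linear map $L'_n\to L_n/p^{b}L_n$, $x\mapsto\overline{p^{b}x}$, whose kernel is exactly $L_n\cap L'_n=\mr{gr}_n(\Emod{m,m'}{\ms{X}})$ because $\mc{O}_{T^*\ms{X},\mb{Q}}(n)$ is $p$-torsion free. Since $L'_n$ and $L_n/p^{b}L_n$ are coherent $\mc{O}$-modules and $\mc{O}$ is coherent on $\mathring{T}^*\ms{X}$, this kernel is coherent, which settles the first assertion.

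The finite-type statement is the heart of the matter. By Lemma \ref{normcalc}~(i)-(a) one may take $a_n=0$ for $n>d\,p^{m'+1}$, giving $L_n\subseteq L'_n$ and hence $\mr{gr}_n(\Emod{m,m'}{\ms{X}})=L_n$ for all such $n$; dually $b_n=0$ for $n<p^{m+1}$, giving $L'_n\subseteq L_n$ and $\mr{gr}_n(\Emod{m,m'}{\ms{X}})=L'_n$ there. Thus $L\cap L'$ coincides with $L$ in high degree, with $L'$ in low degree, and differs from both only in the finitely many intermediate degrees, where it is a coherent $\mc{O}$-module. I would then show $L\cap L'$ is of finite type over $\mc{O}$ by a local computation on a chart $D(\Theta)$: using the explicit presentations of \ref{defofsituation} and \ref{moreexplicitdesc}, membership of a monomial $\underline{\xi}^{\angles{m'}{\underline{l}}}(\Theta^{(m')})^{-i}$ in $L_n\cap L'_n$ is governed by a comparison of two $p$-adic valuations that are piecewise affine in $(\underline{l},i)$ — this is essentially the content of the functions $f,g$ in the proof of Lemma \ref{normcalc}~(ii) — so, after clearing bounded denominators, the monomials lying in $L\cap L'$ correspond to a submonoid of $\mb{N}^d\times\mb{N}\times\mb{Z}$ cut out by finitely many linear (in)equalities, which is finitely generated by a Gordan-type argument. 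This furnishes finitely many homogeneous $\mc{O}$-algebra generators of $\Gamma(D(\Theta),L\cap L')$; by the coherence just proved together with the vanishing $H^1(D(\Theta),\mr{gr}_j(\Emod{m,m'}{\ms{X}}))=0$ of \cite[Exp.\ VI, \S5]{SGA} (used as in the proof of Lemma \ref{noethgrlem}) these sheafify to a surjection $\mc{O}[T_1,\dots,T_s]|_{D(\Theta)}\twoheadrightarrow(L\cap L')|_{D(\Theta)}$. Restricting the same argument to non-negative degrees gives that $\bigoplus_{i\geq0}\mr{gr}_i(\Emod{m,m'}{\ms{X}})$ is also of finite type over $\mc{O}$.

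Once this is done, both $\bigoplus_{i\geq0}\mr{gr}_i(\Emod{m,m'}{\ms{X}})$ and $\mr{gr}(\Emod{m,m'}{\ms{X}})$ are graded $\mc{O}$-algebras of finite type with coherent graded pieces, so Lemma \ref{noethgrlem} applies directly and yields that they are noetherian with respect to $\mf{B}$, which finishes the proof. The main obstacle is precisely the finite-type step: in contrast to the situation of Lemma \ref{notensqincl}, neither $L$ nor $L'$ is a finite module over $L\cap L'$ (the relevant index grows with the degree), so no Eakin--Nagata/Artin--Tate shortcut is available, and one must genuinely produce the algebra generators by hand through the monoid-finiteness argument sketched above.
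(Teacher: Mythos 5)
Your argument for coherence of the graded pieces is correct and a genuinely different route from the paper's: you realize $\mr{gr}_n(\Emod{m,m'}{\ms{X}})$ as the kernel of the $\mc{O}$-linear map $L'_n\to L_n/p^{b_n}L_n$, $x\mapsto\overline{p^{b_n}x}$, using Lemma \ref{normcalc}~(i)-(b) and $p$-torsion-freeness, whereas the paper works with the notion of \emph{pseudo-coherent} module (from \cite[Def.\ 1.1.6]{KK}) and invokes that the intersection of two coherent submodules of a pseudo-coherent module is coherent. Both work; yours is perhaps the more elementary.

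The finite-type step, however, contains a genuine error. You write that ``neither $L$ nor $L'$ is a finite module over $L\cap L'$ \dots\ so no Eakin--Nagata/Artin--Tate shortcut is available,'' and therefore you propose to produce algebra generators explicitly via a monoid-finiteness argument. In fact the paper's proof \emph{is} an Artin--Tate shortcut, applied not to the full $\mr{gr}$ but to its positive-degree truncation. The observation you yourself make — that $a_n=0$ for $n>d\,p^{m'+1}$ gives $L_n=L_n\cap L'_n$ in all sufficiently high degrees — immediately implies that the quotient $\bigoplus_{n\geq0}L_n\big/\bigoplus_{n\geq0}(L_n\cap L'_n)$ is concentrated in the finitely many degrees $0\leq n\leq d\,p^{m'+1}$, where each graded piece $L_n/(L_n\cap L'_n)$ is a coherent $\mc{O}$-module (it is killed by $p^{a_n}$). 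Hence $\bigoplus_{n\geq0}L_n$ is finite over $\bigoplus_{n\geq0}(L_n\cap L'_n)$, and the paper's Claim — a sheaf version of \cite[7.8]{AtMac} applied exactly as in Lemma \ref{noethgrlem} — yields finite generation of $\bigoplus_{n\geq0}(L_n\cap L'_n)$ over $\mc{O}$. The negative-degree part needs no such argument since, by $b_n=0$ for $n<p^{m+1}$, one has $\bigoplus_{n\leq0}(L_n\cap L'_n)=\bigoplus_{n\leq0}L'_n$, which is of finite type by the Lemma in \ref{limitnaive}. Your replacement — describing the monomials of $L\cap L'$ by ``finitely many linear (in)equalities'' on $\mb{N}^d\times\mb{N}\times\mb{Z}$ and invoking a Gordan-type finiteness — is not substantiated: the membership condition is governed by $p$-adic valuations of the form $\sum_j\sum_i[p^{-i}k_j]$, which are \emph{not} piecewise affine with finitely many pieces, so the set in question is not visibly a finitely generated affine monoid and Gordan's lemma does not apply off the shelf. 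Concretely, what you must prove is precisely that this monoid is finitely generated, and the Artin--Tate reduction is the mechanism the paper uses to avoid having to do that by hand.
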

\begin{proof}
 The modules $\mr{gr}_n(\Emod{m}{\ms{X}})$ and
 $\mr{gr}_n(\Emod{m'}{\ms{X}})$ are coherent
 $\mc{O}_{T^{(m')*}\ms{X}}(0)$-modules on $\mathring{T}^*\ms{X}$ by
 Corollary \ref{difflevelcoherent} and (\ref{compgron}). Since
 \begin{equation*}
  \mr{gr}_n(\Emod{m}{\ms{X},\mb{Q}})\cong\mc{O}_{T^{(m)*}\ms{X},\mb{Q}}
   (n)=\sum_{i\geq 0} p^{-i}\mc{O}_{T^{(m)*}\ms{X}}(n),
 \end{equation*}
 there exists an integer $i_0$ such that
 $\mr{gr}_n(\Emod{m}{\ms{X}})+\mr{gr}_n(\Emod{m'}{\ms{X}})\subset
 p^{-i_0}\mc{O}_{T^{(m)*}\ms{X}}(n)$. Since the intersection of two
 coherent modules in a coherent module is coherent,
 $\mr{gr}_n(\Emod{m}{\ms{X}})\cap\mr{gr}_n(\Emod{m'}{\ms{X}})$ is also a
 coherent $\mc{O}_{T^{(m')*}\ms{X}}(0)$-module, and
 $\mr{gr}_n(\Emod{m,m'}{\ms{X}})$ is coherent as well by
 Corollary \ref{intersectioncompatible}.

 \begin{cl}
  Let $\mc{A}=\bigoplus_{i\in\mb{Z}}\mc{A}_i$ be a finitely generated
  $\mc{O}:=\mc{O}_{\mathring{T}^{(m)*}\ms{X}}(0)$-algebra such that
  $\mc{A}_i$ are coherent $\mc{O}$-modules for all $i$. Let
  $\mc{B}:=\bigoplus_{i\in\mb{Z}}\mc{B}_i$
  be a sub-$\mc{O}$-algebra of $\mc{A}$ such that
  $\mc{B}_i\subset\mc{A}_i$ and $\mc{B}_i$ are coherent $\mc{O}$-modules
  for all $i$. Assume $\mc{A}$ is finite over $\mc{B}$. Then $\mc{B}$ is
  a noetherian ring and finitely generated over $\mc{O}$.
 \end{cl}
 The proof is similar to that of Lemma \ref{noethgrlem} using
 \cite[7.8]{AtMac}.
 For any integer $m''\geq0$,
 $\bigoplus_{i\geq0}\mr{gr}_i(\Emod{m''}{\ms{X}})$ is finitely
 generated $\mc{O}_{T^{(m'')*}\ms{X}}(0)$-algebras by Lemma
 \ref{limitnaive}, $\mc{O}_{T^{(m)*}\ms{X}}(0)$ is a coherent
 $\mc{O}_{T^{(m')}\ms{X}}(0)$-module on $\mathring{T}^*\ms{X}$ by
 Corollary \ref{difflevelcoherent}, and
 $\bigoplus_{i\geq0}\mr{gr}_i(\Emod{m}{\ms{X}})$ is finite over
 $\bigoplus_{i\geq0}\mr{gr}_i(\Emod{m,m'}{\ms{X}})$ by Lemma
 \ref{normcalc} (i)-(a). Using the claim, this shows that
 $\bigoplus_{i\geq0}\mr{gr}_i(\Emod{m,m'}{\ms{X}})$ is an
 $\mc{O}_{T^{(m')*}\ms{X}}(0)$-algebra of finite type and
 noetherian.
 Now, since $\bigoplus_{i\leq0}\mr{gr}_i(\Emod{m,m'}{\ms{X}})
 \cong\bigoplus_{i\leq0}\mr{gr}_i(\Emod{m'}{\ms{X}})$ is an
 $\mc{O}_{T^{(m')*}\ms{X}}(0)$-algebra of finite type,
 $\mr{gr}(\Emod{m,m'}{\ms{X}})$ is finitely generated as well. Applying
 the claim again to both $\mc{A}$ and $\mc{B}$ being equal to
 $\mr{gr}(\Emod{m,m'}{\ms{X}})$, we conclude
 that the ring is noetherian.
\end{proof}

\begin{prop}
 \label{noetherianintermediate}
 (i) The filtered ring $(\Emod{m,m'}{\ms{X}},\Emod{m,m'}{\ms{X},n})$ is
 complete.

 (ii) The rings $\Emod{m,m'}{\ms{X},0}$ and $\Emod{m,m'}{\ms{X}}$ are
 noetherian on $\mathring{T}^*\ms{X}$.

 (iii) For open subsets $\mathring{T}^*\ms{X}\supset\ms{U}\supset\ms{V}$
 in $\mf{B}$, the restriction homomorphism
 $\Gamma(\ms{U},\ms{E})\rightarrow\Gamma(\ms{V},\ms{E})$
 is flat. Here $\ms{E}$ denotes $\Emod{m,m'}{\ms{X},0}$ or
 $\Emod{m,m'}{\ms{X}}$.
\end{prop}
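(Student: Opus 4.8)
The plan is to deduce all three assertions, by locality, from the fibre-product description $\Emod{m,m'}{\ms{X},n}=\psi_{m,m'}^{-1}(\Emod{m}{\ms{X},n})\cap\Emod{m'}{\ms{X},n}$ inside $\Emod{m'}{\ms{X},\mb{Q}}$, working with sections over $\ms{U}=D(\Theta)\in\mf{B}$ with $\Theta\in\Gamma(T^*\ms{X},\mc{O}_{T^*\ms{X}})$ homogeneous and $\ms{X}$ affine with local coordinates. Two facts are used throughout: $\psi_{m,m'}$ preserves the filtration degree, with $\mr{gr}_n(\psi_{m,m'})$ the $p$-isogeny of Lemma \ref{pisog} locally (Lemma \ref{stricnessofhom}), and $\Emod{m,m'}{\ms{X},n}=\Emod{m'}{\ms{X},n}$ for $n<p^{m+1}$, in particular for all $n\le 0$ (Lemma \ref{normcalc}(i)-(b) and (\ref{negativecoince})).

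For (i), since $\Gamma(\ms{U},-)$ commutes with inverse limits, sheaf completeness reduces to showing, for $\ms{U}\in\mf{B}$, that $\Gamma(\ms{U},\Emod{m,m'}{\ms{X}})$ is complete for the order filtration and that $\Gamma(\ms{U},-)$ commutes with the quotients $\Emod{m,m'}{\ms{X}}/\Emod{m,m'}{\ms{X},j}$. The latter assertion — which is also condition 4 of Lemma \ref{noetherianlemma}, needed in (ii) — I would obtain from the cartesian squares of Lemma \ref{calcintse}, established at the level of sections over $D(\Theta)$, together with the corresponding commutations for $\Emod{m}{\ms{X}}$, $\Emod{m'}{\ms{X}}$ and $\Emod{m}{\ms{X},\mb{Q}}$ (Lemma \ref{veryelementaryprop}(ii), the $\otimes\mb{Q}$ case handled by (\ref{commindga})), by a diagram chase on the left-exact sequences $0\to\Emod{m,m'}{\ms{X},n}\to\Emod{m'}{\ms{X},n}\oplus\Emod{m}{\ms{X},n}\to(\Emod{m}{\ms{X},\mb{Q}})_n$ and the analogous sequences in filtration degree $n-1$. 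Completeness of $\Gamma(\ms{U},\Emod{m,m'}{\ms{X}})$ itself follows because the completion map is injective ($\Gamma(\ms{U},\Emod{m'}{\ms{X}})$ is separated, Lemma \ref{veryelementaryprop}(iv)) and surjective: using $\Emod{m,m'}{\ms{X},n}=\Emod{m'}{\ms{X},n}$ for $n\le 0$, a compatible family converges to some $\widetilde{x}\in\Gamma(\ms{U},\Emod{m'}{\ms{X}})$, and $\psi_{m,m'}(\widetilde{x})$ lies in $\Gamma(\ms{U},\Emod{m}{\ms{X}})$ because the $\psi_{m,m'}(x_n)\in\Gamma(\ms{U},\Emod{m}{\ms{X}})$ are Cauchy for the order topology of $\Gamma(\ms{U},\Emod{m}{\ms{X},\mb{Q}})$ and $\Gamma(\ms{U},\Emod{m}{\ms{X}})$ is closed there, $\mr{gr}(\Emod{m}{\ms{X}})$ being $p$-torsion free (it is a twist of the $R$-flat $\mc{O}_{T^{(m)*}\ms{X}}$).

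For (ii), I would apply Lemma \ref{noetherianlemma} to $\Emod{m,m'}{\ms{X}}$, and likewise to $\Emod{m,m'}{\ms{X},0}$, on $\mathring{T}^*\ms{X}$ with the basis $\mf{B}$. Condition 1 is (i); condition 4 was handled above; and conditions 2 and 3 follow by applying Lemma \ref{noethgrlem} to $\mr{gr}(\Emod{m,m'}{\ms{X}})$, which by Lemma \ref{intersectioncoherent} is a graded $\mc{O}_{\mathring{T}^{(m')*}\ms{X}}(0)$-algebra of finite type with coherent graded pieces, and to $\mr{gr}(\Emod{m,m'}{\ms{X},0})=\bigoplus_{i\le 0}\mr{gr}_i(\Emod{m,m'}{\ms{X}})\cong\bigoplus_{i\le 0}\mc{O}_{\mathring{T}^{(m')*}\ms{X}}(i)$ (using (\ref{negativecoince})). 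Lemma \ref{noetherianlemma} then yields that $\Emod{m,m'}{\ms{X},0}$ and $\Emod{m,m'}{\ms{X}}$ are noetherian (and pointwise Zariskian) with respect to $\mf{B}$ on $\mathring{T}^*\ms{X}$. Part (iii) is then formal, as in the proof of Proposition \ref{noetheriansheafofmic}(iii): for $\ms{V}\subset\ms{U}$ in $\mf{B}$, both $\Gamma(\ms{U},\ms{E})$ and $\Gamma(\ms{V},\ms{E})$ are complete by (i) and noetherian filtered since their associated graded rings are noetherian, and $\mr{gr}$ of the restriction is flat by Lemma \ref{noethgrlem}, so the restriction is flat by \cite[Ch.II, 1.2.1]{HO}.

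The main obstacle is condition 4 of Lemma \ref{noetherianlemma} for the intermediate ring: unlike the naive case, where $\Gamma(\ms{U},-)$ commutes with $\mr{gr}$ by the Mittag-Leffler/base-change computation of Lemma \ref{veryelementaryprop}(ii), here this commutation must be teased out of the cartesian description of the successive filtration steps provided by Lemma \ref{calcintse}, which is the most delicate bookkeeping in the argument; everything else is an assembly of results already in place.
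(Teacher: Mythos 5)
Your proposal is correct and follows essentially the same route as the paper: parts (ii)--(iii) are obtained by verifying the hypotheses of Lemma~\ref{noetherianlemma}, with conditions 2 and 3 coming from Lemma~\ref{intersectioncoherent} (via Lemma~\ref{noethgrlem}) and condition 4 from Corollary~\ref{intersectioncompatible}, and (iii) then following from \cite[Ch.II, 1.2.1]{HO}. The paper is a bit more economical in two spots you flagged as delicate: for (i) it simply observes that $\Emod{m,m'}{\ms{X},n}=\Emod{m'}{\ms{X},n}$ for $n<p^{m+1}$ (equation~(\ref{negativecoince})) and invokes Lemma~\ref{veryelementaryprop}(iv) directly, sidestepping your argument about closedness of $\Gamma(\ms{U},\Emod{m}{\ms{X}})$ in $\Gamma(\ms{U},\Emod{m}{\ms{X},\mb{Q}})$; and for the ring $\Emod{m,m'}{\ms{X},0}$ it just notes $\Emod{m,m'}{\ms{X},0}=\Emod{m'}{\ms{X},0}$, so that all three assertions for it follow at once from Proposition~\ref{noetheriansheafofmic}, with no need to rerun Lemma~\ref{noetherianlemma}.
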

\begin{proof}
 We have already used (i) but we rewrite the statement because of the
 importance. This follows from the fact that
 $\Emod{m,m'}{\ms{X},n}=\Emod{m'}{\ms{X},n}$ for any $n<p^{m+1}$ by
 (\ref{negativecoince}) and Lemma \ref{veryelementaryprop} (iv). We
 also get (ii) and (iii) for $\ms{E}=\Emod{m,m'}{\ms{X},0}$ by
 Proposition \ref{noetheriansheafofmic}.

 Let us prove (ii) for $\Emod{m,m'}{\ms{X}}$. Let us check
 the conditions of Lemma \ref{noetherianlemma} for the filtered ring
 $(\Emod{m,m'}{\ms{X}},\Emod{m,m'}{\ms{X},n})$. The first condition is
 nothing but (i). The second and the third conditions follow from Lemma
 \ref{noethgrlem} together with Lemma
 \ref{intersectioncoherent}. The last condition follows from Lemma
 \ref{veryelementaryprop} (ii) and
 Corollary \ref{intersectioncompatible}. Hence $\Emod{m,m'}{\ms{X}}$ is
 a noetherian ring. Thus, for any open subscheme $\ms{U}$ in $\mf{B}$,
 $\Gamma(\ms{U},\Emod{m,m'}{\ms{X}})$ is noetherian, and (iii) holds by
 using \cite[Ch.II, 1.2.1]{HO}.
\end{proof}

\begin{rem*}
 By the proof, we can moreover say that $\Emod{m,m'}{\ms{X}}$ is
 pointwise Zariskian with respect to the filtration by order on
 $\mathring{T}^*\ms{X}$. This
 implies that $\Emod{m,m'}{X_i}$ is also pointwise Zariskian with
 respect to the filtration by order for any integer $i\geq0$ on
 $\mathring{T}^*\ms{X}$.
\end{rem*}

\begin{lem}
\label{calcofcoh}
 Let $\ms{X}$ be an affine smooth formal scheme. Let
 $\ms{U}\subset\mathring{T}^*\ms{X}$ be an open set in $\mf{B}$,
 and $\mf{U}$ be a finite $\mf{B}$-covering ({\it i.e.}\ a covering
 consisting of subsets in $\mf{B}$) of $\ms{U}$. Let $\ms{E}$ be
 either $\Emod{m,m'}{\ms{X}}$ or $\Emod{m,m'}{\ms{X},\mb{Q}}$. Then
 $\check{H}_{\mr{aug}}^i(\mf{U},\ms{E})=0$ for $i\in\mb{Z}$. Here
 $\check{H}_{\mr{aug}}^i$ denotes the augmented \v{C}ech cohomology
 {\normalfont(cf.\ {\normalfont\cite[8.1.3]{BGR}})}. In particular,
 $H^1(\ms{U},\ms{E})=0$.
\end{lem}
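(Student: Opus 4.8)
The plan is to reduce the assertion, by a dévissage along the order filtration, to the fact that coherent sheaves pulled back from $P^*\ms{X}$ have no higher cohomology on strictly affine opens, which in turn is the Cartan--Serre vanishing on affine (formal) schemes. I treat only $\ms{E}=\Emod{m,m'}{\ms{X}}$, since the statement for $\Emod{m,m'}{\ms{X},\mb{Q}}$ follows by tensoring with $\mb{Q}$, which is exact and commutes with the finite products occurring in the \v{C}ech complex. Note that $\mf{B}$ is stable under finite intersections (an intersection $D(f)\cap D(g)=D(fg)$ over affine opens of the affine $\ms{X}$ is again of this type), so every term of the \v{C}ech complex is a section over an open in $\mf{B}$; also each open in $\mf{B}$ is itself strictly affine. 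Finally, since a finite $\mf{B}$-covering of $\ms{U}$ is quasi-compact and $\mf{B}$ is a basis, finite $\mf{B}$-coverings are cofinal among all coverings of $\ms{U}$, so once $\check H^i_{\mr{aug}}(\mf{U},\ms{E})=0$ is known for all such $\mf{U}$, the equality $H^1(\ms{U},\ms{E})=\varinjlim_{\mf{U}}\check H^1(\mf{U},\ms{E})$ gives $H^1(\ms{U},\ms{E})=0$.

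First I would treat the graded piece $\mr{gr}_n(\Emod{m,m'}{\ms{X}})$. By Lemma \ref{intersectioncoherent} it is a coherent $\mc{O}_{\mathring{T}^{(m')*}\ms{X}}(0)$-module; by Corollary \ref{intersectioncompatible} together with Lemma \ref{pisog} it is moreover the pull-back along $q\colon\mathring{T}^*\ms{X}\to P^*\ms{X}$ of a coherent sheaf on $P^*\ms{X}$ (the intersection of $\mc{O}_{P^{*(m')}\ms{X}}(n)$ with the image of $\mc{O}_{P^{*(m)}\ms{X}}(n)$ inside $\mc{O}_{P^{*(m')}\ms{X}}(n)\otimes\mb{Q}$). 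Since $q$ is an affine morphism with irreducible affine fibres, and a strictly affine open is by definition $q^{-1}$ of an affine open of $P^*\ms{X}$, the higher cohomology of such a sheaf on $\ms{U}$, and on every member and finite intersection of $\mf{U}$, vanishes (Cartan--Serre on the affine base, using \cite[Exp.\ VI, \S5]{SGA} for the formal setting, exactly as in Example \ref{tangenttwistshfi}). By the generalized Leray argument this yields $\check H^\bullet_{\mr{aug}}(\mf{U},\mr{gr}_n(\Emod{m,m'}{\ms{X}}))=0$, and likewise $H^{>0}(W,\mr{gr}_n(\Emod{m,m'}{\ms{X}}))=0$ for $W\in\mf{B}$. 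Then, by induction on $n-n'$, the same two statements hold for every $\Emod{m,m'}{\ms{X},n}/\Emod{m,m'}{\ms{X},n'}$: the short exact sequences $0\to\Emod{m,m'}{\ms{X},n''}/\Emod{m,m'}{\ms{X},n'}\to\Emod{m,m'}{\ms{X},n}/\Emod{m,m'}{\ms{X},n'}\to\Emod{m,m'}{\ms{X},n}/\Emod{m,m'}{\ms{X},n''}\to0$ stay exact after $\Gamma(W,-)$ for all relevant $W$ because, by the inductive hypothesis, $H^1(W,-)$ of the shorter subquotients vanishes, and the long exact cohomology sequence then squeezes the cohomology of the length-$(n-n')$ quotient to zero.

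Next I would pass to the limit. By Proposition \ref{noetherianintermediate} (i) (together with $(\ref{negativecoince})$ and Lemma \ref{veryelementaryprop} (iv)) one has $\Emod{m,m'}{\ms{X},n}\cong\invlim_{n'\to-\infty}\Emod{m,m'}{\ms{X},n}/\Emod{m,m'}{\ms{X},n'}$, and the same after $\Gamma(W,-)$ for $W\in\mf{B}$ by $(\ref{commprojga})$. The transition maps of the projective system of finite \v{C}ech complexes $\check C^\bullet_{\mr{aug}}(\mf{U},\Emod{m,m'}{\ms{X},n}/\Emod{m,m'}{\ms{X},n'})$ are surjective since $H^1$ of their kernels (again finite-length order subquotients) vanishes, so the system is Mittag-Leffler; passing to $\invlim_{n'}$ gives $\check H^i_{\mr{aug}}(\mf{U},\Emod{m,m'}{\ms{X},n})=0$ for all $i$. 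Finally $\Emod{m,m'}{\ms{X}}=\indlim_{n\to\infty}\Emod{m,m'}{\ms{X},n}$ is an exhaustive filtration, $\Gamma(W,-)$ over the noetherian spaces $W\in\mf{B}$ commutes with this filtered colimit by $(\ref{commindga})$, and the finite \v{C}ech complex commutes with filtered colimits, so $\check H^i_{\mr{aug}}(\mf{U},\Emod{m,m'}{\ms{X}})=0$ for all $i$; in particular $H^1(\ms{U},\ms{E})=0$.

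The point I expect to be the main obstacle is the bookkeeping underlying the dévissage: one must ensure that $\Gamma(W,-)$ for $W\in\mf{B}$ (and for the strictly affine $\ms{U}$) is exact on the relevant short exact sequences of order-subquotients, so that the order-graded pieces of the \v{C}ech complex really are the \v{C}ech complexes of the coherent sheaves $\mr{gr}_n(\Emod{m,m'}{\ms{X}})$; this leans on the completeness in Proposition \ref{noetherianintermediate} (i), the coincidence $(\ref{negativecoince})$, Lemma \ref{veryelementaryprop}, and — for the comparison with $P^*\ms{X}$ that is what makes "strictly affine" do its work — Laumon's formalism of $\epsilon$ and the homogeneous topology in \cite[A.3]{Lau}. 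A secondary technical point is to verify that the $p$-isogenies of Lemma \ref{pisog} actually identify $\mr{gr}_n(\Emod{m,m'}{\ms{X}})$ with the pull-back of a genuine coherent sheaf on $P^*\ms{X}$, which is where the independence of $b_n$ from $\Theta$ in Lemma \ref{normcalc} is used.
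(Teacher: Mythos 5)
Your proof is essentially the same dévissage as the paper's: one shows that the finite-length order subquotients $\Emod{m,m'}{\ms{X},k}/\Emod{m,m'}{\ms{X},-n}$ are coherent $\mc{O}_{T^{(m')*}\ms{X}}(0)$-modules (the paper cites Lemma \ref{intersectioncoherent} directly; you unfold this into an induction on graded pieces, which is the same content), passes to $\invlim_{n'}$ via Mittag--Leffler using the completeness from Proposition \ref{noetherianintermediate}(i), and passes to $\indlim_n$ using $(\ref{commindga})$; the $\mb{Q}$-case and the $H^1$-vanishing are handled in the same way.

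One point worth flagging: your statement that ``each open in $\mf{B}$ is itself strictly affine'' is not literally correct. An element $U\in\mf{B}$ satisfies $\epsilon_\cdot(U)\in\mf{B}'$, but $U$ is only \emph{contained} in its $\Gm{}$-saturation $\epsilon_\cdot(U)$, not equal to it. The reduction goes through because the sheaves in play are all of the form $\epsilon^{-1}\mc{F}'$ with $\mc{F}'$ on $V'$, so by $(\ref{noprimecalcfromprima})$ one has $\Gamma(U,-)=\Gamma(\epsilon_\cdot(U),-)$ for every term of the \v{C}ech complex, and $\epsilon_\cdot$ commutes with finite intersections; this is exactly how the paper justifies replacing a $\mf{B}$-covering by a strictly affine covering. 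With that slip repaired, your argument and the paper's coincide.
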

\begin{proof}
 Let $V,W\in\mf{B}$. Since $\epsilon_\cdot(V\cap
 W)=\epsilon_\cdot(V)\cap\epsilon_\cdot(W)$, we may assume that $\ms{U}$
 is strictly affine and $\mf{U}$ is a finite strictly affine covering
 (cf.\ Definition \ref{limitnaive}).
 Let us show that
 $\check{H}^i_{\mr{aug}}(\mf{U},\Emod{m,m'}{\ms{X},k})=0$ for any $k$.
 By Lemma \ref{intersectioncoherent},
 $\Emod{m,m'}{\ms{X},k}/\Emod{m,m'}{\ms{X},-n}$ is a coherent
 $\mc{O}_{T^{(m')*}\ms{X}}(0)$-module. Thus,
 \begin{equation*}
  \check{H}^i_{\mr{aug}}(\mf{U},\Emod{m,m'}{\ms{X},k}/
   \Emod{m,m'}{\ms{X},-n})=0
 \end{equation*}
 for $i\in\mb{Z}$.
 By the coherence, the projective system
 $\Bigl\{\Gamma(\ms{V},\Emod{m,m'}{\ms{X},k}/\Emod{m,m'}{\ms{X},-n})
 \Bigr\}_{n\geq0}$ satisfies the Mittag-Leffler condition for any
 strictly affine open subset $\ms{V}$.
 This shows that
 $C^q_{\mr{aug}}(\mf{U},\Emod{m,m'}{\ms{X},k}/\Emod{m,m'}
 {\ms{X},-n})$ satisfies the Mittag-Leffler condition for any
 $q\in\mb{Z}$. Thus
 \begin{equation*}
  \check{H}^i_{\mr{aug}}(\mf{U},\Emod{m,m'}{\ms{X},k})\cong
   \check{H}^i_{\mr{aug}}(\mf{U},\invlim\Emod{m,m'}{\ms{X},k}/
   \Emod{m,m'}{\ms{X},-n})\cong\invlim\check{H}^i_{\mr{aug}}(\mf{U},
   \Emod{m,m'}{\ms{X},k}/\Emod{m,m'}{\ms{X},-n})=0
 \end{equation*}
 for $i\in\mb{Z}$, where the first equality holds since
 $\Emod{m,m'}{\ms{X},k}$ is complete by Proposition
 \ref{noetherianintermediate} (i). Thus, we get what we wanted.

 Now, since $\Emod{m,m'}{\ms{X}}\cong\indlim_{k}\Emod{m,m'}{\ms{X},k}$,
 we have $\check{H}^i_{\mr{aug}}(\mf{U},\Emod{m,m'}{\ms{X}})=0$
 by using (\ref{commindga}). The vanishing for
 $\Emod{m,m'}{\ms{X},\mb{Q}}$ follows immediately from the
 $\ms{E}=\Emod{m,m'}{\ms{X}}$
 case. Finally, let us prove $H^1(\ms{U},\ms{E})=0$. We know that
 $H^1(\ms{U},\ms{E})\cong\indlim_{\mf{V}}\check{H}^1(\mf{V},\ms{E})$
 where $\mf{V}$ runs over open coverings of $\ms{U}$. Given an open
 covering $\mf{V}$ of $\ms{U}$, there exists a refinement $\mf{V}'$
 which is a $\mf{B}$-covering. Thus the statement follows from the first
 claim.
\end{proof}

\begin{rem*}
 We may also prove that $H^i(\ms{U},\ms{E})=0$ for $i>0$. This can be
 proven in the same way by using \cite[$0_{\mr{III}}$ 13.3.1]{EGA}.
\end{rem*}

\subsection{}
\label{defofintermrings}
We define
\begin{equation*}
 \Ecomp{m,m'}{\ms{X}}:=\invlim_{i}\Emod{m,m'}{X_i},\qquad
 \EcompQ{m,m'}{\ms{X}}:=\Ecomp{m,m'}{\ms{X}}\otimes\mb{Q}.
\end{equation*}
These are $\pi^{-1}\Dcomp{m}{\ms{X}}$-algebras and the latter is
moreover a $\pi^{-1}\DcompQ{m}{\ms{X}}$-algebra.
We call $\Ecomp{m,m'}{\ms{X}}$ and $\EcompQ{m,m'}{\ms{X}}$
the {\em intermediate rings of microdifferential
operators of level $(m,m')$}. Let $\ms{U}$  be an open subset of
$\mathring{T}^*\ms{X}$ in $\mf{B}$. Applying Lemma
\ref{calcofcoh} to the exact sequence
\begin{equation*}
 0\rightarrow\Emod{m,m'}{\ms{X}}\xrightarrow{\pi^{i+1}}\Emod{m,m'}
  {\ms{X}}\rightarrow\Emod{m,m'}{X_i}\rightarrow0,
\end{equation*}
we get an isomorphism
\begin{equation}
 \label{redintmicsh}
 \Gamma(\ms{U},\Emod{m,m'}{X_i})\cong
  \Gamma(\ms{U},\Emod{m,m'}{\ms{X}})\otimes R_i,
\end{equation}
and by taking the projective limit over $i$, we have
$\Gamma(\ms{U},\Ecomp{m,m'}{\ms{X}})\cong
\Gamma(\ms{U},\Emod{m,m'}{\ms{X}})^{\wedge}$,
where $^{\wedge}$ denotes the $\pi$-adic completion. By Lemma
\ref{completion}, $\Gamma(\ms{U},\Ecomp{m,m'}{\ms{X}})\otimes R_i
\cong\Gamma(\ms{U},\Emod{m,m'}{X_i})$, and thus
$\Ecomp{m,m'}{\ms{X}}\otimes R_i\cong\Emod{m,m'}{X_i}$, in particular,
$\Ecomp{m,m'}{\ms{X}}$ is $\pi$-adically complete. We also define
\begin{equation*}
 \Emod{m,\dag}{\ms{X},\mb{Q}}:=\invlim_{m'\rightarrow\infty}
  \EcompQ{m,m'}{\ms{X}}.
\end{equation*}
This is a ring on $T^*\ms{X}$. Note that there exists a canonical
homomorphism $\Emod{m,\dag}{\ms{X},\mb{Q}}
\rightarrow\Emod{m+1,\dag}{\ms{X},\mb{Q}}$
of rings by Lemma \ref{notensqincl}. We define
\begin{equation*}
 \EdagQ{\ms{X}}:=\indlim_{m\rightarrow\infty}
  \Emod{m,\dag}{\ms{X},\mb{Q}}.
\end{equation*}
For a quasi-compact open subscheme $\ms{U}$ of $\mathring{T}^*\ms{X}$,
(\ref{commindga}) shows
\begin{equation*}
 \Gamma(\ms{U},\EdagQ{\ms{X}})\cong\indlim_{m'\rightarrow\infty}
  \Gamma(\ms{U},\Emod{m',\dag}{\ms{X},\mb{Q}}).
\end{equation*}

\begin{prop}
\label{statementcomplete}
 The rings $\Ecomp{m,m'}{\ms{X}}$, $\EcompQ{m,m'}{\ms{X}}$
 are noetherian on $\mathring{T}^*\ms{X}$, and Proposition
 {\normalfont\ref{noetherianintermediate} (iii)} and Lemma
 {\normalfont\ref{calcofcoh}} are also valid if we take $\ms{E}$ to be
 either $\Ecomp{m,m'}{\ms{X}}$ or $\EcompQ{m,m'}{\ms{X}}$.
\end{prop}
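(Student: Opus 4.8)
The plan is to run the same argument as in the proof of Proposition \ref{noetheriansheafofmic} for $\Ecomp{m}{\ms{X}}$, together with the Mittag-Leffler reasoning of Lemma \ref{calcofcoh}, deducing everything for the completed rings from the corresponding statements for $\Emod{m,m'}{\ms{X}}$ by passing through the $\pi$-adic filtration. First I would record that each reduction $\Emod{m,m'}{X_i}$ is a noetherian sheaf of rings on $\mathring{T}^*\ms{X}$: by (\ref{redintmicsh}) its sections over a strictly affine open $\ms{U}$ are $\Gamma(\ms{U},\Emod{m,m'}{\ms{X}})\otimes R_i$, hence noetherian since $\Emod{m,m'}{\ms{X}}$ is noetherian by Proposition \ref{noetherianintermediate}; its stalks are quotients of noetherian rings for the same reason; and it is coherent, being the quotient of the coherent ring $\Emod{m,m'}{\ms{X}}$ by the two-sided coherent ideal $\pi^{i+1}\Emod{m,m'}{\ms{X}}$. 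In particular $\Emod{m,m'}{X_0}$ is noetherian.

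Next I would endow $\Ecomp{m,m'}{\ms{X}}$ with the $\pi$-adic filtration $\bigl\{\pi^{-i}\Ecomp{m,m'}{\ms{X}}\bigr\}_{i\leq0}$. Since $\Ecomp{m,m'}{\ms{X}}$ is $\pi$-torsion free by Lemma \ref{completion}, the map $\Emod{m,m'}{X_0}[T]\rightarrow\mr{gr}(\Ecomp{m,m'}{\ms{X}})$, $T\mapsto\pi$, is an isomorphism, so by the previous step the associated graded ring is noetherian with respect to the basis of $\mathring{T}^*\ms{X}$. The remaining hypotheses of Lemma \ref{noetherianlemma} are then routine to verify: condition 1 holds because $\Gamma(\ms{U},\Ecomp{m,m'}{\ms{X}})\cong\Gamma(\ms{U},\Emod{m,m'}{\ms{X}})^{\wedge}$ is $p$-adically (equivalently $\pi$-adically) complete; condition 3 because reduction modulo $\pi$ of a restriction map of $\Ecomp{m,m'}{\ms{X}}$ is, up to adjoining $T$, a restriction map of $\Emod{m,m'}{X_0}$, which is flat by base change from Proposition \ref{noetherianintermediate} (iii); and condition 4 is the identification $\Gamma(\ms{U},\Ecomp{m,m'}{\ms{X}})/\pi\cong\Gamma(\ms{U},\Emod{m,m'}{X_0})$, once more a case of (\ref{redintmicsh}). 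Lemma \ref{noetherianlemma} then yields that $\Ecomp{m,m'}{\ms{X}}$ is noetherian on $\mathring{T}^*\ms{X}$, and the analogue of Proposition \ref{noetherianintermediate} (iii) follows from \cite[Ch.II, 1.2.1]{HO}. The case $\EcompQ{m,m'}{\ms{X}}=\Ecomp{m,m'}{\ms{X}}\otimes\mb{Q}$ is obtained exactly as in the Corollary following Proposition \ref{noetheriansheafofmic}: stalks and sections over quasi-compact opens are localizations of noetherian rings, coherence follows from \cite[3.1.1]{Ber1}, and flatness of restriction is preserved under $\otimes\mb{Q}$.

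For the analogue of Lemma \ref{calcofcoh}, I would start from the exact sequence $0\rightarrow\Emod{m,m'}{\ms{X}}\xrightarrow{\pi^{i+1}}\Emod{m,m'}{\ms{X}}\rightarrow\Emod{m,m'}{X_i}\rightarrow0$. Since $\check{H}^j_{\mr{aug}}(\mf{U},\Emod{m,m'}{\ms{X}})=0$ for all $j$ by Lemma \ref{calcofcoh}, the long exact sequence gives $\check{H}^j_{\mr{aug}}(\mf{U},\Emod{m,m'}{X_i})=0$ for all $i$ and $j$. By (\ref{redintmicsh}) and (\ref{negativecoince}) the cochain groups $C^q_{\mr{aug}}(\mf{U},\Emod{m,m'}{X_i})$ form, for each fixed $q$, a projective system in $i$ with surjective transition maps, hence satisfying the Mittag-Leffler condition, so from $\Ecomp{m,m'}{\ms{X}}\cong\invlim_i\Emod{m,m'}{X_i}$ I get $\check{H}^j_{\mr{aug}}(\mf{U},\Ecomp{m,m'}{\ms{X}})\cong\invlim_i\check{H}^j_{\mr{aug}}(\mf{U},\Emod{m,m'}{X_i})=0$; the $\EcompQ{m,m'}{\ms{X}}$ statement follows by exactness of $-\otimes\mb{Q}$ on the finite \v{C}ech complex, and then $H^1(\ms{U},\ms{E})=0$ follows as in the last paragraph of the proof of Lemma \ref{calcofcoh}. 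The step I expect to require the most care is the first one: making sure that the reductions $\Emod{m,m'}{X_i}$ genuinely inherit the noetherian-sheaf-of-rings property on $\mathring{T}^*\ms{X}$ and that (\ref{redintmicsh}) really supplies condition 4 of Lemma \ref{noetherianlemma}. Once $\Emod{m,m'}{X_0}$ is known to be noetherian, the $\pi$-adic filtration argument and the cohomological bookkeeping run in complete parallel with the finite-level case already treated.
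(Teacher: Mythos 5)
Your proposal is correct and follows essentially the same route as the paper: verifying the four hypotheses of Lemma \ref{noetherianlemma} for the $\pi$-adic filtration on $\Ecomp{m,m'}{\ms{X}}$ (with associated graded $\Emod{m,m'}{X_0}[T]$), and deducing the \v{C}ech vanishing from the finite-level case via (\ref{redintmicsh}) and Mittag-Leffler. You are somewhat more explicit than the paper in spelling out that each $\Emod{m,m'}{X_i}$ is itself a noetherian sheaf of rings (the paper leaves this implicit) and you derive restriction flatness from \cite[Ch.II, 1.2.1]{HO} rather than from \cite[3.2.3 (vii)]{Ber1}, but these are presentational differences, not a genuinely different argument.
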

\begin{proof}
 It suffices to show the proposition for
 $\ms{E}=\Ecomp{m,m'}{\ms{X}}$. First, let us check Lemma \ref{calcofcoh}
 for this $\ms{E}$. Since $\Emod{m,m'}{\ms{X}}$ is $\pi$-torsion free, we have
 $\check{H}^i_{\mr{aug}}(\mf{U},\Emod{m,m'}{X_i})=0$
 by (\ref{redintmicsh}) and Lemma \ref{calcofcoh}. The projective system
 $\bigl\{\Gamma(\ms{V},\Emod{m,m'}{X_i})\bigr\}_{i\geq0}$
 satisfies the Mittag-Leffler condition for any $\ms{V}\in\mf{B}$ again
 by (\ref{redintmicsh}), and
 we conclude that
 \begin{equation*}
  \check{H}^i_{\mr{aug}}(\mf{U},\Ecomp{m,m'}{\ms{X}})
   \cong\check{H}^i_{\mr{aug}}(\mf{U},\invlim_i\,
   \Emod{m,m'}{X_i})\cong\invlim_i\,\check{H}^i_{\mr{aug}}
   (\mf{U},\Emod{m,m'}{X_i})=0.
 \end{equation*}

 To prove that $\Ecomp{m,m'}{\ms{X}}$ is noetherian, we check the
 conditions of Lemma \ref{noetherianlemma} for the $\pi$-adic
 filtration. Conditions 2 and 3 follow from the fact that
 $\Emod{m,m'}{\ms{X}}$ is noetherian by Proposition
 \ref{noetherianintermediate}. Condition 4 follows from
 (\ref{redintmicsh}).
 Proposition \ref{noetherianintermediate} (iii) for
 $\ms{E}=\Ecomp{m,m'}{\ms{X}}$ follows directly from the
 $\ms{E}=\Emod{m,m'}{\ms{X}}$ case by using \cite[3.2.3 (vii)]{Ber1},
 and we finish the proof.
\end{proof}

\begin{rem*}
 By the proof we can moreover say that $\Ecomp{m,m'}{\ms{X}}$ is
 pointwise Zariskian with respect to the $\pi$-adic filtration on
 $\mathring{T}^*\ms{X}$.
\end{rem*}

\begin{lem}
 \label{explicitdescription}
 For non-negative integers $m'\geq m$, the canonical homomorphism
 $\EcompQ{m,m'}{\ms{X}}\rightarrow\EcompQ{m}{\ms{X}}$ is injective, and
 the canonical homomorphism
 $\Ecomp{m,m'}{\ms{X}}/\Emod{m,m'}{\ms{X},0}\rightarrow
 \Ecomp{m}{\ms{X}}/\Emod{m}{\ms{X},0}$ is a $p$-isogeny.
\end{lem}
\begin{proof}
 We omit subscripts $\ms{X}$ ({\it e.g.}\ $\Emod{m}{0}$ instead of
 $\Emod{m}{\ms{X},0}$). Consider the following diagram whose rows are
 exact:
 \begin{equation*}
  \xymatrix{0\ar[r]&\Emod{m,m'}{0}\ar[r]\ar[d]&
   \Emod{m,m'}{}\ar[r]\ar[d]&\Emod{m,m'}{}/\Emod{m,m'}{0}
   \ar[r]\ar[d]^{\gamma}&0\\
  0\ar[r]&\Emod{m}{0}\ar[r]&
   \Emod{m}{}\ar[r]&\Emod{m}{}/\Emod{m}{0}\ar[r]&0.}
 \end{equation*}
 Since the injective homomorphism $\Emod{m,m'}{}\rightarrow\Emod{m}{}$
 is strict, $\gamma$ is injective as well. Moreover, by Lemma
 \ref{normcalc} (i)-(a), there exists an integer $a$ such that
 $\mr{Coker}(\gamma)$ is killed by $p^a$.  Since
 $\Emod{m,m'}{}/\Emod{m,m'}{0}$ is $\pi$-torsion free, this implies that
 $\gamma$ is a $p$-isogeny.

 Thus we get the
 following commutative diagram, whose rows are exact
 \begin{equation*}
  \xymatrix{0\ar[r]&(\Emod{m,m'}{0})^\wedge\ar[r]\ar[d]&
   \Ecomp{m,m'}{}\ar[r]\ar[d]&
   \Ecomp{m,m'}{}/\Ecomp{m,m'}{0}\ar[r]\ar[d]^
   {\widehat{\gamma}}&0\\
  0\ar[r]&(\Ecomp{m}{0})^\wedge\ar[r]&
   \Ecomp{m}{}\ar[r]&\Ecomp{m}{}/\Ecomp{m}{0}\ar[r]&0,}
 \end{equation*}
 where $^{\wedge}$ denotes the $\pi$-adic completion. By
 \ref{complofisog}, $\widehat{\gamma}$ is also a $p$-isogeny, and in
 particular, it is an isomorphism after tensoring with $\mb{Q}$.
 Since $\Emod{m}{0}$ and $\Emod{m,m'}{0}$ are already complete with
 respect to the $\pi$-adic topology by Lemma \ref{veryelementaryprop} (i)
 and $\Emod{m,m'}{0}\rightarrow\Emod{m}{0}$ is injective, the left
 vertical homomorphism is injective as well.
\end{proof}

\subsection{}
\label{concrdescrem}
Let the situation be as in \ref{defofsituation}. Let $m''\geq
m'\geq m$ be non-negative integers. We have the following concrete
description:
\begin{align*}
 \mr{Im}\bigl(\Gamma(\ms{U}&,\EcompQ{m,m'}{\ms{X}})\rightarrow
 \Gamma(\ms{U},\EcompQ{m}{\ms{X}})\bigr)\notag\\
 &=\Biggl\{\sum_{|\underline{k}|-inp^{m''}<0}a_{\underline{k},i}
 \underline{\partial}
 ^{\angles{m'}{\underline{k}}}(\widetilde{\Theta}_{\mr{le}}
 ^{(m',m'')})^{-i}+
 \sum_{|\underline{k}|-inp^{m''}\geq 0}a_{\underline{k},i}
 \underline{\partial}^
 {\angles{m}{\underline{k}}}
 (\widetilde{\Theta}_{\mr{le}}^{(m,m'')})^{-i}\,
 \Big|\,\mbox{$(*)$}\Biggr\}.
\end{align*}
\begin{quote}
 $(*)$ Let $\underline{k}\in\mb{N}^d$, $i\geq0$,
 $a_{\underline{k},i}\in\Gamma(\ms{X},\mc{O}_{\ms{X},\mb{Q}})$. For
 an integer $N$, put
 $\alpha_{N,i}:=\sup_{|\underline{k}|=inp^{m''}+N}|a_{\underline{k},i}|$.
 Then $\lim_{i\rightarrow\infty}\alpha_{N,i}=0$ for any $N$,
 $\lim_{N\rightarrow\infty}\sup_i\{\alpha_{N,i}\}=0$, and there exists
 a real number $C>0$ such that $C>\sup_i\{\alpha_{N,i}\}$ for any
 $N<0$.
\end{quote}
Moreover, consider the situation of \ref{uniqrepisposs}. The
homomorphism $\phi$ induces
$\Gamma(D(\Theta),\mc{O}_{T^{(m)*}\ms{X}}(*)\cap
\mc{O}_{T^{(m')*}\ms{X}}(*))\rightarrow
\Gamma(D(\Theta),\Ecomp{m,m'}{\ms{X}})$, where the intersection is taken
in $\mc{O}_{T^*\ms{X},\mb{Q}}(*)$. This homomorphism is abusively
denoted by $\phi$. Let us denote by $|\cdot|_{(i)}$ the norm of
$\mc{O}_{T^*\ms{X},\mb{Q}}(*)$ defined by the $p$-adic norm on
$\mc{O}_{T^{(i)*}\ms{X}}(*)$. Then any element of the above image can
be written {\em uniquely} as
\begin{equation}
 \label{intermuniqrep}
  \sum_{k\in\mb{Z}}\phi(P_k)
\end{equation}
where $P_{k}\in\Gamma(\ms{U},\mc{O}_{T^*\ms{X},\mb{Q}}(k))$
such that $\lim_{k\rightarrow\infty}|P_k|_{(m)}=0$ and there exists a
real number $C$ such that $|P_k|_{(m')}<C$ for any $k<0$.

\begin{lem}
 \label{keyisom2}
 Let $m'>m$ be non-negative integers. The homomorphism
 $\EcompQ{m,m'}{\ms{X}}\rightarrow\EcompQ{m+1,m'}{\ms{X}}$ is
 injective, and induces an isomorphism
 \begin{equation*}
  \EcompQ{m,m'}{\ms{X}}/\EcompQ{m,m'+1}{\ms{X}}\xrightarrow{\sim}
   \EcompQ{m+1,m'}{\ms{X}}/\EcompQ{m+1,m'+1}{\ms{X}}.
 \end{equation*}
\end{lem}
\begin{proof}
 Let us check the injectivity. Since the verification is local, we may
 assume that we are in the situation of
 \ref{defofsituation}. Since the restriction homomorphisms of
 $\mc{O}_{T^*\ms{X}}(*)$ are injective, those of
 $\EcompQ{m,m'}{\ms{X}}$ are injective as well,
 and the verification is generic.
 Thus, we may assume that we are in the situation of
 \ref{uniqrepisposs}, and we can use the concrete description
 (\ref{intermuniqrep}). Since the description is unique, the injectivity
 follows.

 For the second claim, it suffices to show that the
 canonical homomorphism
 \begin{equation*}
  (\Emod{m,m'}{\ms{X},\mb{Q}})_0/(\Emod{m,m'+1}{\ms{X},\mb{Q}})_0
   \rightarrow\EcompQ{m,m'}{\ms{X}}/\EcompQ{m,m'+1}{\ms{X}}
 \end{equation*}
is an isomorphism. This follows from Lemma \ref{explicitdescription}.
\end{proof}

\begin{lem}
 \label{keyisom}
 Let $m'>m\geq0$ be integers. Then the canonical injection
 $\EcompQ{m+1,m'+1}{\ms{X}}\rightarrow\EcompQ{m+1,m'}{\ms{X}}$ induces the
 isomorphism:
 \begin{equation*}
  \EcompQ{m+1,m'+1}{\ms{X}}/\EcompQ{m,m'+1}{\ms{X}}\xrightarrow{\sim}
   \EcompQ{m+1,m'}{\ms{X}}/\EcompQ{m,m'}{\ms{X}}.
 \end{equation*}
\end{lem}
\begin{proof}
 We have the following diagram whose rows are exact.
 \begin{equation*}
  \def\objectstyle{\scriptstyle}
  \xymatrix{
   0\ar[r]&\EcompQ{m,m'+1}{\ms{X}}/(\Emod{m,m'+1}{\ms{X,\mb{Q}}})_0
   \ar[r]\ar[d]&\EcompQ{m+1,m'+1}{\ms{X}}/(\Emod{m+1,m'+1}
   {\ms{X,\mb{Q}}})_0\ar[r]\ar[d]&\EcompQ{m+1,m'+1}{\ms{X}}/
   \EcompQ{m,m'+1}{\ms{X}}\ar[r]\ar[d]&0\\
  0\ar[r]&\EcompQ{m,m'}{\ms{X}}/(\Emod{m,m'}{\ms{X,\mb{Q}}})_0\ar[r]
   &\EcompQ{m+1,m'}{\ms{X}}/(\Emod{m+1,m'}{\ms{X,\mb{Q}}})_0\ar[r]
   &\EcompQ{m+1,m'}{\ms{X}}/\EcompQ{m,m'}{\ms{X}}\ar[r]&0
   }
 \end{equation*}
 The first two vertical homomorphisms are isomorphisms by Lemma
 \ref{explicitdescription}. Thus the right vertical homomorphism is an
 isomorphism as well, and the corollary follows.
\end{proof}

By Lemma \ref{explicitdescription} and Lemma \ref{keyisom2}, we have the
following big diagram of rings of microdifferential operators.

\begin{equation*}
 \xymatrix@R=10pt{
  \pi^{-1}\DcompQ{m}{\ms{X}}\ar@{}[r]|{\subset}\ar@{}[d]|{\cap}&
  \pi^{-1}\DcompQ{m+1}{\ms{X}}\ar@{}[r]|{\subset}\ar@{}[d]|{\cap}&
  \pi^{-1}\DcompQ{m+2}{\ms{X}}\ar@{}[r]|{\subset}\ar@{}[d]|{\cap}&
  \dots\ar@{}[r]|{\subset}&\pi^{-1}\DdagQ{\ms{X}}\ar@{}[d]|{\cap}\\
 \EmodQ{m,\dag}{\ms{X}}\ar@{}[r]|{\subset}\ar@{}[d]|{\cap}&
  \EmodQ{m+1,\dag}{\ms{X}}\ar@{}[r]|{\subset}\ar@{}[d]|{\cap}
  &\EmodQ{m+2,\dag}{\ms{X}}\ar@{}[r]|{\subset}\ar@{}[d]|{\cap}
  &\dots\ar@{}[r]|{\subset}&\EdagQ{\ms{X}}\\
 \vdots\ar@{}[d]|{\cap}&\vdots\ar@{}[d]|{\cap}&
  \vdots\ar@{}[d]|{\cap}&\iddots&\\
 \EcompQ{m,m+2}{\ms{X}}\ar@{}[r]|{\subset}\ar@{}[d]|{\cap}
  &\EcompQ{m+1,m+2}{\ms{X}}\ar@{}[r]|{\subset}\ar@{}[d]|{\cap}
  &\EcompQ{m+2}{\ms{X}}&&\\
 \EcompQ{m,m+1}{\ms{X}}\ar@{}[r]|{\subset}\ar@{}[d]|{\cap}
  &\EcompQ{m+1}{\ms{X}}&&&\\
 \EcompQ{m}{\ms{X}}&&&&
  }
\end{equation*}

\section{Flatness results}
\subsection{}
\label{defassocish}
Let $\ms{X}$ be an affine smooth formal scheme, and $\ms{U}$ be a
strictly affine open subscheme of $\mathring{T}^*\ms{X}$.
Let $\ms{E}$ be one of the rings $\Emod{m,m'}{\ms{X}}$,
$\Emod{m,m'}{\ms{X},\mb{Q}}$, $\Ecomp{m,m'}{\ms{X}}$,
$\EcompQ{m,m'}{\ms{X}}$. Let $M$ be a finite
$\Gamma(\ms{U},\ms{E})$-module. We use the terminologies in \cite[9.1,
9.2]{BGR} freely. Let $\mf{T}$ be the Grothendieck topology (in the
sense of \cite[9.1.1/1]{BGR}) on $\ms{U}$ defined in the following way.
\begin{itemize}
 \item A subset is said to be {\it admissible open} if it is strictly
       affine open subset of $\ms{U}$.
 \item A covering is called an {\it admissible covering} if it is open
       covering in the usual sense.
\end{itemize} 
We define a presheaf $M^{\triangle}$ on $(\ms{U},\mf{T})$ by associating
$\Gamma(\ms{V},\ms{E})\otimes_{\Gamma(\ms{U},\ms{E})}M$
with an strictly affine open subscheme $\ms{V}$.

\begin{lem}
\label{calcofcohmod}
 For any finite $\mf{B}$-covering $\mf{U}$ of $\ms{U}$,
 $\check{H}^i_{\mr{aug}}(\mf{U},M^{\triangle})=0$ for $i\in\mb{Z}$.
\end{lem}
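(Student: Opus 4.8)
The plan is to reduce to the case where $M$ is free of finite rank, which is exactly the content of Lemma \ref{calcofcoh} (together with Proposition \ref{statementcomplete} when $\ms{E}=\Ecomp{m,m'}{\ms{X}}$ or $\EcompQ{m,m'}{\ms{X}}$), and then to pass to a general finite module by a syzygy (dimension-shifting) argument that exploits the finiteness of the covering $\mf{U}$. First I would normalize the set-up exactly as at the beginning of the proof of Lemma \ref{calcofcoh}: since $\epsilon_\cdot(\ms{V}\cap\ms{W})=\epsilon_\cdot(\ms{V})\cap\epsilon_\cdot(\ms{W})$ we may assume $\mf{U}$ is a finite covering by strictly affine opens, so that every finite intersection $\ms{V}$ occurring in the \v{C}ech complex is again strictly affine and the presheaf $M^{\triangle}$ as well as the flatness statements below legitimately apply to it. Write $R:=\Gamma(\ms{U},\ms{E})$. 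In all four cases $R$ is noetherian: for $\Emod{m,m'}{\ms{X}}$ this is Proposition \ref{noetherianintermediate} (ii), for $\Ecomp{m,m'}{\ms{X}}$ it is Proposition \ref{statementcomplete}, and the rational versions are localizations of these away from $p$.

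Given a finite $R$-module $M$, choose a presentation $0\rightarrow N\rightarrow R^{n}\rightarrow M\rightarrow 0$ with $N$ finite over $R$ (noetherianity). For each strictly affine $\ms{V}$ the restriction homomorphism $\Gamma(\ms{U},\ms{E})\rightarrow\Gamma(\ms{V},\ms{E})$ is flat by Proposition \ref{noetherianintermediate} (iii) (and by Proposition \ref{statementcomplete} for $\Ecomp{m,m'}{\ms{X}}$, $\EcompQ{m,m'}{\ms{X}}$; the $\mb{Q}$-versions follow by base change along $-\otimes\mb{Q}$), so tensoring the presentation with $\Gamma(\ms{V},\ms{E})$ stays exact. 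Hence there is a short exact sequence of presheaves $0\rightarrow N^{\triangle}\rightarrow (R^{n})^{\triangle}\rightarrow M^{\triangle}\rightarrow 0$ on $(\ms{U},\mf{T})$. Because $\mf{U}$ is finite, the augmented \v{C}ech complex $\check{C}^{\bullet}_{\mr{aug}}(\mf{U},-)$ is supported in degrees $-1\leq\bullet<s$ with $s=\#\mf{U}$, and it is an exact functor on short exact sequences of presheaves (finite products of exact sequences). This yields the long exact sequence
\begin{equation*}
 \cdots\rightarrow\check{H}^i_{\mr{aug}}(\mf{U},(R^n)^{\triangle})\rightarrow\check{H}^i_{\mr{aug}}(\mf{U},M^{\triangle})\rightarrow\check{H}^{i+1}_{\mr{aug}}(\mf{U},N^{\triangle})\rightarrow\check{H}^{i+1}_{\mr{aug}}(\mf{U},(R^n)^{\triangle})\rightarrow\cdots.
\end{equation*}

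For a finite free module $(R^n)^{\triangle}$ has \v{C}ech complex the $n$-fold direct sum of that of $\ms{E}$, so $\check{H}^i_{\mr{aug}}(\mf{U},(R^n)^{\triangle})=0$ for all $i$ by Lemma \ref{calcofcoh} (resp.\ Proposition \ref{statementcomplete}). The long exact sequence then gives isomorphisms $\check{H}^i_{\mr{aug}}(\mf{U},M^{\triangle})\cong\check{H}^{i+1}_{\mr{aug}}(\mf{U},N^{\triangle})$ for every $i\in\mb{Z}$. Iterating with successive syzygies $N_0:=N$, $N_{j+1}:=\ker(R^{n_{j+1}}\twoheadrightarrow N_j)$ — all finite over the noetherian ring $R$ — we obtain $\check{H}^i_{\mr{aug}}(\mf{U},M^{\triangle})\cong\check{H}^{i+k}_{\mr{aug}}(\mf{U},N_{k-1}^{\triangle})$ for every $k\geq 1$, and the right-hand side vanishes as soon as $i+k\geq s$. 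Hence $\check{H}^i_{\mr{aug}}(\mf{U},M^{\triangle})=0$ for all $i$, as claimed.

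I expect no serious obstacle here: the argument is formal once Lemma \ref{calcofcoh} is in hand, the only genuine inputs being the acyclicity of the ring $\ms{E}$ on strictly affine opens, the flatness of the restriction maps, and the noetherianity of the section rings, all already established. The one point that must be checked with a little care is that every intersection arising in the \v{C}ech complex is strictly affine and is of the type to which the basis $\mf{B}$ and Proposition \ref{noetherianintermediate} (iii) apply, so that both $M^{\triangle}$ and the flatness statement are available on it; this is handled by the same reduction used at the start of the proof of Lemma \ref{calcofcoh}.
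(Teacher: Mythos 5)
Your argument is correct, and it is essentially the same as the paper's: the paper's proof of Lemma \ref{calcofcohmod} reads ``We just copy the proof of \cite[8.2.1/5]{BGR} using Lemma \ref{calcofcoh} and Lemma \ref{statementcomplete},'' and the dimension-shifting argument via a finite free resolution, exactness of the augmented \v{C}ech functor on short exact sequences of presheaves, and termination via the bounded length of the complex is precisely the content of \cite[8.2.1/5]{BGR}. You have correctly identified the inputs (acyclicity of $\ms{E}$ on strictly affine opens, flatness of restriction maps, noetherianity of section rings) and the reduction to strictly affine intersections that makes the \v{C}ech complex well-behaved.
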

\begin{proof}
 We just copy the proof of \cite[8.2.1/5]{BGR} using Lemma
 \ref{calcofcoh} and Lemma \ref{statementcomplete}.
\end{proof}

\begin{cor}
 \label{exactnessofassociation}
 (i) For any finite $\Gamma(\ms{U},\ms{E})$-module $M$, the presheaf
 $M^{\triangle}$ defines a sheaf on $(\ms{U},\mf{T})$, and the functor
 ${}^\triangle$ is exact.

 (ii) Let $\ms{U}$ be a strictly affine open subscheme of
 $\mathring{T}^*\ms{X}$, and suppose there exists a finite presentation
 on $\ms{U}$:
 \begin{equation*}
  (\ms{E}|_{\ms{U}})^{\oplus a}\xrightarrow{\phi}
   (\ms{E}|_{\ms{U}})^{\oplus b}
   \rightarrow\ms{M}\rightarrow 0.
 \end{equation*}
 Then we have a canonical isomorphism
 $\Gamma(\ms{U},\ms{M})^{\triangle}\xrightarrow{\sim}\ms{M}$.
\end{cor}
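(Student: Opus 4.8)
The plan is to obtain (i) from the \v{C}ech acyclicity already proved, and then to deduce (ii) as a purely formal consequence of the exactness of $^{\triangle}$. Throughout I work in the category of $\ms{E}$-modules on the site $(\ms{U},\mf{T})$, which is where $^{\triangle}$ takes values and in which the presentation appearing in (ii) is to be read; that $\ms{E}^{\oplus n}$ is canonically $(\Gamma(\ms{U},\ms{E})^{\oplus n})^{\triangle}$ in this category is clear from the definition of $^{\triangle}$.

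For (i): a strictly affine open is a noetherian topological space, hence quasi-compact, and $\mf{B}$-opens form a basis, so every admissible covering of a strictly affine open admits a finite $\mf{B}$-covering as a refinement; finite $\mf{B}$-coverings are thus cofinal, and it suffices to verify the sheaf axiom for $M^{\triangle}$ along them. This is exactly the vanishing of $\check{H}^{i}_{\mr{aug}}(\mf{U},M^{\triangle})$ for $i=-1,0$ furnished by Lemma \ref{calcofcohmod}. For the exactness of $^{\triangle}$, recall from Proposition \ref{noetherianintermediate} (iii) and Proposition \ref{statementcomplete} (and, after inverting $p$, for the $\mb{Q}$-coefficient variants) that for strictly affine $\ms{V}\subset\ms{U}$ the homomorphism $\Gamma(\ms{U},\ms{E})\to\Gamma(\ms{V},\ms{E})$ is flat; since $\Gamma(\ms{U},\ms{E})$ is noetherian, the functor $M\mapsto\Gamma(\ms{V},\ms{E})\otimes_{\Gamma(\ms{U},\ms{E})}M$ is exact on finite modules for every such $\ms{V}$. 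Hence a short exact sequence of finite $\Gamma(\ms{U},\ms{E})$-modules is carried by $^{\triangle}$ to a sequence of presheaves that is exact on every strictly affine $\ms{V}$; all three terms being sheaves by the first part, it is exact in $\ms{E}$-modules on $(\ms{U},\mf{T})$.

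For (ii): put $N:=\mr{Coker}\bigl(\Gamma(\ms{U},\ms{E})^{\oplus a}\xrightarrow{\Gamma(\ms{U},\phi)}\Gamma(\ms{U},\ms{E})^{\oplus b}\bigr)$, a finite $\Gamma(\ms{U},\ms{E})$-module, so that $\Gamma(\ms{U},\ms{E})^{\oplus a}\to\Gamma(\ms{U},\ms{E})^{\oplus b}\to N\to 0$ is exact. Since $^{\triangle}$ sends a finite free $\Gamma(\ms{U},\ms{E})$-module to the corresponding finite free $\ms{E}$-module and carries $\Gamma(\ms{U},\phi)$ back to $\phi$ (a morphism out of a free module being determined by the images of the canonical generators), applying the exact functor $^{\triangle}$ yields an exact sequence $\ms{E}^{\oplus a}\xrightarrow{\phi}\ms{E}^{\oplus b}\to N^{\triangle}\to 0$ on $(\ms{U},\mf{T})$. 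Comparing with the hypothesis $\ms{E}^{\oplus a}\xrightarrow{\phi}\ms{E}^{\oplus b}\to\ms{M}\to 0$, both $N^{\triangle}$ and $\ms{M}$ are cokernels of one and the same map, so the identity of $\ms{E}^{\oplus b}$ induces a canonical isomorphism $N^{\triangle}\xrightarrow{\sim}\ms{M}$. Taking sections over $\ms{U}$ gives $N=N^{\triangle}(\ms{U})\xrightarrow{\sim}\Gamma(\ms{U},\ms{M})$ as $\Gamma(\ms{U},\ms{E})$-modules, and feeding this isomorphism back through $^{\triangle}$ identifies $\Gamma(\ms{U},\ms{M})^{\triangle}$ with $N^{\triangle}$, hence with $\ms{M}$; unwinding the construction shows the resulting map is the canonical one.

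The only real input is thus concentrated in (i) — the acyclicity of Lemma \ref{calcofcohmod} (itself resting on Lemma \ref{calcofcoh} and Proposition \ref{statementcomplete}) together with the flatness of restriction maps, the latter being precisely what upgrades $^{\triangle}$ from a right-exact to an exact functor. I expect the one point that needs slight care is the bookkeeping for the site $(\ms{U},\mf{T})$: checking that strictly affine opens are cofinal with $\mf{B}$-opens so that Lemma \ref{calcofcohmod} applies verbatim, and that the exact sequence in (ii) is genuinely an exact sequence of sheaves on this site, so that ``cokernel of $\phi$'' is unambiguous. Everything beyond that is formal.
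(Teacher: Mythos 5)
Your proof is correct and follows essentially the same route as the paper's: Lemma \ref{calcofcohmod} for the sheaf condition, flatness of the restriction maps from Propositions \ref{noetherianintermediate} and \ref{statementcomplete} for exactness of ${}^{\triangle}$, and for (ii) the same maneuver of setting $N:=\mr{Coker}\,\Gamma(\ms{U},\phi)$ and applying the exact functor ${}^{\triangle}$ to the resulting presentation. You merely unpack two points the paper treats as evident, namely the cofinality of finite $\mf{B}$-coverings among admissible ones and the identification $(\Gamma(\ms{U},\phi))^{\triangle}=\phi$ on free modules; neither changes the argument.
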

\begin{proof}
 Let us prove (i). Lemma \ref{calcofcohmod} shows that the
 presheaf $M^{\triangle}$ is a sheaf. The functor ${}^\triangle$ is
 exact since the restriction homomorphism
 $\Gamma(\ms{V},\ms{E})\rightarrow\Gamma(\ms{W},\ms{E})$ is flat where
 $\ms{W}\subset\ms{V}\subset\ms{U}$ are strictly affine by Proposition
 \ref{noetherianintermediate} and \ref{statementcomplete}.
 Let us show (ii). We put $M:=\mr{Coker}(\Gamma(\ms{U},\phi))$. Let
 $E:=\Gamma(\ms{U},\ms{E})$. By the definition of $M$, we have the
 following exact sequence
 \begin{equation*}
  E^{\oplus a}\xrightarrow{\Gamma(\ms{U},\phi)}E^{\oplus b}
  \rightarrow M\rightarrow0.
 \end{equation*}
 Taking the exact functor ${}^\triangle$, we have an isomorphism
 $\ms{M}\cong M^{\triangle}$. Taking the global sections,
 $\Gamma(\ms{U},\ms{M})\cong M$, and the claim follows.
\end{proof}

\begin{rem*}
 We did not prove that any coherent $\ms{E}$-module on $\ms{U}$ can be
 written as $M^{\triangle}$ with a finite $\Gamma(\ms{U},\ms{E})$-module
 $M$. We do not go into the problem further in this paper. We believe,
 however, that for any coherent $\ms{E}$-module $\ms{M}$ on a strict
 affine open subscheme $\ms{U}$, the canonical homomorphism
 $\Gamma(\ms{U},\ms{M})^\triangle\rightarrow\ms{M}$ is an isomorphism.
\end{rem*}

Let us use the notation of \ref{subsecmicloc}.
We consider the induced topology from $(T^*\ms{X})'$ on the underlying
set of $\ms{U}$, and denote the topological space by $\ms{U}'$. We
denote by $\epsilon\colon\ms{U}\rightarrow\ms{U}'$ the continuous map
induced by the identity. The topology of $\ms{U}'$ is slightly finer
(cf. \cite[9.1.2/1]{BGR}) than $\mf{T}$. Thus by \cite[9.2.3/1]{BGR},
the sheaf $M^{\triangle}$ extends uniquely to a sheaf on $\ms{U}'$,
denoted by $(M^\triangle)'$. Now, we get the sheaf
$\epsilon^{-1}((M^\triangle)')$. We also denote this sheaf on the
topological space $\ms{U}$ by $M^\triangle$, and from now on,
$M^\triangle$ indicates the associated sheaf on $\ms{U}$ unless
otherwise stated.

\subsection{}
Let $\ms{X}$ be an affine smooth formal scheme over $R$ possessing
a system of local coordinates. Take a homogeneous element $\Theta$ in
$\Gamma(T^*\ms{X},\mc{O}_{T^*\ms{X}})$ such that $\deg(\Theta)>0$. Let
$\ms{U}:=D(\Theta)$. In the rest of this section, we use the notation of
\ref{notationfixring} freely.

Recall that we have the canonical injection
\begin{equation*}
 \rho_{m,m'}\colon\widehat{E}^{(m,m'+1)}_{\mb{Q}}\rightarrow
  \widehat{E}^{(m,m')}_{\mb{Q}}.
\end{equation*}
We define ${E}^{[m,m']}:=\rho_{m,m'}^{-1}(\widehat{E}^{(m,m')})$, and
equip it with {\em non-exhaustive} filtration by order. Since
$\widehat{E}^{(m,m')}\otimes\mb{Q}=\widehat{E}^{(m,m')}_\mb{Q}$, we
get $E^{[m,m']}\otimes\mb{Q}\cong \widehat{E}^{(m,m'+1)}_\mb{Q}$.
See \ref{defofFSalg} for an account why we need to introduce
this ring.

\begin{lem*}
 \label{mainlemnoefl}
 There exists a subring $\Epr\subset E^{[m,m']}$ such that the following
 holds. We equip $\Epr$ with the induced non-exhaustive filtration
 from $E^{[m,m']}$.
 \begin{enumerate}
  \item The ring $\Epr$ contains $\widehat{E}^{(m,m'+1)}$, and the
	inclusion $\Epr\subset E^{[m,m']}$ is a $p$-isogeny.

  \item The ring $\mr{gr}(\Epr)$ is finitely generated over
	$\mr{gr}(E^{(m,m'+1)})$.

  \item The ring $\Epr$ and the Rees ring $(\Epr_0)_{\bullet}$ of
	$\Epr_0$ are two-sided noetherian.
 \end{enumerate}
\end{lem*}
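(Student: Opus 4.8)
The plan is to reduce everything to the associated graded rings for the filtration by order, to use the estimates of Lemma~\ref{normcalc} to locate precisely where the $p$-denominators occur when one passes from level $m'+1$ to level $m'$, and then to obtain $E'$ by adjoining to $\widehat{E}^{(m,m'+1)}$ the inverse of (a suitable integral multiple of) $\widetilde{\Theta}^{(m')}_l$ together with finitely many further sections of $E^{[m,m']}$.

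First I would analyse $\mr{gr}(E^{[m,m']})$. Equip $\widehat{E}^{(m,m'+1)}\subset E^{[m,m']}\subset\widehat{E}^{(m,m'+1)}_{\mb{Q}}$ with the filtration by order. By Corollary~\ref{intersectioncompatible} and Lemma~\ref{pisog} the graded pieces all sit inside the commutative ring $\mc{O}_{T^{*}\ms{X},\mb{Q}}(*)$, and $\mr{gr}_n(\widehat{E}^{(m,m'+1)})=\mr{gr}_n(E^{(m,m'+1)})$ because each of these is finite, hence $p$-adically complete, over the $p$-adically complete noetherian ring $\Gamma(\ms{U},\mc{O}_{T^{(m'+1)*}\ms{X}}(0))$ (its coherence coming from Corollary~\ref{intersectioncompatible} and Lemma~\ref{intersectioncoherent}, and the identification of sections via the vanishing in Lemma~\ref{calcofcoh}). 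Lemma~\ref{normcalc}~(i)(a) then shows that in positive order the inclusion $\mr{gr}_n(\widehat{E}^{(m,m'+1)})\subset\mr{gr}_n(E^{[m,m']})$ is an equality for $n>d\,p^{m'+1}$ and is cokilled by a fixed power of $p$ for $0\le n\le d\,p^{m'+1}$; in negative order, however, the defect is governed by the inverses $(\widetilde{\Theta}^{(m')}_l)^{-i}$, whose denominators relative to $\widehat{E}^{(m,m'+1)}$ grow with $i$, since by Lemma~\ref{moreexplicitdesc}~(i) the constant $r_{m',m'+1}$ relating $\Theta^{(m',m'+1)}$ to $\Theta^{(m'+1)}$ is divisible by $p$. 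Thus $E^{[m,m']}/\widehat{E}^{(m,m'+1)}$ is $p$-power torsion but of unbounded exponent, which is exactly why neither $\widehat{E}^{(m,m'+1)}$ nor $E^{[m,m']}$ itself can serve as $E'$.

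Accordingly I would proceed as follows. By Example~\ref{inversecontE} and the divisibility just noted, some integral multiple $s:=p^{a}\widetilde{\Theta}^{(m')}_l$ lies in $\widehat{E}^{(m,m'+1)}$, and as in \ref{microlocaldef}--\ref{Orelocal} the multiplicative set generated by $s$ satisfies the two-sided Ore condition; let $E''$ be the $p$-adic completion of $\widehat{E}^{(m,m'+1)}[s^{-1}]$. Then $E''$ is a subring of $E^{[m,m']}$ (since $E^{[m,m']}$ is $p$-adically closed in $\widehat{E}^{(m,m')}$ and contains $\widehat{E}^{(m,m'+1)}$ and $(\widetilde{\Theta}^{(m')}_l)^{-1}$), it is left and right noetherian (a $p$-adic completion, $p$ being central, of an Ore localization of the noetherian ring $\Gamma(\ms{U},\Ecomp{m,m'+1}{\ms{X}})$, which is noetherian by Proposition~\ref{statementcomplete}), and $\mr{gr}(E'')$ is finitely generated over $\mr{gr}(E^{(m,m'+1)})$. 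Now choose finitely many $f_1,\dots,f_r\in E^{[m,m']}$ of orders in the bounded range found above whose principal symbols, together with $\mr{gr}(E'')$, generate $\mr{gr}(E^{[m,m']})$ as an $\mr{gr}(E'')$-module, and set $E'$ to be the subring of $E^{[m,m']}$ generated by $E''$ and the $f_i$. With this choice: $\mr{gr}(E')$ is generated over $\mr{gr}(E^{(m,m'+1)})$ by $\sigma(s)$, $\sigma(s)^{-1}$ and the $\sigma(f_i)$, which gives~(ii); $E''$ absorbs all the $(\widetilde{\Theta}^{(m')}_l)^{-i}$, so $E^{[m,m']}/E'$ is supported in a bounded range of orders and killed by a fixed power of $p$ — a homomorphism $p$-isogeny, compatible with completions by Lemma~\ref{complofisog} — which gives~(i); and $E'$ is a module-finite extension of the noetherian ring $E''$, hence left and right noetherian, giving the first half of~(iii). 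Finally, $E'_0:=F_0E'$ is, by \eqref{negativecoince} and \eqref{ismoofnegativeparts}, a module-finite extension of $F_0\widehat{E}^{(m,m'+1)}=\widehat{E}^{(m'+1)}_0$, which is $p$-adically complete, $p$-torsion free and noetherian (Proposition~\ref{noetheriansheafofmic}~(i)); so $E'_0$ is $p$-adically complete and noetherian, whence its Rees ring for the $\pi$-adic filtration, with associated graded $(E'_0\otimes_R k)[T]$, is noetherian as well.

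The main obstacle is the denominator analysis in the second paragraph: verifying that, outside a bounded range of orders, the inverse of $\widetilde{\Theta}^{(m')}_l$ (together with elements already in $\widehat{E}^{(m,m'+1)}$) is the \emph{only} source of unbounded $p$-denominators in $E^{[m,m']}$, so that $E''$ together with the finitely many $f_i$ recovers $E^{[m,m']}$ modulo bounded $p$-torsion, and checking the Ore condition for $s$ in $\widehat{E}^{(m,m'+1)}$. This rests on the explicit presentations of \ref{explicitdescription} and on the full strength of Lemma~\ref{normcalc}; throughout, one has to keep careful track of the two topologies in play, since $E^{(m,m')}$ and $\Emod{m,m'}{\ms{X}}$ are complete for the filtration by order (which is not exhaustive on the $p$-adic completions) while only the $p$-adic completions are complete $p$-adically, so the noetherianity of $E'$ must be established via the $\pi$-adic filtration as in Propositions~\ref{noetheriansheafofmic} and~\ref{noetherianintermediate}.
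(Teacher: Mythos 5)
Your plan goes wrong at the key step where you try to manufacture the intermediate ring as an Ore localization. You set $s := p^{a}\widetilde{\Theta}^{(m')}_{l}\in\widehat{E}^{(m,m'+1)}$ and propose $E''$ to be the $p$-adic completion of $\widehat{E}^{(m,m'+1)}[s^{-1}]$. But inverting $s=p^{a}\widetilde{\Theta}^{(m')}_{l}$ also inverts $p^{a}$ (if $t\in\widehat{E}^{(m,m'+1)}_{\mb{Q}}$ denotes $\widetilde{\Theta}^{(m')}_{l}$, then $s^{-1}t = p^{-a}$ is an element of the localization, and $p$ is central), so $\widehat{E}^{(m,m'+1)}[s^{-1}]$ is already a $\mb{Q}$-algebra; its $p$-adic topology is discrete and the "completion" changes nothing. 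Such a ring cannot sit inside $E^{[m,m']}=\rho_{m,m'}^{-1}(\widehat{E}^{(m,m')})$, which by construction is $p$-torsion free and $p$-adically separated: $s^{-1}=p^{-a}(\widetilde{\Theta}^{(m')}_{l})^{-1}$ has image $p^{-a}(\widetilde{\Theta}^{(m')}_{l})^{-1}\notin\widehat{E}^{(m,m')}$, since $(\widetilde{\Theta}^{(m')}_{l})^{-1}$ reduces mod $p$ to a nonzero element. So your $E''\not\subset E^{[m,m']}$, and the whole chain of consequences (containment, noetherianity of $E'$ as a "module-finite extension", the $p$-isogeny) collapses. Your parenthetical justification ("$E^{[m,m']}$ contains $(\widetilde{\Theta}^{(m')}_{l})^{-1}$") quietly substitutes $(\widetilde{\Theta}^{(m')}_{l})^{-1}$ for $s^{-1}$, which are different by the unbounded factor $p^{-a}$. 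And even the repaired version is problematic: Example \ref{inversecontE} only puts $(\widetilde{\Theta}^{(M,M')}_{\bullet})^{-1}$ into $\Emod{m,m_1}{\ms{X},\mb{Q}}$ when $M'\geq m_1$, so with $M=M'=m'$ one cannot conclude $(\widetilde{\Theta}^{(m')}_{l})^{-1}\in\widehat{E}^{(m,m'+1)}_{\mb{Q}}$ at all — the element may simply not live in the domain of $\rho_{m,m'}$.

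The paper sidesteps both problems at once by choosing the specifically intermediate element $\theta:=(\widetilde{\Theta}^{(m',m'+1)}_{l})^{-1}$: Example \ref{inversecontE} guarantees $\theta\in\widehat{E}^{(m,m'+1)}_{\mb{Q}}$ (take $M=m'$, $M'=m'+1\geq m'+1$) and $\theta\in\widehat{E}^{(m,m')}$ (take $m_1=m'$), so $\theta\in E^{[m,m']}$; and $\theta=(\mathrm{unit})\,p^{-n}(\widetilde{\Theta}^{(m'+1)}_{l})^{-1}$ with $(\widetilde{\Theta}^{(m'+1)}_{l})^{-1}\in\widehat{E}^{(m,m'+1)}$ a \emph{non-unit}, so adjoining $\theta$ does not invert $p$. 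Crucially, the noetherianity of $E'=\langle\widehat{E}^{(m,m'+1)},\theta\rangle$ in the paper is \emph{not} obtained from localization theory or from module-finiteness over anything: it is proved by introducing a positive exhaustive filtration $G_{\bullet}$ with $G_0=E$, $G_{i+1}=E+G_i\theta$, checking via the commutator estimate \eqref{relationT} that $G$ is a ring filtration, covering $\mr{gr}^G(E')$ by a twisted graded ring $A=E\oplus\bigoplus_{k>0}(E/p^nE)T^{k}$, and running a symbol argument in the commutative ring $\mr{gr}(E/p^{n}E)$; the Rees ring gets an analogous, more elaborate double-graded treatment. This filtration is genuinely infinite (the $p$-denominators of $\theta^{i}$ grow without bound), so no module-finite reduction to a noetherian base is available, and your claim that "$E'$ is a module-finite extension of $E''$" would have to be replaced by the paper's filtration/Rees argument.
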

\begin{proof}
 In this proof, we denote $\widehat{E}^{(m,m'+1)}$ by $E$ for
 simplicity and consider the non-exhaustive filtration by order. By
 Lemma \ref{inversecontE}, we have $(\Thetatil{(m',m'+1)})^{-1}\in
 \widehat{E}^{(m,m')}$ and $(\Thetatil{(m',m'+1)})^{-1}\in
 \widehat{E}^{(m,m'+1)}_{\mb{Q}}$. This shows that
 $(\Thetatil{(m',m'+1)})^{-1}\in E^{[m,m']}$. We put $\Epr$ to be the
 subring of $E^{[m,m']}$ generated by $E$ and
 $\theta:=(\Thetatil{(m',m'+1)})^{-1}$.

 Let us show that the inclusion $\Epr\hookrightarrow E^{[m,m']}$ is a
 $p$-isogeny. Since $E/E_0\rightarrow\widehat{E}^{(m,m')}/E^{(m,m')}_0$
 is a $p$-isogeny by Lemma \ref{explicitdescription}, it suffices to
 check that $\Epr_0\hookrightarrow E_0^{[m,m']}$ is a
 $p$-isogeny. Let $a:=a_{\mr{ord}(\theta)}$ of Lemma
 \ref{normcalc} (i). Let $\partial^{\angles{m'}{\underline{k}}}
 (\Thetatil{(m',m'+1)})^{-i}$ for
 $\underline{k}\geq\underline{0}$ and $i\geq0$ be an operator in
 $E^{[m,m']}$ whose order is less than or equal to $0$. Then there
 exists an integer $j>0$ such that the order of
 $\partial^{\angles{m'}{\underline{k}}}
 (\Thetatil{(m',m'+1)})
 ^{-i+j}$ is strictly greater than $\mr{ord}(\theta)$ and less than or
 equal to $0$. By the choice of $a$, the operator
 $p^a\cdot\partial^{\angles{m'}{\underline{k}}}
 (\Thetatil{(m',m'+1)})^{-i+j}$
 is in $E$, and thus
 \begin{equation*}
  p^a\cdot\partial^{\angles{m'}{\underline{k}}}
   (\Thetatil{(m',m'+1)})^{-i}\,\in E\cdot\theta^j\subset\Epr.
 \end{equation*}
 Take any $P$ in $E_0^{[m,m']}$. There exists an integer $b$ such that
 $p^b\cdot P\in\widehat{E}^{(m,m'+1)}$. Take a left presentation
 (\ref{sum}) of level $m'+1$ such that $p^b\cdot
 b_{\underline{k},i}\in\Gamma(\ms{X},\mc{O}_{\ms{X}})$ for any
 $\underline{k}$ and $i$. For an integer $M'$ and $?\in\{\leq,>\}$, we
 put
 \begin{align*}
  P_{?M'}&:=\sum_{N? M'}\sum_{|\underline{k}|-inp^{m'+1}=N}
  b_{\underline{k},i}\,\underline{\partial}^{\angles{m'+1}
  {\underline{k}}}\,(\Thetatil{(m'+1)})^{-i}.
 \end{align*}
 Since $(\Thetatil{(m'+1)})^{-1}\cdot\Thetatil{(m',m'+1)}\in pE$, the
 operator $(P_{\leq M}\cdot(\Thetatil{(m',m'+1)})^b)$ where
 $M=b\cdot\mr{ord}(\theta)\,(<0)$ is contained in $E$. Thus,
 \begin{equation*}
  p^a\cdot P=p^a\cdot P_{>M}+ p^a\cdot P_{\leq M}\,\in
   \Epr+E\cdot\theta^b\subset\Epr,
 \end{equation*}
 which implies that $p^a\cdot E^{[m,m']}_0\subset\Epr_0$, and the claim
 follows.

 Let us check condition 2 and show that this $\Epr$ is left
 noetherian. We can show $\Epr$ to be right noetherian similarly.
 We define a filtration $G_i$ for $i\geq 0$ on $\Epr$ in the following
 way: we put $G_0(\Epr):=E$. For $i>0$, we inductively define
 $G_{i+1}(\Epr):=E+G_i(\Epr)\cdot\theta$. Let $P\in E_l$ for some
 integer $l$. Since
 $p^n\theta=u\cdot(\Thetatil{(m'+1)})^{-1}$ by
 Lemma \ref{moreexplicitdesc} where $u\in\mb{Z}^*$ and $n$ denotes the
 order of $\Theta$, we can write
 $(p^n\theta)P=P(p^n\theta)+\sum_{k>1}P_k(p^n\theta)^k$ with $P_k\in
 E_{(k-1)np^{m'+1}+l-1}$, thus
 \begin{equation}
  \label{relationT}
   \theta\cdot P\in P\cdot\theta+E_{l-1}\cdot\theta.
 \end{equation} 
 This implies that condition 2 holds. Moreover, the filtration $G$ is
 compatible with ring structure and  exhaustive.
 Let us show that $\mr{gr}^G(\Epr)$ is noetherian.
 Once this is shown, since $G$ is positive, $\Epr$ is noetherian as
 well.

 We put $\overline{E}:=E/p^nE$.
 Let $A:=E\oplus\bigoplus_{k>0}\overline{E}\cdot T^k$ be a graded ring,
 whose graduation is defined by the degree of the indeterminate $T$, and
 the multiplication is defined by
 \begin{equation*}
  T\cdot P=(\theta\,P\,\theta^{-1})\cdot T\quad
   \in (P+E_{l-1})\cdot T
 \end{equation*}
 for $P\in E_l$ where the membership relation holds by
 (\ref{relationT}). It is straightforward to check that this gives us a
 ring structure.
 We denote by $A_i$ the homogeneous part of degree $i$.
 Since $p^n\theta\in E$, there exist the surjection
 $\overline{E}\twoheadrightarrow\mr{gr}^G_i(\Epr)$
 for $i\geq 1$ sending
 $1$ to $\theta^i$, which defines the surjection of rings
 \begin{equation*}
  A\twoheadrightarrow\mr{gr}^G(\Epr).
 \end{equation*}
 It suffices to show that $A$ is noetherian.
 For $Q\in \overline{E}$, we denote by $\sigma(Q)$ the principal symbol
 in $\mr{gr}(\overline{E})$ where the filtration is taken with respect
 to the filtration by order, and for $Q'\in E$, we denote by
 $\sigma(Q')$ the principal symbol of the image of $Q'$ in
 $\overline{E}$.
 Let us define a ``symbol map'' $\Sigma\colon
 A\rightarrow\mr{gr}(\overline{E})$. Let $P\in A$. Then we may write in
 a unique way $P=\sum P_i$ where $P_i\in A_i$. Let $k$ be the
 largest integer such that $P_k\neq 0$.
 We define $\Sigma(P)$ to be $\sigma(P_k)\in\mr{gr}(\overline{E})$. Let
 $I$ be a left ideal of $A$. We define
 \begin{equation*}
  S:=\bigl\{\Sigma(P)\mid P\in I\bigr\}
   \subset\mr{gr}(\overline{E}).
 \end{equation*}
 Since, for $P\in E_l$, we have $T\cdot P\in P\cdot
 T+\overline{E}_{l-1}\cdot T$ in $\mr{gr}^G_1(A)$, the set $S$
 is closed under multiplication by homogeneous elements of
 $\mr{gr}(\overline{E})$. Moreover, $S$ is also closed under addition of
 two homogeneous elements with the same degree.
 Let $I_S$ in $\mr{gr}(\overline{E})$ be the
 ideal generated by $S$. By the above properties,
 we get that
 $S=I_S\cap\bigcup_{i\in\mb{Z}}\mr{gr}_i(\overline{E})$. Since
 $\mr{gr}(\overline{E})$ is noetherian, we can take homogeneous
 generators $P'_1,\dots,P'_k\in S$ of $I_S$. There
 exist elements $P_1,\dots,P_k$ of $A$ such that
 $\Sigma(P_i)=P'_i$ and the degrees of $P_i$ are the same $d>0$ for all
 $i$. For any $P\in I$, the
 completeness of $\overline{E}$ with respect to the filtration
 by order implies that there exists $R_i\in A$ such that $P-\sum R_iP_i$
 is degree less than $d$, and thus contained in
 $\bigoplus_{i<d}A_i$. Since $\bigoplus_{i<d}A_i$ is finite over the
 noetherian ring $E$, there exists $Q_1,\dots,Q_{k'}$
 generating $I\cap\bigoplus_{i<d}A_i$ over $E$. By the
 construction, $P_1\dots,P_k,Q_1,\dots,Q_{k'}$ generate $I$, and in
 particular, $I$ is finitely generated.
 \bigskip


 It remains to show that $(\Epr_0)_{\bullet}$ is noetherian. Although
 the proof is slightly more complicated, the idea is essentially the
 same.
 We define a filtration $F$ on $\bigoplus_{j\leq 0}\Epr\nu^j$: $F_0$ is
 equal to $\bigoplus_{j\leq0}E\nu^j$, and we
 inductively define $F_{i+1}:=F_0+F_i\cdot\theta$, namely
 $F_i=\bigoplus_{j\leq0}G_i(\Epr)\nu^j$.
 We define the induced filtration on $(\Epr_0)_{\bullet}$ from
 $\bigoplus_{j\leq0}\Epr\nu^j$ also denoted by $F$.
 It suffices to show that $\mr{gr}^F((\Epr_0)_{\bullet})$ is
 noetherian. Let $(\mr{gr}^G_i(\Epr))_j$ denotes the image of
 $G_i(\Epr)\cap\Epr_j$ in $\mr{gr}^G_i(\Epr)$, and we put
 $N:=np^{m'+1}=-\mr{ord}(\theta)>0$. Then, for $i>0$, we get a
 surjection
 \begin{equation*}
  \overline{E}_{Ni}\oplus\overline{E}_{Ni-1}\cdot\nu^{-1}\oplus
   \dots\twoheadrightarrow\mr{gr}^F_i((\Epr_0)_{\bullet})\cong
   \bigoplus_{j\leq0}(\mr{gr}^G_i(\Epr))_j\cdot\nu^{j},
 \end{equation*}
 sending $P\cdot\nu^j$ with $P\in\overline{E}_{Ni+j}$ to
 $(P\cdot\theta^i)\cdot\nu^j$. We define a ring {\em graded both by the
 degree of $T$ and $\nu$} by
 \begin{equation*}
  A':=\underbrace{(E_0\oplus E_{-1}\cdot\nu^{-1}\oplus\dots)}_{=:A'_0}
   \oplus\bigoplus_{i>0}\underbrace{
   \Bigl(\overline{E}_{Ni}\oplus\overline{E}_{Ni-1}
   \cdot\nu^{-1}\oplus\dots\Bigr)T^i}_{=:A'_i}
 \end{equation*}
 and define the ring structure in the same way as before using
 (\ref{relationT}).
 If we simply say degree, it means the degree of $T$. We denote by
 $A'_i$ the part of degree$i$. Since there exists a surjection
 $A'\rightarrow\mr{gr}^F((\Epr_0)_{\bullet})$, it suffices to show that
 $A'$ is noetherian. Let $\mr{gr}_{[j]}(\overline{E}):=\bigoplus_{i\leq
 j}\mr{gr}_i(\overline{E})$. We define a {\it double graded commutative}
 ring
 \begin{equation*}
  B:=\bigoplus_{a,b\geq 0}\mr{gr}_{[Na-b]}(\overline{E})\mu^a\nu^{-b},
 \end{equation*}
 whose ring structure is defined by the canonical homomorphism
 \begin{equation*}
  \mr{gr}_{[Na-b]}(\overline{E})\times\mr{gr}_{[Na'-b']}(\overline{E})
   \rightarrow\mr{gr}_{[N(a+a')-(b+b')]}(\overline{E}).
 \end{equation*}
 We claim that this ring is noetherian. For this, it suffices to show
 that $B$ is finitely generated over $\mr{gr}_0(\overline{E})$. We know
 that $\bigoplus_{i\geq0}\mr{gr}_i(\overline{E})$ and
 $\bigoplus_{i\leq0}\mr{gr}_i(\overline{E})$ are finitely generated over
 $\mr{gr}_0(\overline{E})$. Then the following claim leads us to the
 desired conclusion.

 \begin{cl}
  Let $C=\bigoplus_{i\in\mb{Z}}C_i$ be a graded commutative ring. Assume
  that $C$ is noetherian, and that $C_{\leq0}:=\bigoplus_{i\leq0}C_i$
  and $C_{\geq0}:=\bigoplus_{i\geq0}C_i$ are finitely generated over
  $C_0$.
  Let $C_{[j]}:=\bigoplus_{j\geq i}C_i$. Then for any positive integer
  $N$, the ring $D_N:=\bigoplus_{j\geq0}C_{[Nj]}\nu^{Nj}$, where $\nu$
  is an indeterminate, is also finitely generated over $C_0$. Moreover,
  the ring $\bigoplus_{k,j\geq0}C_{[Nj-k]}\nu^j\mu^k$ is finitely
  generated over $C_0$ where $\nu$ and $\mu$ are indeterminates.
 \end{cl}
 \begin{proof}
  \renewcommand{\qedsymbol}{$\square$}
  It suffices to check the $N=1$ case. Indeed, $D_1$ can be seen as a
  $D_N$-algebra, and $D_1$ is integral over $D_n$. Thus if $D_1$ is
  finitely generated over $C_0$, $D_N$ is also finitely generated over
  $C_0$ by \cite[7.8]{AtMac}.

  It suffices to show that $\bigoplus_{j\geq0}\Bigl(\bigoplus_{0\geq
  i}C_i\Bigr)\nu^j\subset D_1$ and
  $\bigoplus_{j\geq0}\Bigl(\bigoplus_{j\geq i\geq0}C_i\Bigr)\nu^j\subset
  D_1$ are finitely generated over $C_0$. Since  the former one is
  isomorphic to $C_{\leq0}[\nu]$, it is finitely generated.
  Let $\{x_i\}_{i\in I}$ be a finite set of generators of
  $C_{\geq0}\cong\bigoplus_{i\geq0}C_i\nu^i\subset D_1$ over $C_0$. Then
  the latter one is generated by $\{x_i\}_{i\in I}$ and $\nu$, and the
  claim follows.
 \end{proof}

 For $P\in A'$, we can write $P=\sum_iP_i$ with $P_i\in A'_i$ in a
 unique way. Let $s$ be the maximal integer such that $P_s\neq 0$. We
 denote $P_s$ in $A'_s$ by $\tau(P)$.
 Let $\tau(P)=\sum_{0\leq i\leq K}P_i\nu^{-i}$ with $P_K\neq0$.
 We define $\Sigma'(P)\in
 B$ to be $\sigma(P_K)\mu^{s}\nu^{-K}$ where $\sigma$ denotes the
 principal symbol with respect to the filtration by order of
 $\overline{E}$. Let $I$ be an ideal of $A'$, and we put
 $S':=\{\Sigma'(P)\mid P\in I\}\subset B$. This set is closed under
 addition of two elements with the same degree, and multiplication by
 homogeneous element. Take a finite set $\{Q_i\}$ in $I$ such that
 $\bigl\{\Sigma'(Q_i)\bigr\}$ is a set of generators of the ideal
 $BS'\subset B$. It is straightforward to check that the set $\{Q_i\}$
 generates $I$.
\end{proof}

\begin{prop}
 \label{FSflatbrac}
 (i) The ring
 $\widehat{E}_{\mb{Q}}^{[m,m']}:=\widehat{E}^{[m,m']}\otimes\mb{Q}$ is
 noetherian where $\widehat{\cdot}$ indicates the $\pi$-adic completion.

 (ii) The canonical homomorphism $\alpha_{m,m'}\colon
 \widehat{E}^{(m,m'+1)}_\mb{Q}\rightarrow\widehat{E}^{[m,m']}_\mb{Q}$
 is flat.

 (iii) The canonical homomorphism $\beta_{m,m'}\colon
 \widehat{E}^{[m,m']}_\mb{Q}\rightarrow\widehat{E}^{(m,m')}_\mb{Q}$ is
 flat.
\end{prop}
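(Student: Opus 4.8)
The plan is to move everything onto the auxiliary ring $E'$ supplied by Lemma~\ref{mainlemnoefl}, prove (i) by a direct filtered‑ring argument, and reduce (ii), (iii) to flatness statements about $p$‑adic completions.

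\emph{Reduction.} By Lemma~\ref{mainlemnoefl}(1) the inclusion $E'\subset E^{[m,m']}$ is a $p$‑isogeny of $p$‑torsion‑free rings, so Lemma~\ref{complofisog} yields a $p$‑isogeny $\widehat{E}'\dashrightarrow\widehat{E}^{[m,m']}$ of $p$‑adic completions, hence an isomorphism $\widehat{E}'_{\mb{Q}}\xrightarrow{\sim}\widehat{E}^{[m,m']}_{\mb{Q}}$. Since $\widehat{E}^{(m,m'+1)}\subset E'$ (Lemma~\ref{mainlemnoefl}(1)) and $\widehat{E}^{(m,m'+1)}$ is already $p$‑adically complete, under this identification $\alpha_{m,m'}$ becomes $-\otimes\mb{Q}$ applied to the $p$‑adic completion of the inclusion $\widehat{E}^{(m,m'+1)}\hookrightarrow E'$; using $E^{[m,m']}=\rho_{m,m'}^{-1}(\widehat{E}^{(m,m')})$, the map $\beta_{m,m'}$ becomes $-\otimes\mb{Q}$ applied to the $p$‑adic completion of the composite $E'\hookrightarrow E^{[m,m']}\xrightarrow{\rho_{m,m'}}\widehat{E}^{(m,m')}$. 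Thus (i) reduces to noetherianity of $\widehat{E}'_{\mb{Q}}$, and (ii), (iii) to flatness, after $p$‑adic completion and inverting $p$, of $\widehat{E}^{(m,m'+1)}\hookrightarrow E'$ and of $E'\to\widehat{E}^{(m,m')}$ respectively.

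\emph{Proof of (i).} The ring $E'$ is noetherian by Lemma~\ref{mainlemnoefl}(3), and $p$‑torsion‑free since it is a subring of the $\mb{Q}$‑algebra $\widehat{E}^{(m,m'+1)}_{\mb{Q}}$; hence $E'\to\widehat{E}'$ is flat, $\widehat{E}'$ is $p$‑torsion‑free, $\widehat{E}'/p\widehat{E}'\cong E'/pE'$, and this quotient of a noetherian ring is noetherian. Give $\widehat{E}'$ the $p$‑adic filtration (\ref{adicfiltex}): it is complete and its associated graded ring is $(E'/pE')[T]$, noetherian by the Hilbert basis theorem, so $\widehat{E}'$ is noetherian filtered (\ref{noethfiltdef}), in particular noetherian; then $\widehat{E}'_{\mb{Q}}$, a localization of it, is noetherian.

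\emph{(ii) and (iii), and the main obstacle.} Here I would invoke the filtered flatness criterion \cite[Ch.II, 1.2.1]{HO}. For (ii), equip $E'$ with the positive $G$‑filtration of the proof of Lemma~\ref{mainlemnoefl} ($G_0=\widehat{E}^{(m,m'+1)}$, $G_{i+1}=G_0+G_i\cdot\theta$, $\theta=(\widetilde{\Theta}^{(m',m'+1)}_l)^{-1}$), for which $\mr{gr}^{G}(E')$ is noetherian and finitely generated over $\mr{gr}^G_0(E')=\widehat{E}^{(m,m'+1)}$, pass to $p$‑adic completions, and deduce flatness of $\widehat{E}^{(m,m'+1)}\to\widehat{E}'$ from flatness on the (completed) associated graded objects together with completeness; here the noetherianity of the Rees ring $(E'_0)_\bullet$ from Lemma~\ref{mainlemnoefl}(3) is what controls the degree‑$0$ part, where the $p$‑adic completion actually lives. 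Statement (iii) is then obtained by the dual filtration argument on $\widehat{E}^{(m,m')}$ over the image of $E'$, using the same criterion. The main obstacle is precisely the verification that these associated‑graded maps are flat: \emph{modulo $p$ they are not flat} --- adjoining $\theta$ makes the nonzero element $(\widetilde{\Theta}^{(m'+1)}_l)^{-1}$ of $\widehat{E}^{(m,m'+1)}/p$ vanish in $E'/pE'$ --- so the flatness is invisible on the naive $p$‑adic graded pieces, and one is forced to work with the finer $G$‑filtration and with the Rees ring $(E'_0)_\bullet$, identifying the graded objects and checking flatness by means of the explicit presentations of \ref{defofsituation} and the norm estimates of Lemma~\ref{normcalc}.
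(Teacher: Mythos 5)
Your reduction to the auxiliary ring $E'$ and your argument for (i) are correct; the paper simply invokes \cite[3.2.3]{Ber1} for the noetherianity of $\widehat{E'}$, and your filtered-ring argument is a valid substitute.

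For (ii) and (iii), however, you have taken the wrong route, and the ``main obstacle'' you describe is a red herring. The point you missed is the chain of identifications $E'\otimes\mb{Q}=E^{[m,m']}\otimes\mb{Q}=\widehat{E}^{(m,m'+1)}_\mb{Q}$: after inverting $p$, the inclusion $\widehat{E}^{(m,m'+1)}\hookrightarrow E'$ becomes the identity. Consequently $\alpha_{m,m'}$ is literally $(E'\to\widehat{E'})\otimes\mb{Q}$, a noetherian ring mapping to its $p$-adic completion, hence flat by the same citation used in (i). No $G$-filtration argument is needed for (ii) at all, and your observation that $\widehat{E}^{(m,m'+1)}/p\to E'/p$ fails to be injective, while correct, is beside the point: $\widehat{E}^{(m,m'+1)}\to E'$ is not the map whose flatness is being claimed --- after $\otimes\mb{Q}$ it is an isomorphism. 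For (iii) the paper again reduces to a ``ring to its completion'' flatness, but this time with respect to the order filtration rather than the $p$-adic one: set $E'_f:=\bigcup_n E'_n$, show that $E'_f$ is noetherian filtered for the order filtration (this is where the noetherianity of the Rees ring $(E'_0)_\bullet$ from Lemma~\ref{mainlemnoefl}(3) is actually used, via Lemma~\ref{lemmonnoethfilt}), let $E''$ be the completion of $E'_f$ with respect to the order filtration, so $E'_f\to E''$ is flat; then $\widehat{E'_f}\to\widehat{E''}$ is flat by \cite[3.2.3 (vii)]{Ber1}, and $p$-isogeny comparisons identify $\widehat{E'_f}\otimes\mb{Q}\cong\widehat{E}^{[m,m']}_\mb{Q}$ and $\widehat{E''}_\mb{Q}\cong\widehat{E}^{(m,m')}_\mb{Q}$, yielding $\beta_{m,m'}$. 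Your proposed ``dual filtration argument'' is not carried out and, as you acknowledge, runs into the obstacle you describe; the correct proof sidesteps it entirely.
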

\begin{proof}
 We use the notation of Lemma \ref{mainlemnoefl}.
 Since $\Epr$ is noetherian, the canonical homomorphism
 $\Epr\rightarrow\widehat{\Epr}$ is flat and
 $\widehat{\Epr}$ is noetherian by \cite[3.2.3]{Ber1}. Since $\Epr$ is
 $p$-isogeneous to ${E}^{[m,m']}$, they are also $p$-isogeneous even
 after taking $\pi$-adic completion by Lemma \ref{complofisog}. Thus we
 get (i). Since
 $E^{[m,m']}\otimes\mb{Q}\cong\widehat{E}^{(m,m'+1)}_\mb{Q}$, the
 flatness of $\alpha_{m,m'}$ follows, which is (ii).

 Let us prove (iii). We put $\Epr_{\mr{fin}}:=\bigcup_n\Epr_n$. By
 condition 2 of Lemma \ref{mainlemnoefl} and Lemma
 \ref{intersectioncoherent},
 $\bigoplus_{i\geq0}\mr{gr}_i(\Epr_{\mr{fin}})$ is noetherian.
 Since $\Epr_0$ is a noetherian filtered ring with respect to the
 filtration by order by condition 3, $\Epr_{\mr{fin}}$ is also a
 noetherian filtered ring by Lemma \ref{lemmonnoethfilt}. Let
 $\Epr_{\mr{fin}}'$ be the completion of $\Epr_{\mr{fin}}$ with respect
 to the filtration. Then the canonical homomorphism
 $\Epr_{\mr{fin}}\rightarrow \Epr_{\mr{fin}}'$ is flat and
 $\Epr_{\mr{fin}}'$ is noetherian (cf.\ \ref{noethfiltdef}). Thus,
 by taking the $\pi$-adic completion, the canonical homomorphism
 $\widehat{\Epr_{\mr{fin}}}\rightarrow\widehat{\Epr_{\mr{fin}}'}$ is
 flat by \cite[3.2.3 (vii)]{Ber1} where $^\wedge$ denote the $\pi$-adic
 completion. It suffices to show that
 \begin{equation}
  \label{twoisomtoshow}
  \widehat{\Epr_{\mr{fin}}}\otimes{\mb{Q}}\cong
  \widehat{E}^{[m,m']}_{\mb{Q}},\qquad
 \widehat{\Epr_{\mr{fin}}'}\otimes{\mb{Q}}\cong
 \widehat{E}_{\mb{Q}}^{(m,m')}.
 \end{equation}
 Let $E^{[m,m']}_{\mr{fin}}:=\bigcup_nE_n^{[m,m']}$. Since
 $\Epr_{\mr{fin}}\subset E^{[m,m']}_{\mr{fin}}$ is a $p$-isogeny and the
 $\pi$-adic completion of $E^{[m,m']}_{\mr{fin}}$ is
 $\widehat{E}^{[m,m']}_{\mb{Q}}$, we get the first isomorphism. Let us
 show the second one. Note that the completion of
 $E_{\mr{fin}}^{[m,m']}$ with respect to the filtration by order is
 $E^{(m,m')}$. There exists an integer $n$ such that
 $p^nE^{[m,m']}_{\mr{fin}}\subset\Epr_{\mr{fin}}\subset
 E^{[m,m']}_{\mr{fin}}$. Since these inclusions are strict
 homomorphisms, the inclusions are preserved even
 after taking the completion with respect to the filtration by order,
 and we get $p^nE^{(m,m')}\subset\Epr_{\mr{fin}}'\subset E^{(m,m')}$. In
 particular, the inclusion $\Epr_{\mr{fin}}'\subset E^{(m,m')}$ is a
 $p$-isogeny, which implies the second isomorphism of
 (\ref{twoisomtoshow}), and the proposition follows.
\end{proof}

\begin{lem}
\label{calofthered}
 Let $m'>m$. Put $F^{(m')}:=\iota(\widehat{E}^{(m,m')}_{\mb{Q}})
 \cap\widehat{E}^{(m)}$ where
 $\iota\colon\widehat{E}^{(m,m')}_{\mb{Q}}\rightarrow
 \widehat{E}^{(m)}_{\mb{Q}}$ is the canonical injection. Then, for any
 $j\geq0$, $F^{(m')}_{X_j}:=F^{(m')}\otimes R_j$
 does not depend on $m'$.
\end{lem}
\begin{proof}
 By definition, the canonical homomorphism
 $F^{(m')}_{X_j}\hookrightarrow E^{(m)}_{X_j}:=
 \Gamma(\ms{U},\Emod{m}{X_j})$ is injective.
 There exists $m''\geq m'$ such that $\Thetatil{(m,m'')}$ is in the
 center of $D^{(m)}_{X_j}$.
 Let $LD^{(m)}_{X_j}$ be the subring of $E^{(m)}_{X_j}$ generated by
 $D^{(m)}_{X_j}$ and $(\Thetatil{(m,m'')})^{-1}$, which does not depend
 on the choice of $m''$. (In fact,
 $LD^{(m)}_{X_j}:=\Gamma(\ms{U},\ms{L}\Dmod{m}{X_j})$ using the notation
 of Remark \ref{remBer}.) It suffices to show that
 ${LD}^{(m)}_{X_j}=F^{(m')}_{X_j}$ in $E^{(m)}_{X_j}$.
 Since $(\Thetatil{(m,m'')})^{-1}\in
 F^{(m')}$, we have ${LD}^{(m)}_{X_j}\subset F^{(m')}_{X_j}$.
 Let us show the opposite inclusion.
 By Remark \ref{concrdescrem}, any element of $F^{(m')}$ can be written
 as
 \begin{equation*}
  \underbrace{\sum_{k<0}P_{k,i}\cdot(\Thetatil{(m',m'')})
   ^{-i}}_{\ccirc{1}}+
   \underbrace{\sum_{k\geq0}P_{k,i}\cdot(\Thetatil{(m,m'')})
   ^{-i}}_{\ccirc{2}}
 \end{equation*}
 where $P_{k,i}\in(D^{(m)}_{\mb{Q}})_{k+np^{m''}i}$
 ($n:=\mr{ord}(\Theta)$) with some convergence conditions.
 The sum \ccirc{2} becomes finite in $E^{(m)}_{X_j}$ since
 $\lim_{k\rightarrow\infty}P_{k,i}=0$. Since
 $(\Thetatil{(m',m'')})^{-1}\equiv0\bmod p\widehat{E}^{(m)}$, \ccirc{1}
 is also a finite sum in $E^{(m)}_{X_j}$, and we have
 $F^{(m')}_{X_j}\subset{LD}^{(m)}_{X_j}$.
\end{proof}

\begin{cor*}
 \label{denseimage}
 The image of the homomorphism $\widehat{E}^{(m,m'+2)}_\mb{Q}
 \rightarrow\widehat{E}^{[m,m']}_\mb{Q}$
 is dense with respect to the $\pi$-adic topology on
 $\widehat{E}^{[m,m']}_\mb{Q}$ for any $m'\geq m$.
\end{cor*}
\begin{proof}
 Set $(m,m')$ of Lemma \ref{calofthered} to be $(m',m'+1)$. Then
 $F^{(m'+1)}=E^{[m',m']}$, and the lemma implies that the
 image of the homomorphism $\widehat{E}^{(m',m'+2)}_\mb{Q}
 \rightarrow\widehat{E}_\mb{Q}^{[m',m']}$ is dense. This shows that the
 image of the homomorphism
 $(E^{(m',m'+2)}_\mb{Q})_0\rightarrow(E_\mb{Q}^{[m',m']})_0$
 is also dense, and so is the image of the composition
 \begin{equation*}
  (E^{(m,m'+2)}_\mb{Q})_0\xrightarrow{\sim}
   (E^{(m',m'+2)}_\mb{Q})_0\rightarrow
   (\widehat{E}_\mb{Q}^{[m',m']})_0\xleftarrow{\sim}
   (\widehat{E}_\mb{Q}^{[m,m']})_0,
 \end{equation*}
 which is nothing but the canonical homomorphism
 $(E^{(m,m'+2)}_\mb{Q})_0\rightarrow(\widehat{E}_\mb{Q}^{[m,m']})_0$. Since
 \begin{equation*}
  \widehat{E}^{(m,m'+2)}_\mb{Q}/(E^{(m,m'+2)}_\mb{Q})_0
   \xrightarrow{\sim}
   \widehat{E}_\mb{Q}^{[m,m']}/(\widehat{E}_\mb{Q}^{[m,m']})_0,
 \end{equation*}
 we conclude the proof.
\end{proof}

\subsection{}
\label{defofFSalg}
We recall the definition of Fr\'{e}chet-Stein algebra. For more
details, we refer to \cite{ST}. A $K$-algebra $A$ together with
a projective system of $K$-Banach algebras $\{A_i\}_{i\geq 0}$ and a
homomorphism of projective systems $A\rightarrow\{A_i\}$ where $A$
denotes the constant projective system is called a {\em
Fr\'{e}chet-Stein algebra} (cf.\ \cite[\S3]{ST}) if the following hold.
\begin{enumerate}
 \item For any $i\geq 0$, the ring $A_i$ is noetherian.
 \item The transition homomorphism $A_{i+1}\rightarrow A_i$ is flat and
       the image is dense in $A_i$.
 \item The given homomorphism of projective systems induces an
       isomorphism of $K$-algebras $A\rightarrow\invlim_iA_i$.
\end{enumerate}
In general, the image of the homomorphism
$\widehat{E}^{(m,m'+1)}_{\mb{Q}}\rightarrow
\widehat{E}^{(m,m')}_{\mb{Q}}$ is not dense, and the projective system
$\bigl\{\widehat{E}^{(m,m')}_{\mb{Q}}\bigr\}_{m'\geq m}$ does not give a
Fr\'{e}chet-Stein structure on $E^{(m,\dag)}_{\mb{Q}}$. We need to
replace $\widehat{E}^{(m,m')}_{\mb{Q}}$ by
$\widehat{E}^{[m,m']}_{\mb{Q}}$ to get such a structure as the following
theorem shows.

\begin{thm}
 \label{FSprop}
 (i) The ring $E^{(m,\dag)}_{\mb{Q}}$ is a Fr\'{e}chet-Stein algebra
 with respect to the projective system
 $\bigl\{\widehat{E}^{[m,m']}_{\mb{Q}}\bigr\}_{m'\geq m}$.

 (ii) For a finitely presented $E^{(m,\dag)}_{\mb{Q}}$-module $M$, we
 have
 \begin{equation*}
   R^i\invlim_{m'}(\widehat{E}^{(m,m')}_\mb{Q}
   \otimes_{E^{(m,\dag)}_{\mb{Q}}}M)\xleftarrow{\sim}
   \begin{cases}
    M&\mbox{if $i=0$}\\
    0&\mbox{if $i\neq0$.}
   \end{cases}
 \end{equation*}

 (iii) Let $\ms{V}\subset\ms{U}$ be strictly affine open subschemes of
 $\mathring{T}^*\ms{X}$. Then the homomorphism
 $\Gamma(\ms{U},\Emod{m,\dag}{\ms{X},\mb{Q}})\rightarrow
 \Gamma(\ms{V},\Emod{m,\dag}{\ms{X},\mb{Q}})$ is flat.
\end{thm}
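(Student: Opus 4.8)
The plan is to deduce part~(i) by verifying directly the three axioms of a Fr\'{e}chet--Stein algebra for the tower $\bigl\{\widehat{E}^{[m,m']}_{\mb{Q}}\bigr\}_{m'\geq m}$, using the preceding proposition, Lemma~\ref{calofthered} and its corollary; and then to obtain part~(ii) from the Schneider--Teitelbaum theory of coadmissible modules \cite{ST}, transported along the interleaving of the two towers $\bigl\{\widehat{E}^{(m,m')}_{\mb{Q}}\bigr\}_{m'\geq m}$ and $\bigl\{\widehat{E}^{[m,m']}_{\mb{Q}}\bigr\}_{m'\geq m}$.

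For (i): noetherianity of each $\widehat{E}^{[m,m']}_{\mb{Q}}$ is part~(i) of the preceding proposition. The transition homomorphism $\widehat{E}^{[m,m'+1]}_{\mb{Q}}\to\widehat{E}^{[m,m']}_{\mb{Q}}$ is the composite
\[
 \widehat{E}^{[m,m'+1]}_{\mb{Q}}\xrightarrow{\ \beta_{m,m'+1}\ }
  \widehat{E}^{(m,m'+1)}_{\mb{Q}}\xrightarrow{\ \alpha_{m,m'}\ }
  \widehat{E}^{[m,m']}_{\mb{Q}}
\]
of two flat maps (parts (ii), (iii) of the preceding proposition), hence flat; and its image contains the image of $E^{(m,\dag)}_{\mb{Q}}$, which is dense by the corollary to Lemma~\ref{calofthered}, so the image is dense. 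For the third axiom I would observe that $\alpha_{m,m'}$ and $\beta_{m,m'}$ interleave the two towers: $\beta_{m,m'}\circ\alpha_{m,m'}$ is the transition map $\rho_{m,m'}$ of the tower $\bigl\{\widehat{E}^{(m,m')}_{\mb{Q}}\bigr\}$, while $\alpha_{m,m'-1}\circ\beta_{m,m'}$ is the transition map of the tower $\bigl\{\widehat{E}^{[m,m']}_{\mb{Q}}\bigr\}$ (both checked by tracing through $E^{[m,m']}=\rho_{m,m'}^{-1}(\widehat{E}^{(m,m')})$ on the dense subrings before $p$-adic completion). Hence each tower is cofinal in the common refinement, so $\invlim_{m'}\widehat{E}^{[m,m']}_{\mb{Q}}\cong\invlim_{m'}\widehat{E}^{(m,m')}_{\mb{Q}}=E^{(m,\dag)}_{\mb{Q}}$, which is the third axiom.

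For (ii): by (i) the ring $E^{(m,\dag)}_{\mb{Q}}$ is Fr\'{e}chet--Stein, so a finitely presented $E^{(m,\dag)}_{\mb{Q}}$-module $M$ is coadmissible, and \cite{ST} gives that the canonical map $M\to\invlim_{m'}\bigl(\widehat{E}^{[m,m']}_{\mb{Q}}\otimes_{E^{(m,\dag)}_{\mb{Q}}}M\bigr)$ is an isomorphism and that $R^i\invlim_{m'}\bigl(\widehat{E}^{[m,m']}_{\mb{Q}}\otimes_{E^{(m,\dag)}_{\mb{Q}}}M\bigr)=0$ for $i>0$ (the vanishing for $i\geq2$ being automatic for a countable inverse system). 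Applying $-\otimes_{E^{(m,\dag)}_{\mb{Q}}}M$ to the interleaving of the previous paragraph produces an interleaved diagram of modules in which both $\bigl\{\widehat{E}^{[m,m']}_{\mb{Q}}\otimes M\bigr\}$ and $\bigl\{\widehat{E}^{(m,m')}_{\mb{Q}}\otimes M\bigr\}$ are cofinal; since a cofinal subsystem of a countable inverse system of abelian groups computes the same derived limits, $R^i\invlim$ over the tower $\{\widehat{E}^{(m,m')}_{\mb{Q}}\otimes M\}$ agrees with $R^i\invlim$ over the tower $\{\widehat{E}^{[m,m']}_{\mb{Q}}\otimes M\}$, and combining this with the coadmissibility statement yields the asserted values ($M$ for $i=0$, $0$ otherwise).

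The substantive input — noetherianity of the rings $\widehat{E}^{[m,m']}_{\mb{Q}}$, flatness of $\alpha_{m,m'}$ and $\beta_{m,m'}$, and density of the image of $E^{(m,\dag)}_{\mb{Q}}$ — has already been established in the preceding proposition and in the corollary to Lemma~\ref{calofthered}, so here the main task is assembly. The only point needing some care is the interleaving/cofinality argument (identifying $\beta_{m,m'}\circ\alpha_{m,m'}$ and $\alpha_{m,m'-1}\circ\beta_{m,m'}$ as the transition maps of the respective towers, and checking that cofinality passes to $R^i\invlim$ for both the rings and the tensored modules); I do not expect this to be a real obstacle.
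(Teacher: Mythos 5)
Your proposal is correct and takes essentially the same route as the paper: part~(i) is exactly the "combine the previous results" step that the paper leaves implicit (noetherianity from the preceding proposition, flatness of the transition maps as composites of $\alpha$ and $\beta$, density from the corollary to Lemma~\ref{calofthered}, and the third axiom via interleaving), and part~(ii) is precisely the cofinality-of-the-two-towers argument plus \cite[Corollary~3.1]{ST} that the paper uses verbatim.
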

\begin{proof}
 For (i), combine Proposition \ref{FSflatbrac} and Corollary
 \ref{denseimage}.
 To check (ii), the projective systems
 $\bigl\{\widehat{E}_\mb{Q}^{[m,m']}\otimes M\bigr\}_{m'\geq m}$ and
 $\bigl\{\widehat{E}_\mb{Q}^{(m,m')}\otimes M\bigr\}_{m'\geq m}$ are
 cofinal in the projective system
 \begin{equation*}
  \dots\rightarrow \widehat{E}_\mb{Q}^{[m,m'+1]}\otimes M\rightarrow
   \widehat{E}_\mb{Q}^{(m,m'+1)}\otimes
   M\rightarrow\widehat{E}_\mb{Q}^{[m,m']}\otimes
   M\rightarrow\widehat{E}
   _\mb{Q}^{(m,m')}\otimes M\rightarrow\dots,
 \end{equation*}
 and these three projective systems have the same $R^i\invlim_{m'}$.
 Thus, \cite[\S3 Theorem]{ST} leads us to (ii). Let us prove (iii).
 Put $E_*:=\Gamma(*,\Emod{m,\dag}{\ms{X},\mb{Q}})$ for
 $*\in\{\ms{U},\ms{V}\}$.
 Let $0\rightarrow I\rightarrow E_{\ms{U}}\rightarrow M\rightarrow0$ be an
 exact sequence of $E_{\ms{U}}$-modules such that $I$ is a finitely
 generated ideal.
 By \cite[I, \S4 Proposition 1]{Bour}, it suffices to show that
 $\mr{Tor}_1^{E_{\ms{U}}}(E_{\ms{V}},M)=0$. Using \cite[Corollary
 3.4-i]{ST}, since $M$ is finitely presented, $M$ is coadmissible, and
 thus $I$ is coadmissible as well. By \cite[Remark 3.2, \S3
 Theorem]{ST} and Proposition \ref{statementcomplete}, the sequence
 \begin{equation*}
  0\rightarrow E_{\ms{V}}\otimes_{E_{\ms{U}}}I\rightarrow
   E_{\ms{V}}\rightarrow E_{\ms{V}}\otimes_{E_{\ms{U}}}M\rightarrow0
 \end{equation*}
 is exact, and we get the vanishing of $\mr{Tor}$.
\end{proof}

\begin{rem*}
 In (ii) of the theorem, we can more generally take $M$ to be a
 coadmissible $E^{(m,\dag)}_{\mb{Q}}$-module (cf.\ \cite[\S3]{ST}).
\end{rem*}

\begin{cor}
 \label{asssheafdag}
 Let $\ms{U}$ be a strictly affine open subscheme of $\mathring{T}^*\ms{X}$,
 and $M$ be a {\em finitely presented}
 $\Gamma(\ms{U},\Emod{m,\dag}{\ms{X},\mb{Q}})$-module. We define the
 presheaf $M^\triangle$ in the same way as
 {\normalfont\ref{defassocish}}. Then Lemma
 {\normalfont\ref{calcofcoh}}, Lemma {\normalfont\ref{calcofcohmod}},
 Corollary {\normalfont\ref{exactnessofassociation}} are also valid for
 $\ms{E}=\Emod{m,\dag}{\ms{X},\mb{Q}},\ms{E}^\dag_{\ms{X},\mb{Q}}$, and
 $M$.
\end{cor}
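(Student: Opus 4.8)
The plan is to transcribe the proofs of Lemma~\ref{calcofcoh}, Lemma~\ref{calcofcohmod} and Corollary~\ref{exactnessofassociation}, replacing the Mittag--Leffler arguments that were used there to commute the inverse limit $\invlim_i$ (reduction modulo $\pi^{i+1}$) with \v{C}ech cohomology by the Fr\'echet--Stein machinery of \cite{ST}, which is available here through Theorem~\ref{FSprop}. Throughout, $\ms{X}$ is affine, $\ms{U}\subset\mathring{T}^*\ms{X}$ is strictly affine and $\mf{U}$ is a finite $\mf{B}$-covering of $\ms{U}$. Since $\ms{X}$ is affine, every non-empty finite intersection of members of $\mf{U}$ has the form $D(\Theta)$ for a homogeneous section $\Theta$ of $\mc{O}_{T^*\ms{X}}$, so Theorem~\ref{FSprop} applies verbatim to the sections of $\Emod{m,\dag}{\ms{X},\mb{Q}}$ over each such intersection; only the term indexed by $\ms{U}$ itself lies outside this class.

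The first step, and the main obstacle, is to promote Theorem~\ref{FSprop} from the basic opens $D(\Theta)$ to an arbitrary strictly affine $\ms{U}$: namely, that $\Gamma(\ms{U},\Emod{m,\dag}{\ms{X},\mb{Q}})$ is a Fr\'echet--Stein algebra for the projective system $\{\Gamma(\ms{U},\EcompQ{m,m'}{\ms{X}})\}_{m'\geq m}$, and that for a finitely presented module $M$ over it one has $R^i\invlim_{m'}\bigl(\Gamma(\ms{U},\EcompQ{m,m'}{\ms{X}})\otimes_{\Gamma(\ms{U},\Emod{m,\dag}{\ms{X},\mb{Q}})}M\bigr)$ equal to $M$ for $i=0$ and to $0$ for $i\neq0$. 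Noetherianity of each $\Gamma(\ms{U},\EcompQ{m,m'}{\ms{X}})$ and the identity $\Gamma(\ms{U},\Emod{m,\dag}{\ms{X},\mb{Q}})=\invlim_{m'}\Gamma(\ms{U},\EcompQ{m,m'}{\ms{X}})$ follow from Proposition~\ref{statementcomplete} and the definitions; denseness of the transition maps and flatness are deduced by choosing a finite covering of $\ms{U}$ by basic opens $D(\Theta_\alpha)$, invoking the $D(\Theta_\alpha)$-case of Theorem~\ref{FSprop} together with the exactness of the augmented \v{C}ech complexes of $\EcompQ{m,m'}{\ms{X}}$ from Proposition~\ref{statementcomplete}, and descending along this covering.

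With this in hand the analogue of Lemma~\ref{calcofcohmod} goes through. For a finitely presented $E:=\Gamma(\ms{U},\Emod{m,\dag}{\ms{X},\mb{Q}})$-module $M$ put $M_{m'}:=\Gamma(\ms{U},\EcompQ{m,m'}{\ms{X}})\otimes_E M$, a finite module over the noetherian ring $\Gamma(\ms{U},\EcompQ{m,m'}{\ms{X}})$, so that Lemma~\ref{calcofcohmod} applied to $\ms{E}=\EcompQ{m,m'}{\ms{X}}$ gives that $C^\bullet_{m'}:=C^\bullet_{\mr{aug}}(\mf{U},M_{m'}^{\triangle})$ is acyclic. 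Because global sections and finite products commute with inverse limits, and using the first step for the $\ms{U}$-term, one gets $C^\bullet_{\mr{aug}}(\mf{U},M^{\triangle})=\invlim_{m'}C^\bullet_{m'}$; each term $C^q_{m'}$ is a finite sum of modules $\Gamma(D(\Theta_\alpha),\EcompQ{m,m'}{\ms{X}})\otimes_E M$, hence $\invlim_{m'}$-acyclic by Theorem~\ref{FSprop}(ii) over each $D(\Theta_\alpha)$ applied to the finitely presented module $\Gamma(D(\Theta_\alpha),\Emod{m,\dag}{\ms{X},\mb{Q}})\otimes_E M$. As the complexes $C^\bullet_{m'}$ are uniformly bounded (finite covering), $\invlim_{m'}C^\bullet_{m'}$ computes the derived limit, which is acyclic since each $C^\bullet_{m'}$ is; thus $\check H^i_{\mr{aug}}(\mf{U},M^{\triangle})=0$. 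Taking $M=E$ yields the analogue of Lemma~\ref{calcofcoh}, and $H^1(\ms{U},\Emod{m,\dag}{\ms{X},\mb{Q}})=0$ follows by refining an arbitrary covering to a $\mf{B}$-covering as in the proof of Lemma~\ref{calcofcoh}.

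Finally, Corollary~\ref{exactnessofassociation}: that $M^{\triangle}$ is a sheaf on $(\ms{U},\mf{T})$ follows from the vanishing just established, exactly as before; exactness of $(-)^{\triangle}$ is obtained by applying the (flat, by \cite{ST}) functor $\Gamma(\ms{U},\EcompQ{m,m'}{\ms{X}})\otimes_E(-)$, then the finite-level exactness of $(-)^{\triangle}$ for $\EcompQ{m,m'}{\ms{X}}$, then $\invlim_{m'}$, which is exact on these systems because $R^1\invlim_{m'}=0$; part (ii) is then formal. The case $\ms{E}=\EdagQ{\ms{X}}=\indlim_m\Emod{m,\dag}{\ms{X},\mb{Q}}$ reduces to the above: a finitely presented $\Gamma(\ms{U},\EdagQ{\ms{X}})$-module has a presentation already defined over some $\Gamma(\ms{U},\Emod{m,\dag}{\ms{X},\mb{Q}})$, and since $\Gamma$ over quasi-compact opens and finite products commute with the filtered colimit $\indlim_m$ by (\ref{commindga}) and $\indlim_m$ is exact, all three statements pass to the limit. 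The only genuinely new work is the first step; the rest is a mechanical replacement of $\invlim_i$ by $\invlim_{m'}$ and of Mittag--Leffler by the Fr\'echet--Stein vanishing.
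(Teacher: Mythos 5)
Your overall strategy coincides with the paper's: apply Theorem~\ref{FSprop}(ii) termwise to the finite-level augmented \v{C}ech complexes of Lemma~\ref{calcofcohmod}, commute $\invlim_{m'}$ past the \v{C}ech differential using the $R^1\invlim$-vanishing, and then handle $\EdagQ{\ms{X}}$ by exactness of $\indlim$. The paper performs exactly this reduction; its Step~1 invokes Theorem~\ref{FSprop}(ii) for \emph{every} strictly affine $\ms{V}\subset\ms{U}$ -- including the degree $-1$ term $\ms{V}=\ms{U}$ itself -- even though the theorem is only stated and proved for the basic opens $D(\Theta)$. You are therefore right that the extension of the Fr\'echet--Stein property from $D(\Theta)$ to an arbitrary strictly affine $\ms{U}$ is the real content, and right that without it the degree $-1$ term of the augmented complex is unaccounted for (the unaugmented complex alone would only give $H^1(\ms{U},\ms{E})=0$, not the sheaf property of $M^\triangle$ nor Corollary~\ref{exactnessofassociation}).

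Where your write-up is thin is precisely in the one step you call ``genuinely new'': the assertion that flatness (and density of image) of the transition $\Gamma(\ms{U},\EcompQ{m,m'+1}{\ms{X}})\to\Gamma(\ms{U},\EcompQ{m,m'}{\ms{X}})$ can be obtained ``by descending along the covering.'' This is not a general fact: $\Gamma(\ms{U},\EcompQ{m,m'}{\ms{X}})$ is an \emph{equalizer} of the rings $\Gamma(D(\Theta_\alpha),\EcompQ{m,m'}{\ms{X}})$, not a localization or a product of them, and flatness of a map of equalizers does not follow from flatness on the pieces. What does work is to use the machinery already available at finite level: given a finitely generated ideal $I\subset\Gamma(\ms{U},\EcompQ{m,m'+1}{\ms{X}})$, form the coherent sheaf $I^\triangle$ via Corollary~\ref{exactnessofassociation} (for $\EcompQ{m,m'+1}{\ms{X}}$), observe that $\EcompQ{m,m'}{\ms{X}}\otimes_{\EcompQ{m,m'+1}{\ms{X}}}I^\triangle\cong\bigl(\Gamma(\ms{U},\EcompQ{m,m'}{\ms{X}})\otimes I\bigr)^\triangle$ by comparing finite presentations, and then use sheaf-level flatness of $\EcompQ{m,m'+1}{\ms{X}}\to\EcompQ{m,m'}{\ms{X}}$ (Theorem of \S5.5) together with the left-exactness of $\Gamma(\ms{U},-)$ and Corollary~\ref{exactnessofassociation}(ii). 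A parallel trick, factoring the canonical map through the \v{C}ech $H^0$, handles density. Once this is made precise, your step~1 holds, and the remainder of your argument tracks the paper's proof line by line.
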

\begin{proof}
 Let us check the claim for $\ms{E}=\Emod{m,\dag}{\ms{X},\mb{Q}}$.
 For any strictly open subscheme $\ms{V}\subset\ms{U}$,
 \begin{equation*}
  R^i\invlim_{m'}\Bigl(\Gamma(\ms{V},\EcompQ{m,m'}{\ms{X}})\otimes
   M\Bigr)=0
 \end{equation*}
 for $i>0$ by Theorem \ref{FSprop} (ii). Let us denote by
 $\EcompQ{m,m'}{\ms{X}}\otimes M$ the coherent
 $\EcompQ{m,m'}{\ms{X}}|_{\ms{U}}$-module associated with $M$. By Lemma
 \ref{calcofcohmod}, this shows that the sequence
 \begin{equation*}
  \dots\rightarrow
   \invlim_{m'}C^q_{\mr{aug}}(\mf{U},\EcompQ{m,m'}{\ms{X}}\otimes M)
   \rightarrow
   \invlim_{m'}C^{q+1}_{\mr{aug}}(\mf{U},\EcompQ{m,m'}{\ms{X}}\otimes M)
   \rightarrow\dots
 \end{equation*}
 is exact. Since
 \begin{equation*}
  \invlim_{m'}C^q_{\mr{aug}}(\mf{U},\EcompQ{m,m'}{\ms{X}}\otimes M)\cong
   C^q_{\mr{aug}}(\mf{U},\Emod{m,\dag}{\ms{X},\mb{Q}}\otimes M)
 \end{equation*}
 by Theorem \ref{FSprop} (ii), Lemma \ref{calcofcoh} and Lemma
 \ref{calcofcohmod} for this $\ms{E}$ follows. The verification of
 Corollary \ref{exactnessofassociation} is similar.
 For the claims on $\EdagQ{\ms{X}}$, we only note that the functor
 $\indlim$ is exact.
\end{proof}

\begin{cor}
 \label{keyisomdag}
 Let $m'>m$ be non-negative integers. Then the canonical injection
 $\Emod{m+1,\dag}{\ms{X},\mb{Q}}\rightarrow\EcompQ{m+1,m'}{\ms{X}}$ induces the
 isomorphism:
 \begin{equation*}
  \Emod{m+1,\dag}{\ms{X},\mb{Q}}/\Emod{m,\dag}{\ms{X},\mb{Q}}\xrightarrow{\sim}
   \EcompQ{m+1,m'}{\ms{X}}/\EcompQ{m,m'}{\ms{X}}.
 \end{equation*}
\end{cor}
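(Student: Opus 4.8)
The plan is to deduce the statement from Corollary~\ref{keyisom} by passing to the inverse limit over $m'$. For each integer $m'>m$ one has a short exact sequence of sheaves
\begin{equation*}
 0\rightarrow\EcompQ{m,m'}{\ms{X}}\rightarrow\EcompQ{m+1,m'}{\ms{X}}
  \rightarrow\EcompQ{m+1,m'}{\ms{X}}/\EcompQ{m,m'}{\ms{X}}\rightarrow0,
\end{equation*}
the left injectivity being part of the inclusion relations coming from Lemma~\ref{notensqincl}; moreover, by Corollary~\ref{keyisom}, the transition homomorphisms $\EcompQ{m+1,m'+1}{\ms{X}}/\EcompQ{m,m'+1}{\ms{X}}\rightarrow\EcompQ{m+1,m'}{\ms{X}}/\EcompQ{m,m'}{\ms{X}}$ of the third term are all isomorphisms. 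Since $\Emod{m,\dag}{\ms{X},\mb{Q}}=\invlim_{m'}\EcompQ{m,m'}{\ms{X}}$ (and likewise in level $m+1$) by definition, applying $\invlim_{m'}$ to this sequence should produce exactly the desired short exact sequence $0\rightarrow\Emod{m,\dag}{\ms{X},\mb{Q}}\rightarrow\Emod{m+1,\dag}{\ms{X},\mb{Q}}\rightarrow\EcompQ{m+1,m'}{\ms{X}}/\EcompQ{m,m'}{\ms{X}}\rightarrow0$.

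To make this precise I would localize: the assertion is local, so it suffices to verify it after applying $\Gamma(\ms{U},-)$ for $\ms{U}$ ranging over a basis of strictly affine opens of $\mathring{T}^*\ms{X}$. Fixing such a $\ms{U}$ (with $\ms{X}$ affine possessing local coordinates), the vanishing $H^1(\ms{U},\EcompQ{m,m'}{\ms{X}})=0$ from Proposition~\ref{statementcomplete} keeps the sequence above exact after taking sections. Then I would take $\invlim_{m'}$ of the resulting short exact sequences of $K$-vector spaces: using that sections commute with $\invlim$, that the last system has isomorphic transition maps, and --- crucially --- that
\begin{equation*}
 R^1\invlim_{m'}\Gamma(\ms{U},\EcompQ{m,m'}{\ms{X}})=0,
\end{equation*}
one gets a short exact sequence
\begin{equation*}
 0\rightarrow\Gamma(\ms{U},\Emod{m,\dag}{\ms{X},\mb{Q}})\rightarrow
  \Gamma(\ms{U},\Emod{m+1,\dag}{\ms{X},\mb{Q}})\rightarrow
  \Gamma(\ms{U},\EcompQ{m+1,m'}{\ms{X}}/\EcompQ{m,m'}{\ms{X}})\rightarrow0.
\end{equation*}
The displayed $\invlim^1$-vanishing is read off from Theorem~\ref{FSprop}~(ii) applied to $M=\Gamma(\ms{U},\Emod{m,\dag}{\ms{X},\mb{Q}})$, which is finitely presented over itself (then $\Gamma(\ms{U},\EcompQ{m,m'}{\ms{X}})=\widehat{E}^{(m,m')}_{\mb{Q}}\otimes_{E^{(m,\dag)}_{\mb{Q}}}M$ and the theorem gives $R^{i}\invlim_{m'}$ equal to $M$ for $i=0$ and $0$ otherwise). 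Finally, $H^1(\ms{U},\Emod{m,\dag}{\ms{X},\mb{Q}})=0$ (Corollary~\ref{asssheafdag}) lets me identify $\Gamma(\ms{U},\Emod{m+1,\dag}{\ms{X},\mb{Q}})/\Gamma(\ms{U},\Emod{m,\dag}{\ms{X},\mb{Q}})$ with $\Gamma$ of the quotient sheaf, so the sequence shows the canonical map $\Emod{m+1,\dag}{\ms{X},\mb{Q}}/\Emod{m,\dag}{\ms{X},\mb{Q}}\rightarrow\EcompQ{m+1,m'}{\ms{X}}/\EcompQ{m,m'}{\ms{X}}$ is bijective on every $\ms{U}$ in the basis, hence an isomorphism.

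The only nonformal input is the vanishing of $R^1\invlim_{m'}\Gamma(\ms{U},\EcompQ{m,m'}{\ms{X}})$, and this is exactly what the Fr\'{e}chet-Stein property of $\Gamma(\ms{U},\Emod{m,\dag}{\ms{X},\mb{Q}})$ (Theorem~\ref{FSprop}~(i)) buys us, via part~(ii) of that theorem with the coefficient module taken to be the ring itself; so the real content of the corollary is already contained in Corollary~\ref{keyisom} and Theorem~\ref{FSprop}. Everything else is a routine manipulation of inverse limits of short exact sequences, the only minor care needed being the passage between sections and sheaves, which is covered by the cohomology-vanishing results of Lemma~\ref{calcofcoh}, Proposition~\ref{statementcomplete} and Corollary~\ref{asssheafdag} on strictly affine opens.
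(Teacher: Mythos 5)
Your proof is correct and follows exactly the route the paper itself takes: the paper's one-line proof is ``This follows from Corollary~\ref{keyisom} and the fact that $R^1\invlim_{m'}\widehat{E}^{(m,m')}_{\ms{X},\mb{Q}}=0$ by Theorem~\ref{FSprop}.'' You have simply spelled out the localization to strictly affine opens and the cohomology-vanishing bookkeeping needed to pass between sheaves and sections, which the paper leaves implicit.
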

\begin{proof}
 It suffices to show that
 $E^{(m+1,\dag)}_{\mb{Q}}/E^{(m,\dag)}_{\mb{Q}}\xrightarrow{\sim}
 \widehat{E}^{(m+1,m')}_{\mb{Q}}/\widehat{E}^{(m,m')}_{\mb{Q}}$ for any
 strictly affine open subscheme $\ms{U}$.
 This follows from Lemma \ref{keyisom} and the fact that
 $R^1\invlim_{m'}\widehat{E}^{(m,m')}_{\mb{Q}}=0$ by Theorem
 \ref{FSprop}.
\end{proof}

\subsection{}
Now, we argue the flatness of
$\EcompQ{m,m'}{\ms{X}}\rightarrow\EcompQ{m+1,m'}{\ms{X}}$.

\begin{lem*}
 The canonical homomorphism
 $\EcompQ{m,m'}{\ms{X}}\rightarrow\EcompQ{m+1,m'}{\ms{X}}$
 is flat for non-negative integers $m'>m$.
\end{lem*}
\begin{proof}
 Since the verification is local, we may assume that we are in the
 situation of \ref{defofsituation}. It suffices to check that
 $\widehat{E}^{(m,m')}_{\mb{Q}}\rightarrow
 \widehat{E}^{(m+1,m')}_{\mb{Q}}$ is flat. The proof being similar to
 \cite[3.5.3]{Ber1}, we only sketch. Let $F$
 be the subring of $\widehat{E}^{(m,m')}_{\mb{Q}}$ generated over
 $\widehat{E}^{(m,m')}$ by
 $\bigl\{\partial_i^{\angles{m+1}{p^{m+1}}}\bigr\}_{1\leq i\leq
 d}$. Since $[P,\partial_i^{\angles{m+1}{p^{m+1}}}]\in D^{(m)}$ for
 $P\in D^{(m)}$,
 \begin{equation*}
  [(\Thetatil{(m)})^{-1},\partial_i^{\angles{m+1}{p^{m+1}}}]=
   (\Thetatil{(m)})^{-1}
   \cdot[\partial_i^{\angles{m+1}{p^{m+1}}},
   \Thetatil{(m)}]\cdot(\Thetatil{(m)})^{-1}
   \in\widehat{E}^{(m)}.
 \end{equation*}
 Thus, $[Q,\partial_i^{\angles{m+1}{p^{m+1}}}]\in\widehat{E}^{(m,m')}$
 for $Q\in\widehat{E}^{(m,m')}$. This shows that
 \begin{equation*}
  F=\sum_{\underline{k}}\widehat{E}^{(m,m')}\cdot
   \bigl(\underline{\partial}^{\angles{m+1}{p^{m+1}}}\bigr)
   ^{\underline{k}}.
 \end{equation*}
 Now, define the filtration on $F$ by the order of
 $\underline{\partial}^{\angles{m+1}{p^{m+1}}}$. Then we have a
 surjection $\widehat{E}^{(m,m')}[T_1,\dots,T_d]\rightarrow\mr{gr}(F)$
 sending $T_i$ to $\sigma(\partial_i^{\angles{m+1}{p^{m+1}}})$. Thus,
 $F$ is noetherian since $\widehat{E}^{(m,m')}$ is. This implies that
 the homomorphism $F\rightarrow F^\wedge$ is flat. Using Lemma
 \ref{normcalc} (i), we can check that $F^\wedge$ is $p$-isogeneous to
 $\widehat{E}^{(m+1,m')}_{\mb{Q}}$, and the lemma follows.
\end{proof}

\subsection{}
\label{mainresofsecflat}
We sum up the results we got in this section as the following theorem.

\begin{thm*}
 Let $m'\geq m$ be non-negative integers.
 \begin{enumerate}
 \item\label{1}
      The canonical injective homomorphism
      $\Emod{m,\dag}{\ms{X},\mb{Q}}\rightarrow\EcompQ{m,m'}{\ms{X}}$ is
      flat.
 \item\label{2}
      The canonical injective homomorphism
      $\EcompQ{m,m'+1}{\ms{X}}\rightarrow\EcompQ{m,m'}{\ms{X}}$ is
      flat.
 \item\label{3}
      Let $\ms{M}$ be a finitely presented
      $\Emod{m,\dag}{\ms{X},\mb{Q}}$-module. Then we get
      \begin{equation*}
       \ms{M}\xrightarrow{\sim}\invlim_{m'}\EcompQ{m,m'}{\ms{X}}
	\otimes_{\Emod{m,\dag}{\ms{X},\mb{Q}}}\ms{M}.
      \end{equation*}
 \item\label{4}
      The canonical injective homomorphism
      $\EcompQ{m,m'}{\ms{X}}\rightarrow\EcompQ{m+1,m'}{\ms{X}}$ is
      flat.
 \end{enumerate}
\end{thm*}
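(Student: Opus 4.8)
The plan is to collate the flatness statements and the Fr\'{e}chet-Stein theorem proved in the preceding two sections; no new idea is needed. Each of the four assertions concerns injectivity and flatness of a morphism of sheaves of rings, and both properties may be checked on stalks. Since $\mf{B}$ is a basis of the topology, a stalk is a filtered colimit of the rings of sections $\Gamma(\ms{U},-)$, $\ms{U}\in\mf{B}$, and filtered colimits of modules preserve injectivity and flatness; so it suffices to argue with sections over $\ms{U}=D(\Theta)$, with $\ms{X}$ affine possessing a system of local coordinates and $\Theta$ homogeneous in $\Gamma(T^*\ms{X},\mc{O}_{T^*\ms{X}})$. We keep the abbreviations $\widehat{E}^{(m,m')}_{\mb{Q}}$, $\widehat{E}^{[m,m']}_{\mb{Q}}$ and $E^{(m,\dag)}_{\mb{Q}}=\invlim_{m'}\widehat{E}^{(m,m')}_{\mb{Q}}$. (Over a neighbourhood of the zero section the same statements hold, via~(\ref{negativecoince}), from the corresponding results for the naive sheaves; we concentrate on $\mathring{T}^*\ms{X}$.)

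Assertion (4) is exactly the Lemma established just above, so there is nothing to add. For assertion (2): since $E^{[m,m']}$ was defined as $\rho_{m,m'}^{-1}(\widehat{E}^{(m,m')})$, with $E^{[m,m']}\otimes\mb{Q}\cong\widehat{E}^{(m,m'+1)}_{\mb{Q}}$ and $\rho_{m,m'}|_{E^{[m,m']}}$ taking values in $\widehat{E}^{(m,m')}$, the transition map $\rho_{m,m'}\colon\widehat{E}^{(m,m'+1)}_{\mb{Q}}\to\widehat{E}^{(m,m')}_{\mb{Q}}$ is the composite $\beta_{m,m'}\circ\alpha_{m,m'}$, where $\alpha_{m,m'}$ is the $p$-adic completion map and $\beta_{m,m'}$ the map induced by $\rho_{m,m'}|_{E^{[m,m']}}$ on completions. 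Both are flat by the Proposition giving the flatness of $\alpha_{m,m'}$ and $\beta_{m,m'}$, hence so is $\rho_{m,m'}$; and, the composite $\EcompQ{m,m'+1}{\ms{X}}\to\EcompQ{m,m'}{\ms{X}}\to\EcompQ{m}{\ms{X}}$ being the injection of Lemma~\ref{explicitdescription}, the first arrow is injective.

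For assertion (1): by the cofinality of the subsystems $\{\widehat{E}^{[m,m']}_{\mb{Q}}\}_{m'}$ and $\{\widehat{E}^{(m,m')}_{\mb{Q}}\}_{m'}$ noted in the proof of Theorem~\ref{FSprop}, the projection $E^{(m,\dag)}_{\mb{Q}}\to\widehat{E}^{(m,m')}_{\mb{Q}}$ factors as $E^{(m,\dag)}_{\mb{Q}}\to\widehat{E}^{[m,m']}_{\mb{Q}}\xrightarrow{\beta_{m,m'}}\widehat{E}^{(m,m')}_{\mb{Q}}$; the first map is flat because $E^{(m,\dag)}_{\mb{Q}}$ is a Fr\'{e}chet-Stein algebra with terms $\widehat{E}^{[m,m']}_{\mb{Q}}$ (Theorem~\ref{FSprop}(i)) and the terms of a Fr\'{e}chet-Stein algebra are flat over it (cf.\ \cite{ST}), and the second is flat by the same Proposition used for~(2). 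Thus the projection is flat, and its injectivity follows from~(2): the projective system $\{\widehat{E}^{(m,m')}_{\mb{Q}}\}_{m'\geq m}$ has injective transition maps, so its inverse limit maps injectively onto each term.

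Assertion (3) is Theorem~\ref{FSprop}(ii) globalized. The statement is local, so by Corollary~\ref{asssheafdag} we may take $\ms{M}=M^{\triangle}$ on a strictly affine $\ms{U}$ with $M:=\Gamma(\ms{U},\ms{M})$ finitely presented over $E^{(m,\dag)}_{\mb{Q}}$. Using the exactness of $(-)^{\triangle}$ (Corollary~\ref{exactnessofassociation}) and a finite free presentation of $M$, one identifies $\EcompQ{m,m'}{\ms{X}}\otimes_{\Emod{m,\dag}{\ms{X},\mb{Q}}}\ms{M}$ with $(\widehat{E}^{(m,m')}_{\mb{Q}}\otimes_{E^{(m,\dag)}_{\mb{Q}}}M)^{\triangle}$, whose sections over $\ms{U}$ are $\widehat{E}^{(m,m')}_{\mb{Q}}\otimes M$. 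Since $\Gamma(\ms{U},-)$ commutes with $\invlim$ by~(\ref{commprojga}), Theorem~\ref{FSprop}(ii) in degree $0$ shows $\Gamma(\ms{U},\ms{M})\to\invlim_{m'}(\widehat{E}^{(m,m')}_{\mb{Q}}\otimes M)$ is an isomorphism, and compatibility with restriction gives the isomorphism of sheaves. There is no real obstacle here: the substantive work was done in the previous two sections, and the only points needing care are the precise identification of the canonical maps with the composites above and, for~(1), the appeal to \cite{ST} for the flatness of the terms of a Fr\'{e}chet-Stein algebra over the algebra itself.
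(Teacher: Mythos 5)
Your proposal is correct and follows essentially the same route as the paper, which disposes of (2) and (4) by citing the preceding flatness lemma and Proposition, of (1) by appealing to the flatness of the terms of a Fr\'{e}chet-Stein algebra over the algebra itself (Remark 3.2 of Schneider--Teitelbaum, here via the factorization through $\widehat{E}^{[m,m']}_{\mb{Q}}$ and $\beta_{m,m'}$), and of (3) by reducing to a presentation on a strictly affine $\ms{U}$ and invoking Corollary~\ref{asssheafdag} together with Theorem~\ref{FSprop}(ii). You have simply filled in the details that the paper's one-line proof leaves to the reader.
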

\begin{proof}
 We restate what we have proven for \ref{2} and \ref{4}. To
 check \ref{1}, it suffices to apply \cite[Remark 3.2]{ST}.
 Let us prove \ref{3}. Since $\ms{M}$ is finitely
 presented, there exists a strictly affine open subscheme $\ms{U}$
 such that there exists a presentation on which $\ms{M}$ possesses a
 finite presentation.
 Then we apply Corollary \ref{asssheafdag}.
\end{proof}

\begin{rem*}
 We do not know if $\EcompQ{m,m'}{\ms{X}}$ is flat
 over $\pi^{-1}\DcompQ{m}{\ms{X}}$. However, in the curve case, this is
 flat by \cite[1.3.4]{AM}.
\end{rem*}

\section{On finiteness of sheaves of rings}
\label{snoethcond}
In this section, we introduce a finiteness property for modules on
certain topological spaces, and prove some stationary type theorem. This
finiteness is especially useful when we consider modules on formal
schemes.

\subsection{}
First, let us introduce conditions on topological spaces and on
sheaves.
\begin{quote}
 A ringed space $(X,\mc{O}_X)$ is said to satisfy condition (FT)
 if the following two conditions hold.
 \begin{enumerate}
  \item The topological space $X$ is sober\footnote{In this paper, we do
	not use the uniqueness of generic points, and this assumption is
	a little stronger than what is really needed.}
	({\it i.e.}\ any irreducible closed subset has a unique generic
	point, see \cite[Exp.\ IV, 4.2.1]{SGA}) and noetherian
	(cf. \cite[$0_{\mr{I}}$, \S2.2]{EGA}).
  \item The structure sheaf $\mc{O}_{X}$ is a coherent ring, and
	$\mc{O}_{X,x}$ is noetherian for any $x\in X$.
 \end{enumerate}
\end{quote}
Let $(X,\mc{O}_X)$ be a ringed space satisfying (FT), and let $\mc{M}$
be a coherent $\mc{O}_X$-module. Let $\mf{Z}:=\{Z_i\}_{i\in I}$ be a
{\em finite} family of irreducible closed subsets. The module $\mc{M}$
is said to satisfy condition (SH) with respect to $\mf{Z}$ if the
following holds.
\begin{quote}
 For any section
 $s\in\Gamma(U,\mc{M})$ over any open subset $U$, there
 exists a subset $I'\subset I$ such that
 $\mr{Supp}(s)=\bigcup_{i\in I'}Z_i\cap U$.
\end{quote}
We simply say that $\mc{M}$ satisfies condition (SH) if there exists
a finite family $\mf{Z}$ such that $\mc{M}$ satisfies (SH) with respect
to $\mf{Z}$.

\begin{lem}
 \label{elemlemshcon}
 Let $(X,\mc{O}_X)$ be a ringed space satisfying {\normalfont(FT)}. Let
 $\mc{M}$ be a coherent $\mc{O}_X$-module satisfying
 {\normalfont(SH)} with respect to $\mf{Z}=\{Z_i\}_{i\in I}$. Then for
 any sub-$\mc{O}_X$-module $\mc{K}$ of $\mc{M}$, there exists an open
 subset $Z'_i$ of $Z_i$ for each $i\in I$ such that
 $\mr{Supp}(\mc{K})=\bigcup_{i\in I}Z'_i$.
\end{lem}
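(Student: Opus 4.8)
The plan is to work locally and reduce everything to a statement about supports of sections. First I would note that support is a local notion, so it suffices to produce, for each point $x\in X$ and each $i$, a description of $\mathrm{Supp}(\mc{K})$ near $x$ in terms of the $Z_i$. Fix an open set $U$ on which $\mc{K}|_U$ is generated by a family of sections $\{s_\lambda\}_{\lambda\in\Lambda}$ of $\mc{M}$; since $X$ is noetherian every open set is quasi-compact, and one can cover $U$ by opens on which $\mc{K}$ is generated, so without loss of generality we may take such a generating family. Then $\mathrm{Supp}(\mc{K}|_U)=\bigcup_\lambda \mathrm{Supp}(s_\lambda)$, because a stalk $\mc{K}_x$ vanishes exactly when all the generators vanish at $x$. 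By the hypothesis (SH) with respect to $\mf{Z}=\{Z_i\}_{i\in I}$, each $\mathrm{Supp}(s_\lambda)$ equals $\bigcup_{i\in I_\lambda}(Z_i\cap U)$ for some subset $I_\lambda\subset I$. Hence $\mathrm{Supp}(\mc{K}|_U)=\bigcup_{i\in I_U}(Z_i\cap U)$ where $I_U:=\bigcup_\lambda I_\lambda$.

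Next I would globalize. Cover $X$ by opens $U_\alpha$ as above, each with its own index set $I_{U_\alpha}\subset I$. For each $i\in I$ set
\begin{equation*}
 Z'_i:=\bigcup_{\alpha\,:\,i\in I_{U_\alpha}}(Z_i\cap U_\alpha),
\end{equation*}
which is an open subset of $Z_i$. I claim $\mathrm{Supp}(\mc{K})=\bigcup_{i\in I}Z'_i$. Indeed, a point $x$ lies in $\mathrm{Supp}(\mc{K})$ iff it lies in $\mathrm{Supp}(\mc{K}|_{U_\alpha})$ for some (equivalently, any) $\alpha$ with $x\in U_\alpha$, and the local computation shows this happens iff $x\in Z_i\cap U_\alpha$ for some $i\in I_{U_\alpha}$, i.e. iff $x\in Z'_i$ for some $i$. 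The only point needing care is consistency of the local index sets on overlaps, but this is automatic: if $x\in U_\alpha\cap U_\beta$ and $x\in Z_i$, then $x\in\mathrm{Supp}(\mc{K})$ is detected the same way from either chart, so one does not even need $I_{U_\alpha}$ and $I_{U_\beta}$ to agree — the union over all charts in the definition of $Z'_i$ absorbs any discrepancy.

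The main obstacle, such as it is, is ensuring that $\mc{K}$ is locally generated by sections of $\mc{M}$ to which (SH) applies — but this is immediate since $\mc{K}\subset\mc{M}$ and any sheaf of modules is locally generated by its sections, which here are sections of $\mc{M}$; the noetherian hypothesis on $X$ is only used to keep covers finite where convenient and is not essential to this particular lemma. One should also remark that (SH) as stated applies to sections on \emph{any} open subset, so there is no issue applying it to the restrictions $s_\lambda|_{U_\alpha\cap(\text{smaller open})}$ that arise. Thus the proof is essentially a bookkeeping argument combining the pointwise description of the support of a subsheaf as the union of supports of local generators with the hypothesis (SH).
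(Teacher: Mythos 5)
Your proof contains a genuine gap: the claim that "any sheaf of modules is locally generated by its sections" is false for arbitrary subsheaves, and the lemma as stated applies to \emph{any} sub-$\mc{O}_X$-module $\mc{K}$ of $\mc{M}$, not just a quasi-coherent or coherent one. (Indeed, Proposition \ref{stationary}, where this lemma is used, explicitly works with ascending chains of submodules that are "not necessarily coherent".) For an arbitrary subsheaf $\mc{K}\subset\mc{M}$, there need not exist, around a given point $x$, an open $U\ni x$ on which $\Gamma(U,\mc{K})$ generates $\mc{K}|_U$; the germ map $\Gamma(U,\mc{K})\to\mc{K}_x$ is surjective in the colimit over $U$, but for a fixed $U$ the sections may fail to generate $\mc{K}_y$ at other points $y\in U$. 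Your intermediate claim that $\mathrm{Supp}(\mc{K}|_U)=\bigcup_{i\in I_U}(Z_i\cap U)$, i.e.\ that on a chart the support is a union of \emph{full} intersections $Z_i\cap U$, rests on that false premise and does not hold in general; the correct, weaker conclusion is only that $\mathrm{Supp}(\mc{K})\cap Z_i$ is open in $Z_i$, which is precisely the statement of the lemma.

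The paper's argument sidesteps the issue entirely and is instructive to compare. For \emph{any} sheaf $\mc{K}$, one has $\mathrm{Supp}(\mc{K})=\bigcup_{s\in\mf{S}}\mathrm{Supp}(s)$, where $\mf{S}$ is the collection of all sections of $\mc{K}$ over all open subsets of $X$ — no generation hypothesis required. Applying (SH) to each $s$ (viewed via the injection $\Gamma(U,\mc{K})\hookrightarrow\Gamma(U,\mc{M})$, which preserves supports), one writes $\mathrm{Supp}(s)=\bigcup_{i\in I_s}(Z_i\cap U_s)$. Regrouping by the index $i$ and observing that, for $s\in\mf{S}_i$, the intersection $\mathrm{Supp}(s)\cap Z_i$ equals $U_s\cap Z_i$ and hence is open in $Z_i$, the union $\bigcup_{s\in\mf{S}_i}(\mathrm{Supp}(s)\cap Z_i)$ is an open subset $Z_i'$ of $Z_i$, and $\mathrm{Supp}(\mc{K})=\bigcup_{i\in I}Z_i'$. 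Your argument can be repaired along the same lines by replacing the purported generating family on each chart with \emph{all} sections of $\mc{K}$ on \emph{all} sub-opens of the chart, at the cost of weakening the intermediate local description — but as written, the generation assumption is unjustified.
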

\begin{proof}
 Let $U$ be an open subset of $X$, and take $0\neq
 s\in\Gamma(U,\mc{K})$. Let $\varphi_U\colon\Gamma(U,\mc{K})
 \hookrightarrow\Gamma(U,\mc{M})$ be the inclusion.
 Since $\varphi$ is injective, $\mr{Supp}(s)=\mr{Supp}(\varphi_U(s))$.
 There exists a subset $I_s\subset I$ such that
 \begin{equation*}
  \mr{Supp}(s)=\mr{Supp}(\varphi_U(s))=
   \bigcup_{i\in I_s}Z_i\cap U
 \end{equation*}
 by (SH) of $\mc{M}$. Note that this is an open subset of $\bigcup_{i\in
 I_s}Z_i$. Let $\mf{S}:=\bigcup_{U\subset X}\Gamma(U,\mc{K})$ where $U$
 runs over open subsets of $X$, and $\mf{S}_i$ be the subset of $\mf{S}$
 consisting of the elements $s$ such that $i\in I_s$. Now, we get
 \begin{equation*}
  \mr{Supp}(\mc{K})=\bigcup_{s\in\mf{S}}\mr{Supp}(s)=\bigcup_{i\in
   I}\Bigl(\bigcup_{s\in\mf{S}_i}\mr{Supp}(s)\cap Z_i\Bigr).
 \end{equation*}
 Since $\mr{Supp}(s)\cap Z_i$ is open in $Z_i$,
 the set $Z'_i:=\bigcup_{s\in\mf{S}_i}\mr{Supp}(s)\cap Z_i$ is also open in
 $Z_i$.
\end{proof}

\begin{prop}
\label{stationary}
 Let $(X,\mc{O}_X)$ be a ringed space satisfying {\normalfont(FT)}. Let
 $\mc{M}$ be a coherent $\mc{O}_X$-module, and assume that for any open
 subset $U\subset X$, {\normalfont(SH)} holds for any coherent
 subquotient of $\mc{M}|_{U}$. Now, let
 \begin{equation*}
  \mc{K}_1\subset\mc{K}_2\subset\mc{K}_3\subset\dots\subset\mc{M}
 \end{equation*}
 be an ascending chain of sub-$\mc{O}_X$-modules (not necessarily
 coherent) of $\mc{M}$. Then the chain is stationary.
\end{prop}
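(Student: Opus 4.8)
The plan is to deduce the proposition from a more flexible statement proved by Noetherian induction on closed subsets $Z\subseteq X$. Writing $\mc{K}_\infty:=\varinjlim_n\mc{K}_n$ for the (sheaf) union, the chain $(\mc{K}_n)$ is stationary precisely when $\mc{K}_n=\mc{K}_\infty$ for $n\gg0$, so I would prove the statement $(\star_Z)$: for every coherent $\mc{O}_X$-module $\mc{N}$ satisfying the hypothesis of the proposition and every ascending chain $\mc{L}=\mc{K}_0\subseteq\mc{K}_1\subseteq\mc{K}_2\subseteq\cdots\subseteq\mc{N}$ in which $\mc{L}$ is \emph{coherent} and $\overline{\mr{Supp}(\mc{K}_\infty/\mc{L})}\subseteq Z$, the chain is stationary. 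The proposition is then the case $Z=\overline{\mr{Supp}(\mc{M})}$, $\mc{L}=0$, after prepending the term $0$ to the given chain. This induction is legitimate because $X$ is noetherian (condition (FT)), and the hypothesis on $\mc{N}$ is inherited by its coherent subquotients, in particular by the quotients $\mc{N}/\mc{L}'$ that occur below, so the class of admissible modules is stable under the operations used. The case $Z=\varnothing$ is trivial, since then $\mc{K}_\infty=\mc{L}$.

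For the inductive step fix $Z$ and assume $(\star_{Z'})$ for all closed $Z'\subsetneq Z$. Replacing $Z$ by $\overline{\mr{Supp}(\mc{K}_\infty/\mc{L})}$ and applying the inductive hypothesis if this is proper, we may assume $Z=\overline{\mr{Supp}(\mc{K}_\infty/\mc{L})}$. Since $\mc{L}$ is coherent, $\mc{N}/\mc{L}$ is a coherent admissible module and $\mc{K}_\infty/\mc{L}$ is a sub-$\mc{O}_X$-module of it, so Lemma \ref{elemlemshcon} shows $\mr{Supp}(\mc{K}_\infty/\mc{L})$ is a finite union of sets each open in an irreducible closed subset; passing to closures, $Z$ is a finite union of such irreducible closed sets, and in particular its irreducible components $T_1,\dots,T_s$, with generic points $\tau_1,\dots,\tau_s$ (which exist because $X$ is sober), are among them. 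Each stalk $\mc{O}_{X,\tau_j}$ is noetherian (condition (FT), via Definition \ref{defnoethrigsh}) and $\mc{N}_{\tau_j}$ is finitely generated, so the increasing sequence $\mc{K}_{1,\tau_j}\subseteq\mc{K}_{2,\tau_j}\subseteq\cdots$ of submodules of $\mc{N}_{\tau_j}$ stabilizes; fix $N_0$ with $\mc{K}_{n,\tau_j}=\mc{K}_{N_0,\tau_j}=\mc{K}_{\infty,\tau_j}$ for all $n\ge N_0$ and all $j$.

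Next I choose a \emph{coherent} submodule $\mc{L}'$ with $\mc{L}\subseteq\mc{L}'\subseteq\mc{K}_{N_0}$ and $\mc{L}'_{\tau_j}=\mc{K}_{N_0,\tau_j}$ for every $j$: since $X$ is noetherian, $\mc{K}_{N_0}$ is a quasi-coherent submodule of $\mc{N}$, hence the filtered union of its coherent subsheaves, and one takes a member of this union large enough to recover the finitely generated stalks at the finitely many points $\tau_j$ and enlarges it by the coherent module $\mc{L}$. Then $\mc{N}/\mc{L}'$ is again coherent and admissible, $\mc{K}_\infty/\mc{L}'$ is a quotient of $\mc{K}_\infty/\mc{L}$ (so $\mr{Supp}(\mc{K}_\infty/\mc{L}')\subseteq Z$), and $(\mc{K}_\infty/\mc{L}')_{\tau_j}=0$ for all $j$. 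Applying Lemma \ref{elemlemshcon} to the submodule $\mc{K}_\infty/\mc{L}'$ of $\mc{N}/\mc{L}'$ gives $\mr{Supp}(\mc{K}_\infty/\mc{L}')=\bigcup_i W'_i$ with each nonempty $W'_i$ open in an irreducible closed set $Z'_i\subseteq Z$; since $Z'_i$ is irreducible and contained in $Z=\bigcup_j T_j$ it lies in some $T_j$, and it cannot equal $T_j$, for then $\tau_j$ would be the generic point of $Z'_i$ and hence would lie in $W'_i\subseteq\mr{Supp}(\mc{K}_\infty/\mc{L}')$, a contradiction. Therefore $Z':=\overline{\mr{Supp}(\mc{K}_\infty/\mc{L}')}=\bigcup_i Z'_i$ is a closed subset of $Z$ containing no $\tau_j$, so $Z'\subsetneq Z$. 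Because $\mc{L}'\subseteq\mc{K}_{N_0}$, the chain $\mc{L}'\subseteq\mc{K}_{N_0}\subseteq\mc{K}_{N_0+1}\subseteq\cdots\subseteq\mc{N}$ has union $\mc{K}_\infty$ and coherent bottom term, so $(\star_{Z'})$ applies and shows it is stationary; hence the original chain is stationary.

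The delicate point is that stabilizing the stalks at the generic points of $Z$ is by itself insufficient, since a non-coherent subsheaf can have vanishing stalk at a point while failing to vanish near it. This is exactly what Lemma \ref{elemlemshcon} repairs: it forces the support of any subsheaf of an admissible coherent module to be a finite union of loci, each open in an irreducible closed set, so that killing the stalk at a generic point genuinely removes a component and the ambient closed set strictly shrinks. Keeping the ``denominator'' $\mc{L}'$ coherent at each step is what keeps Lemma \ref{elemlemshcon} applicable to a genuine coherent quotient along the induction; the only nontrivial external input is the standard fact that over a noetherian base a submodule of a coherent module is a filtered union of coherent subsheaves.
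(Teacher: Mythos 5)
Your proof has a genuine gap at the step where you build the global coherent submodule $\mc{L}'$. You assert that ``since $X$ is noetherian, $\mc{K}_{N_0}$ is a quasi-coherent submodule of $\mc{N}$, hence the filtered union of its coherent subsheaves.'' But the ringed space $(X,\mc{O}_X)$ in (FT) is not assumed to be a scheme --- indeed $\mc{O}_X$ need not even be commutative (in the intended applications of Theorem~\ref{stnoeththem} the structure sheaf is $\Dmod{m}{X_i}$, $\Ecomp{m,m'}{\ms{X}}$, $\EcompQ{m,m'}{\ms{X}}$, etc.). Quasi-coherence is not a hypothesis here, and the usual noetherian-scheme argument (a submodule of a quasi-coherent sheaf on a noetherian scheme is quasi-coherent, hence a filtered union of coherent subsheaves) is simply unavailable. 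It is not clear that, over an arbitrary noetherian ring sheaf satisfying (FT), a sub-$\mc{O}_X$-module of a coherent module is a filtered union of its globally defined coherent subsheaves; this is precisely the kind of finiteness property that the paper is in the business of proving (compare Remark~\ref{laumonnoeth} and the condition (c) of Kashiwara--Kawai, which is deliberately excluded from the definition of a noetherian ring). Without it you cannot lift the finitely generated stalks $\mc{K}_{N_0,\tau_j}$ to a single coherent $\mc{L}'\subseteq\mc{K}_{N_0}$ defined on all of $X$, and the inductive step does not close.

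The paper's proof is designed to sidestep exactly this obstruction. It tracks the quantity ``stationary for $(n,Z)$'' --- a statement about the restriction $\mc{K}_n|_{X\setminus Z}$ --- rather than a globally coherent bottom term $\mc{L}$. It then fixes a single generic point $\eta$ of $Z$, lifts generators of $\mc{K}_{n',\eta}$ only on an open neighbourhood $U$ of $\eta$ to get a coherent $\mc{S}\subseteq\mc{M}|_U$, and, crucially, defines $Z':=(Z\cap W')\cup(Z\setminus U)$ so that the part of $Z$ left uncontrolled outside $U$ is simply deferred to a later descent step. Your Noetherian induction on $Z$ is an attractive reformulation, but your statement $(\star_Z)$ is wedded to a \emph{global} coherent module $\mc{L}$, which forces the globalization you cannot justify. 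If you instead phrase the induction hypothesis in terms of restriction to $X\setminus Z$ (as the paper does), take $\mc{L}'$ only on a neighbourhood of one generic point, and absorb $Z\setminus U$ into $Z'$, the argument closes --- but then you have essentially reproduced the paper's proof.
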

\begin{proof}
 Let $n\in\mb{N}$ and $Z$ be a closed subset of
 $X$. We say that the chain is {\em stationary for $(n,Z)$} if
 $\mc{K}_n|_{X\setminus Z}=\mc{K}_i|_{X\setminus Z}$
 for any $i\geq n$. We claim that if the chain is stationary for
 $(n,Z)$ with $Z\neq\emptyset$, then there exists an integer $n'$ and
 $Z'\subsetneq Z$ such that the
 chain is stationary for $(n',Z')$. Once this is proven, (i) follows
 since $X$ is a noetherian space.

 Let us show the claim. By Lemma \ref{elemlemshcon}, there exists an
 integer $a$ such that
 $\overline{\mr{Supp}(\mc{K}_i)}=\overline{\mr{Supp}(\mc{K}_a)}$ for any
 $i\geq a$. We may suppose that
 $Z\subset\overline{\mr{Supp}(\mc{K}_a)}$.
 Take a generic point $\eta$ of $Z$. Since $\mc{O}_{X,\eta}$ is
 noetherian, there exists $n'\geq\max\{a,n\}$ such that
 $\mc{K}_{i,\eta}=\mc{K}_{n',\eta}$
 for any $i\geq n'$. Fix a set of
 generators $\{f_1,\dots,f_\alpha\}$ of $\mc{K}_{n',\eta}$. There exists
 an open neighborhood $U$ of $\eta$ such that $\{f_1,\dots,f_{\alpha}\}$
 can be lifted on $U$ and $U\cap Z$ is irreducible. We fix a set of
 liftings $\{\tilde{f}_1,\dots,\tilde{f}_\alpha\}$ in
 $\Gamma(U,\mc{K}_{n'})$.
 Let $\mc{S}$ be the sub-$\mc{O}_X|_U$-module of $\mc{M}|_U$ generated
 by $\{\tilde{f}_1,\dots,\tilde{f}_\alpha\}$, which is coherent since
 $\mc{O}_X$ is. Now, let $\mc{M}'_U:=\mc{M}|_U/\mc{S}$ be a coherent
 $\mc{O}_X|_U$-module, and denote by $\mc{K}'_i$ the image of
 $\mc{K}_i|_U$ in $\mc{M}'_U$. We know that $\mr{Supp}(\mc{K}_i)\cap
 U\supset\mr{Supp}(\mc{K}'_i)$.
 By construction, $\eta\not\in\mr{Supp}(\mc{K}'_i)$ for any $i\geq
 n'$. By assumption, $\mc{M}'_U$ also
 satisfies (SH). Let $\mf{W}:=\{W_j\}_{j\in J}$ be a finite
 family of irreducible closed subset of $U$ such that $\mc{M}'_U$
 satisfies (SH) with respect to $\mf{W}$. Let $J'$ be the subset of $J$
 such that $\eta\not\in W_j$,
 and we put $W':=\bigcup_{j\in J'}W_j$. We let
 \begin{equation*}
  Z':=(Z\cap W')\cup(Z\setminus U).
 \end{equation*}
 Since $\eta\not\in Z'$, we get $Z'\subsetneq Z$. For any $i\geq n'$, by
 Lemma \ref{elemlemshcon}, there exists an open subset $W'_j$ of $W_j$
 for each $j\in J$ such that 
 \begin{equation*}
  \mr{Supp}(\mc{K}'_i)=\bigcup_{j\in J}W'_j.
 \end{equation*}
 We claim that $W'_j\cap
 Z=\emptyset$ for any $j\not\in J'$. Indeed, $j\not\in
 J'$ implies $\eta\in W_j$ and $Z\cap U\subset W_j$. If $W'_j\cap
 Z\neq\emptyset$, we would get $\eta\in W'_j$ since $Z\cap U$ is
 irreducible closed and $W'_j$ is open in $W_j$. This contradicts with
 $\eta\not\in\mr{Supp}(\mc{K}'_i)$. Thus,
 \begin{equation}
  \label{calcofsupport}
  \mr{Supp}(\mc{K}'_i)\cap(Z\setminus W')\cap U=\emptyset.
 \end{equation}
 Now, the chain is stationary for $(n',Z')$: it suffices to check
  $\mc{K}_{i,z}=\mc{K}_{n',z}$ for any $z\in Z\setminus Z'=(Z\setminus
 W')\cap U$. However, we get $\mc{K}'_{i,z}=0$
 for any $i\geq n'$ by (\ref{calcofsupport}). Thus,
 $\mc{K}_{i,z}=\mc{S}_z$ by the definition of $\mc{K}'_i$, which
 concludes the proof.
\end{proof}

\subsection{}
We show that coherent modules over some noetherian rings we
have defined in this paper satisfy (SH). For this, we prepare some
lemmas. In the following, let $(X,\mc{O}_X)$ be a ringed space
satisfying (FT).

\begin{lem*}
\label{extension*}
 Condition {\normalfont(SH)} is closed under extensions: suppose
 there exists an exact sequence of coherent $\mc{O}_X$-modules
 $0\rightarrow\mc{F}'\rightarrow\mc{F}\rightarrow\mc{F}''\rightarrow 0$
 such that $\mc{F}'$ and $\mc{F}''$ satisfy condition
 {\normalfont(SH)}. Then $\mc{F}$ also satisfies condition
 {\normalfont(SH)}.
\end{lem*}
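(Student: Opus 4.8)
The plan is to prove that $\mc{F}$ satisfies (SH) with respect to the family $\mf{Z} := \mf{Z}' \sqcup \mf{Z}''$, where $\mf{Z}' = \{Z'_i\}_{i \in I}$ and $\mf{Z}'' = \{Z''_j\}_{j \in J}$ are finite families with respect to which $\mc{F}'$ and $\mc{F}''$ respectively satisfy (SH); note $\mf{Z}$ is again finite. Fix an open subset $U \subset X$ and a section $s \in \Gamma(U,\mc{F})$, and let $\overline{s} \in \Gamma(U,\mc{F}'')$ denote its image. The support $\mr{Supp}(\overline{s})$ is closed in $U$ (its complement is open, since a germ which vanishes vanishes on a whole neighbourhood), so $V := U \setminus \mr{Supp}(\overline{s})$ is an open subset of $X$.

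First I would describe $s$ over $V$. Since $\overline{s}|_V = 0$, the left exactness of $\Gamma(V,-)$ applied to $0 \to \mc{F}' \to \mc{F} \to \mc{F}''$ shows that $s|_V$ comes from a (necessarily unique) section $s' \in \Gamma(V,\mc{F}')$ via the injection $\mc{F}' \hookrightarrow \mc{F}$; hence $\mr{Supp}(s) \cap V = \mr{Supp}(s')$. Moreover $\mr{Supp}(\overline{s}) \subset \mr{Supp}(s)$ since $\mc{F} \to \mc{F}''$ is surjective on germs, and therefore
\[
 \mr{Supp}(s) \;=\; \mr{Supp}(\overline{s}) \;\cup\; \bigl(\mr{Supp}(s) \cap V\bigr) \;=\; \mr{Supp}(\overline{s}) \;\cup\; \mr{Supp}(s').
\]

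Next I would invoke the two hypotheses. The condition (SH) for $\mc{F}''$, applied to $\overline{s}$ on $U$, provides a subset $J_s \subset J$ with $\mr{Supp}(\overline{s}) = \bigcup_{j \in J_s} Z''_j \cap U$; the condition (SH) for $\mc{F}'$, applied to $s'$ on the open subset $V$, provides a subset $I_s \subset I$ with $\mr{Supp}(s') = \bigcup_{i \in I_s} Z'_i \cap V$. Substituting into the previous display gives $\mr{Supp}(s) = \bigl(\bigcup_{j \in J_s} Z''_j \cap U\bigr) \cup \bigl(\bigcup_{i \in I_s} Z'_i \cap V\bigr)$, and I would finish by observing that here $Z'_i \cap V$ may be replaced by $Z'_i \cap U$ without changing the union: indeed $Z'_i \cap U = (Z'_i \cap V) \cup (Z'_i \cap \mr{Supp}(\overline{s}) \cap U)$, and the second term is contained in $\mr{Supp}(\overline{s}) \cap U = \bigcup_{j \in J_s} Z''_j \cap U$, which is already part of the union. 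Thus $\mr{Supp}(s) = \bigcup_{k \in I_s \sqcup J_s} Z_k \cap U$, with $I_s \sqcup J_s \subset I \sqcup J$ the index set of $\mf{Z}$, which is exactly the condition (SH) for $\mc{F}$.

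Every step here is a short formal manipulation with supports of sections, so I do not anticipate a genuine obstacle; the only point requiring care is bookkeeping — namely, using the instance of (SH) for $\mc{F}'$ on the open set $V = U \setminus \mr{Supp}(\overline{s})$ (legitimate precisely because (SH) is demanded for sections over arbitrary open subsets), and absorbing the leftover contribution $Z'_i \cap \mr{Supp}(\overline{s})$ into $\mr{Supp}(\overline{s})$.
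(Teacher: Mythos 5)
Your argument is correct and matches the paper's own proof essentially step for step: use (SH) for $\mc{F}''$ on $\overline{s}$, pass to $V = U \setminus \mr{Supp}(\overline{s})$, identify $s|_V$ with a section of $\mc{F}'$ via left exactness of $\Gamma$, apply (SH) for $\mc{F}'$ on $V$, and take the union of the two families. Your only additions are a more explicit verification that $Z'_i \cap V$ may be replaced by $Z'_i \cap U$, and the correct attribution to \emph{left} exactness (the paper says "right exactness" but means the same thing); these are refinements of bookkeeping, not a different route.
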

\begin{proof}
 Left to the reader.
\end{proof}

\begin{lem}
 \label{SHilocal}
 Let $\mc{M}$ be a coherent $\mc{O}_X$-module. The module $\mc{M}$
 satisfies condition {\normalfont(SH)} if and only if there exists a
 covering $\{U_i\}_{i\in I}$ of $X$ such that $\mc{M}|_{U_i}$ satisfies
 the condition on $U_i$ for any $i$.
\end{lem}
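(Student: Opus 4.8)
The plan is as follows. The ``only if'' direction is immediate: if $\mc{M}$ satisfies (SH), then the trivial covering $\{X\}$ already has the required property. For the ``if'' direction, suppose $\{U_i\}_{i\in I}$ is a covering of $X$ such that each $\mc{M}|_{U_i}$ satisfies (SH) on $U_i$. Since $X$ is noetherian, hence quasi-compact, I may replace $I$ by a finite subset and assume $I$ is \emph{finite}. For each $i$ I fix a finite family $\mf{W}_i=\{W_{i,j}\}_{j\in J_i}$ of irreducible closed subsets of $U_i$ with respect to which $\mc{M}|_{U_i}$ satisfies (SH); recall that each $U_i$ together with $\mc{O}_X|_{U_i}$ again satisfies (FT), so this makes sense. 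Writing $\overline{W_{i,j}}$ for the closure of $W_{i,j}$ in $X$, which is irreducible and closed in $X$, the candidate witnessing family for $\mc{M}$ will be the finite family $\mf{Z}:=\{\,\overline{W_{i,j}}\mid i\in I,\ j\in J_i\,\}$.

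I would then verify the claim directly. Let $U\subset X$ be open and $s\in\Gamma(U,\mc{M})$, and set $T:=\mr{Supp}(s)$, a closed subset of $U$. Because the support of a section is local, $T\cap U_i=\mr{Supp}(s|_{U\cap U_i})$, so applying (SH) for $\mc{M}|_{U_i}$ to $s|_{U\cap U_i}$ and using $W_{i,j}\subset U_i$ yields a subset $J'_i\subset J_i$ with $T\cap U_i=\bigcup_{j\in J'_i}W_{i,j}\cap U$. Hence $T=\bigcup_i\bigcup_{j\in J'_i}W_{i,j}\cap U\subset\bigcup_i\bigcup_{j\in J'_i}\overline{W_{i,j}}\cap U$. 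For the reverse inclusion, fix $i$ and $j\in J'_i$ with $W_{i,j}\cap U\neq\emptyset$. Since $W_{i,j}$ is closed in $U_i$ one has $\overline{W_{i,j}}\cap U_i=W_{i,j}$, so $W_{i,j}\cap U=W_{i,j}\cap(U\cap\overline{W_{i,j}})$ is a nonempty open subset of the irreducible space $\overline{W_{i,j}}$, hence dense in it; it follows that the closure of $W_{i,j}\cap U$ taken in $U$ equals $\overline{W_{i,j}}\cap U$. As $W_{i,j}\cap U\subset T$ and $T$ is closed in $U$, this forces $\overline{W_{i,j}}\cap U\subset T$. Combining the two inclusions gives $T=\bigcup_i\bigcup_{j\in J'_i}\overline{W_{i,j}}\cap U$, and discarding empty terms exhibits $\mr{Supp}(s)$ as a union of members of $\mf{Z}$ intersected with $U$. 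Therefore $\mc{M}$ satisfies (SH) with respect to $\mf{Z}$.

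The only step requiring care — and the one I expect to be the main obstacle — is the topological identity that the closure of $W_{i,j}\cap U$ in $U$ equals $\overline{W_{i,j}}\cap U$. It rests on $W_{i,j}$ being simultaneously open and dense in the irreducible space $\overline{W_{i,j}}$ (open because $\overline{W_{i,j}}\cap U_i=W_{i,j}$, dense by definition of the closure), on the fact that a nonempty open subset of an irreducible space is dense, and on the fact that the trace of a dense subset on an open subspace remains dense. No property of $\mc{M}$ or of the ring $\mc{O}_X$ beyond what is needed to ensure that $\mr{Supp}(s)$ is closed enters the argument, and noetherianity of $X$ is used only to pass to a finite covering.
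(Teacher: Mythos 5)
Your proof is correct and follows the same approach as the paper: pass to a finite covering, take the family of closures $\overline{W_{i,j}}$ in $X$, and verify (SH) by a density argument for nonempty opens in irreducible spaces. The paper's proof states the candidate family and declares the verification straightforward; your write-up supplies exactly the verification that is left to the reader, including the key topological identity that the closure in $U$ of $W_{i,j}\cap U$ equals $\overline{W_{i,j}}\cap U$.
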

\begin{proof}
 We only need the proof for the ``if'' part.
 Since $X$ is quasi-compact, we may assume that the covering is
 finite. By assumption, for each $i\in I$, there exists a family
 $\{Z_j\}_{j\in J_i}$ of closed subsets of $U_i$ such that
 $\mc{M}|_{U_i}$ satisfies (SH) with respect to this family. The module
 $\mc{M}$ satisfies (SH) with respect to the family $\bigcup_{i\in
 I}\bigl\{\overline{Z_j}\bigr\}_{j\in J_i}$. Since the verification is
 straightforward, we leave the details to the reader.
\end{proof}

\begin{lem}
\label{filtration}
 Let $(\mc{F},\mc{F}_i)$ be a separated filtered sheaf. Suppose that
 $\mr{gr}(\mc{F})$ satisfies {\normalfont(SH)}. Then
 $\mc{F}$ also satisfies {\normalfont(SH)}.
\end{lem}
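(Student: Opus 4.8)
The plan is to prove that $\mc{F}$ satisfies (SH) with respect to the \emph{same} finite family $\mf{Z}=\{Z_i\}_{i\in I}$ of irreducible closed subsets that witnesses (SH) for $\mr{gr}(\mc{F})$. First I would note that each graded piece $\mr{gr}_n(\mc{F})$ is a direct summand of $\mr{gr}(\mc{F})$ as an $\mc{O}_X$-module, hence is coherent, and inherits (SH) with respect to $\mf{Z}$ (the support of a section is not changed by the inclusion $\mr{gr}_n(\mc{F})\hookrightarrow\mr{gr}(\mc{F})$). Now fix an open $U$ and a section $s\in\Gamma(U,\mc{F})$; we must show $\mr{Supp}(s)=\bigcup_{i\in I'}(Z_i\cap U)$ for a suitable $I'\subset I$. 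Since $\bigcup_n\mc{F}_n=\mc{F}$, cover $U$ by opens $U_\alpha$ on which $s$ is a section of some $\mc{F}_{n_\alpha}$; it will suffice to analyze each $s|_{U_\alpha}$ and then reassemble, so we may assume $s\in\Gamma(U,\mc{F}_n)$.

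In this situation I would build a descending chain of opens $U=V_0\supset V_1\supset V_2\supset\cdots$ together with the inductive property that $s|_{V_k}\in\Gamma(V_k,\mc{F}_{n-k})$. Given $V_{k-1}$, let $\overline{s}_k\in\Gamma(V_{k-1},\mr{gr}_{n-k+1}(\mc{F}))$ be the image of $s|_{V_{k-1}}$ and set $V_k:=V_{k-1}\setminus\mr{Supp}(\overline{s}_k)$; then $\overline{s}_k$ vanishes on $V_k$, so applying left-exactness of $\Gamma$ to $0\to\mc{F}_{n-k}\to\mc{F}_{n-k+1}\to\mr{gr}_{n-k+1}(\mc{F})$ gives $s|_{V_k}\in\Gamma(V_k,\mc{F}_{n-k})$, continuing the induction. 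By (SH) for $\mr{gr}_{n-k+1}(\mc{F})$, each $V_{k-1}\setminus V_k=\mr{Supp}(\overline{s}_k)$ is a union of sets $Z_i\cap V_{k-1}$ with $i$ in some $I_k\subset I$. The key identity is then $\mr{Supp}(s)=\bigcup_{k\ge1}(V_{k-1}\setminus V_k)$. The inclusion $\supseteq$ is pointwise: at a point $x\in V_{k-1}\setminus V_k$ the germ $s_x\in\mc{F}_{n-k+1,x}$ has nonzero image in $\mr{gr}_{n-k+1}(\mc{F})_x$, hence $s_x\ne 0$. For $\subseteq$, put $V_\infty:=\bigcap_k V_k$; then $s|_{V_\infty}\in\Gamma(V_\infty,\mc{F}_{n-k})$ for every $k$, so $s|_{V_\infty}$ is a section of $\bigcap_i\mc{F}_i=\invlim_i\mc{F}_i=0$ (here I use separatedness in exactly the form "$\bigcap_i\mc{F}_i=0$ as a subsheaf"), whence $\mr{Supp}(s)\cap V_\infty=\emptyset$ and every point of $U$ outside $V_\infty$ lies in some $V_{k-1}\setminus V_k$.

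Putting the two steps together, $\mr{Supp}(s|_{U_\alpha})$, and therefore $\mr{Supp}(s)=\bigcup_\alpha\mr{Supp}(s|_{U_\alpha})$, is a union of sets of the form $Z_i\cap W$ with $W\subset U$ open. To conclude I would upgrade this to a union of the full traces $Z_i\cap U$, exactly as in the proofs of Lemmas \ref{elemlemshcon} and \ref{SHilocal}: let $\eta_i$ be the generic point of $Z_i$ (which exists since $X$ is sober) and set $I':=\{i\in I:\eta_i\in\mr{Supp}(s)\}$. Since $\mr{Supp}(s)$ is closed in $U$ and contains $\eta_i$ for $i\in I'$, it contains $\overline{\{\eta_i\}}\cap U=Z_i\cap U$; conversely, in the expression $\mr{Supp}(s)=\bigcup(Z_i\cap W)$ every nonempty $Z_i\cap W$, being a nonempty open subset of the irreducible set $Z_i$, contains $\eta_i$, so $i\in I'$ and $Z_i\cap W\subset Z_i\cap U$. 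Hence $\mr{Supp}(s)=\bigcup_{i\in I'}(Z_i\cap U)$. I expect the main obstacle to be nothing deep but rather the careful bookkeeping around $V_\infty$: because the chain $\{V_k\}$ need not stabilize, one genuinely needs the sheaf-theoretic separatedness hypothesis (not merely stalkwise separatedness) to kill $s$ on $V_\infty$; everything else is the standard generic-point manipulation already used in this section.
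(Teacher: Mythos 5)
Your proof matches the paper's: iterate the symbol of $s$, shrink the open set by removing its support, and finish by upgrading to full traces $Z_i\cap U$ via generic points (a step the paper leaves implicit). One correction to your closing remark, though: the chain $\{V_k\}$ \emph{does} stabilize. Indeed $V_k=U\setminus\bigcup_{i\in J_k}Z_i$ with $J_k:=\bigcup_{l\le k}I_l$, and $\{J_k\}$ is an increasing chain of subsets of the finite set $I$ --- this is precisely what the paper invokes with ``Since $I$ is a finite set, this sequence is stationary.'' This is not cosmetic: if $V_\infty$ were merely a decreasing intersection of opens rather than itself open, then from ``$s|_{V_\infty}\in\Gamma(V_\infty,\mc{F}_{n-k})$ for all $k$'' you could only conclude $s_x\in\bigcap_i(\mc{F}_i)_x$ for each $x\in V_\infty$, and the stalkwise intersection $\bigcap_i(\mc{F}_i)_x$ need not vanish just because $\invlim_i\mc{F}_i=0$ as a sheaf; it is the automatic stabilization, not any strengthening of the separatedness hypothesis, that makes $V_\infty$ open and lets the sheaf-level hypothesis kill $s$ there.
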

\begin{proof}
 Assume that $\mr{gr}(\mc{F})$ satisfies (SH) with respect to
 $\mf{Z}=\{Z_i\}_{i\in I}$. Let $U$ be an open subset of $X$, and take a
 non-zero $s\in\Gamma(U,\mc{F})$. There exists an integer $i_0$ such
 that $s\in\Gamma(U,\mc{F}_{i_0})$ and
 $s\not\in\Gamma(U,\mc{F}_{i_0-1})$ since the filtration is exhaustive
 (cf.\ see Convention) and separated. Let
 $\sigma(s)\in\Gamma(U,\mr{gr}_{i_0}(\mc{F}))\subset
 \Gamma(U,\mr{gr}(\mc{F}))$ be the principal symbol of $s$. Then there
 exists $J_0\subset I$ such that $\mr{Supp}(\sigma(s))=\bigcup_{j\in
 J_0}Z_j\cap U$.
 For $k\geq0$, we inductively define an open subset $U_k$, and a subset
 $J_k$ of $J$ in the following way. We put $U_0:=U$. Now, let
 $U_{k+1}:=U_{k}\setminus\mr{Supp}(\sigma(s|_{U_k}))$. Then there exists
 $J'_{k+1}$ such that
 \begin{equation*}
  \mr{Supp}(\sigma(s|_{U_{k+1}}))=\bigcup_{j\in J'_{k+1}}Z_j\cap U_{k+1}.
 \end{equation*}
 We define $J_{k+1}:=J_k\cup J'_{k+1}$.
 Obviously, $J_0\subset J_1\subset\dots\subset I$. Since $I$ is a
 finite set, this sequence is stationary. Let $J:=\bigcup_{i}J_i\subset
 I$. Then $\mr{Supp}(s)=\bigcup_{j\in J}Z_j\cap U$,
 and $\mc{F}$ satisfies (SH) with respect to $\mf{Z}$ as well.
\end{proof}

\begin{dfn}
 Let $(X,\mc{O}_X)$ be a ringed space satisfying (FT). The ringed space
 or $\mc{O}_X$ is said to be {\em {\normalfont(SH)}-noetherian} if
 condition (SH) is satisfied for any coherent $\mc{O}_X|_U$-modules
 for any open subset $U$ of $X$. If, moreover, $\mc{O}_X$ is noetherian
 with respect to $\mf{B}$, we say that $\mc{O}_X$ is {\em strictly
 noetherian with respect to $\mf{B}$}.
\end{dfn}

\begin{lem}
\label{takinggraded}
 Let $(X,\mc{E})$ be a ringed space satisfying {\normalfont(FT)}, and
 let $(\mc{E},\mc{E}_i)$ be a filtration on $\mc{E}$.
 Suppose that the filtration is pointwise Zariskian (cf.\ Definition
 {\normalfont\ref{defnoethrigsh}}). If $\mr{gr}(\mc{E})$ is
 {\normalfont (SH)}-noetherian, then so is $\mc{E}$.
\end{lem}
\begin{proof}
 Let $\mc{M}$ be a coherent $\mc{E}$-module. Since the verification is
 local by Lemma \ref{SHilocal}, we may suppose that there exists a good
 filtration $(\mc{M},\mc{M}_i)$. By Lemma \ref{Zarisep}, the filtration
 is separated. Now by Lemma \ref{filtration}, the corollary follows.
\end{proof}

\begin{lem}
 \label{fgisstric}
 Let $\mc{A}$ be a strictly noetherian sheaf with respect to $\mf{B}$
 (resp.\ {\normalfont (SH)}-noetherian sheaf) on a topological space
 $X$, then so is $\mc{A}[T]$.
\end{lem}
\begin{proof}
 It is easy to check that $\mc{A}$ is noetherian if and only if
 $\mc{A}[T]$ is a noetherian ring since
 $\Gamma(U,\mc{A}[T])\cong\Gamma(U,\mc{A})[T]$ for any open set
 $U$. Assume $\mc{A}$ to be (SH)-noetherian, and let $\mc{M}$ be a
 coherent $\mc{A}[T]$-module. It suffices to show that $\mc{M}$
 satisfies (SH).
 Since the verification is local by Lemma \ref{SHilocal}, we may assume
 that there exists integers $a,b\geq0$ and a presentation
 \begin{equation*}
  \mc{A}[T]^{\oplus a}\xrightarrow{\psi}\mc{A}[T]^{\oplus b}
   \xrightarrow{\phi}\mc{M}\rightarrow0.
 \end{equation*}
 Let $\mc{A}_n:=\bigoplus_{i\leq n}\mc{A}\cdot T^i$ and
 $\mc{K}'_n:=\psi(\mc{A}_n)$. Let
 $\mc{K}_{m,n}:=\mc{K}'_m\cap\mc{A}_n^{\oplus b}$ in
 $\mc{A}[T]^{\oplus b}$, which is a coherent sub-$\mc{A}$-module of
 $\mc{A}_n^{\oplus b}\subset\mc{A}[T]^{\oplus b}$. Since $\mc{A}$ is
 strictly noetherian,
 $\mc{K}_n:=\bigcup_{m\geq0}\mc{K}_{m,n}=\mr{Ker}(\phi)\cap
 \mc{A}_n$ is a coherent $\mc{A}$-module by Proposition
 \ref{stationary}. We define a coherent $\mc{A}$-module $\mc{M}_n$ by
 $\mc{A}^{\oplus b}_n/\mc{K}_n\subset\mc{M}$. By construction,
 $\bigcup_n\mc{M}_n=\mc{M}$, and $(\mc{M},\{\mc{M}_n\}_{n\in\mb{Z}})$ is
 a filtered $(\mc{A}[T],\{\mc{A}_n\}_{n\in\mb{Z}})$-module.

 It suffices to show that $\mr{gr}(\mc{M})$ satisfies (SH) by Lemma
 \ref{filtration}. We have checked $\mr{gr}_i(\mc{M})$ is a coherent
 $\mc{A}$-module for
 any $i$.
 By construction, the homomorphism
 $T\colon\mr{gr}_i(\mc{M})\rightarrow\mr{gr}_{i+1}(\mc{M})$ is
 surjective for any $i$, and we have the following sequence of
 surjections.
 \begin{equation*}
  \mc{M}_0=\mr{gr}_0(\mc{M})\twoheadrightarrow\dots
   \twoheadrightarrow\mr{gr}_i(\mc{M})\twoheadrightarrow
   \mr{gr}_{i+1}(\mc{M})\twoheadrightarrow\dots
 \end{equation*}
 Since $\mc{M}_0$ is coherent and $\mc{A}$ is strictly
 noetherian, this sequence is stationary, and there exists an integer
 $N$ such that $T^i\colon\mr{gr}_N(\mc{M})\rightarrow\mr{gr}_{N+i}
 (\mc{M})$ is an isomorphism for any $i\geq0$. Since $\bigoplus_{0\leq
 i\leq N}\mr{gr}_i(\mc{M})$ is a coherent $\mc{A}$-module, it
 satisfies (SH) with respect to a family $\mf{Z}$. Then
 $\mr{gr}(\mc{M})$ satisfies (SH) with respect to $\mf{Z}$.
\end{proof}

\begin{cor}
 \label{tensQsnoe}
 Let $X$ be a topological space satisfying {\normalfont(FT)}, and
 $\mc{A}$ be a strictly noetherian $R$-module on $X$ with respect to
 $\mf{B}$ (resp.\ {\normalfont (SH)}-noetherian). Then so is
 $\mc{A}\otimes\mb{Q}$.
\end{cor}
\begin{proof}
 We only prove the strictly noetherian case.
 It is easily verified that $\mc{A}\otimes\mb{Q}$ is noetherian with
 respect to $\mf{B}$.
 Let $\mc{A}_n:=\mr{Ker}(\mc{A}\xrightarrow{p^n}\mc{A})$. Since $\mc{A}$
 is strictly noetherian, the sequence
 $\mc{A}_0\subset\mc{A}_1\subset\dots\subset\mc{A}$ is stationary. Let
 $\mc{A}_\infty:=\indlim_n\mc{A}_n$. Since $\mc{A}/\mc{A}_\infty$ is a
 coherent $\mc{A}$-algebra, it is strictly noetherian, and we may assume
 that $\mc{A}$ is a flat $R$-module in the corollary. Let
 $F_i(\mc{A}\otimes\mb{Q}):=\pi^{-i}\mc{A}$ for $i\geq0$ and
 $F_i(\mc{A}\otimes\mb{Q})=0$ for $i<0$. Then it suffices to show that
 $\mr{gr}^F(\mc{A}\otimes\mb{Q})$ is strictly noetherian by Lemma
 \ref{takinggraded}. Since $\mr{gr}^F(\mc{A}\otimes\mb{Q})$ is a
 coherent $\mc{A}[T]$-algebra where the action of $T$ is the
 multiplication by $\pi^{-1}\in\mr{gr}^F_1(\mc{A}\otimes\mb{Q})$, it is
 reduced to showing that $\mc{A}[T]$ is strictly noetherian, which
 follows from Lemma \ref{fgisstric}.
\end{proof}

\begin{lem}
\label{spacescondition}
 Let $X$ be a noetherian scheme, and let $\mf{B}$ be the open basis
 consisting of open affine subschemes of $X$. Then $\mc{O}_X$ is
 strictly noetherian with respect to $\mf{B}$.
\end{lem}
\begin{proof}
 Condition (FT) on $(X,\mc{O}_X)$ is a basic property of
 noetherian schemes. Let $\mc{M}$ be a coherent $\mc{O}_X$-module, and
 let us check (SH) for this $\mc{M}$.
 Since the
 statement is local by Lemma \ref{SHilocal}, we may suppose that
 $X=\mr{Spec}(A)$ for a noetherian ring $A$. There exists a finite
 decreasing filtration $\{\mc{M}_i\}_{0\leq i\leq n}$ on $\mc{M}$ such
 that $\mc{M}_0=\mc{M}$, $\mc{M}_n=0$, the quotient
 $\mc{M}_i/\mc{M}_{i+1}$ is irredondant for any $0\leq i<n$, and
 \begin{equation*}
  \mr{Ass}(\mc{M}_i/\mc{M}_{i+1})\subset\mr{Ass}(\mc{M})
 \end{equation*}
 by \cite[IV 3.2.8]{EGA}. By Lemma \ref{extension*}, it suffices to show
 the lemma for irredondant modules, but in this case, it follows by
 definition.
\end{proof}

Using Lemma \ref{fgisstric}, we have the following corollary.
\begin{cor*}
 \label{fgisstric2}
 Let $X$ be a noetherian scheme and $\mc{A}$ be a quasi-coherent
 $\mc{O}_X$-algebra of finite type. Then $\mc{A}$ is strictly
 noetherian.
\end{cor*}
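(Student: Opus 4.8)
The plan is to deduce the corollary from Lemma \ref{fgisstric} by presenting $\mc{A}$ locally as a quotient of a polynomial ring over $\mc{O}_X$ and then transferring the condition (SH) across that presentation. The ingredients will be: Lemma \ref{spacescondition} (the structure sheaf of a noetherian scheme is strictly noetherian), Lemma \ref{fgisstric} (strict noetherianity is stable under adjoining one polynomial variable), and Lemma \ref{SHilocal} (the condition (SH) is local). First, though, I would check that $(X,\mc{A})$ satisfies (FT), so that the conclusion even makes sense.

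To verify (FT) for $(X,\mc{A})$: the topological space $X$ is unchanged, hence still sober and noetherian, so only the noetherianity of $\mc{A}$ as a sheaf of rings (Definition \ref{defnoethrigsh}) needs checking. Since $\mc{A}$ is quasi-coherent of finite type, each point has an affine open neighbourhood $U=\mr{Spec}(B)$ with $B$ noetherian on which there is a surjection $\phi\colon\mc{O}_X[T_1,\dots,T_n]|_U\twoheadrightarrow\mc{A}|_U$. As $\mc{O}_X$ is a noetherian ring on $X$, so is $\mc{O}_X[T_1,\dots,T_n]$; in particular the ideal $\ker(\phi)$ is finitely generated over affines, so $\mc{A}|_U$ is of \emph{finite presentation} over $\mc{O}_X[T_1,\dots,T_n]|_U$. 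Consequently, for affine $U'\subset U$ the ring $\Gamma(U',\mc{A})$ is a quotient of the noetherian ring $\Gamma(U',\mc{O}_X)[T_1,\dots,T_n]$, hence noetherian, and the coherence of $\mc{A}$ follows since a quotient of a coherent ring by a finitely generated ideal is coherent. So $(X,\mc{A})$ satisfies (FT).

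Next, by Lemma \ref{spacescondition} the sheaf $\mc{O}_X$ is strictly noetherian, and applying Lemma \ref{fgisstric} repeatedly shows $\mc{O}_X[T_1,\dots,T_n]$ is strictly noetherian for all $n\geq0$. To finish, let $\mc{M}$ be a coherent $\mc{A}|_V$-module for an open $V\subset X$; by Lemma \ref{SHilocal} I may assume $V$ lies in an affine open $U$ carrying a surjection $\phi$ as above. Composing the $\mc{A}|_V$-action with $\phi$ makes $\mc{M}$ a module over $\mc{O}_X[T_1,\dots,T_n]|_V$, which is again coherent because $\mc{O}_X[T_1,\dots,T_n]$ is a coherent ring and $\mc{A}$ is of finite presentation over it. Since (SH) refers only to the supports of local sections of the underlying sheaf — and to the topological space — it is insensitive to which of the two rings acts; hence $\mc{M}$ satisfies (SH) by strict noetherianity of $\mc{O}_X[T_1,\dots,T_n]$, and therefore $\mc{A}$ is strictly noetherian. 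The main obstacle is precisely this coherence bookkeeping: one must be sure a coherent $\mc{A}$-module is coherent over the polynomial ring, which needs $\mc{A}$ to be of finite \emph{presentation} (not merely finite type) over $\mc{O}_X[T_1,\dots,T_n]$ locally, and this is exactly where the noetherianity of $\mc{O}_X$, hence of the polynomial ring, is used.
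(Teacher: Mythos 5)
Your proof is correct and follows the route the paper intends: it first upgrades $\mc{O}_X$ (Lemma \ref{spacescondition}) to $\mc{O}_X[T_1,\dots,T_n]$ via Lemma \ref{fgisstric}, then passes to the quotient $\mc{A}$ by observing that a coherent $\mc{A}$-module is coherent over the polynomial ring and that the condition (SH) depends only on supports of sections (hence is insensitive to which ring acts), with Lemma \ref{SHilocal} handling the localization to a presentation. The preliminary verification of (FT) for $(X,\mc{A})$ and the coherence bookkeeping ($\mc{A}$ finitely presented over $\mc{O}_X[T_1,\dots,T_n]$, so coherent $\mc{A}$-modules are coherent over it) are exactly the points the paper leaves implicit, and your treatment of them is sound.
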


\begin{lem}
 \label{stnoethopen}
 Let $(Y,\mc{A})$ be a ringed space satisfying {\normalfont(FT)}, and
 assume that $\mc{A}$ is strictly noetherian with respect to
 $\mf{B}$. Let $X$ be a sober and noetherian topological space, and
 $f\colon X\rightarrow Y$ be an open
 continuous map of topological spaces. Let $\mf{C}$ be an open basis of
 $X$ consisting of $V$ such that $f(V)\in\mf{B}$.
 Then the ringed space $(X,f^{-1}\mc{A})$ is strictly noetherian with
 respect to $\mf{C}$.
\end{lem}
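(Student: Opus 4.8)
The plan is to establish the two defining clauses of ``strictly noetherian'' for $(X,f^{-1}\mc{A})$ separately. Verifying {\normalfont(FT)} is the routine part: $X$ is sober and noetherian by hypothesis, and that $f^{-1}\mc{A}$ is a noetherian ring with respect to a suitable open basis is proved exactly as in Example~\ref{tangenttwistshfi} and \cite[A.3.1.8]{Lau} --- $f^{-1}$ is exact and induces isomorphisms on germs, so coherence of $\mc{A}$ passes to $f^{-1}\mc{A}$ and the stalks $(f^{-1}\mc{A})_x\cong\mc{A}_{f(x)}$ are noetherian by {\normalfont(FT)} on $Y$, and the openness of $f$ provides the left adjoint $U\mapsto f(U)$ to $f^{-1}$ on open sets needed to control sections over the basis. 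The substance of the lemma is therefore {\normalfont(SH)}. Since {\normalfont(SH)} is local on $X$ by Lemma~\ref{SHilocal}, and since replacing $X$ by an open $U$ and $f$ by $f|_U$ preserves all hypotheses, it is enough to show that a coherent $f^{-1}\mc{A}$-module $\mc{M}$ satisfies {\normalfont(SH)} near an arbitrary point $x\in X$.

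The first step is a reduction to a pull-back. Near $x$ choose a presentation $(f^{-1}\mc{A})^{\oplus a}\xrightarrow{\phi}(f^{-1}\mc{A})^{\oplus b}\rightarrow\mc{M}\rightarrow0$. The germ $\phi_x$ lives over $(f^{-1}\mc{A})_x\cong\mc{A}_{f(x)}$, hence descends to a matrix over $\Gamma(V,\mc{A})$ for some open $V\ni f(x)$ in $Y$; letting $\mc{N}$ be the corresponding cokernel, a coherent $\mc{A}|_V$-module, one has $(f^{-1}\mc{N})_x\cong\mc{M}_x$. As the sheaf $\mc{H}om$ of two coherent modules over a coherent ring is coherent, this isomorphism of germs extends to an isomorphism over some open neighbourhood $U''$ of $x$, which we may take inside $f^{-1}(V)$; thus $\mc{M}|_{U''}\cong g^{-1}\mc{N}$ with $g:=f|_{U''}\colon U''\rightarrow V$ continuous. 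By Lemma~\ref{SHilocal} it remains to prove that such a $g^{-1}\mc{N}$ satisfies {\normalfont(SH)} on $U''$.

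For this, $\mc{N}$ satisfies {\normalfont(SH)} with respect to a finite family $\{Z_j\}_{j\in J}$ of irreducible closed subsets of $V$ because $\mc{A}$ is strictly noetherian, and I would take on $U''$ the family $\mf{Z}$ of irreducible components of the closed sets $g^{-1}(Z_j)$; it is finite because $U''$ is noetherian. Given $s\in\Gamma(W,g^{-1}\mc{N})$ with $W\subset U''$ open, cover $W$ by opens $W_\alpha\subset g^{-1}(V_\alpha)$ ($V_\alpha\subset V$ open) on which $s$ is the canonical image of some $t_\alpha\in\Gamma(V_\alpha,\mc{N})$ --- possible since the germs of $g^{-1}\mc{N}$ come from $\mc{N}$. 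Computing on germs gives $\mr{Supp}(s)\cap W_\alpha=g^{-1}(\mr{Supp}(t_\alpha))\cap W_\alpha$, and since $g(W_\alpha)\subset V_\alpha$, {\normalfont(SH)} for $\mc{N}$ rewrites this as a union of traces $Z\cap W_\alpha$ with $Z\in\mf{Z}$. One concludes by verifying $\mr{Supp}(s)=\bigcup\{\,Z\cap W\mid Z\in\mf{Z},\ \emptyset\neq Z\cap W\subset\mr{Supp}(s)\,\}$, and then invoking Lemma~\ref{SHilocal}.

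I expect the main obstacle to be exactly this last equality. The pull-back computation delivers $\mr{Supp}(s)$ a priori only as a union of \emph{open} pieces of the preimages $g^{-1}(Z_j)$, whereas {\normalfont(SH)} requires it to be an \emph{exact} union of full traces of a \emph{fixed} finite family of \emph{irreducible} closed sets. Closing this gap is precisely what forces the passage to irreducible components together with the elementary topological fact that a nonempty open subset of an irreducible space is dense: since $\mr{Supp}(s)$ is closed in $W$, once $Z\cap W_\alpha\subset\mr{Supp}(s)$ with $Z\in\mf{Z}$ irreducible and $W_\alpha$ open and meeting $Z$, one already gets $Z\cap W\subset\mr{Supp}(s)$, which is what makes the equality hold.
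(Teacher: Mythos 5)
Your proof is correct, and it takes a genuinely different route from the paper's in the reduction to pull-back modules. The paper argues globally --- asserting that $f_*$ is exact because $f$ is open, that therefore $f^{-1}f_*\mc{M}\to\mc{M}$ is an isomorphism, so every coherent $f^{-1}\mc{A}$-module is a pull-back --- whereas you localize: near $x$ you descend a presentation of $\mc{M}$ through the stalk isomorphism $(f^{-1}\mc{A})_x\cong\mc{A}_{f(x)}$ to construct a coherent $\mc{A}$-module $\mc{N}$, then propagate the germ identification $\mc{M}_x\cong(f^{-1}\mc{N})_x$ to a neighbourhood using coherence of $\mc{H}om$. Your version sidesteps the exactness claim for $f_*$, which for an arbitrary open map is not as immediate as the paper's one-line assertion suggests, and both reductions arrive at the same key statement to prove. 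You are also more precise than the paper about the heart of (SH): the paper simply says that $f^{-1}\mc{N}$ satisfies (SH) with respect to $f^{-1}(\mf{Z})$, but the closed sets $f^{-1}(Z_j)$ need not be irreducible, so one has to pass to their finitely many irreducible components and then argue exactly as you do --- a nonempty open subset of an irreducible space is dense and $\mr{Supp}(s)$ is closed in $W$, so $\emptyset\neq Z\cap W_\alpha\subset\mr{Supp}(s)$ already forces $Z\cap W\subset\mr{Supp}(s)$. On the (FT) side you and the paper are both brief; the identification of $\Gamma(U,f^{-1}\mc{A})$ with $\Gamma(f(U),\mc{A})$ holds at the presheaf level for $f$ open but for the sheaf itself needs an extra argument in the spirit of $(\ref{noprimecalcfromprima})$, so that part of your write-up is a sketch rather than a proof --- matching the paper's own level of detail there.
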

\begin{proof}
 Let $\mc{N}$ be a coherent $\mc{A}$-module satisfying (SH) with respect
 to $\mf{Z}$. Then $f^{-1}\mc{N}$ satisfies (SH) with respect to
 $f^{-1}(\mf{Z})$. We claim that for any coherent $f^{-1}\mc{A}$-module
 $\mc{M}$, there exists a coherent $\mc{A}$-module $\mc{N}$ such that
 $\mc{M}\cong f^{-1}(\mc{N})$. Now, the functor $f_*$ is exact since $f$
 is open. Thus, for a coherent $f^{-1}\mc{A}$-module $\mc{M}$, the
 canonical homomorphism $f^{-1}f_*\mc{M}\rightarrow\mc{M}$ is an
 isomorphism, and $f_*\mc{M}|_{f(X)}$ is a coherent
 $\mc{A}|_{f(X)}$-module. Thus $f^{-1}\mc{A}$ is (SH)-noetherian. We
 leave the details to check that it is noetherian with respect to
 $\mf{C}$.
\end{proof}

\begin{thm}
 \label{stnoeththem}
 Let $\ms{X}$ be a smooth formal scheme {\em of finite type} over
 $\mr{Spf}(R)$. Then $\mc{O}_{X_i}$, $\mc{O}_{\ms{X}}$,
 $\mc{O}_{\ms{X},\mb{Q}}$, $\Dmod{m}{X_i}$,
 $\Dmod{m}{\ms{X}}$, $\Dmod{m}{\ms{X},\mb{Q}}$,
 $\Dcomp{m}{\ms{X}}$, $\DcompQ{m}{\ms{X}}$
 are strictly noetherian sheaves on
 $\ms{X}$. Moreover, $\Emod{m,m'}{X_i}$, $\Emod{m,m'}{\ms{X}}$,
 $\Ecomp{m,m'}{\ms{X}}$, $\EcompQ{m,m'}{\ms{X}}$ are strictly noetherian
 on $\mathring{T}^*\ms{X}$.
\end{thm}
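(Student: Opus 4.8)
The plan is to reduce every assertion, by repeated use of the reduction lemmas above, to the two base facts that the structure sheaf of a noetherian scheme is strictly noetherian (Lemma \ref{spacescondition}) and that a quasi-coherent algebra of finite type over it is strictly noetherian (Corollary \ref{fgisstric2}). The machinery is: Corollary \ref{takinggraded}, to pass from a strictly noetherian associated graded ring to a pointwise Zariskian filtered ring; Lemma \ref{fgisstric} (iterated) for polynomial extensions, together with the trivial remark that a coherent quotient of a strictly noetherian ring is strictly noetherian; Corollary \ref{tensQsnoe} for $\otimes\mb{Q}$; and Lemma \ref{stnoethopen} for pullback along an open continuous map. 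Since (SH) is local (Lemma \ref{SHilocal}) we may work on an affine $\ms{X}$ with coordinates wherever convenient, and the topological half of (FT) — soberness and noetherianness of the underlying space, and noetherianness of the sheaf of rings with respect to $\mf{B}$ — holds for every space and ring occurring here (the latter by the noetherianity statements established for each of these rings earlier), so below I concentrate on (SH). For the structure sheaves: $\mc{O}_{X_i}$ is Lemma \ref{spacescondition}; for $\mc{O}_{\ms{X}}$ use the $\pi$-adic filtration, so that $\mr{gr}^\pi(\mc{O}_{\ms{X}})\cong\mc{O}_{X_0}[T]$ is strictly noetherian by Lemmas \ref{spacescondition} and \ref{fgisstric}, while each stalk is $\pi$-adically complete and noetherian, hence noetherian filtered complete, hence Zariskian (\cite[Ch.II, 2.2.1]{HO}), and Corollary \ref{takinggraded} applies; then $\mc{O}_{\ms{X},\mb{Q}}$ follows from Corollary \ref{tensQsnoe}. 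The same $\pi$-adic argument, using that the special fibre $P^{(m)*}X_0=\mr{Proj}(\mr{gr}(\Dmod{m}{X_0}))$ is a noetherian scheme, shows $\mc{O}_{P^{(m)*}\ms{X}}$ is strictly noetherian; since $\mc{O}_{T^{(m)*}\ms{X}}(0)\cong q^{-1}\mc{O}_{P^{(m)*}\ms{X}}$ on $\mathring{T}^{(m)*}\ms{X}$ for the canonical open continuous map $q$ (cf.\ \ref{subsecmicloc}), Lemma \ref{stnoethopen} gives that $\mc{O}_{\mathring{T}^{(m)*}\ms{X}}(0)$ is strictly noetherian.

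For the rings of differential operators I would argue as follows. For $\Dmod{m}{X_i}$ with the filtration by order, $\mr{gr}(\Dmod{m}{X_i})$ is a quasi-coherent $\mc{O}_{X_i}$-algebra of finite type (Lemma \ref{isomcoordgr}), hence strictly noetherian by Corollary \ref{fgisstric2}; the order filtration is positive, so $\Dmod{m}{X_i}$ is complete for it and, being noetherian filtered stalkwise by Lemma \ref{lemmonnoethfilt}, is pointwise Zariskian; Corollary \ref{takinggraded} concludes. The case $\Dmod{m}{\ms{X}}$ is identical, noting that $\mr{gr}(\Dmod{m}{\ms{X}})$ is a quasi-coherent $\mc{O}_{\ms{X}}$-algebra of finite type (locally $A[\xi_1,\dots,\xi_d]^{(m)}$, since each graded piece is finite free over the $\pi$-adically complete $A$), hence locally a coherent quotient of a polynomial ring over the already-treated strictly noetherian $\mc{O}_{\ms{X}}$, hence strictly noetherian by Lemma \ref{fgisstric}. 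For $\Dcomp{m}{\ms{X}}$ use the $\pi$-adic filtration: $\mr{gr}^\pi(\Dcomp{m}{\ms{X}})\cong\Dmod{m}{X_0}[T]$ is strictly noetherian by the previous step and Lemma \ref{fgisstric}, the stalks are $\pi$-adically complete noetherian rings hence Zariskian, and Corollary \ref{takinggraded} applies; finally $\Dmod{m}{\ms{X},\mb{Q}}$ and $\DcompQ{m}{\ms{X}}$ follow from Corollary \ref{tensQsnoe}.

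For the intermediate microdifferential sheaves on $\mathring{T}^*\ms{X}$: endow $\Emod{m,m'}{\ms{X}}$ with the filtration by order; by Lemma \ref{intersectioncoherent}, $\mr{gr}(\Emod{m,m'}{\ms{X}})$ is an algebra of finite type over $\mc{O}_{\mathring{T}^{(m')*}\ms{X}}(0)$ with coherent graded pieces, hence locally a coherent quotient of a polynomial ring over the strictly noetherian ring $\mc{O}_{\mathring{T}^{(m')*}\ms{X}}(0)$ and hence strictly noetherian (Lemma \ref{fgisstric}); it is pointwise Zariskian for the order filtration and noetherian with respect to $\mf{B}$ (Proposition \ref{noetherianintermediate} and its Remark), so Corollary \ref{takinggraded} applies. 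Then $\Ecomp{m,m'}{\ms{X}}$ follows from the $\pi$-adic-filtration argument: its special fibre $\Emod{m,m'}{X_0}$ is a coherent quotient of $\Emod{m,m'}{\ms{X}}$, hence strictly noetherian, so $\mr{gr}^\pi(\Ecomp{m,m'}{\ms{X}})\cong\Emod{m,m'}{X_0}[T]$ is too, and it is pointwise Zariskian for the $\pi$-adic filtration (Remark after Proposition \ref{statementcomplete}); moreover $\Emod{m,m'}{X_i}\cong\Ecomp{m,m'}{\ms{X}}/\pi^{i+1}$ is a coherent quotient hence strictly noetherian, and $\EcompQ{m,m'}{\ms{X}}$ follows from Corollary \ref{tensQsnoe}.

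The main obstacle, I expect, is the twisted structure sheaf $\mc{O}_{\mathring{T}^{(m')*}\ms{X}}(0)$ on the pseudo-cotangent bundle: Lemma \ref{spacescondition} does not apply to it directly, both because the situation is formal rather than algebraic and because the twist $(0)$ is not a structure sheaf in the naive sense, so one is forced to pass through the $\pi$-adic filtration down to the genuine noetherian scheme $P^{(m')*}X_0$ and then transport back along $q$ via Lemma \ref{stnoethopen}, checking en route that $q$ is open and continuous (it is the analogue of $\mb{A}^{n+1}\setminus0\to\mb{P}^n$) and that the identification $\mc{O}(0)|_{\mathring{T}^*}\cong q^{-1}\mc{O}_{P^*}(0)$ of \ref{subsecmicloc} persists in the formal setting. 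A secondary, essentially bookkeeping, difficulty is verifying pointwise Zariskianness uniformly: for the $\pi$-adically complete sheaves it is immediate from \cite[Ch.II, 2.2.1]{HO}, but for $\Dmod{m}{\ms{X}}$, which is not $\pi$-adically complete, one must instead use the order filtration — positive, hence automatically complete — after first checking the noetherian-filtered hypothesis of Lemma \ref{lemmonnoethfilt} stalkwise.
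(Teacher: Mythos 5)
Your overall strategy matches the paper's: reduce each ring to its associated graded via Corollary \ref{takinggraded}, handle $\otimes\mb{Q}$ with Corollary \ref{tensQsnoe}, bootstrap the $\pi$-adically complete rings from their special fibres via $\mr{gr}_\pi(-)\cong(-\otimes k)[T]$, and transport along $q$ via Lemma \ref{stnoethopen}. The case decomposition is the same, and your explicit treatment of $\mc{O}_{\mathring{T}^{(m')*}\ms{X}}(0)$ is actually more careful than what the paper writes, which simply pushes $q_*\mr{gr}(\ms{E})$ down to $P^*X$ without commenting on the strict noetherianity of the base there.

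There is one step that would fail as written. Twice you derive pointwise Zariskianity by asserting that ``each stalk is $\pi$-adically complete'' (for $\mc{O}_{\ms{X}}$ and for $\Dcomp{m}{\ms{X}}$) and then citing \cite[Ch.II, 2.2.1]{HO}. That premise is false: $\mc{O}_{\ms{X},x}$ is a filtered colimit of $\pi$-adically complete rings of sections, and such a colimit is in general a dense proper subring of its $\pi$-adic completion, not complete itself, so the chain ``noetherian filtered complete, hence Zariskian'' does not apply at the stalk. The conclusion (the stalk is Zariskian) is nonetheless correct, but the right argument passes through the faithful flatness of $\mc{A}_x\to\widehat{\mc{A}}_x$ and the Zariskianity of the complete ring $\widehat{\mc{A}}_x$; this is exactly what Lemma \ref{noetherianlemma} establishes, and is what the paper invokes via \cite[3.3.6]{Ber1} and \cite[Ch.II, 2.2 (4)]{HO}. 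Replacing your two completeness claims by an appeal to Lemma \ref{noetherianlemma} (or, for the microdifferential sheaves, to the pointwise Zariskianity already recorded in the Remarks following Propositions \ref{noetheriansheafofmic}, \ref{noetherianintermediate} and \ref{statementcomplete}) repairs the proof; everything else is sound.
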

\begin{proof}
 Note first that $\ms{X}$ and $\mathring{T}^*\ms{X}$ are noetherian
 spaces. The ring $\mc{O}_{X_i}$
 is strictly noetherian by Lemma \ref{spacescondition}. To check that
 $\mc{O}_{\ms{X}}$ is strictly noetherian, we consider the $\pi$-adic
 filtration. Since $\mc{O}_{\ms{X}}$ is pointwise Zariskian with respect
 to the $\pi$-adic filtration by \cite[3.3.6]{Ber1} and \cite[Ch.II, 2.2
 (4)]{HO} (or we can use Lemma \ref{noetherianlemma}), it suffices to
 show that $\mr{gr}_{\pi}(\mc{O}_{\ms{X}})\cong \mc{O}_{X_0}[T]$ is
 strictly noetherian by Lemma \ref{takinggraded} where
 $\mr{gr}_{\pi}$ denotes the $\mr{gr}$ with respect to the $\pi$-adic
 filtration and $T$ denotes the class of $\pi$. This follows from Lemma
 \ref{fgisstric}. For $\mc{O}_{\ms{X},\mb{Q}}$ use Corollary
 \ref{tensQsnoe}.

 Let $X$ be either $\ms{X}$ or $X_i$ for some $i\geq0$.
 Let us prove that $\Dmod{m}{X}$ is strictly noetherian. We consider the
 filtration by order. Since the filtration is positive, it suffices to
 show that $\mr{gr}(\Dmod{m}{X})$ is strictly noetherian by Lemma
 \ref{takinggraded}. Since
 $\mr{gr}(\Dmod{m}{X})$ is of finite type over
 $\mr{gr}_0(\Dmod{m}{X})$, and $\mr{gr}_i(\Dmod{m}{X})$ is
 coherent $\mr{gr}_0(\Dmod{m}{X})$-module for any $i$,
 $\mr{gr}(\Dmod{m}{X})$ can be seen as a coherent
 $\mc{O}_{X}[T_1,\dots,T_n]$-algebra for some $n$, and the claim follows
 by using Corollary \ref{fgisstric}.

 Let us prove that $\Dcomp{m}{\ms{X}}$ is strictly noetherian. Consider
 the $\pi$-adic filtration. Then $\Dcomp{m}{\ms{X}}$ is pointwise
 Zariskian filtered by \cite[3.3.6]{Ber1} and \cite[Ch.II, 2.2
 (4)]{HO}. By Lemma \ref{takinggraded}, it suffices to show that
 $\mr{gr}_{\pi}(\Dcomp{m}{\ms{X}})\cong(\Dmod{m}{X_0})[T]$ is strictly
 noetherian where $\mr{gr}_{\pi}$ denotes the $\mr{gr}$ with
 respect to the $\pi$-adic filtration and $T$ denotes the class of
 $\pi$. Since we checked that $\Dmod{m}{X_0}$ is strictly noetherian,
 $(\Dmod{m}{X_0})[T]$ is strictly noetherian by Lemma
 \ref{fgisstric}, and thus $\Dcomp{m}{\ms{X}}$ is strictly noetherian.

 Let $X$ be either $\ms{X}$ or $X_i$ for some $i$.
 Let us check that $\ms{E}:=\Emod{m,m'}{X}$ is strictly noetherian on
 $\mathring{T}^*\ms{X}$. Consider the filtration by order. It suffices
 to show that $\mr{gr}(\ms{E})$ is strictly noetherian by Lemma
 \ref{takinggraded} and Remark \ref{noetherianintermediate}.
 Let $q:\mathring{T}^*X\rightarrow P^*X$ be the canonical
 surjection. Then, it suffices to
 show that $q_*(\mr{gr}(\ms{E}))$ is strictly noetherian by
 (\ref{noprimecalcfromprima}). By Lemma \ref{intersectioncoherent}, we
 know that $q_*(\mr{gr}(\ms{E}))$ is an $\mc{O}_{P^{(m)*}X}$-algebra
 of finite type and $q_*(\mr{gr}_i(\ms{E}))$ is
 a coherent $\mc{O}_{P^{(m)*}X}$-module for any $i$, and we get the
 claim by using Lemma \ref{fgisstric}.

 For $\Ecomp{m,m'}{\ms{X}}$ and $\EcompQ{m,m'}{\ms{X}}$ the
 verifications are the same as those of $\Dcomp{m}{\ms{X}}$ and
 $\DcompQ{m}{\ms{X}}$, we leave the details to the reader.
\end{proof}

\section{Application: Stability theorem for curves}
In this section, we focus on the relation between the support of
the microlocalization and the characteristic variety. We formulate a
conjecture on the relation, and prove the conjecture in the curve case.

\subsection{}
\label{countexchsupp}
Recall the setting \ref{limitnaive}, and let $\ms{X}$ be a {\em
quasi-compact} smooth formal scheme over $R$.
One might expect that, for a coherent $\DcompQ{m}{\ms{X}}$-module
$\ms{M}$,
\begin{equation*}
 \mr{Char}^{(m)}(\ms{M})=\mr{Supp}(\Emod{m,\dag}{\ms{X,\mb{Q}}}
  \otimes_{\pi^{-1}\DcompQ{m}{\ms{X}}}\pi^{-1}\ms{M}).
\end{equation*}
For the definition of the characteristic varieties, see
\ref{charvardef}. However, this does not hold in general. Indeed,
suppose this were true. Then since
$\mr{Supp}(\Emod{m,\dag}{\ms{X,\mb{Q}}}\otimes\ms{M})
\supset\mr{Supp}(\Emod{m+1,\dag}{\ms{X,\mb{Q}}}\otimes\ms{M})$, we would
get $\mr{Char}^{(m)}(\ms{M})\supset\mr{Char}^{(m+1)}(\DcompQ{m+1}
{\ms{X}}\otimes\ms{M})$. However, this does not hold by Example
\ref{countcharnotstable}. Considering these, we conjecture the
following.

\begin{conj*}
 Let $\ms{X}$ be a {\em quasi-compact} smooth formal scheme over $R$,
 and $\ms{M}$ be a coherent $\DcompQ{m}{\ms{X}}$-module. Then there
 exists an integer $N\geq m$ such that for any $m'\geq N$,
 \begin{equation*}
  \mr{Char}^{(m')}(\DcompQ{m'}{\ms{X}}\otimes_{\DcompQ{m}{\ms{X}}}
   \ms{M})=\mr{Supp}(\Emod{m',\dag}{\ms{X},\mb{Q}}\otimes_{\pi^{-1}
   \DcompQ{m}{\ms{X}}}\pi^{-1}\ms{M}).
 \end{equation*}
\end{conj*}
We prove this conjecture in the case where $\ms{X}$ is a formal curve
({\it i.e.\ }dimension $1$ connected smooth formal scheme of finite
type over $R$). Namely,

\begin{thm}
 \label{main}
 Let $\ms{X}$ be a smooth formal {\em curve} over $R$.
 Let $\ms{M}$ be a coherent $\DcompQ{m}{\ms{X}}$-module.
 Then there exists an integer $N\geq m$ such that we have
 \begin{equation*}
  \mr{Char}^{(m')}(\DcompQ{m'}{\ms{X}}\otimes\ms{M})=
   \mr{Supp}(\EdagQ{\ms{X}}\otimes\ms{M})
 \end{equation*}
 for $m'\geq N$.
\end{thm}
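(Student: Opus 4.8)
The plan is to reduce everything to a single affine curve with a global coordinate and to an explicit computation, exploiting the fact that on a curve the pseudo cotangent bundle $\mathring{T}^*\ms{X}$ is a very simple space: after choosing a coordinate $x$ with dual $\xi$, it is covered by a single distinguished open $D(\Theta)$ with $\Theta=\xi$, and $P^*\ms{X}$ is a single point over each closed point of $\ms{X}$. First I would reduce to the affine case: the statement is local on $\ms{X}$, both $\mr{Char}^{(m')}$ and the support of a microlocalization can be checked after restricting to a cover, and by Theorem \ref{stnoeththem} the relevant rings are strictly noetherian so coherence and the formation of $\mr{Supp}$ behave well under restriction; the passage to $\EdagQ{\ms{X}}$ rather than a fixed $\Emod{m',\dag}{\ms{X},\mb{Q}}$ is harmless because $\EdagQ{\ms{X}}=\indlim_{m'}\Emod{m',\dag}{\ms{X},\mb{Q}}$ and on a noetherian space the support of an inductive limit of coherent modules stabilizes. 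So fix $\ms{X}=\mr{Spf}(A)$ with a coordinate $x$, and write $\ms{U}:=D(\xi)\subset\mathring{T}^*\ms{X}$, the unique ``interesting'' chart; the zero section contributes the part of $\mr{Char}$ lying over $\ms{X}$, which is controlled directly by the rank of $\ms{M}$ as an $\mc{O}_{\ms{X},\mb{Q}}$-module, so the real content is what happens on $\ms{U}$.

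Next I would set up the two sides as explicit modules over explicit rings. By Proposition \ref{characteristicvariety}, $\mr{Char}^{(m')}(\DcompQ{m'}{\ms{X}}\otimes\ms{M})=\mr{Supp}(\EcompQ{m'}{\ms{X}}(\DcompQ{m'}{\ms{X}}\otimes\ms{M}))$, so I must compare $\mr{Supp}(\EdagQ{\ms{X}}\otimes\ms{M})$ with $\mr{Supp}(\EcompQ{m'}{\ms{X}}\otimes_{\pi^{-1}\DcompQ{m'}{\ms{X}}}\pi^{-1}(\DcompQ{m'}{\ms{X}}\otimes\ms{M}))$. On $\ms{U}$ all the rings $\EcompQ{m}{\ms{X}}$, $\EcompQ{m,m'}{\ms{X}}$, $\Emod{m,\dag}{\ms{X},\mb{Q}}$, $\EdagQ{\ms{X}}$ have the concrete presentations of \ref{explicitdescription} in terms of operators $\underline{\partial}^{\angles{m'}{\underline{k}}}(\widetilde{\Theta}_l^{(m')})^{-i}$ with Banach-type growth conditions on the coefficients, and the inclusion diagram at the end of \S4 exhibits them all as subrings of $\EcompQ{m}{\ms{X}}[1/p]$. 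Using a finite presentation $\DcompQ{m}{\ms{X}}^a\to\DcompQ{m}{\ms{X}}^b\to\ms{M}\to0$ on $\ms{U}$ (here $\ms{M}$ is coherent over a Fréchet–Stein / noetherian ring, so such a presentation exists, cf.\ Corollary \ref{asssheafdag}), the support on $\ms{U}$ of any of these microlocalizations is the zero locus of the ideal of maximal minors of the image of the connection matrix, computed over the corresponding ring of sections. The point over a closed (rigid) point $y\in\ms{U}$ belongs to the support iff the localized matrix fails to be right-invertible, and for a cyclic module $\ms{M}=\DcompQ{m}{\ms{X}}/\DcompQ{m}{\ms{X}}P$ with $P=\sum a_j(x)\partial^{\angles{m}{j}}$ this is the statement that $P$ (viewed through $\widehat{\varphi}_{m'}$) is, or is not, invertible in the stalk of the ring at $y$.

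Then comes the heart of the argument, which I expect to be the main obstacle: showing that for $m'$ large the ``Newton polygon at the boundary point'' computed in $\EdagQ{\ms{X}}$ agrees with the one computed in $\EcompQ{m'}{\ms{X}}$, i.e.\ that raising the level does not shrink the support on $\ms{U}$ once it is high enough. The mechanism is that $\widetilde{\Theta}^{(m',m'+1)}_l$ becomes invertible in $\EcompQ{m'}{\ms{X}}$ (Example \ref{inversecontE}), so $\EcompQ{m'+1}{\ms{X}}$ differs from $\EcompQ{m',\dag}{\ms{X},\mb{Q}}$ on $\ms{U}$ only by elements of negative order whose norms are not controlled $p$-adically (this is exactly the phenomenon in the proof of Lemma \ref{countcharnotstable}); and the estimates of Lemma \ref{normcalc}, together with the Fréchet–Stein structure of $\Emod{m,\dag}{\ms{X},\mb{Q}}$ from Theorem \ref{FSprop}, force the slopes of $\ms{M}$ (a finite $p$-adic datum attached to the coherent module, essentially a bounded set of ``radii of convergence'' of the formal solutions of $P$) to stabilize after finitely many levels. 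Concretely, I would show: (a) there is an $N$ such that for $m'\ge N$ the operator $P$ is invertible in $\Gamma(\ms{U},\EdagQ{\ms{X}})$ iff it is invertible in $\Gamma(\ms{U},\EcompQ{m'}{\ms{X}})$ — equivalently the associated graded / Newton-polygon invariants computed by Lemma \ref{normcalc}(ii) no longer change; and (b) by the exactness of $(-)^\triangle$ (Corollary \ref{exactnessofassociation}, Corollary \ref{asssheafdag}) and flatness of $\Emod{m',\dag}{\ms{X},\mb{Q}}\to\EcompQ{m',m''}{\ms{X}}$ from the theorem at the end of \S5, the support statement for the module follows from the statement for a presentation matrix, which reduces to the cyclic case and hence to (a). Finally, reassembling the zero-section part (which is level-independent) with the $\ms{U}$-part, and using that $\mr{Supp}(\EdagQ{\ms{X}}\otimes\ms{M})=\mr{Supp}(\Emod{m',\dag}{\ms{X},\mb{Q}}\otimes\ms{M})$ for $m'\ge N$ on a noetherian base, gives the equality $\mr{Char}^{(m')}(\DcompQ{m'}{\ms{X}}\otimes\ms{M})=\mr{Supp}(\EdagQ{\ms{X}}\otimes\ms{M})$ for $m'\ge N$, which is the theorem.
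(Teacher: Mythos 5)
Your reduction to the affine case and the general framing are fine, and you have correctly identified where the difficulty lies; but the proposed mechanism for overcoming it is not actually carried out, and it is genuinely different from what the paper does. Your step (a) --- ``there is an $N$ such that for $m'\ge N$, the operator $P$ is invertible in $\Gamma(\ms{U},\EdagQ{\ms{X}})$ iff in $\Gamma(\ms{U},\EcompQ{m'}{\ms{X}})$'' --- is, modulo the reduction to the cyclic case, exactly the content of the theorem restated in other words. You then assert that ``the estimates of Lemma \ref{normcalc}, together with the Fr\'echet--Stein structure\dots force the slopes of $\ms{M}$\dots to stabilize after finitely many levels,'' but this is a hope, not an argument: Lemma \ref{normcalc} gives $p$-adic bounds on individual monomials $\underline{\partial}^{\angles{m}{\underline{k}}}(\widetilde{\Theta}^{(m,m')}_l)^{-i}$ when changing level, and nothing in it, nor in the Fr\'echet--Stein machinery of \S5, yields a finiteness statement about the Newton-polygon data of a general coherent module. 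There is also an unjustified reduction from the coherent-module case to the cyclic case, which is not trivial for non-commutative rings, and a subtle point in testing invertibility at closed points of $\ms{U}$ alone.

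The paper's proof does not go through Newton polygons at all. It first establishes (Proposition \ref{mainprop}) that the theorem follows once one knows that $\mr{Char}^{(m')}(\DcompQ{m'}{\ms{X}}\otimes\ms{M})$ stabilizes in $m'$. The mechanism there is homological and noetherian, not analytic: one introduces the kernel sheaves
\begin{equation*}
 \ms{K}_{m'}:=\mr{Ker}\bigl(\Emod{m,\dag}{\ms{X},\mb{Q}}\otimes\pi^{-1}\ms{M}\to\Emod{m',\dag}{\ms{X},\mb{Q}}\otimes\pi^{-1}\ms{M}\bigr),
\end{equation*}
shows via a Tor computation and flatness that each $\ms{K}_{m'}$ injects into $\pi^{-1}\ms{M}$, and invokes the ``strictly noetherian'' property of $\pi^{-1}\DcompQ{m}{\ms{X}}$ from \S6 (Theorem \ref{stnoeththem} combined with Proposition \ref{stationary} and Lemma \ref{stnoethopen}) to conclude the ascending chain $\ms{K}_{m'}$ stabilizes; Lemma \ref{surjectionlemma} then provides the key split surjectivities and the direct-sum decomposition of the kernel. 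The second step, which produces the stabilization hypothesis of Proposition \ref{mainprop} on a curve, is where the curve assumption enters for real: one shows that $\mr{Supp}(\EcompQ{m'}{\ms{X}}\otimes\ms{M})$ is eventually monotone, then in the remaining nontrivial case $\ms{M}^{(m')}$ restricted to a shrinking open $\ms{U}_{m'}$ is a coherent $\mc{O}_{\ms{U}_{m'},\mb{Q}}$-module whose rank $r_{m'}$ is nonincreasing hence eventually constant, and one then applies Ogus' theorem \cite[2.16]{Og} --- that on a smooth curve a coherent $\mc{O}$-module with a $\DcompQ{m}$-structure which extends to level $m+1$ over some open subscheme already carries a global level-$(m+1)$ structure --- to conclude the characteristic varieties stabilize. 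Your proposal contains none of these ingredients: no $\ms{K}_{m'}$, no appeal to the strictly noetherian property of \S6 in this role, no Lemma \ref{surjectionlemma}, and, critically, no substitute for Ogus' theorem, which is the actual source of the curve-specific stabilization. Without a proof of your step (a) the argument does not close, and the paper's own route shows that genuinely new inputs --- not just the Banach estimates of Lemma \ref{normcalc} --- are required.
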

This theorem is proven in the last part of this section.

\begin{rem}
 The conjecture and Theorem \ref{main} may seem to be different since we
 used $\Emod{m',\dag}{\ms{X},\mb{Q}}$ in the conjecture and
 $\EdagQ{\ms{X}}$ in the theorem. However, these are equivalent. Indeed,
 since there exists a homomorphism
 $\Emod{m',\dag}{\ms{X},\mb{Q}}\rightarrow
 \Emod{m'+1,\dag}{\ms{X},\mb{Q}}$ and the topological space $T^*\ms{X}$
 is noetherian, there exists an integer $a$ such that
 for any $m'\geq a$,
 \begin{equation*}
  \mr{Supp}(\EdagQ{\ms{X}}\otimes\ms{M})\subset
   \mr{Supp}(\Emod{m',\dag}{\ms{X},\mb{Q}}\otimes\ms{M})=
   \mr{Supp}(\Emod{a,\dag}{\ms{X},\mb{Q}}\otimes\ms{M}).
 \end{equation*}
 We remind that these supports are closed by \cite[$0_{\mr{I}}$,
 5.2.2]{EGA}.
 Let us show that this inclusion is in fact an equality. Since the
 problem is local, we may assume that $\ms{X}$ is affine, and take
 global generators $m_1,\dots,m_n\in\Gamma(\ms{X},\ms{M})$ of
 $\ms{M}$ over $\DcompQ{m}{\ms{X}}$. Suppose that the inclusion is not
 an equality, and take a
 point $x\in\mr{Supp}(\Emod{a,\dag}{\ms{X},\mb{Q}}\otimes\ms{M})$ which
 is not contained in $\mr{Supp}(\EdagQ{\ms{X}}\otimes\ms{M})$.
 This means that $(\EdagQ{\ms{X}}\otimes\ms{M})_x=0$.
 Now, we know that
 \begin{equation*}
  (\EdagQ{\ms{X}}\otimes\ms{M})_x\cong\indlim_{m'}~
   (\Emod{m',\dag}{\ms{X},\mb{Q}}\otimes\ms{M})_x
 \end{equation*}
 by \cite[Ch.II, 1.11]{Go}. Thus there exists an integer $m'\geq a$ such
 that the images of $m_1,\dots,m_n$ in
 $(\Emod{m',\dag}{\ms{X},\mb{Q}}\otimes\ms{M})_x$ are $0$.
 Since the latter module is generated by these elements over
 $(\Emod{m',\dag}{\ms{X},\mb{Q}})_x$, we would have
 $(\Emod{m',\dag}{\ms{X},\mb{Q}}\otimes\ms{M})_x=0$, which contradicts
 with the assumption. Summing up, we obtain
 \begin{equation*}
  \mr{Supp}(\EdagQ{\ms{X}}\otimes\ms{M})=\bigcap_{m'\geq m}
   \mr{Supp}(\Emod{m',\dag}{\ms{X},\mb{Q}}\otimes\ms{M}).
 \end{equation*}
\end{rem}

\subsection{}
Before we start proving the theorem, let us see some consequences of the
theorem.

\begin{dfn*}
 Let $\ms{X}$ be a smooth formal scheme of dimension $1$ over $R$ (not
 necessarily quasi-compact), and
 let $\ms{M}$ be a coherent $\DdagQ{\ms{X}}$-module. We define
 \begin{equation*}
  \mr{Char}(\ms{M}):=\mr{Supp}(\EdagQ{\ms{X}}\otimes\ms{M}).
 \end{equation*}
\end{dfn*}

\begin{cor}
 Suppose that $k$ is perfect and there exists a lifting
 $\sigma:R\xrightarrow{\sim}R$ of
 the absolute Frobenius automorphism of $k$, and fix one. Let $\ms{X}$
 be a smooth formal scheme of dimension $1$, and let $\ms{M}$ be a
 coherent $F$-$\DdagQ{X}$-module. Then
 \begin{equation*}
  \mr{Car}(\ms{M})=\mr{Char}(\ms{M}),
 \end{equation*}
 where $\mr{Car}$ denotes the characteristic variety defined by
 Berthelot {\normalfont(cf.\ \cite[5.2.7]{BerInt})}.
\end{cor}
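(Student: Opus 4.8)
The plan is to deduce the corollary from Theorem \ref{main} by means of Berthelot's Frobenius descent. First I would use the hypotheses ($k$ perfect, together with the fixed lift $\sigma$ of its absolute Frobenius) to invoke Berthelot's Frobenius descent theorem: since $\ms{M}$ is a coherent $F$-$\DdagQ{\ms{X}}$-module, there exist a non-negative integer $m$ and a coherent $F$-$\DcompQ{m}{\ms{X}}$-module $\ms{M}_0$, which moreover admits a $p$-torsion free coherent $\Dcomp{m}{\ms{X}}$-model (cf.\ \cite[3.4.5]{Ber1}), together with an isomorphism of $\DdagQ{\ms{X}}$-modules $\DdagQ{\ms{X}}\otimes_{\DcompQ{m}{\ms{X}}}\ms{M}_0\xrightarrow{\sim}\ms{M}$. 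By the definition of Berthelot's characteristic variety for $F$-$\DdagQ{\ms{X}}$-modules (cf.\ \cite[5.2.7]{BerInt}) together with the compatibility of finite-level characteristic varieties under Frobenius descent (cf.\ \cite[5.2.4]{BerInt}), one has, inside $T^*X$ and under the identification $T^{(m')*}X\approx T^*X$ of \ref{tangspident},
\[
 \mr{Car}(\ms{M})=\mr{Char}^{(m')}\bigl(\DcompQ{m'}{\ms{X}}\otimes_{\DcompQ{m}{\ms{X}}}\ms{M}_0\bigr)
\]
for every $m'\geq m$. The crucial point in this identity is that the Frobenius structure forces these finite-level characteristic varieties to be independent of $m'$; this is precisely the stabilization phenomenon whose failure in the absence of a Frobenius structure is witnessed by Lemma \ref{countcharnotstable}.

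Next I would rewrite the other side. By definition $\mr{Char}(\ms{M})=\mr{Supp}(\EdagQ{\ms{X}}\otimes\ms{M})$, and since $\pi^{-1}\DdagQ{\ms{X}}\hookrightarrow\EdagQ{\ms{X}}$ (the inclusion diagram of \S4) and $\ms{M}\cong\DdagQ{\ms{X}}\otimes_{\DcompQ{m}{\ms{X}}}\ms{M}_0$, the fact that $\pi^{-1}$ commutes with tensor products together with associativity of tensor products gives
\[
 \EdagQ{\ms{X}}\otimes_{\pi^{-1}\DdagQ{\ms{X}}}\pi^{-1}\ms{M}\cong\EdagQ{\ms{X}}\otimes_{\pi^{-1}\DcompQ{m}{\ms{X}}}\pi^{-1}\ms{M}_0,
\]
so $\mr{Char}(\ms{M})=\mr{Supp}(\EdagQ{\ms{X}}\otimes_{\pi^{-1}\DcompQ{m}{\ms{X}}}\pi^{-1}\ms{M}_0)$. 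Now $\ms{M}_0$ is a coherent $\DcompQ{m}{\ms{X}}$-module on a smooth formal curve, so Theorem \ref{main} provides an integer $N\geq m$ with
\[
 \mr{Supp}\bigl(\EdagQ{\ms{X}}\otimes\ms{M}_0\bigr)=\mr{Char}^{(m')}\bigl(\DcompQ{m'}{\ms{X}}\otimes_{\DcompQ{m}{\ms{X}}}\ms{M}_0\bigr)
\]
for all $m'\geq N$. Choosing any $m'\geq N$ and combining the three displayed equalities yields $\mr{Car}(\ms{M})=\mr{Char}(\ms{M})$, which is the assertion.

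The steps that require care are the two identifications rather than any new estimate: one must check that the ambient space in which Berthelot defines $\mr{Car}(\ms{M})$ agrees with the space $\mathring{T}^*\ms{X}\approx T^*X$ (together with its zero section) used systematically here via \ref{tangspident}, and that the Frobenius-descent model $\ms{M}_0$ is of the shape for which $\mr{Char}^{(m)}(\ms{M}_0)$ makes sense in the manner of \ref{charvardef}; both are supplied by Berthelot's formalism. The hypotheses on $k$ and the lift $\sigma$ enter precisely so that Frobenius descent can be carried out on $\ms{X}$ itself rather than on a Frobenius twist $\ms{X}'$, which is what places both characteristic varieties in the same space without an extra twisting. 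I do not expect a genuine obstacle here: the entire substance of the corollary is already contained in Theorem \ref{main} and in Berthelot's Frobenius descent.
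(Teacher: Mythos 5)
Your proof is correct and follows essentially the same path as the paper's: invoke Berthelot's Frobenius descent to produce a finite-level model, note that Frobenius descent stabilizes $\mr{Char}^{(m')}$ (via \cite[5.2.4 (iii)]{BerInt}), and then apply Theorem \ref{main} to the descended module. The extra care you take with the tensor-product reindexing and the identification of ambient spaces is harmless but unnecessary — the paper treats those as understood — and there is no substantive difference between your argument and the one in the text.
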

\begin{proof}
 The problem being local, we may assume that $\ms{X}$ is a curve.
 Since this follows immediately from the definition of $\mr{Car}$, we
 recall the definition briefly. For a large enough integer $m$,
 we can take the Frobenius descent of level $m$ denoted by
 $\ms{M}^{(m)}$ by \cite[4.5.4]{Ber2}, which is a coherent
 $\DcompQ{m}{\ms{X}}$-module with an isomorphism
 $\DcompQ{m+1}{\ms{X}}\otimes\ms{M}^{(m)}
 \xrightarrow{\sim}F^*\ms{M}^{(m)}$. The characteristic variety of
 $\ms{M}$ is by definition $\mr{Char}^{(m)}(\ms{M}^{(m)})$. Since we
 have $\ms{M}^{(m+k)}:=\DcompQ{m+k}{\ms{X}}\otimes\ms{M}^{(m)}\cong
 F^{k*}\ms{M}^{(m)}$,
 $\mr{Char}^{(m)}(\ms{M}^{(m)})=\mr{Char}^{(m+k)}(\ms{M}^{(m+k)})$ by
 \cite[5.2.4 (iii)]{BerInt}. Thus the corollary follows by applying
 Theorem \ref{main} to $\ms{M}^{(m)}$.
\end{proof}

\subsection{}
Let us prove the theorem.
To prove the theorem, we show the following proposition first.
\begin{prop*}
\label{mainprop}
 Let $\ms{X}$ be a smooth formal curve over $R$, and $\ms{M}$ be a
 coherent $\DcompQ{m}{\ms{X}}$-module.
 Suppose that there exists an integer $N'\geq m$ such that 
 \begin{equation*}
  \mr{Char}^{(m')}(\DcompQ{m'}{\ms{X}}\otimes\ms{M})=
   \mr{Char}^{(N')}(\DcompQ{N'}{\ms{X}}\otimes\ms{M})
 \end{equation*}
 for any $m'\geq N'$. Then the conclusion of Theorem
 {\normalfont\ref{main}} holds.
\end{prop*}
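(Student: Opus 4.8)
The plan is to use the hypothesis to reduce to comparing, at a fixed sufficiently large level $N'$, the characteristic variety $\mr{Char}^{(N')}(\DcompQ{N'}{\ms{X}}\otimes\ms{M})$ with the support of the microlocalization $\mr{Supp}(\EmodQ{N',\dag}{\ms{X}}\otimes\ms{M})$. By Proposition \ref{characteristicvariety}, for each finite level $m''$ we already know $\mr{Char}^{(m'')}(\DcompQ{m''}{\ms{X}}\otimes\ms{M})=\mr{Supp}(\EcompQ{m''}{\ms{X}}\otimes\pi^{-1}(\DcompQ{m''}{\ms{X}}\otimes\ms{M}))$, which after transitivity of base change equals $\mr{Supp}(\EcompQ{m''}{\ms{X}}\otimes_{\pi^{-1}\DcompQ{m}{\ms{X}}}\pi^{-1}\ms{M})$. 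Since $\EmodQ{N',\dag}{\ms{X}}=\invlim_{m''}\EcompQ{N',m''}{\ms{X}}$ and $\EcompQ{N',m''}{\ms{X}}\subset\EcompQ{m''}{\ms{X}}$ with the transition maps flat (the Theorem at the end of \S5), the support of $\EmodQ{N',\dag}{\ms{X}}\otimes\ms{M}$ is contained in the intersection $\bigcap_{m''\geq N'}\mr{Supp}(\EcompQ{m''}{\ms{X}}\otimes\pi^{-1}\ms{M})=\bigcap_{m''\geq N'}\mr{Char}^{(m'')}(\DcompQ{m''}{\ms{X}}\otimes\ms{M})$. Under the stabilization hypothesis this intersection is just $\mr{Char}^{(N')}(\DcompQ{N'}{\ms{X}}\otimes\ms{M})$, giving one inclusion after raising $N$ if needed.

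For the reverse inclusion I would work locally on the curve and use the explicit description of sections. Since $\dim\ms{X}=1$, on an affine open $\ms{U}=D(\Theta)$ with a coordinate the graded ring $\mr{gr}(\EcompQ{N',m''}{\ms{X}})$ is, by Lemma \ref{intersectioncoherent}, a finitely generated algebra over $\mc{O}_{T^{(m'')*}\ms{X}}(0)$, and in the curve case the base $P^*\ms{X}$ is zero-dimensional over $\ms{X}$, so $\mathring{T}^*\ms{X}$ is essentially one-dimensional. The key point is that the Fréchet--Stein structure (Theorem \ref{FSprop}) and Corollary \ref{asssheafdag} let me control $\EmodQ{N',\dag}{\ms{X}}\otimes\ms{M}$ by the coherent modules $\EcompQ{N',m''}{\ms{X}}\otimes\ms{M}$: a point $x$ lies in $\mr{Supp}(\EmodQ{N',\dag}{\ms{X}}\otimes\ms{M})$ iff the stalk is nonzero iff, by the finitely-presented hypothesis and exactness of $\indlim$/$\invlim$ from \cite[II.1.11]{Go}, it lies in $\mr{Supp}(\EcompQ{N',m''}{\ms{X}}\otimes\ms{M})$ for all $m''$. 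So the real content is that for $N'$ large, $\mr{Supp}(\EcompQ{N',m''}{\ms{X}}\otimes\pi^{-1}\ms{M})$ stabilizes as $m''\to\infty$ and agrees with $\mr{Char}^{(N')}$. Here I would use Corollary \ref{keyisom2} and Corollary \ref{keyisom} — the isomorphisms $\EcompQ{m,m'}{\ms{X}}/\EcompQ{m,m'+1}{\ms{X}}\cong\EcompQ{m+1,m'}{\ms{X}}/\EcompQ{m+1,m'+1}{\ms{X}}$ — to compare the support at level $(N',m'')$ with the naive support at level $N'$, exploiting that the ``error term'' between $\EcompQ{N'}{\ms{X}}=\EcompQ{N',N'}{\ms{X}}$ and $\EmodQ{N',\dag}{\ms{X}}$ is controlled by graded pieces that are coherent over a one-dimensional base.

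The hard part will be showing that the support does not shrink when passing from the naive ring $\EcompQ{N'}{\ms{X}}$ to the intermediate rings $\EcompQ{N',m''}{\ms{X}}$ and then to the limit $\EmodQ{N',\dag}{\ms{X}}$ — i.e.\ that no new cancellation occurs at the generic points of the characteristic variety once the level is large enough. This is where the dimension-one hypothesis is essential: the characteristic variety of a coherent $\DcompQ{N'}{\ms{X}}$-module on a curve is either contained in the zero section or is (generically) finite over $\ms{X}$, so its generic points are few and can be analyzed one at a time; at such a generic point the localized module is finite over a field (after passing to $\mb{Q}$), and the strictness results (Lemma \ref{stricnessofhom}, Corollary \ref{intersectioncompatible}) together with the flatness of $\EmodQ{N',\dag}{\ms{X}}\to\EcompQ{N',m''}{\ms{X}}$ force the localization of $\EmodQ{N',\dag}{\ms{X}}\otimes\ms{M}$ there to be nonzero. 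I expect the argument at that step to require a careful bookkeeping of $p$-adic valuations via Lemma \ref{normcalc}, comparing operators $\underline{\partial}^{\angles{N'}{\underline{k}}}(\widetilde{\Theta}^{(N')}_l)^{-i}$ against their counterparts at higher levels, exactly as in the proof of Lemma \ref{explicitdescription}. Once the generic-point analysis is done, the stationarity results of \S\ref{snoethcond} (Proposition \ref{stationary}, Theorem \ref{stnoeththem}, and strict noetherianity of $\EcompQ{N',m''}{\ms{X}}$) upgrade the pointwise statement to the desired equality of closed subsets, and raising $N$ once more to absorb the finitely many levels before stabilization finishes the proof.
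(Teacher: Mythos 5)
Your proof plan inverts which inclusion is hard. The inclusion you present as formal and easy, namely $\mr{Supp}(\Emod{N',\dag}{\ms{X},\mb{Q}}\otimes\ms{M})\subset\bigcap_{m''}\mr{Supp}(\EcompQ{m''}{\ms{X}}\otimes\pi^{-1}\ms{M})$, does not follow from the flatness of $\Emod{N',\dag}{\ms{X},\mb{Q}}\to\EcompQ{m''}{\ms{X}}$: one needs \emph{faithful} flatness, or some other mechanism, to pass from vanishing of $(\EcompQ{m''}{\ms{X}}\otimes\ms{M})_x$ to vanishing of $(\Emod{N',\dag}{\ms{X},\mb{Q}}\otimes\ms{M})_x$, and the paper never establishes faithfulness. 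In fact this containment is precisely the whole content of the proposition. Conversely, the direction you call ``the hard part'' (that the support does not shrink, i.e.\ $\mr{Char}^{(N')}\subset\mr{Supp}(\Emod{N',\dag}{\ms{X},\mb{Q}}\otimes\ms{M})$) follows rather directly from the Fr\'echet--Stein formalism: $\mr{Supp}(\EcompQ{N',m''}{\ms{X}}\otimes\ms{M})$ is an \emph{increasing} chain whose first term is $\mr{Supp}(\EcompQ{N'}{\ms{X}}\otimes\ms{M})=\mr{Char}^{(N')}$, and by Theorem \ref{FSprop} the limit's support is the union, so there is no need for the generic-point analysis you outline.

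The mechanism the paper actually uses to obtain the hard inclusion is missing from your plan. The key observation is that the kernel
$\ms{K}_{m'}=\ker\bigl(\Emod{m,\dag}{\ms{X},\mb{Q}}\otimes\pi^{-1}\ms{M}\rightarrow\Emod{m',\dag}{\ms{X},\mb{Q}}\otimes\pi^{-1}\ms{M}\bigr)$
is identified, via the Tor comparison diagram and Corollary \ref{keyisomdag}, with $\pi^{-1}\mr{Tor}_1^{\DcompQ{m}{\ms{X}}}(\DcompQ{m'}{\ms{X}}/\DcompQ{m}{\ms{X}},\ms{M})$, which embeds into $\pi^{-1}\ms{M}$. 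Strict noetherianity of $\pi^{-1}\DcompQ{m}{\ms{X}}$ (Theorem \ref{stnoeththem} and Lemma \ref{stnoethopen}, hence Proposition \ref{stationary}) forces this ascending chain of subsheaves of $\pi^{-1}\ms{M}$ to stabilize at some level $N$ --- and notably this step does \emph{not} use the stabilization hypothesis on characteristic varieties. The hypothesis then enters through Lemma \ref{surjectionlemma}: at a point $x\notin Z$, the transition map $\varphi_{m',x}$ splits, its kernel decomposes explicitly as a direct sum of stalks of coherent $\EcompQ{i,i+1}{\ms{X}}\otimes\ms{M}$, and since $\ms{K}_{m'}=\ms{K}_N$ for $m'\geq N$, each summand vanishes, giving $(\EcompQ{m',m''}{\ms{X}}\otimes\ms{M})_x=0$ and, via Fr\'echet--Stein, $(\Emod{m',\dag}{\ms{X},\mb{Q}}\otimes\ms{M})_x=0$. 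You correctly identify strict noetherianity, the intermediate quotient isomorphisms (Corollaries \ref{keyisom} and \ref{keyisom2}), and the Fr\'echet--Stein structure as the right tools, but without the kernel-chain argument and the split-surjectivity/kernel-decomposition of Lemma \ref{surjectionlemma}, these pieces do not assemble into a proof, and the generic-point/valuation-bookkeeping detour you propose is both unnecessary and not obviously workable.
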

The proof is given in \ref{proofmainprop}. For an interval
$I\subset\mb{R}$, we denote $I\cap\mb{Z}$ by $I_{\mb{Z}}$ in the
following.

\begin{lem}
\label{surjectionlemma}
 Let $\ms{X}$ be a smooth formal scheme. (We do not need to assume that
 $\ms{X}$ is a curve in this lemma.)
 Let $\ms{M}$ be a $\Dcomp{m}{\ms{X},\mb{Q}}$-module and $x\in
 T^*\ms{X}$. Suppose there exist integers $b\geq a\geq m$ such that
 \begin{equation*}
  (\EcompQ{m'}{\ms{X}}\otimes\ms{M})_x=0
 \end{equation*}
 for any integer $b\geq m'\geq a$. Then the canonical homomorphism
 \begin{equation}
  \label{surjhom}
   (\EcompQ{l,\bullet}{\ms{X}}\otimes
   \ms{M})_x\rightarrow(\EcompQ{m',\bullet}{\ms{X}}\otimes
   \ms{M})_x
 \end{equation}
 is split surjective for any integers $b\geq m'\geq a$, $m'\geq l\geq
 m$, and $\bullet\in\bigl\{\dag,\left[m',\infty\right[_{\mb{Z}}\bigr\}$. Here
 $\EcompQ{l,\dag}{\ms{X}}$ means $\Emod{l,\dag}{\ms{X},\mb{Q}}$ by
 abuse of language. Moreover, we get
 \begin{align*}
  \mr{Ker}\bigl((\EcompQ{l,\bullet}{\ms{X}}\otimes
  \ms{M})_x\rightarrow(\EcompQ{m',\bullet}{\ms{X}}&\otimes
  \ms{M})_x\bigr)
  \cong\mr{Tor}_1^{\DcompQ{m}{\ms{X}}}(\EcompQ{m',\bullet}
  {\ms{X}}/\EcompQ{l,\bullet}{\ms{X}},\ms{M})_x\\
  &\cong
  \begin{cases}
   (\EcompQ{l,a}{\ms{X}}\otimes\ms{M})_x\oplus
   \bigoplus_{i=a}^{m'-1}(\EcompQ{i,i+1}{\ms{X}}\otimes\ms{M})_x
   &\text{if $l<a$} \\
   \bigoplus_{i=l}^{m'-1}(\EcompQ{i,i+1}{\ms{X}}\otimes\ms{M})_x
   &\text{if $l\geq a$.}
  \end{cases}  
 \end{align*}
\end{lem}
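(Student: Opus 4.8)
The plan is to reduce everything to understanding the cokernels and kernels of the transition maps between the intermediate rings, localized at $x$, and then to assemble these using the filtration-by-level picture displayed just before Proposition \ref{statementcomplete}. First I would fix $b\geq m'\geq a$ and recall from the flatness Theorem at the end of \S5 that all the relevant transition homomorphisms $\EcompQ{l,m''}{\ms{X}}\hookrightarrow\EcompQ{l+1,m''}{\ms{X}}$, $\EcompQ{l,m''+1}{\ms{X}}\hookrightarrow\EcompQ{l,m''}{\ms{X}}$, and $\Emod{l,\dag}{\ms{X},\mb{Q}}\hookrightarrow\EcompQ{l,m''}{\ms{X}}$ are injective and flat; hence for $\bullet\in\{\dag\}\cup\left[m',\infty\right[_{\mb{Z}}$ the inclusion $\EcompQ{l,\bullet}{\ms{X}}\hookrightarrow\EcompQ{m',\bullet}{\ms{X}}$ is injective and flat, so $\mr{Tor}_1^{\DcompQ{m}{\ms{X}}}(\EcompQ{m',\bullet}{\ms{X}}/\EcompQ{l,\bullet}{\ms{X}},\ms{M})$ fits in the expected place: the short exact sequence $0\to\EcompQ{l,\bullet}{\ms{X}}\to\EcompQ{m',\bullet}{\ms{X}}\to\EcompQ{m',\bullet}{\ms{X}}/\EcompQ{l,\bullet}{\ms{X}}\to 0$ gives, after $\otimes_{\DcompQ{m}{\ms{X}}}\ms{M}$ and taking stalks at $x$, the exact sequence
\begin{equation*}
 0\to\mr{Tor}_1(\EcompQ{m',\bullet}{\ms{X}}/\EcompQ{l,\bullet}{\ms{X}},\ms{M})_x
 \to(\EcompQ{l,\bullet}{\ms{X}}\otimes\ms{M})_x
 \to(\EcompQ{m',\bullet}{\ms{X}}\otimes\ms{M})_x
 \to(\EcompQ{m',\bullet}{\ms{X}}/\EcompQ{l,\bullet}{\ms{X}}\otimes\ms{M})_x\to 0,
\end{equation*}
where I used flatness of $\EcompQ{m',\bullet}{\ms{X}}$ (resp.\ $\EcompQ{l,\bullet}{\ms{X}}$) over $\pi^{-1}\DcompQ{m}{\ms{X}}$ to kill higher $\mr{Tor}$'s.

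Next I would compute the graded pieces of the quotient. The key inputs are Corollary \ref{keyisom}, Corollary \ref{keyisomdag}, and Corollary \ref{keyisom2}: these say that for $m''>m$ the quotient $\EcompQ{i+1,m''}{\ms{X}}/\EcompQ{i,m''}{\ms{X}}$ is independent of $m''$ (up to canonical isomorphism) and equals $\Emod{i+1,\dag}{\ms{X},\mb{Q}}/\Emod{i,\dag}{\ms{X},\mb{Q}}$, which I'll abbreviate $\mc{Q}_i$. Since the chain $\EcompQ{l,\bullet}{\ms{X}}\subset\EcompQ{l+1,\bullet}{\ms{X}}\subset\dots\subset\EcompQ{m',\bullet}{\ms{X}}$ has successive quotients $\mc{Q}_l,\dots,\mc{Q}_{m'-1}$, I get a finite filtration on $\EcompQ{m',\bullet}{\ms{X}}/\EcompQ{l,\bullet}{\ms{X}}$ with those graded pieces; moreover by the flatness statements each short exact sequence $0\to\EcompQ{i,\bullet}{\ms{X}}\to\EcompQ{i+1,\bullet}{\ms{X}}\to\mc{Q}_i\to 0$ shows $\mc{Q}_i$ is flat over $\pi^{-1}\DcompQ{m}{\ms{X}}$, so the filtration stays exact after $\otimes\ms{M}$ and taking stalks. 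Now the hypothesis $(\EcompQ{m''}{\ms{X}}\otimes\ms{M})_x=0$ for $b\geq m''\geq a$ combined with the inclusion $\EcompQ{i,i+1}{\ms{X}}\subset\EcompQ{i+1}{\ms{X}}=\EcompQ{i+1,i+1}{\ms{X}}$ (and its flatness) gives $(\EcompQ{i+1}{\ms{X}}/\EcompQ{i,i+1}{\ms{X}}\otimes\ms{M})_x\hookleftarrow$ an injection from a vanishing module, so in fact I should argue via: the exact sequence $0\to\EcompQ{i,i+1}{\ms{X}}\to\EcompQ{i+1}{\ms{X}}\to\EcompQ{i+1}{\ms{X}}/\EcompQ{i,i+1}{\ms{X}}\to 0$ tensored with $\ms{M}$ and stalked at $x$ shows $(\EcompQ{i,i+1}{\ms{X}}\otimes\ms{M})_x\cong(\EcompQ{i+1}{\ms{X}}/\EcompQ{i,i+1}{\ms{X}}\otimes\ms{M})_x[-1]$-shifted pieces; more cleanly, since $(\EcompQ{i+1}{\ms{X}}\otimes\ms{M})_x=0$ for $i+1\in[a,b]$, one reads off $\mr{Tor}_1^{\DcompQ{m}{\ms{X}}}(\mc{Q}_i,\ms{M})_x\cong(\EcompQ{i,i+1}{\ms{X}}\otimes\ms{M})_x$ and $(\mc{Q}_i\otimes\ms{M})_x=0$ for $a\leq i\leq b-1$, using Corollary \ref{keyisom}/\ref{keyisom2} to identify $\mc{Q}_i$ with $\EcompQ{i+1,i+1}{\ms{X}}/\EcompQ{i,i+1}{\ms{X}}=\EcompQ{i+1}{\ms{X}}/\EcompQ{i,i+1}{\ms{X}}$.

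From this the splitting follows: the map (\ref{surjhom}) is surjective because all the graded-piece contributions $(\mc{Q}_i\otimes\ms{M})_x$ to the cokernel vanish for $l\leq i\leq m'-1$ (using $a\leq l$ in the easy case; for $l<a$ one additionally needs $(\EcompQ{l,a}{\ms{X}}/\text{?}\,\otimes\ms{M})_x$ contributions, handled by the same dévissage splitting the chain as $\EcompQ{l,\bullet}{\ms{X}}\subset\EcompQ{a,\bullet}{\ms{X}}\subset\EcompQ{m',\bullet}{\ms{X}}$ and observing $\EcompQ{l,\bullet}{\ms{X}}\to\EcompQ{a,\bullet}{\ms{X}}$ has cokernel with graded pieces $\mc{Q}_l,\dots,\mc{Q}_{a-1}$ none of which vanish, which is exactly why the kernel term $(\EcompQ{l,a}{\ms{X}}\otimes\ms{M})_x\oplus\bigoplus_{i=a}^{m'-1}(\EcompQ{i,i+1}{\ms{X}}\otimes\ms{M})_x$ appears). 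The splitting itself comes from the fact that $\EcompQ{m',\bullet}{\ms{X}}$ is, as a left module over $\EcompQ{l,\bullet}{\ms{X}}$, flat and the quotient contributes only $\mr{Tor}_1$-terms, so one can use the standard fact that a surjection of modules whose kernel is identified via the connecting maps of a filtered dévissage admits a section once the relevant $\mr{Tor}_1$'s of the intermediate quotients at $x$ are the only obstruction; concretely, I would exhibit the section by lifting generators of $(\EcompQ{m',\bullet}{\ms{X}}\otimes\ms{M})_x$ through the flat surjection, using that $\ms{M}$ is generated locally by finitely many sections over $\DcompQ{m}{\ms{X}}$ which already live in the image of $\EcompQ{l,\bullet}{\ms{X}}\otimes\ms{M}$. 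The main obstacle I anticipate is bookkeeping the case $l<a$ correctly — making sure the $\mr{Tor}_1$ identification telescopes into exactly the displayed direct sum and that the "$\EcompQ{l,a}{\ms{X}}\otimes\ms{M}$" summand (rather than a further-decomposed version) is the right terminal term — together with checking that the section can be chosen compatibly across the indices $\bullet\in\{\dag\}\cup\left[m',\infty\right[_{\mb{Z}}$, i.e.\ that the $R^1\invlim$ over $m''$ vanishes by Theorem \ref{FSprop}, so the $\dag$-case follows from the finite-level cases by passing to the limit.
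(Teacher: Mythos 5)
Your opening move already rests on a statement the paper never establishes. You write that you will use ``flatness of $\EcompQ{m',\bullet}{\ms{X}}$ (resp.\ $\EcompQ{l,\bullet}{\ms{X}}$) over $\pi^{-1}\DcompQ{m}{\ms{X}}$ to kill higher $\mr{Tor}$'s'' and later that the short exact sequence $0\to\EcompQ{i,\bullet}{\ms{X}}\to\EcompQ{i+1,\bullet}{\ms{X}}\to\mc{Q}_i\to 0$ ``shows $\mc{Q}_i$ is flat over $\pi^{-1}\DcompQ{m}{\ms{X}}$.'' Neither claim is available: flatness of the \emph{intermediate} rings $\EcompQ{m,m'}{\ms{X}}$ over $\pi^{-1}\DcompQ{m}{\ms{X}}$ is explicitly left open in the paper (see the Remark after Proposition \ref{statementcomplete}: ``We expect that $\EcompQ{m,m'}{\ms{X}}$ is flat over $\pi^{-1}\DcompQ{m}{\ms{X}}$''), and even if the two outer terms of a short exact sequence were flat this would not imply the cokernel is flat, so the inference about $\mc{Q}_i$ is a non-sequitur. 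This is exactly the obstruction the paper's proof is designed to route around.

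The paper's actual mechanism is a two-row comparison: it writes down the long exact $\mr{Tor}$-sequence for $0\to\EcompQ{l,\bullet}{\ms{X}}\to\EcompQ{m',\bullet}{\ms{X}}\to\ms{Q}\to0$ \emph{and} for $0\to\EcompQ{l,m'}{\ms{X}}\to\EcompQ{m'}{\ms{X}}\to\ms{Q}\to0$, where the quotient $\ms{Q}$ is the same in both by Corollary \ref{keyisomdag}. The only flatness used is that of the \emph{naive} ring $\EcompQ{m'}{\ms{X}}$ over $\DcompQ{m}{\ms{X}}$ (Proposition \ref{noetheriansheafofmic} combined with \cite[3.5.3]{Ber1}), which kills $\mr{Tor}_1(\EcompQ{m'}{\ms{X}},\ms{M})$ in the bottom row and forces $\alpha$ to be injective; the commuting square then forces $\beta$ to be injective in the top row, without any flatness statement for the intermediate rings. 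The splitting is likewise explicit — the retraction of $\beta_x$ is the composite of the vertical arrow $(\EcompQ{l,\bullet}{\ms{X}}\otimes\ms{M})_x\to(\EcompQ{l,m'}{\ms{X}}\otimes\ms{M})_x$ with $\alpha_x^{-1}$, which exists once $(\EcompQ{m'}{\ms{X}}\otimes\ms{M})_x=0$ makes $\alpha_x$ an isomorphism — rather than an appeal to a ``standard fact'' about sections through filtered dévissages, which is not a theorem in the generality you invoke. Your proposed lifting-of-generators step and the vague $R^1\invlim$ hand-off for the $\dag$-case would need precisely this kind of explicit retraction to be made to work, so as written the splitting claim is unsupported. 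Finally, your dévissage moves the \emph{left} index $l\to l+1\to\dots\to m'$; the paper's induction moves the \emph{right} index $m'\to m'-1\to\dots$, at each stage comparing $0\to\EcompQ{l,m''}{\ms{X}}\to\EcompQ{l,m''-1}{\ms{X}}\to\ms{Q}'\to0$ with the corresponding sequence for the naive ring $\EcompQ{m''-1}{\ms{X}}$ via Corollary \ref{keyisom2}. The direction of your dévissage is not intrinsically wrong, but without the flatness you assumed you cannot conclude that tensoring it with $\ms{M}$ stays exact, so the step-by-step identification of the kernel collapses.
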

\begin{proof}
 For $\bullet\in\bigl\{\dag,\left[m',\infty\right[_{\mb{Z}}\bigr\}$,
 \begin{equation*}
 \EcompQ{m',\bullet}{\ms{X}}/\EcompQ{l,\bullet}{\ms{X}}
  \xrightarrow{\sim}\EcompQ{m'}{\ms{X}}/\EcompQ{l,m'}{\ms{X}}
 \end{equation*}
 by Corollary \ref{keyisomdag}.
 We denote this quotient by $\ms{Q}$. We get the following diagram whose rows
 are exact:
 \begin{equation*}
 \xymatrix{
  \mr{Tor}_1(\EcompQ{m',\bullet}{\ms{X}},\ms{M})\ar[r]\ar[d]&
  \mr{Tor}_1(\ms{Q},\ms{M})\ar[r]^\beta\ar[d]^{\sim}&
  \EcompQ{l,\bullet}{\ms{X}}\otimes\ms{M}\ar[r]\ar[d]&
  \EcompQ{m',\bullet}{\ms{X}}\otimes \ms{M}\ar[r]\ar[d]&
  \ms{Q}\otimes \ms{M}\ar[d]^{\sim}\ar[r]&0\\
 \mr{Tor}_1(\EcompQ{m'}{\ms{X}},\ms{M})\ar[r]&
  \mr{Tor}_1(\ms{Q},\ms{M})\ar[r]
  ^<>(.5){\alpha}&\EcompQ{l,m'}{\ms{X}}\otimes\ms{M}\ar[r]&
  \EcompQ{m'}{\ms{X}}\otimes \ms{M}\ar[r]&\ms{Q}\otimes\ms{M}\ar[r]&0
  }
 \end{equation*}
 where the $\otimes$ and $\mr{Tor}_1$ are taken over
 $\Dcomp{m}{\ms{X},\mb{Q}}$, and we omit the pull-back of sheaves
 $\pi^{-1}$ since it is obvious where to put them. Now, since
 $\mr{Tor}^{\Dcomp{m}{\ms{X},\mb{Q}}}_1(\EcompQ{m'}
 {\ms{X}},\ms{M})=0$ by the flatness of $\EcompQ{m'}{\ms{X}}$ over
 $\DcompQ{m'}{\ms{X}}$ and $\DcompQ{m'}{\ms{X}}$ over
 $\DcompQ{m}{\ms{X}}$ (cf.\ Proposition \ref{noetheriansheafofmic}
 (ii) and \cite[3.5.3]{Ber1}), the homomorphism $\alpha$ is
 injective. Moreover, since $(\EcompQ{m'}{\ms{X}}\otimes \ms{M})_x=0$
 by the hypothesis, the homomorphism $\alpha_x$ is an
 isomorphism, and $(\ms{Q}\otimes \ms{M})_x=0$. Since $\alpha$ is
 injective, $\beta$ is injective as well. Thus, the homomorphism
 (\ref{surjhom}) is split surjective and
 \begin{equation*}
 \mr{Ker}(\EcompQ{l,\bullet}{\ms{X}}\otimes
  \ms{M}\rightarrow\EcompQ{m',\bullet}{\ms{X}}\otimes
  \ms{M})_x\cong\mr{Tor}_1(\ms{Q},\ms{M})_x\xrightarrow[\alpha_x]
  {\sim}(\EcompQ{l,m'}{\ms{X}}\otimes\ms{M})_x.
 \end{equation*}
 Let us calculate $\mr{Tor}_1(\ms{Q},\ms{M})_x$. We only treat the case
 where $l<a$, and since the proof is similar, the other case is left to
 the reader. To calculate this, it suffices to show
 \begin{equation*}
  (\EcompQ{l,m'}{\ms{X}}\otimes\ms{M})_x\cong
   (\EcompQ{l,a}{\ms{X}}\otimes\ms{M})_x\oplus
   \bigoplus_{i=a}^{m'-1}(\EcompQ{i,i+1}{\ms{X}}\otimes\ms{M})_x.
 \end{equation*}
 We use the induction on $k:=m'-a$. For $k=0$, the claim
 is redundant. Suppose that the statement holds to be true for
 $k=k_0-1\geq0$. Then it suffices to show the following isomorphism for
 $m''=a+k_0$:
 \begin{equation}
  \label{splittoproviso}
  (\EcompQ{l,m''}{\ms{X}}\otimes\ms{M})_x\cong
   (\EcompQ{l,m''-1}{\ms{X}}\otimes\ms{M})_x\oplus
   (\EcompQ{m''-1,m''}{\ms{X}}\otimes\ms{M})_x.
 \end{equation}
 Indeed, we just apply the induction hypothesis to
 $(\EcompQ{l,m'-1}{\ms{X}}\otimes\ms{M})_x$ to get the conclusion.
 Let us show (\ref{splittoproviso}). Note that $m''-1\geq a$. This
 isomorphism can be shown using exactly the same method as before using
 the following diagram instead:
 \begin{equation*}
  \def\objectstyle{\scriptstyle}
   \xymatrix{
   \mr{Tor}_1(\EcompQ{l,m''-1}{\ms{X}},\ms{M})\ar[r]\ar[d]&
   \mr{Tor}_1(\ms{Q}',\ms{M})\ar[r]\ar[d]^{\sim}&
   \EcompQ{l,m''}{\ms{X}}\otimes \ms{M}\ar[r]\ar[d]&
   \EcompQ{l,m''-1}{\ms{X}}\otimes\ms{M}\ar[r]\ar[d]&
   \ms{Q}'\otimes \ms{M}\ar[d]^{\sim}\ar[r]&0\\
  \mr{Tor}_1(\EcompQ{m''-1}{\ms{X}},\ms{M})\ar[r]&
   \mr{Tor}_1(\ms{Q}',\ms{M})\ar[r]&\EcompQ{m''-1,m''}{\ms{X}}\otimes
   \ms{M}\ar[r]&
   \EcompQ{m''-1}{\ms{X}}\otimes \ms{M}\ar[r]&\ms{Q}'\otimes \ms{M}
   \ar[r]&0
   }
 \end{equation*}
 where
 \begin{equation*}
 \ms{Q}'=\EcompQ{l,m''-1}{\ms{X}}/\EcompQ{l,m''}{\ms{X}}
  \cong\EcompQ{m''-1}{\ms{X}}/\EcompQ{m''-1,m''}{\ms{X}}
 \end{equation*}
 using Lemma \ref{keyisom2}.
\end{proof}

\begin{lem}
 Let $\ms{X}$ be a {\em curve}. Then the canonical homomorphism
 \begin{equation*}
  \pi^{-1}\DcompQ{m+k}{\ms{X}}/\DcompQ{m}{\ms{X}}\rightarrow
   \Emod{m+k,\dag}{\ms{X},\mb{Q}}/\Emod{m,\dag}{\ms{X},\mb{Q}}
 \end{equation*}
 is an isomorphism.
\end{lem}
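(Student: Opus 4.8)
The plan is to reduce the statement to a purely local computation on an affine $\ms{X}$ possessing a coordinate $x$, and then use the explicit presentations of the rings of microdifferential operators developed in \S\ref{explicitdescription}. Since the asserted map is a morphism of sheaves on $\ms{X}$, injectivity and surjectivity can be checked locally, so we may assume $\ms{X}=\mr{Spf}(A)$ is affine with local coordinate $x$ and corresponding derivation $\partial$. Because $\ms{X}$ is a curve, the cotangent space $T^*\ms{X}$ has $P^*\ms{X}$ of dimension zero over $\ms{X}$, so $\mathring{T}^*\ms{X}$ is covered by the single chart $D(\Theta)$ with $\Theta=\xi$ (the principal symbol of $\partial$), and the zero section is the remaining piece. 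The key point is that the quotients in question are supported away from the zero section in a very controlled way, and on $D(\xi)$ the sections of all the rings involved are described by the explicit series of \ref{defofsituation}, \ref{moreexplicitdesc} and \ref{explicitdescription}.

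First I would identify, using Corollary \ref{keyisomdag}, the target with $\EcompQ{m+k,m'}{\ms{X}}/\EcompQ{m,m'}{\ms{X}}$ for any fixed $m'>m+k$; this replaces the $\dag$-rings (infinite projective limits) by the more tractable intermediate rings at a single finite level $m'$. Then on the chart $\ms{U}=D(\xi)$, Lemma \ref{explicitdescription} gives a concrete description of $\Gamma(\ms{U},\EcompQ{m,m'}{\ms{X}})$ as the set of series $\sum a_{k,i}\,\partial^{\angles{m'}{k}}(\widetilde{\Theta}^{(m')}_l)^{-i}$ in the negative-order part plus $\sum a_{k,i}\,\partial^{\angles{m}{k}}(\widetilde{\Theta}^{(m,m')}_l)^{-i}$ in the non-negative-order part, subject to the growth condition $(*)$, and similarly for level $m+k$. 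Since the negative-order parts coincide for all intermediate levels (the isomorphism $\widehat{E}^{(m')}_0\xrightarrow{\sim}\widehat{E}^{(m,m')}_0$ of \ref{explicitdescription}, together with \eqref{negativecoince}), the quotient $\EcompQ{m+k,m'}{\ms{X}}/\EcompQ{m,m'}{\ms{X}}$ is represented entirely by the \emph{non-negative-order, order $<p^{m+1}$-complementary} terms, i.e.\ by finite combinations of $\partial^{\angles{m+k}{k}}(\widetilde{\Theta}^{(m+k,m')}_l)^{-i}$ modulo those that already lie in level $m$. On the differential-operator side, $\DcompQ{m+k}{\ms{X}}/\DcompQ{m}{\ms{X}}$ on a curve is, using $k!\,\partial^{\angles{m}{k}}=q!\,\partial^k$ and the fact that $\DcompQ{m}{\ms{X}}\cong\DcompQ{m+k}{\ms{X}}$ rationally, spanned over $\mc{O}_{\ms{X},\mb{Q}}$ by the operators $\partial^{\angles{m+k}{k}}$ with $p^{m+1}\le k$ together with a divisibility datum recording which $\mb{Z}_p$-multiples land in level $m$; this matches exactly the generators on the microdifferential side because $(\widetilde{\Theta}^{(m+k,m')}_l)^{-i}$ is invertible and lies in $\Gamma(\ms{U},\Emod{m,m'}{\ms{X}})$ by Example \ref{inversecontE}, so it contributes nothing new to the quotient.

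The map is then visibly injective: an element of $\pi^{-1}\DcompQ{m+k}{\ms{X}}$ mapping into $\Emod{m,\dag}{\ms{X},\mb{Q}}$ must, by the strictness of the inclusions $\Emod{m,\dag}{\ms{X},\mb{Q}}\hookrightarrow\EcompQ{m,m'}{\ms{X}}\hookrightarrow\EcompQ{m}{\ms{X}}$ and the uniqueness of presentations restricted to polynomial (in $\partial$) coefficients, already lie in $\DcompQ{m}{\ms{X}}$. For surjectivity I would argue that every class in $\Emod{m+k,\dag}{\ms{X},\mb{Q}}/\Emod{m,\dag}{\ms{X},\mb{Q}}$ has a representative whose presentation involves only terms of bounded order $<$ some $C$ and with $i$ bounded — this is because the ``tail'' of any series, being of sufficiently negative order, is absorbed into $\Emod{m,\dag}{\ms{X},\mb{Q}}$ via the norm estimates of Lemma \ref{normcalc} (i)-(b) — and such a bounded-order representative is a genuine differential operator of level $m+k$ up to an element of level $m$, using again $k!\,\partial^{\angles{m+k}{k}}=q!\,\partial^{k}$ to pass between the divided-power and ordinary generators. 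I expect the \textbf{main obstacle} to be the bookkeeping in this last absorption step: one must show precisely which $\mb{Z}_p$-coefficients make $\partial^{\angles{m+k}{k}}(\widetilde{\Theta}^{(m+k,m')}_l)^{-i}$ fall into $\Emod{m,\dag}{\ms{X},\mb{Q}}$ versus merely into $\Emod{m,\dag}{\ms{X},\mb{Q}}\otimes\mb{Q}$, and check that this is governed by exactly the same $p$-adic divisibility that defines $\DcompQ{m+k}{\ms{X}}/\DcompQ{m}{\ms{X}}$ — i.e.\ that the two $p$-isogeny structures from Lemma \ref{pisog} and Lemma \ref{normcalc} match on the nose in the curve case. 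The one-dimensionality is what makes this tractable: there is a single index $k$ rather than a multi-index $\underline{k}$, so the functions $f,g$ of the proof of Lemma \ref{normcalc} (ii) become explicit piecewise-linear functions of one variable and the comparison is a finite check.
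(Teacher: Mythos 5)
Your proposal aims at the right kind of calculation, but it takes a considerably harder route than the paper does and leaves the heart of the matter — the ``absorption step'' you flag yourself — unfinished. The paper's proof is much shorter and rests on one structural observation specific to curves: it suffices to show that the canonical homomorphism of sheaves of abelian groups
\begin{equation*}
 \pi^{-1}\DcompQ{m}{\ms{X}}\longrightarrow
  \EcompQ{m}{\ms{X}}/(\EcompQ{m}{\ms{X}})_{-1}
\end{equation*}
is an isomorphism, i.e.\ that on a curve the ``order $\geq 0$'' part of $\EcompQ{m}{\ms{X}}$ is exactly $\pi^{-1}\DcompQ{m}{\ms{X}}$. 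Given this, since $(\Emod{m,\dag}{\ms{X},\mb{Q}})_{-1}=(\Emod{m+k,\dag}{\ms{X},\mb{Q}})_{-1}$ (both equal the level-$m'$ ring of order $\leq -1$ by (\ref{negativecoince})), and since Lemma \ref{explicitdescription} identifies $\EcompQ{m,m'}{\ms{X}}$ with $\EcompQ{m}{\ms{X}}$ after quotienting by order $\leq 0$, the original quotient $\Emod{m+k,\dag}{\ms{X},\mb{Q}}/\Emod{m,\dag}{\ms{X},\mb{Q}}$ collapses to $\pi^{-1}(\DcompQ{m+k}{\ms{X}}/\DcompQ{m}{\ms{X}})$. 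Modding out by order $\leq -1$ also kills the $\pi$-adic completion issues, so this reduces further to the finite-level claim $\pi^{-1}\Dmod{m}{X_i}\cong\Emod{m}{X_i}/\Emod{m}{X_i,-1}$, which on a curve is checked on graded pieces: for $n\geq 0$ both sides have $\mr{gr}_n$ free of rank one over $\mc{O}_{X_i}$ on $\xi^{\angles{m}{n}}$, because $\xi^{\angles{m}{n+p^m}}\cdot(\xi^{\angles{m}{p^m}})^{-1}$ and $\xi^{\angles{m}{n}}$ differ by a $p$-adic unit. This last point is exactly where one dimension is essential — in higher dimension, $\partial_2^{\angles{m}{p^m}}(\widetilde{\Theta}^{(m)}_l)^{-1}$ with $\Theta=\xi_1$ has order zero but is not a differential operator.

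Concrete gaps in your proposal. (1) The surjectivity argument is not carried out; it \emph{is} the content of the lemma, and writing ``the tail is absorbed'' begs the question — you would need to exhibit, uniformly in $m'$, the $p$-adic estimates showing when $\partial^{\angles{m+k}{j}}(\widetilde{\Theta}^{(m+k,m')}_l)^{-i}$ of order $\geq 0$ equals a differential operator of level $m+k$ modulo something in $\Emod{m,m'}{\ms{X}}$. This is precisely the graded-piece computation that the paper's reduction isolates. (2) The inference ``$(\widetilde{\Theta}^{(m+k,m')}_l)^{-i}$ lies in $\Gamma(\ms{U},\Emod{m,m'}{\ms{X}})$ by Example \ref{inversecontE}, so it contributes nothing new to the quotient'' is doubly flawed: the first claim of Example \ref{inversecontE} requires $M\geq m'$, i.e.\ $m+k\geq m'$, which is incompatible with $m'>m+k$ (the second claim only gives containment after inverting $p$); and even if it held, the quotient is of $\pi^{-1}\DcompQ{m}{\ms{X}}$-modules, not of rings, so a product $P\cdot Q$ with $Q\in\Emod{m,m'}{\ms{X}}$ need not lie in $\Emod{m,m'}{\ms{X}}$. (3) The injectivity argument via ``uniqueness of presentations'' is not available — the paper explicitly warns in \ref{defofsituation} that presentations are not unique; what actually gives injectivity in the paper's approach is the strict injectivity of the filtered maps plus the graded identification.
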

\begin{proof}
 It suffices to prove that the canonical homomorphism of sheaf of
 abelian groups $\pi^{-1}\DcompQ{m}{\ms{X}}\rightarrow
 \EcompQ{m}{\ms{X}}/(\Emod{m}{\ms{X},\mb{Q}})_{-1}$ is an
 isomorphism. To show this, it suffices to check that
 $\pi^{-1}\Dmod{m}{X_i}\rightarrow\Emod{m}{X_i}/\Emod{m}{X_i,-1}$ is an
 isomorphism, whose verification is straightforward.
\end{proof}

\subsection{Proof of Proposition \ref{mainprop}:}
\label{proofmainprop}
\quad
 Consider the following diagram of sheaves on $T^*\ms{X}$ for any
 integer $m'\geq m$.
 \begin{equation*}
  \xymatrix{
   \mr{Tor}_1(\DcompQ{m'}{\ms{X}},\ms{M})\ar[r]\ar[d]&
   \mr{Tor}_1(\DcompQ{m'}{\ms{X}}/\DcompQ{m}{\ms{X}},\ms{M})
   \ar[r]\ar[d]^{\sim}&\ms{M}\ar[d]\ar[r]&
   \DcompQ{m'}{\ms{X}}\otimes\ms{M}\ar[d]\\
  \mr{Tor}_1(\EcompQ{m'}{\ms{X}},\ms{M})\ar[r]&
   \mr{Tor}_1(\EcompQ{m'}{\ms{X}}/\EcompQ{m,m'}
   {\ms{X}},\ms{M})\ar[r]_<>(.5){\alpha}&\EcompQ{m,m'}{\ms{X}}
   \otimes\ms{M}\ar[r]&\EcompQ{m'}{\ms{X}}\otimes\ms{M}
   }
 \end{equation*}
 Here $\otimes$ and $\mr{Tor}$ are taken over $\DcompQ{m}{\ms{X}}$, and
 we omit the pull-back of sheaves $\pi^{-1}$.
 Since $\DcompQ{m'}{\ms{X}}$ and $\EcompQ{m'}{\ms{X}}$ are flat over
 $\DcompQ{m}{\ms{X}}$, the left vertical arrow of the diagram is just
 $0\rightarrow 0$. Thus the homomorphism $\alpha$ is injective. Consider
 the following commutative diagram
 \begin{equation*}
  \xymatrix{
   \mr{Tor}_1(\Emod{m',\dag}{\ms{X},\mb{Q}}/\Emod{m,\dag}{\ms{X},\mb{Q}}
   ,\ms{M})\ar[r]^<>(.5){\beta}\ar[d]_\sim&
   \Emod{m,\dag}{\ms{X},\mb{Q}}\otimes\ms{M}\ar[d]\\
   \mr{Tor}_1(\EcompQ{m'}{\ms{X}}/\EcompQ{m,m'}{\ms{X}},\ms{M})
    \ar[r]_<>(.5)\alpha&\EcompQ{m,m'}{\ms{X}}\otimes\ms{M}
   }
 \end{equation*}
 where the isomorphism is by Lemma \ref{keyisomdag}. Since $\alpha$ is
 injective, $\beta$ is also injective. This implies that
 \begin{align*}
  \tag{*}
  \ms{K}_{m'}:=\mr{Ker}\bigl(\varphi_{m'}\colon
  \Emod{m,\dag}{\ms{X},\mb{Q}}\otimes\pi^{-1}\ms{M}
  \rightarrow&\Emod{m',\dag}{\ms{X},\mb{Q}}\otimes\pi^{-1}\ms{M}\bigr)
  \cong\mr{Tor}_1(\Emod{m',\dag}{\ms{X},\mb{Q}}/
  \Emod{m,\dag}{\ms{X},\mb{Q}},\pi^{-1}\ms{M})\\
  &\cong\pi^{-1}\mr{Tor}_1(\DcompQ{m'}{\ms{X}}/\DcompQ{m}{\ms{X}},\ms{M})
  \hookrightarrow\pi^{-1}\ms{M}.
 \end{align*}
 Since $\pi^{-1}\DcompQ{m}{\ms{X}}$ is strictly noetherian by Lemma
 \ref{stnoethopen} and Theorem \ref{stnoeththem}, there exists an
 integer $N$ such that $\ms{K}_k=\ms{K}_N$ for $k\geq N$. So far we have
 {\em not} used the assumption on the characteristic varieties.

 By changing $m$ if necessarily, we may assume that $m=N'$. Now, by
 this assumption,
 \begin{equation*}
  Z:=\mr{Supp}(\EcompQ{m}{\ms{X}}\otimes\ms{M})=
   \mr{Supp}(\EcompQ{m'}{\ms{X}}\otimes\ms{M})
 \end{equation*}
 for any $m'\geq m$. Take $x\in\mathring{T}^*\ms{X}\setminus Z$. Then Lemma
 \ref{surjectionlemma} shows that $\varphi_{m',x}$ (see (*)) is split
 surjective. Thus, the homomorphism
 \begin{equation*}
  \psi_{m',x}\colon(\Emod{N,\dag}{\ms{X},\mb{Q}}
   \otimes\ms{M})_x\rightarrow(\Emod{m',\dag}{\ms{X},\mb{Q}}
   \otimes\ms{M})_x
 \end{equation*}
 is also surjective for any $m'\geq N$. Since the kernel is isomorphic
 to $(\ms{K}_{m'}/\ms{K}_N)_x$, the homomorphism $\psi_{m',x}$ is an
 isomorphism by the choice of $N$. Thus using Lemma
 \ref{surjectionlemma} again,
 \begin{equation*}
  (\EcompQ{m',m''}{\ms{X}}\otimes\ms{M})_x=0
 \end{equation*}
 for any integer $m'\geq N$ and $m''\geq m'$.

 Let $\ms{U}$ be the complement of $Z$. Let $\ms{V}\subset\ms{U}$ be a
 strictly affine open subscheme. By the above observation, we get
  $\Gamma(\ms{V},\EcompQ{m',m''}{\ms{X}}\otimes\ms{M})=0$.
 Since $\Gamma(\ms{V},\Emod{m',\dag}{\ms{X},\mb{Q}})$ is a
 Fr\'{e}chet-Stein algebra, we have
 $\Gamma(\ms{V},\Emod{m',\dag}{\ms{X},\mb{Q}}\otimes\ms{M})=0$.
 Thus the proposition follows.

\subsection{Proof of Theorem \ref{main}:}\quad
 We use the notation in the proof of Proposition \ref{mainprop}. There
 exists an integer $M$ such that $\ms{K}_k=\ms{K}_M$ for $k\geq
 M$. Note that to prove the existence of this $M$, we did not use the
 assumption of Proposition \ref{mainprop}. Let
 $x\in\mathring{T}^*\ms{X}$. Suppose there is an integer $m'>M$ such that
 $(\EcompQ{m'}{\ms{X}}\otimes\ms{M})_x=0$. Then by using Lemma
 \ref{surjectionlemma},
 \begin{equation*}
  0=(\EcompQ{m'}{\ms{X}}\otimes\ms{M})_x\cong
   (\EcompQ{m,m'}{\ms{X}}\otimes\ms{M})_x/\ms{K}_{m',x}=
   (\EcompQ{m,m'}{\ms{X}}\otimes\ms{M})_x/\ms{K}_{m'',x}
   \subset(\EcompQ{m'',m'}{\ms{X}}\otimes\ms{M})_x
 \end{equation*}
 for an integer $m'\geq m''\geq M$.
 However, since $(\EcompQ{m'',m'}{\ms{X}}\otimes\ms{M})_x$ is generated
 by the image of $\ms{M}$, we have
 $(\EcompQ{m'',m'}{\ms{X}}\otimes\ms{M})_x=0$. This implies that
 $(\EcompQ{m''}{\ms{X}}\otimes\ms{M})_x=0$ for $m'\geq m''\geq M$. Thus,
 \begin{equation*}
    \mr{Supp}(\EcompQ{m'}{\ms{X}}\otimes\ms{M})\subset
   \mr{Supp}(\EcompQ{m'+1}{\ms{X}}\otimes\ms{M})
 \end{equation*}
 for $m'\geq M$.

 Now suppose there exists $M'\geq M$ such that
 $\mr{Supp}(\EcompQ{M'}{\ms{X}}\otimes\ms{M})=T^*\ms{X}$. Then there is
 nothing to prove. Using \cite[5.2.4]{Ga} we may suppose that the
 dimension of $\mr{Supp}(\EcompQ{m'}{\ms{X}}\otimes\ms{M})$ is equal to
 $1$ for any $m'\geq M$.

 In this case, for any $m'\geq M$, there exists an open formal subscheme
 $\ms{U}_{m'}$ of $\ms{X}$ such that
 \begin{equation*}
  \mr{Char}^{(m')}(\DcompQ{m'}{\ms{X}}\otimes
   \ms{M}|_{\ms{U}_{m'}})\cap~T^*\ms{U}_{m'}=\ms{U}_{m'},
 \end{equation*}
 and $\ms{U}_{m'}\supset\ms{U}_{m'+1}$. Let
 $\ms{M}^{(m')}:=\DcompQ{m'}{\ms{X}}\otimes\ms{M}$. The module
 $\ms{M}^{(m')}|_{\ms{U}_{m'}}$ is a coherent
 $\mc{O}_{\ms{U}_{m'},\mb{Q}}$-module. Let $r_{m'}$ be the rank as a
 locally projective $\mc{O}_{\ms{U}_{m'},\mb{Q}}$-module. Then we know
 that $r_{m'}\geq r_{m'+1}$ for any $m'\geq M$. There exists an integer
 $N\geq M$ such that $r_N=r_{m'}$ for any $m'\geq N$. By the choice of
 $N$, the canonical homomorphism
 \begin{equation*}
  \Gamma(\ms{U}_{m'},\ms{M}^{(N)})\rightarrow
   \Gamma(\ms{U}_{m'},\ms{M}^{(m')})
 \end{equation*}
 is an isomorphism for $m'\geq N$. Indeed, since both sides are finite
 over $\Gamma(\ms{U}_{m'},\mc{O}_{\ms{X},\mb{Q}})$, and the image is
 dense, the homomorphism is surjective. However, since the rank of
 the both sides are the same by assumption, the homomorphism
 is an isomorphism. Now, the proof of \cite[2.16]{Og} is shows that for
 a smooth curve $\ms{Y}$ and a $\DcompQ{m}{\ms{Y}}$-module $\ms{M}$
 which is coherent as an $\mc{O}_{\ms{Y},\mb{Q}}$-module, if there
 exists an open formal subscheme $\ms{V}$ such that $\ms{M}|_{\ms{V}}$
 is a $\DcompQ{m+1}{\ms{V}}$-module, then $\ms{M}$ is a
 $\DcompQ{m+1}{\ms{Y}}$-module. Thus $\ms{M}^{(N)}|_{\ms{U}_{N}}$ is
 already a $\DcompQ{m'}{\ms{U}_N}$-module, and
 $\ms{M}^{(N)}|_{\ms{U}_N}\xrightarrow{\sim}
 \DcompQ{m'}{\ms{U}_N}\otimes\ms{M}^{(N)}|_{\ms{U}_N}$.
 This implies that the condition of Proposition \ref{mainprop} holds,
 and we obtain the theorem.
 \qed

Tomoyuki Abe:\\
Institute for the Physics and Mathematics of the Universe (WPI)\\
The University of Tokyo\\
5-1-5 Kashiwanoha,  
Kashiwa, Chiba, 277-8583, Japan\\
e-mail: {\tt tomoyuki.abe@ipmu.jp}

\begin{thebibliography}{AtMa}
 \bibitem[Ab]{AbeL}
	     Abe, T.: {\em Langlands program for p-adic coefficients and
	     the petits camarades conjecture}, available at
	     \url{arxiv.org/abs/1111.2479}.

 \bibitem[AM]{AM}
	     Abe, T., Marmora, A.: {\em Product formula for $p$-adic
	     epsilon factors}, available at
	     \url{arxiv.org/abs/1104.1563}.

 \bibitem[AtMa]{AtMac}
		Atiyah, M. F., MacDonald, I. G.: {\em introduction to
	       commutative algebra}, Addison-Wesley Publishing (1969).

 \bibitem[Be1]{Ber1}
	      Berthelot, P.: {\em {${\mathscr D}$}-modules
	      arithm\'etiques. {I}. {O}p\'erateurs diff\'erentiels de
	      niveau fini}, Ann.\ Sci.\ \'Ecole Norm.\ Sup.\ {\bf 29},
	      p.185--272 (1996).
	       
 \bibitem[Be2]{Ber2}
	      Berthelot, P.:
	      {\em{$\mathscr D$}-modules arithm\'etiques. {II}. Descente par
	      {F}robenius},
	      M\'em.\ Soc.\ Math.\ Fr.\ {\bf 81} (2000).
	       
 \bibitem[BeI]{BerInt}
	      Berthelot, P.:
	      {\em Introduction \`a la th\'eorie arithm\'etique des {$\mathscr
	      D$}-modules},
	      Ast\'erisque {\bf 279}, p.1--80 (2002).
	 
 \bibitem[BGR]{BGR}
	      Bosch, S., G\"{u}ntzer, U., Remmert, R.: {\em
	      Non-Archimedean Analysis}, Grundlehren der
	      math. Wissenschaften {\bf 261}, Springer (1984).

 \bibitem[Bj]{Bjo}
	     Bj\"{o}rk, J. E.: {\em Analytic $\mc{D}$-modules and
	     applications}, Mathematics and its applications {\bf 247},
	     Kluwer (1993).

 \bibitem[Bo]{Bour}
	     Bourbaki, N.: {\em Alg\`{e}bre Commutative}, Hermann.

 \bibitem[CM]{CM}
	     Christol, G., Mebkhout, Z.: {\em Equations
	     diff\'{e}rentielles $p$-adiques et coefficients $p$-adiques
	     sur les courbes}, Asterisque {\bf 279}, p.125--183 (2002).

 \bibitem[Cr]{Cr}
	     Crew R.: {\em $F$-isocrystals and their monodromy groups}, 
	     Ann.\ Sci.\ \'Ecole Norm.\ Sup.\ {\bf 25}, p.429--464
	     (1992).

 \bibitem[De]{De}
	      Deligne, P.: {\em La conjecture de Weil.II}, Publ.\ Math.\
	     IHES {\bf 52}, p.137--252 (1980).
	       
 \bibitem[EGA]{EGA}
	      Grothendieck, A.: {\em El\'{e}ments de G\'{e}om\'{e}trie
	      Alg\'{e}brique}, Publ.\ Math.\ IHES


 \bibitem[Ga]{Ga}
	     Garnier, L.: {\em Th\'{e}or\`{e}mes de division sur
	     $\widehat{\mc{D}}^{(0)}_{\mc{X}\mb{Q}}$ et applications},
	     Bull.\ Soc.\ Math.\ Fr.\ {\bf 123}, p.547--589 (1995).

 \bibitem[Go]{Go}
	     Godement, R.: {\em Topologie alg\'{e}brique et th\'{e}orie
	     des faisceaux}, Hermann.

 \bibitem[Ha]{Har}
	     Hartshorne, R.: {\it Algebraic geometry}, GTM {\bf 52},
	     Springer.

 \bibitem[HO]{HO}
	     Huishi, L., Oystaeyen, F.: {\em Zariskian filtrations},
	     $K$-Monographs in math.\ {\bf 2}, Kluwer (1996).

 \bibitem[KK]{KK}
	     Kashiwara, M., Kawai, T.: {\em On holonomic system of
	     micraodifferential equations. III},
	     Publ.\ RIMS {\bf 17}, p.813--979 (1981).

 \bibitem[Laf]{Laf}
	      Lafforgue, L.:{\em Chtoucas de Drinfeld et correspondance
	      de Langlands}, Invent.\ Math.\ {\bf 147}, p.1--241
	      (2002).

 \bibitem[Lam]{Lam}
	      Lam, T. Y.: {\em Lectures on Modules and Rings}, GTM {\bf
	      189}, Springer.

 \bibitem[Lau]{Lau}
	     Laumon, G.: {\em Transformations canoniques et
	     sp\'{e}cialisation pour les $\ms{D}$-modules filtr\'{e}s},
	     Asterisque {\bf 130}, p.56--129 (1985).

 \bibitem[Lo]{Lop}
	     L\'{o}pez, R. G.: {\em Microlocalization and stationary
	     phase}, Asian J. Math.\ {\bf 8}, p.747--768 (2004).

 \bibitem[Ma]{Mar}
	      Marmora, A.: {\em Microdiff\'{e}rentielles
	      arithm\'{e}tiques sur une courbe}, in preparation.

 \bibitem[Og]{Og}
	     Ogus, A.: {\em $F$-isocrystals and de Rham cohomology II},
	     Duke Math.\ J.\ {\bf 51}, no.\ 4, p.765--850 (1984).

 \bibitem[SGA4]{SGA}
	      Grothendieck, A., {\it et al}.: {\em Th\'{e}orie des
	       topos et cohomologie \'{e}tale des sch\'{e}mas (SGA 4)}.
      
 \bibitem[ST]{ST}
	     Schneider, P., Teitelbaum, J.: {\em Algebras of p-adic
	     distributions and admissible representations},
	     Invent.\ Math.\ {\bf 153}, p.145--196 (2003).
\end{thebibliography}
\end{document}